\numberwithin{equation}{section}
\newtheorem{Theorem}{Theorem}[section]
\newtheorem{pr}[Theorem]{Proposition}
\newtheorem{cor}[Theorem]{Corollary}
\newtheorem{con}[Theorem]{Conjecture}
\newtheorem{lem}[Theorem]{Lemma}
\newtheorem{prb}[Theorem]{Problem}
\theoremstyle{definition}
\newtheorem{de}[Theorem]{Definition}
\newtheorem{rem}[Theorem]{Remark}
\newtheorem{ex}[Theorem]{Example}
\newtheorem{exer}[Theorem]{Exercise}
\newtheorem{comm}[Theorem]{Comments}}
\def\s{{\mathfrak S}}
\def\R{\mathbb{R}}
\def\N{\mathbb{N}}
\def\Z{\mathbb{Z}}
\def\Q{\mathbb{Q}}
\begin{document}

\allowdisplaybreaks

\newcommand{\arXivNumber}{1501.07337}

\renewcommand{\PaperNumber}{034}

\FirstPageHeading

\ShortArticleName{Notes on Schubert, Grothendieck and Key Polynomials}

\ArticleName{Notes on Schubert, Grothendieck\\ and Key Polynomials}

\Author{Anatol N.~{KIRILLOV}~$^{\dag\ddag\S}$}

\AuthorNameForHeading{A.N.~Kirillov}

\Address{$^\dag$~Research Institute of Mathematical Sciences (RIMS), Kyoto, Sakyo-ku 606-8502, Japan}
\EmailD{\href{mailto:kirillov@kurims.kyoto-u.ac.jp}{kirillov@kurims.kyoto-u.ac.jp}}
\URLaddressD{\url{http://www.kurims.kyoto-u.ac.jp/~kirillov/}}

\Address{$^\ddag$~The Kavli Institute for the Physics and Mathematics of the Universe (IPMU),\\
\hphantom{$^\ddag$}~5-1-5 Kashiwanoha, Kashiwa, 277-8583, Japan}

\Address{$^\S$~Department of Mathematics, National Research University Higher School of Economics, \\
\hphantom{$^\S$}~7 Vavilova Str., 117312, Moscow, Russia}

\ArticleDates{Received March 26, 2015, in f\/inal form February 28, 2016; Published online March 29, 2016}

\Abstract{We introduce common generalization of (double) Schubert, Grothendieck, Demazure, dual and stable Grothendieck polynomials, and Di~Francesco--Zinn-Justin polynomials. Our approach is based on the study of algebraic and combinatorial properties of the reduced rectangular plactic algebra and associated Cauchy kernels.}

\Keywords{plactic monoid and reduced plactic algebras; nilCoxeter and
idCoxeter algebras; Schubert, $\beta$-Grothendieck, key and (double)
key-Grothendieck, and Di~Francesco--Zinn-Justin polynomials; Cauchy's type
kernels and symmetric, totally symmetric plane partitions, and alternating sign matrices; noncrossing Dyck paths and (rectangular) Schubert polynomials;
multi-parameter deformations of Genocchi numbers of the f\/irst and the second types;
Gandhi--Dumont polynomials and (staircase) Schubert polynomials;
double af\/f\/ine nilCoxeter algebras}

\Classification{05E05; 05E10; 05A19}

\vspace{-3mm}

\begin{flushright}
\it To the memory of Alexander Grothendieck (1928--2014)
\end{flushright}

\vspace{-7mm}

{\small\renewcommand{\baselinestretch}{0.9} \tableofcontents}

\subsection*{Extended abstract} We introduce
certain f\/inite-dimensional algebras denoted by ${\cal{PC}}_n$ and
${\cal{PF}}_{n,m}$ which are certain quotients of the plactic algebra
${\cal{P}}_{n}$, which had been introduced by A.~Lascoux and
\mbox{M.-P.}~Sch\"{u}t\-zen\-berger \cite{LS1}. We show that
$\dim({\cal{PF}}_{n,k})$ is equal to the number of symmetric plane partitions
 f\/itting inside the box $n \times k \times k$,
$\dim({\cal{PC}}_{n})$ is equal to the number of alternating sign matrices of
size $n \times n$, moreover,
\begin{gather*}
\dim({\cal{PF}}_{n,n}) = {\rm TSPP}(n+1) \times {\rm TSSCPP}(n),\\
\dim({\cal{PF}}_{n,n+1})= {\rm TSPP}(n+1) \times {\rm TSSCPP}(n+1),\\
\dim({\cal{PF}}_{n+2,n})= \dim({\cal{PF}}_{n,n+1}),\qquad
\dim({\cal{PF}}_{n+3,n})={\frac{1}{2}} \dim({\cal{PF}}_{n+1,n+1}),
\end{gather*}
and study decomposition of the Cauchy kernels corresponding to the algebras
${\cal{PC}}_n$ and ${\cal{PF}}_{n,m}$;
as well as introduce polynomials which are common generalizations of the
(double) Schubert, $\beta$-Grothendieck, Demazure (known also as {\it key polynomials}), (plactic) key-Grothendieck, (plactic) Stanley and stable $\beta$-Grothendieck polynomials. Using a family of the Hecke type divided dif\/ference operators
we introduce polynomials which are common generalizations of the Schubert,
$\beta$-Grothendieck, dual $\beta$-Grothendieck, $\beta$-Demazure--Grothendieck, and Di Francesco--Zinn-Justin polynomials.
We also introduce and study some properties of the double af\/f\/ine
nilCoxeter algebras and related polynomials,
put forward a $q$-deformed version of the Knuth relations and
plactic algebra.

\section{Introduction}

The Grothendieck polynomials had been introduced by A.~Lascoux and M.-P.~Sch\"{u}t\-zen\-berger in~\cite{LS6} and studied in detail in~\cite{L33}. There are two equivalent versions of the Grothendieck polynomials depending on a choice
of a basis in the Grothendieck ring $K^{\star}({\cal{F}}l_n)$ of the
complete f\/lag variety ${\cal{F}}l_n$. The basis $\{\exp(\xi_{1}), \ldots,\exp(\xi_{n}) \}$ in $K^{*}({\cal{F}}l_n)$ is one choice, and another choice is the
basis $\{1-\exp(- {\xi}_{j}), \,1 \le j \le n\}$, where $\{{\xi}_{j},
\, 1 \le j \le n \}$ denote the Chern classes of the tautological linear
bundles $L_{j}$ over the f\/lag variety ${\cal{F}}l_n$. In the present paper we
use the basis in a deformed Grothendieck ring $K^{*, \beta}({\cal{F}}_n)$ of
the f\/lag variety ${\cal{F}}l_n$ generated by the set of elements $\{x_{i} =
x_{i}^{(\beta)} = 1-\exp(\beta {\xi}_{i}),\, i=1, \ldots, n \}$. This basis has
been introduced and used for construction of the $\beta$-Grothendieck
polynomials in~\cite{FK1,FK}.

A basis in the classical Grothendieck ring of the f\/lag variety in question
corresponds to the choice $\beta = -1$. For arbitrary $\beta$ the ring
generated by the elements $\big\{x_{i}^{(\beta)},\, 1 \le i \le n \big\}$ has been
identif\/ied with the Grothendieck ring corresponding to the generalized
cohomology theory associated with the multiplicative formal group law
$F(x,y)=x+y+\beta x y$, see \cite{Hu}. The Grothendieck polynomials
corresponding to the classical $K$-theory ring $K^{\star}({\cal{F}}l_{n})$,
 i.e., the case $\beta = -1$, had been studied in depth by A.~Lascoux and
M.-P.~Sch\"{u}t\-zen\-berger in \cite{LS7}. The $\beta$-Grothendieck
polynomials has been studied in~\cite{FK1,FK2,Hu}.

The {\it plactic monoid} over a f\/inite totally ordered set $\mathbb{A}=
\{a < b <c < \cdots < d \}$ is the quotient of the free monoid generated by
elements from $\mathbb{A}$ subject to the elementary Knuth transformations~\cite{Kn}
\begin{gather}\label{equation1.1}
 bca = bac\quad \& \quad acb = cab, \qquad \text{and}\qquad bab=bba \quad \& \quad aba=baa,
\end{gather}
for any triple $\{a < b < c \} \subset \mathbb{A}$.

To our knowledge, the concept of ``plactic monoid'' has its origins in a
paper by C.~Schensted~\cite{Sc}, concerning the study of the longest
increasing subsequence of a permutation, and a~paper by D.~Knuth~\cite{Kn},
concerning the study of combinatorial and algebraic properties of the
Robinson--Schensted correspondence\footnote{See, e.g., \url{https://en.wikipedia.org/wiki/Robinson-Schensted_correspondence}.}.

As far as we know, this monoid and the (unital) algebra $\cal{P}(\mathbb{A})$ corresponding to that monoid\footnote{If $\mathbb{A} = \{1 < 2 < \cdots < n \}$, the elements of the
algebra ${\cal{P}}(\mathbb{A})$ can be identif\/ied with semistandard Young
tableaux. It was discovered by D.~Knuth~\cite{Kn} that modulo {\it Knuth
 equivalence} the equivalence classes of semistandard Young tableaux form an
algebra, and he has named this algebra by {\it tableaux algebra}. It is easily
seen that the tableaux algebra introduced by D.~Knuth is isomorphic to the
algebra introduced by M.-P.~Sch\"{u}tzenberger~\cite{Sch}.}, had been introduced, studied and used by M.-P.~Sch\"{u}tzenberger, see
\cite[Section~5]{Sch}, to give the f\/irst complete proof of the famous
\textit{Littlewood--Richardson rule} in the theory of symmetric functions. A~bit
later this monoid, was named the ``mono\"{i}de plaxique'' and studied in depth
by A.~Lascoux and M.-P.~Sch\"{u}tzenberger~\cite{LS1}. The algebra
corresponding to the plactic monoid is commonly known as
{\it plactic algebra}.
 One of the basic properties of the plactic algebra~\cite{Sch} is that it
contains the distinguished commutative subalgebra which is generated by
noncommutative elementary (quasi-symmetric) polynomials\footnote{See, e.g., \cite{KT} for def\/inition of noncommutative quasi-symmetric functions and polynomials.}
\begin{gather*}
 e_k({\mathbb{A}}_n) = \sum_{i_1 > i_2 > \cdots > i_k} a_{i_{1}} a_{i_{2}}
\cdots a_{i_{k}}, \qquad k= 1, \ldots,n,
\end{gather*}
see, e.g., \cite[Corollary~5.9]{Sch} and~\cite{FG}.

We refer the reader to nice written overview~\cite{LLT} of the basic
properties and applications of the plactic monoid in combinatorics.

It is easy to see that the plactic relations for two letters $a < b$, namely,
\begin{gather*}
 aba=baa,\qquad bab=bba,
 \end{gather*}
imply the commutativity of noncommutative elementary polynomials in two
variables. In other words, the plactic relations for two letters imply that
\begin{gather*}
 ba (a+b)=(a+b) ba, \qquad a < b.
\end{gather*}
 It has been proved in~\cite{FG} that these relations together with the Knuth
relations~\eqref{equation1.1} for three letters $a < b < c$, imply the commutativity
of noncommutative elementary quasi-symmetric polynomials for any number of
variables.

In the present paper we prove that in fact the commutativity of noncommutative
elementary quasi-symmetric polynomials for $n=2$ and $n=3$ implies the
commutativity of that polynomials for all~$n$, see Theorem~\ref{theorem2.23}\footnote{Let \looseness=-1 us stress that conditions necessary and suf\/f\/icient to
assure the commutativity of noncommutative elementary polynomials for the
number of variables equals $n=2$ and $n=3$ turn out to be weaker then that
listed in~\cite{FG}.}.

One of the main objectives of the present paper is to study combinatorial
properties of the generalized plactic Cauchy kernel
\begin{gather*}
{\cal C}({\mathfrak{P}}_n,U)= \prod_{i=1}^{n-1}
 \left \{{\prod_{j=n-1}^{i}(1+ p_{i,j-i+1} u_{j})} \right\},
\end{gather*}
where ${\mathfrak{P}}_n$ stands for the set of parameters $\{p_{ij},\, 2 \le i+j \le n+1, \,i >1,\,j>1 \}$, and $U := U_{n}$ stands for a certain noncommutative
algebra we are interested in, see Section~\ref{section5}.

We also want to bring to the attention of the reader on some
interesting combinatorial properties of {\it rectangular} Cauchy kernels{\samepage
\begin{gather*}
{\cal{F}}({\mathfrak{P}}_{n,m}, U) = \prod_{i=1}^{n-1} \left\{\prod_{j=m-1}^{1} (1 + p_{i,\overline{i-j+1}^{(m)}} u_{j}) \right\},
\end{gather*}
where ${\mathfrak{P}}_{n,m} = \{p_{ij} \}_{1 \le i \le n \atop 1\le j \le m}$;
see Def\/inition~\ref{def6.6} for the meaning of symbol~$\overline{a}^{(m)}$.}

We treat these kernels in the (reduced) plactic algebras~${\cal{PC}}_n$ and
${\cal{PF}}_{n,m}$ correspondingly. The algebras~${\cal{PC}}_n$ and~${\cal{PF}}_{n,m}$ are f\/inite-dimensional and have bases parameterized by
certain Young tableaux described in Sections~\ref{section5.1} and~\ref{section6} correspondingly.
Decomposition of the rectangular Cauchy kernel with respect to the basis in
the algebra ${\cal{PF}}_{n,m}$ mentioned above, gives rise to a~set of
polynomials which are common generalizations of the (double) Schubert.
$\beta$-Gro\-thendieck, Demazure and Stanley polynomials. To be more precise,
the polynomials listed above correspond to certain quotients of the plactic
algebra ${\cal{PF}}_{n,m}$ and appropriate specializations of parameters~$\{p_{ij} \}$ involved in our def\/inition of polynomials
$U_{\alpha}(\{p_{ij} \})$, see Section~\ref{section6}.

As it was pointed out in the beginning of Introduction, the Knuth (or plactic)
relations~\eqref{equation1.1} have been discovered in~\cite{Kn} in the course of the
study of algebraic and combinatorial properties of the Robinson--Schensted
correspondence. Motivated by the study of basic properties of a~{\it quantum} version of the tropical/geometric Robinson--Schensted--Knuth correspondence~-- work in progress, but see~\cite{BK,AK,KB,NY,OP} for def\/inition and basic properties of the tropical/geometric RSK,~-- the author of the present paper came to a discovery that certain
deformations of the Knuth relations {\it preserve} the Hilbert series
(resp. the Hilbert polynomials) of the plactic algebras~${\cal{P}}_n$ and~${\cal{F}}_n$ (resp.\ the algebras~${\cal{PC}}_n$ and~${\cal{PF}}_n$).

 More precisely, let $\{q_2,\ldots,q_{n} \}$ be a set of (mutually commuting)
parameters, and ${\mathbb{U}}_n:=\{u_1,\ldots,u_{n} \}$ be a set of
generators of the free associative algebra over~$\Q$ of rank~$n$. Let $Y, Z
\subset [1,n]$ be subsets such that $Y \cup Z=[1,n]$ and $Y \cap Z=\varnothing$. Let us set $p(a)=0$, if $a \in Y$ and $p(a)=1$, if $a \in Z$.
Def\/ine ${q}$-deformed super Knuth relations among the generators
$u_1, \ldots,u_{n}$ as follows:
\begin{align*}
{\rm SPL}_q \colon \ & (-1)^{p(i)p(k)} q_{k} u_j u_i u_k=u_j u_k u_i, \qquad i < j \le k,\\
& (-1)^{p(i)p(k)} q_{k} u_i u_k u_j=u_k u_i u_j, \qquad i \le j <k.
\end{align*}

We def\/ine
\begin{itemize}\itemsep=0pt
\item $q$-deformed superplactic algebra ${\cal{SQP}}_n$ to
 be the quotient of the free associative algebra $\Q \langle u_1,\ldots,u_{n}
\rangle$ by the two-sided ideal generated by the set of {\bf q}-deformed
 Knuth rela\-tions~$({\rm SPL}_{\bf q})$,

\item reduced $q$-deformed superplactic algebras
${\cal{SQPC}}_n$ and ${\cal{SQPF}}_{n,m}$ to be the quotient of the algebra
${\cal{SQP}}_n$ by the two-sided ideals described in Def\/initions~\ref{def5.15} and~\ref{def6.7} cor\-respon\-dingly.
\end{itemize}

 We state

 \begin{con}\label{conjecture1.0}
The algebra ${\cal{SQP}}_n$ and the algebras ${\cal{SQPC}}_n$ and
${\cal{SQPF}}_{n,m}$, are flat deformations of the algebras
${\cal{P}}_n$, ${\cal{PC}}_n$ and ${\cal{PF}}_{n,m}$ correspondingly.
\end{con}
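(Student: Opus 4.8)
The plan is to establish flatness by exhibiting, for each of the three families, an explicit monomial basis that is independent of the deformation parameters $\{q_i\}$, and then showing that the $q$-deformed relations suffice to rewrite every monomial in terms of this basis. Since a deformation of a finitely presented algebra is flat precisely when the dimension of each graded piece is preserved (equivalently, when a basis of the classical algebra lifts to a spanning set of the same cardinality in the deformed algebra), it is enough to produce a \emph{confluent} rewriting system for $({\rm SPL}_{\bf q})$ whose set of normal forms coincides, monomial-by-monomial, with the Knuth-tableaux basis of the undeformed plactic algebra ${\cal P}_n$ (and with the reduced tableaux bases of ${\cal PC}_n$, ${\cal PF}_{n,m}$ in the reduced cases). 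First I would orient the defining relations: read each relation ${\rm SPL}_{\bf q}$ as a rewrite rule sending the lexicographically (or length-lex) larger word to the smaller one, carrying along the scalar factor $(-1)^{p(i)p(k)} q_k$. Because the underlying combinatorial moves are exactly the Knuth moves \eqref{equation1.1}, the classical theory (Schensted insertion / the Robinson--Schensted--Knuth correspondence, cf.\ \cite{Kn,Sch,LS1}) already guarantees that the \emph{unsigned, unweighted} system is confluent with normal forms indexed by semistandard tableaux. So the real content is to check that introducing the signs and the $q$-weights does not spoil confluence: one must verify that all overlap ambiguities (the Newman/Bergman diamond lemma resolutions) close up \emph{with matching scalars}.

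The key steps, in order, are: (1) Set up the Diamond Lemma (Bergman's) for $\Q[q_2,\dots,q_n]\langle u_1,\dots,u_n\rangle$ with the chosen admissible order; the ambiguities to resolve are the minimal overlaps of two left-hand sides of ${\rm SPL}_{\bf q}$, which occur on words of length $4$ and length $5$ in the letters $u_i$. (2) For each such overlap word, compute the two reduction paths to a common normal form and check that the accumulated product of the scalars $(-1)^{p(a)p(b)}q_c$ agrees on the two sides. Here the crucial observation is that in any Knuth move the ``outer'' letters and in particular the index $c$ carrying the $q_c$ are tracked by the RSK correspondence: the multiset of moves needed to sort a word to its tableau is, up to reordering, an invariant, so the product $\prod q_{c}$ over a reduction sequence depends only on the word and its normal form, not on the path. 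The sign factor is handled by the standard super/fermionic bookkeeping: $(-1)^{p(a)p(b)}$ over a sorting sequence computes the sign of the induced permutation restricted to the odd letters, again a path-independent quantity. Thus each ambiguity is resolvable. (3) Conclude from the Diamond Lemma that the irreducible monomials form a $\Q[q_2,\dots,q_n]$-basis of ${\cal SQP}_n$; since these irreducible monomials are exactly the reading words of semistandard Young tableaux — the same set as for ${\cal P}_n$ — the Hilbert series is preserved and ${\cal SQP}_n$ is a flat deformation of ${\cal P}_n$. (4) For ${\cal SQPC}_n$ and ${\cal SQPF}_{n,m}$, add to the rewriting system the extra rules coming from the two-sided ideals of Definitions~\ref{def5.15} and~\ref{def6.7}; repeat the overlap analysis for the new ambiguities (overlaps between a plactic rule and a reduction rule, and among reduction rules), again checking scalar compatibility, to obtain that the normal forms are the reduced tableaux of Sections~\ref{section5.1} and~\ref{section6}, whose count gives $\dim {\cal PC}_n$ and $\dim {\cal PF}_{n,m}$ respectively.

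The main obstacle I expect is step~(2) in the reduced cases of step~(4): while the overlaps internal to the super-Knuth relations are a short, closed list and their scalar-resolution is essentially forced by RSK invariance, the overlaps \emph{between} the $q$-deformed Knuth rules and the defining relations of the reduced algebras ${\cal SQPC}_n$, ${\cal SQPF}_{n,m}$ need not be governed by any known combinatorial invariant, and one must check by hand (or by a careful induction on degree, using the known value of $\dim {\cal PF}_{n,m}$ from the symmetric-plane-partition count in the Extended Abstract) that no new relations among normal-form monomials are produced — i.e.\ that the deformation does not cause the ideal to ``jump.'' A fallback, if a uniform confluence argument resists, is a deformation-theoretic one: show $H^2$ of the relevant algebra (in the appropriate grading) vanishes in the degrees where obstructions to lifting the classical basis could live, or exhibit the family $\{{\cal SQP}_n(q)\}$ as obtained from ${\cal P}_n$ by conjugating the generators by a diagonal ``twist'' $u_i \mapsto t_i u_i$ on a Zariski-open locus of the $q$-parameters and then taking a flat limit, which would reduce flatness on all of $\operatorname{Spec}\Q[q]$ to upper semicontinuity of fiber dimension.
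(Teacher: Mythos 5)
First, a point of calibration: the paper does not prove this statement -- it is stated and left as an open conjecture (the author explicitly says he does not even know conditions on the general parameters ${\bf Q}$ that would imply flatness), so there is no proof of record to compare against; I can only assess your argument on its own terms. Your main route, through Bergman's Diamond Lemma, has a genuine gap at steps (1)--(2). You assert that ``the classical theory (Schensted insertion / RSK) already guarantees that the unsigned, unweighted system is confluent with normal forms indexed by semistandard tableaux.'' This conflates two different statements: that every Knuth class contains a unique tableau word (true), and that the Knuth relations, oriented by a length-lexicographic order, form a confluent rewriting system, i.e., a Gr\"obner--Shirshov basis of the plactic ideal (false for $n\ge 3$). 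Since the relations are length-preserving, any admissible order must compare equal-length words lexicographically, and it is known (Kubat--Okni\'nski; see also Bokut et al.\ and Cain--Gray--Malheiro) that with such an order the overlap ambiguities of the bare Knuth relations do \emph{not} all resolve: a complete rewriting system for the plactic monoid of rank $\ge 3$ requires either infinitely many extra rules or a change to column/row generators. So the scaffolding on which you hang the scalar bookkeeping collapses already at $q_i=1$, and the ``short, closed list'' of overlaps in your step (2) would fail to close.

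That said, the observation buried in your step (2) -- path-independence of the accumulated scalar -- is correct and is the germ of a proof that avoids rewriting theory altogether. Each defining relation of ${\rm SPL}_q$ moves a single pair $u_k,u_i$ ($i<k$) past each other and the attached scalar $(-1)^{p(i)p(k)}q_k$ depends only on the values $\{i,k\}$; hence $c(\alpha,\beta)=\prod_{i>j}\big((-1)^{p(i)p(j)}q_i\big)^{\alpha_i\beta_j}$ is a bimultiplicative $2$-cocycle on the content grading $\N^n$, and ${\cal{SQP}}_n$ is exactly the cocycle twist of ${\cal{P}}_n$ by $c$: rescaling each monomial $u_{a_1}\cdots u_{a_m}$ of the free algebra by its inversion statistic $\prod_{s<t,\ a_s>a_t}(-1)^{p(a_s)p(a_t)}q_{a_s}$ carries the deformed defining ideal onto the classical one, since the cross-inversions with a fixed prefix and suffix are unchanged by a content-preserving move. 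Such a twist preserves Hilbert series and gives flatness over $\Q\big[q_i^{\pm1}\big]$; moreover the ideals of Definitions~\ref{def5.15} and~\ref{def6.7} are generated by monomials, hence content-homogeneous and stable under the twist, which disposes of the difficulty you flag for ${\cal{SQPC}}_n$ and ${\cal{SQPF}}_{n,m}$ without any new overlap analysis. Note that your proposed fallback twist $u_i\mapsto t_iu_i$ is the wrong one: it rescales monomials by content only and therefore acts trivially on the content-preserving Knuth relations; the twist must live on the $\N^n$-grading, not on the generators.
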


In fact one can consider more general deformation of the Knuth relations, for example take a set of parameters ${\bf Q}:=\{q_{ik},\, 1 \le i < k \le n \}$
and impose on the set of generators $\{u_1,\ldots,u_n \}$ the following
relations
\begin{gather*}
q_{ik} u_j u_i u_k=u_j u_k u_i, \qquad i < j \le k,\qquad q_{ik} u_i u_k u_j=u_k u_i u_j, \qquad i \le j <k.
\end{gather*}
However we don't know how to describe a set of conditions on parameters~${\bf Q}$ which imply the f\/latness of the corresponding quotient algebra(s),
as well as we don't know an interpretation and dimension of the algebras
${\cal{SQPC}}_n$ and ${\cal{SQPF}}_{n,m}$ for a ``generic'' values of parameters~${\bf Q}$. We {\it expect} the dimension of algebras ${\cal{SQPC}}_n$ and
${\cal{SQPF}}_{n,m}$ each depends piece-wise polynomially on a set of
parameters $\{q_{ij} \in \Z_{\ge 0},\, 1 \le i < j \le n \}$, and pose a~problem to describe its poly\-no\-mia\-lity chambers.

 We also mention and leave for a separate publication(s), the case of algebras
and polynomials associated with {\it superplactic} monoid~\cite{LNS, LT}, which corresponds to the relations ${\rm SPL}_q$ with $q_i =1$, $\forall\, i$. Finally we point out an interesting and important paper~\cite{Li} wherein
the case $Z= \varnothing$, and the all deformation parameters are equal to
each other, has been independently introduced and studied in depth.

Let us repeat that the important property of plactic algebras ${\cal{P}}_n$
is that the noncommutative elementary polynomials
\begin{gather*}
e_{k}(u_{1}, \ldots,n_{n-1}):= \sum_{n-1 \ge a_{1} \ge a_{2} \ge a_{k} \ge1}
 u_{a_{1}} \cdots u_{a_{k}}, \qquad k=1,\ldots,n-1,
\end{gather*}
generate a commutative subalgebra inside of the plactic algebra
${\cal{P}}_n$, see, e.g.,~\cite{FG, LS1}. Therefore all our f\/inite-dimensional algebras introduced in the present paper, have a~distinguished
f\/inite-dimensional commutative subalgebra. We have in mined to describe
these algebras explicitly in a separate publication.

In Section~\ref{section2} we state and prove necessary and suf\/f\/icient conditions in order
the elementary noncommutative polynomials form a mutually commuting family.
Surprisingly enough to check the commutativity of noncommutative elementary
polynomials for any $n$, it's enough to check these
conditions only for $n= 2,3$. However a combinatorial meaning of a
generalization of the Lascoux--Sch\"{u}tzenberger plactic algebra ${\cal{P}}_n$
 obtained in this way, is still missing.

The plactic algebra ${\cal{PF}}_{n,m}$ introduced in Section~\ref{section6}, has a monomial
 basis parametrized by the set of Young tableaux of shape
$ \lambda \subset (n^{m})$ f\/illed by the numbers from the set
$\{1,\ldots,m \}$. In the case $n=m$ it is
well-known~\cite{G,KGV,Ma}, that this number is equal to the
number of symmetric plane partitions f\/itting inside the cube $n \times n \times n$. Surprisingly enough this number admits a~factorization in the product of
the number of totally symmetric plane partitions (${\rm TSPP}$) by the number of
totally symmetric self-complementary plane partitions~(${\rm TSSCPP}$) f\/it inside
 the same cube. A similar phenomenon happens if $|m-n| \le 2$, see Section~\ref{section6}.
More precisely, we add to the well-known equalities
\begin{gather*}
\#|{\cal{B}}_{1,n}| =2^n, \qquad \#|{\cal{B}}_{2,n}| = {2 n +1 \choose n},\qquad
 \#|{\cal{B}}_{3,n}| =2^n \operatorname{Cat}_{n+1} \qquad \cite[A003645]{SL}, \\
\#|{\cal{B}}_{4,n}| = {\frac{1}{2}} \operatorname{Cat}_{n+1} \operatorname{Cat}_{n+2} \qquad \cite[A000356]{SL},\\
\#|{\cal{B}}_{n,5}| =
\frac{{n+5 \choose 5} {n+7 \choose 7} {n+9 \choose 9}}{{n+2
\choose 2}{n+4 \choose 4}} \qquad \cite[A133348]{SL},
\end{gather*}
 the following relations
\begin{gather*}
\#|{\cal{B}}_{n,n}| = {\rm TSPP}(n+1) \times {\rm ASM}(n), \qquad \#|{\cal{B}}_{n,n+1}| = {\rm TSPP}(n+1) \times {\rm ASM}(n+1), \\
\#|{\cal{B}}_{n+2,n}| =\#|{\cal{B}}_{n,n+1}|, \qquad \#|{\cal{B}}_{n+3,n}|= {\frac{1}{2}} \#|{\cal{B}}_{n+1,n+1}|,\\
\# |{\rm PP}(n)| =\#|{\rm TSSCPP}(n)| \times \#|{\rm ASMHT}(2n)|= \#|{\rm CSSCPP}(2n)| \times \#|{\rm CSPP}(n)|, \\
\#|{\rm CSPP}(2n)| = \#|{\rm TSPP}(2n)| \times \# |{\rm CSTCPP}(2n)|, \\
\#|{\rm CSPP}(2n+1)| = 2^{2 n} \#|{\rm TSPP}(2n+1)| \times \#|{\rm TSPP}(2n)|,
\end{gather*}
where ${\rm PP}(n)$ stands for the set of plane partitions f\/it in a cub of size
$n \times n \times n$; ${\rm AMSHT}(2n)$ denotes the set of alternating sign
matrices of size $2n \times 2n$ invariant under a half-turn and ${\rm CSSPP}(2n)$
denotes the set of cyclically symmetric self-complementary plane partitions
f\/itting inside a cub of size $2n \times 2n \times 2n$, see, e.g.,~\cite{B};
${\rm CSTCPP}(n)$ stands for the set of cyclically symmetric transpose complementary plane partitions f\/itting inside a cub of size $2n \times 2n \times 2n$, see, e.g.,~\cite[$A051255$]{SL}.
See Section~\ref{section6} for the def\/inition of the sets ${\cal{B}}_{n,m}$
and examples.
In Exercise~\ref{exer6.3} we state some (new) divisibility properties of
 the numbers $\#|B_{n+4,n}|$.

It is well-known that ${\rm ASMHT}(2n)= {\rm ASM}(n) \times {\rm CSPP}(n)$, where ${\rm CSPP}(n)$
denotes the number of cyclically symmetric plane partitions f\/itting inside
$n$-cube, and ${\rm CSSCPP}(2n)={\rm ASM}(n)^2$, see,~e.g.,~\cite{B,Ku} and \cite[$A006366$]{SL}.

\begin{prb} \quad
\begin{itemize}\itemsep=0pt
\item Construct bijection between the set of plane partitions fit
inside $n$-cube and the set of $($ordered$)$ triples $(\pi_1,\pi_2, \wp)$,
where $ (\pi_1,\pi_2)$ is a pair of ${\rm TSSCPP}(n)$ and $\wp$ is a cyclically
symmetric plane partition fitting inside $n$-cube.

\item Describe the involution $\kappa \colon {\rm PP}(n) \longrightarrow {\rm PP}(n)$
 which is induced by the involution $(\pi_1,\pi_2, \wp)$
$\longrightarrow (\pi_2,\pi_1,\wp)$ on the set ${\rm TSSCPP}(n) \times {\rm TSSCP}(n)
\times {\rm CSPP}(n)$, and its fixed points. Clearly one has $\# | {\rm Fix}(\kappa)| =
{\rm ASMHT}(2n)$.

\item Characterize pairs of plane partitions $(\Pi_1, \Pi_2) \in
{\rm PP}(n) \times {\rm PP}(n)$ such that
\begin{gather*}
(a) \quad \wp(\Pi_1)=\wp(\Pi_2); \qquad (b) \quad (\pi_1(\Pi_1),\pi_2(\Pi_1))=
(\pi_1(\Pi_2),\pi_2(\Pi_2)).
\end{gather*}
\end{itemize}
\end{prb}

These relations have straightforward proofs based on the explicit product
formulas for the numbers
\begin{gather*}
 \#|{\rm SPP}(n)| = \prod_{1 \le i \le j \le k} \frac{n+i+j+k-1}{i+j+k-1},\\
\#|{\rm TSPP}(n)| =\prod_{i=1}^{n} \prod_{j=i}^{n} \prod_{k=j}^{n} \frac{i+j+k-1}{i+j+k-2}, \\
\#|{\rm PP}(n)|= \prod_{i=0}^{n} \prod_{j=1}^{n-1}{\frac{3 n -i -j}{2 n -i -j}}=
\prod_{i=1}^{n}{\frac{{2 n +i \choose n}}{{n +i \choose n}}},
\end{gather*}
but bijective proofs of these identities are an open problem.

It follows from \cite{AL, LS1} that the dimension of the (reduced)
plactic algebra ${\cal{PC}}_n$ is equal to the number of alternating sign
matrices of size $n \times n$ (note that ${\rm ASM}(n)={\rm TSSCPP}(n)$). Therefore the
key-Grothendieck polynomials can be obtained from $U$-polynomials (see
Section~\ref{section6}, Theorem~\ref{theorem6.12}) after the specialization
$p_{ij}=0$, if $i+j > n+1$.

In Section~\ref{section4} following \cite{Ki1} we introduce and study a
family of polynomials
which are a~common generalization of the Schubert, $\beta$-Grothendieck, dual
$\beta$-Grothendieck, $\beta$-Demazure, $\beta$-key-Grothendieck, Bott--Samelson and $q$-Demazure polynomials, Whittaker functions (see~\cite{BBL} and
Lemma~\ref{lem4.20}) and Di~Francesco--Zinn-Justin polynomials (see
Section~\ref{section4}).
 Namely, for any permutation $w \in {\mathbb{S}}_n$ and composition $\zeta
\subset \delta_n := (n-1,n-2,\ldots,2,1)$, we introduce polynomials
\begin{gather*}
{\cal{KN}}_{w}^{(\beta,\alpha,\gamma,h)}(X_n)= h^{\ell(w)} T_{s_{i_{1}}}
\cdots T_{s_{i_{\ell}}} \big(x^{\delta_n}\big), \\
{\rm KD}_{\zeta}^{(\beta,\alpha,\gamma,h)}(X_n)= h^{\ell(v_{\zeta})} T_{s_{i_{1}}}
\cdots T_{s_{i_{\ell}}} \big(x^{\zeta^{+}}\big),
\end{gather*}
where
\begin{gather*}
\begin{split}
& T_{i}:=T_{i}^{(\beta,\alpha,\gamma,h)} =- \alpha + ((\alpha + \beta + \gamma) x_{i} +\gamma x_{i+1} +h \\
& \hphantom{T_{i}:=T_{i}^{(\beta,\alpha,\gamma,h)} =}{} +
h^{-1} (\alpha+\gamma)(\beta + \gamma) x_{i} x_{i+1}) \partial_{i,i+1},\qquad
i=1,\dots,n-1,
\end{split}
\end{gather*}
denote a collection of divided dif\/ference operators which satisfy the Coxeter
 and Hecke relations
\begin{gather*}
T_{i} T_{j} T_{i} = T_{j} T_{i} T_{j}, \qquad \text{if}\quad |i-j|=1;\qquad
T_{i} T_{j}=T_{j} T_{i}, \qquad \text{if} \quad |i-j| \ge 2, \\
T_{i}^2 = (\beta -\alpha) T_{i} +\beta \alpha, \qquad i=1,\ldots,n-1;
\end{gather*}
by def\/inition for any permutation $w \in {\mathbb{S}}_n$ we set
\begin{gather*}
T_{w} := T_{s_{i_{1}}} \cdots T_{s_{i_{\ell}}},
\end{gather*}
for any reduced decomposition $w =s_{i_{1}} \cdots s_{i_{\ell}}$ of a permutation in question; $\zeta^{+}$ denotes a unique partition obtained from $\zeta$
by ordering its parts, and~$v_{\zeta} \in {\mathbb{S}}_n$ denotes the minimal
 length permutation such that $v_{\zeta}(\zeta)=\zeta^{+}$.

Assume that $h=1$.\footnote{Clearly that if $h \not= 0$, then after rescaling parameters
$\alpha$, $\beta$ and $\gamma$
one can assume that $h=1$. However, see, e.g., \cite[Section~5]{Ki1}, the
parameter~$h$ plays important role in the study of dif\/ferent specializations
of the variab\-les~$x_i$, $1 \le i \le n-1$ and parameters~$\alpha$,~$\beta$ and~$\gamma$.} If $\alpha=\gamma = 0$, these polynomials coincide with
the $\beta$-Grothendieck polynomials~\cite{FK1}, if $\beta=\alpha =1$,
$\gamma=0$ these polynomials coincide with the Di Francesco--Zinn-Justin
polynomials~\cite{DZ1}, if $\beta=\gamma=0$, these polynomials coincide
with dual $\alpha$-Grothendieck polynomials ${\cal{H}}_{w}^{(\alpha)}(X_n)$,\footnote{To avoid the reader's confusion, let us explain that in our paper we use the letter~$\alpha$ either as the lower index to denote a composition, or
as the upper index to denote a parameter which appears in certain polynomials
 treated in our paper. For example ${\cal{H}}_{\alpha}^{(\alpha)}(X_n)$ denotes
 the dual $\alpha$-Grothendieck polynomial corresponding to a composition~$\alpha$. Note that the $\alpha$-Grothendieck polynomial ${\mathfrak{G}}_{w}^{(\alpha)}(X_n)$ can be obtained from the polynomial ${\mathfrak{G}}_{w}^{(\beta)}(X_n)$ by replacing~$\beta$ by~$\alpha$.} where by def\/inition we set $X_n:= (x_1,\ldots,x_n)$.

\begin{con} For any permutation $w \in {\mathbb{S}}_n$ and any composition
 $\zeta \subset \delta_n$, polynomials ${\cal{KN}}_{w}^{(\beta,\alpha,\gamma,h)}(X_n)$ and ${\rm KD}_{\zeta}^{(\alpha,\beta,\gamma,h)}(X_n)$ have nonnegative coefficients, i.e.,
\begin{gather*}
{\cal{KN}}_{w}^{(\beta,\alpha,\gamma,h)}(X_n) \in \N[\alpha,\beta,\gamma,h]
[X_n], \qquad {\rm KD}_{\zeta}^{(\beta,\alpha,\gamma,h)}(X_n) \in \N[\alpha,\beta,\gamma,h] [X_n].
\end{gather*}
\end{con}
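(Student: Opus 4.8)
The plan is to prove positivity of ${\cal{KN}}_{w}^{(\beta,\alpha,\gamma,h)}(X_n)$ by induction on $\ell(w)$, the key point being that the whole operator $T_i$ has, in a suitable sense, nonnegative coefficients as an operator, so that it sends polynomials with coefficients in $\N[\alpha,\beta,\gamma,h]$ to polynomials of the same kind. After rescaling we may and do assume $h=1$ (the dependence on $h$ is recovered by homogeneity at the end). Writing $T_i = -\alpha + \big((\alpha+\beta+\gamma)x_i + \gamma x_{i+1} + (\alpha+\gamma)(\beta+\gamma)x_i x_{i+1}\big)\,\partial_{i,i+1}$, the first step is to understand the action of the naive divided difference $\partial_{i,i+1} f = (f - s_i f)/(x_i - x_{i+1})$ on monomials: for $a > b$ one has $\partial_{i,i+1}(x_i^a x_{i+1}^b) = x_i^a x_{i+1}^b\cdot(\text{difference})$, and in general $\partial_{i,i+1}(x_i^a x_{i+1}^b)$ is, up to sign, a polynomial in $x_i,x_{i+1}$ with coefficients in $\{0,1\}$. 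The crucial arithmetic lemma to establish is that for any $a,b\ge 0$,
\[
\big((\alpha+\beta+\gamma)x_i + \gamma x_{i+1} + (\alpha+\gamma)(\beta+\gamma)x_i x_{i+1}\big)\,\partial_{i,i+1}\big(x_i^a x_{i+1}^b\big) + (-\alpha)\,x_i^a x_{i+1}^b
\]
lies in $\N[\alpha,\beta,\gamma][x_i,x_{i+1}]$. The $-\alpha$ term is dangerous, and the content of the lemma is that the leading part of the product term always produces a matching $+\alpha x_i^a x_{i+1}^b$ (or dominates it) so that the subtraction never creates a negative coefficient.

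**Main steps.** First I would reduce to a two-variable statement, since $T_i$ only involves $x_i,x_{i+1}$: it suffices to prove that the operator $T:=-\alpha + ((\alpha+\beta+\gamma)x + \gamma y + (\alpha+\gamma)(\beta+\gamma)xy)\,\partial_{xy}$ maps $\N[\alpha,\beta,\gamma][x,y]$ into itself, where $\partial_{xy}f = (f - f(y,x))/(x-y)$. Second, by $\N[\alpha,\beta,\gamma]$-linearity it is enough to check this on monomials $x^a y^b$. By the symmetry $\partial_{xy}(f(y,x)) = -\partial_{xy}(f)$ one reduces to the case $a\ge b$; write $\partial_{xy}(x^a y^b) = \sum_{j=0}^{a-b-1} x^{a-1-j}y^{b+j}$ for $a>b$ and $=0$ for $a=b$. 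Third, multiply out: $(\alpha+\beta+\gamma)x\cdot\partial_{xy}(x^ay^b)$ has lowest term $(\alpha+\beta+\gamma)x^a y^b$, whose $\alpha$-coefficient is $1$; combined with the $-\alpha x^a y^b$ from the first summand this gives $(\beta+\gamma)x^a y^b + (\text{strictly positive remainder})$, manifestly in $\N[\alpha,\beta,\gamma][x,y]$; when $a=b$ the product term vanishes and $-\alpha x^a y^a$ would be negative — so one must check separately that $x^a y^a$ never appears with a bare negative coefficient after the full $T_w$ is applied, which is where the inductive bookkeeping is needed. Fourth, having the operator lemma, the induction on $\ell(w)$ is immediate: $x^{\delta_n}\in\N[X_n]$ and each $T_{s_i}$ preserves the cone $\N[\alpha,\beta,\gamma,h][X_n]$; the same argument verbatim handles ${\rm KD}_\zeta$ starting from $x^{\zeta^+}$. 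Reduced-word independence of $T_w$ is guaranteed by the Coxeter relations stated in the excerpt, so the polynomial is well defined.

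**The main obstacle.** The genuinely delicate point is the $-\alpha$ summand in $T_i$: on "diagonal'' monomials $x_i^a x_{i+1}^a$ the divided difference kills the polynomial and one is left with $-\alpha x_i^a x_{i+1}^a$, which is negative. The resolution I anticipate is that such diagonal monomials never survive in $T_{s_{i_1}}\cdots T_{s_{i_\ell}}(x^{\delta_n})$ with an isolated negative coefficient, because each application of a $T_{s_i}$ to a nonnegative polynomial produces only terms $x^c$ that are \emph{non-diagonal} in the pair $(c_i,c_{i+1})$ once one tracks where the $-\alpha$ cancellation comes from — i.e., the correct inductive hypothesis is not merely "the polynomial is in $\N[\alpha,\beta,\gamma,h][X_n]$'' but a slightly stronger statement pinning down how the $\alpha^0$-part and the $\alpha^{\ge 1}$-part interact under $\partial_{i,i+1}$. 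Making this refined invariant precise, and checking it is preserved by $T_i$, is the crux; everything else is the routine two-variable monomial computation sketched above, plus recovering the $h$-dependence by the substitution $x_i\mapsto x_i$, $\alpha\mapsto h\alpha$, $\beta\mapsto h\beta$, $\gamma\mapsto h\gamma$.
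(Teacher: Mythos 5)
First, a point of orientation: the statement you are proving is stated in the paper only as a \emph{conjecture}; the author gives no proof, so there is no argument of the paper's to compare yours against. The question is therefore whether your sketch actually settles it, and it does not.

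The proposed key lemma --- that each $T_i$ maps $\N[\alpha,\beta,\gamma,h][X_n]$ into itself --- is false, as you yourself observe: since $\partial_{i,i+1}$ annihilates any monomial with equal exponents in $x_i$ and $x_{i+1}$, one has $T_i\bigl(x_i^{a}x_{i+1}^{a}\bigr)=-\alpha\,x_i^{a}x_{i+1}^{a}$, so positivity cannot be propagated monomial by monomial. Everything then hinges on the ``refined invariant'' invoked in your last paragraph, which is never formulated, let alone shown to be preserved by the $T_i$; as written, the induction has no hypothesis strong enough to go through. The difficulty is not an artifact of bookkeeping: diagonal monomials in the pair $(x_i,x_{i+1})$ genuinely occur in intermediate polynomials (already $T_i\bigl(x_i^{a+1}x_{i+1}^{a}\bigr)$ contains the term $h\,x_i^{a}x_{i+1}^{a}$, with no accompanying $\alpha$), and a subsequent application of some $T_j$ can place a diagonal monomial in the active pair of variables. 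A further warning that no routine cancellation argument can close this: the specialization $\beta=\alpha=1$, $\gamma=0$, $h=1$ yields the Di~Francesco--Zinn-Justin polynomials, whose positivity is itself recorded in the paper as a separate open conjecture (Conjecture~\ref{conj4.5}). Your reduction to a two-variable computation, the monomial formula for $\partial_{xy}$, and the appeal to the Coxeter/Hecke relations for well-definedness of $T_w$ are all fine, but the crux you flag at the end is precisely the open problem, not a detail to be filled in.
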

We {\it expect} that these polynomials have some geometrical meaning
to be discovered.

More generally we study divided dif\/ference type operators of the form
\begin{gather*}
T_{ij}:= T_{ij}^{(a,b,c,h,e)}= a + (b x_i +c x_j + h +e x_i x_j) \partial_{ij},
\end{gather*}
depending on parameters $a$, $b$, $c$, $h$, $e$ and satisfying the $2D$-Coxeter relations
\begin{gather*}
 T_{ij} T_{jk} T_{ij}= T_{jk} T_{ij} T_{jk}, \qquad 1 \le i < j < k \le n,\\
T_{ij} T_{kl} =T_{kl} T_{ij}, \qquad \text{if}\quad \{i,j\} \cap \{k,l \}=\varnothing.
\end{gather*}
We f\/ind that the necessary and suf\/f\/icient condition which ensure the validity
of the $2D$-Coxeter relations is the following relation among the parameters\footnote{In other words, the divided dif\/ference operators $ \{T_{ij}:= T_{ij}^{(a,b,c,h,e)}\}$ which obey the $2D$-Coxeter relations, have the following
form:
\begin{gather*}
 T_{ij}^{(a,b,c,h)}= \left(1+\frac{(a+b)}{h} x_i\right)\left(1+\frac{c-a}{h} x_{j}\right) \partial_{ij} +a \sigma_{ij},
 \qquad \text{if}\quad h \not= 0, \quad \text{$a$, $b$, $c$ arbitrary}.
\end{gather*}
If $h=0$, then either $T_{ij}= ((c-a) x_j+e x_i x_j) \partial_{ij} +
a \sigma_{ij}$, or $T_{ij} =(a+b) x_i + e x_i x_j) \partial_{ij} +a \sigma_{ij}$,
where $\sigma_{ij}$ stands for the exchange operator: $\sigma_{ij}(F(z_i,z_j)) = F(z_j,z_i)$.}:
\begin{gather*}
 (a+b)(a-c)+h e = 0.
\end{gather*}

Therefore, if the above relation between parameters $a$, $b$, $c$, $h$, $e$ holds, then
for any permutation $w \in {\mathbb{S}}_n$ the operator
\begin{gather*}
T_{w}:= T_{w}^{(a,b,c,h,e)}= T_{i_{1}}^{(a,b,c,h,e)} \cdots T_{i_{\ell}}^{(a,b,c,h,e)},
\end{gather*}
where $w=s_{i_{1}} \cdots s_{i_{\ell}}$ is any reduced decomposition of~$w$, is
 \textit{well-defined}. Hence under the same assumption on parameters,
for any permutation $w \in {\mathbb{S}}_n$ one can attach the well-def\/ined
 polynomial
\begin{gather*}
G_{w}^{(a,b,c,h,e)}(X,Y) := {T_{w}^{(x)}}^{(a,b,c,h,e)}\bigg(\prod_{i \ge 1,
j \ge 1 \atop i+j \le n+1}(x_i+y_j)\bigg),
\end{gather*}
and in much the same fashion to def\/ine polynomials
\begin{gather*}
D_{\alpha}^{(a,b,c,h,e)}(X,Y) := {T_{w_{\alpha}}^{(x)}}^{(a,b,c,h,e)}\big(x^{\alpha^{+}}\big)
\end{gather*}
 for any composition $\alpha$ such that $\alpha_{i} \le n-i$, $\forall\, i$.
We have used the notation $ {T^{(x)}}_{w}^{(a,b,c,h,e)}$ to point out
that this operator acts only on the variables $X =(x_1,\ldots,x_n)$; for any
composition $\alpha \in \Z_{\ge 0}^n$, $\alpha^{+}$~denotes a unique
partition obtained from~$\alpha$ by reordering its parts in (weakly)
 decreasing order, and~$w_{\alpha}$ denotes a unique minimal length
permutation in the symmetric group~$\mathbb{S}_{n}$ such that
$w_{\alpha}(\alpha)=\alpha^{+}$.

 In the present paper we are interested in to list a conditions on
parameters $A:= \{a,b,c,h,e \}$ with the constraint
\begin{gather*}
(a+b)(a-c)+h e=0,
\end{gather*}
 which ensure that the above polynomials
$G_{w}^{(a,b,c,h,e)}(X)$ and $D_{\alpha}^{(a,b,c,h,e)}(X)$ or their
specialization $x_i=1$, $\forall\, i$, have nonnegative coef\/f\/icients.
 We state the following conjectures:
\begin{itemize}\itemsep=0pt
\item ${\cal{KN}}_{w}^{(\beta,\alpha,\gamma)}(X_n) \in \N[\alpha, \beta, \gamma] [X_n]$,

\item $G_{w}^{(-b,a+b+c,c,1,(b+c)(a+c)}(X_n) \in \N [a,b,c] [X_n]$,

\item $G_{w}^{(-b,a+b+c,c+d,1,(b+c+d)(a+c)}(x_i=1,\, \forall\, i) \in
\N[a,b,c,d]$, where $a$, $b$, $c$, $d$ are free parameters.
\end{itemize}

In the present paper we treat the case
\begin{gather}\label{equation1.7}
A= (- \beta,\beta+\alpha+\gamma, \gamma,1,(\alpha+\gamma)(\beta+\gamma)).
\end{gather}
As it was pointed above, in this case polynomials $G_{w}^{A}(X)$ are common
generalization of Schubert, $\beta$-Grothendieck and dual $\beta$-Grothendieck,
 and Di Francesco--Zinn-Justin polynomials. We expect a certain interpretation
of the polynomials $G_{w}^{A}$ for general $\beta$, $\alpha$ and $\gamma$.

As it was pointed out earlier, one of the basic properties of the plactic
monoid ${\cal{P}}_{n}$ is that the noncommutative elementary symmetric
polynomials $\{e_{k}(u_{1},\ldots,u_{n-1}) \}_{1 \le k \le n-1}$ generate
a~commutative subalgebra in the plactic algebra in question. One can
reformulate this statement as follows. Consider the generating function
\begin{gather*}
A_{i}(x): = \prod_{a=n-1}^{i}(1+x u_{a}) = \sum_{a=0}^{i}e_{a}(u_{n-1}, \ldots,u_{i}) x^{i-a},
\end{gather*}
where we set $e_{0}(U)=1$. Then the commutativity property of noncommutative
elementary symmetric polynomials is equivalent to the following commutativity
relation in the plactic as well as in the generic plactic, algebras ${\cal{P}}_n$ and ${\mathfrak{P}}_n$~\cite{FG}, and Theorem~\ref{theorem2.23},
\begin{gather*}
A_{i}(x) A_{i}(y)= A_{i}(y) A_{i}(x), \qquad 1 \le i \le n-1.
\end{gather*}
Now let us consider the Cauchy kernel
\begin{gather*}
{\cal{C}}({\mathfrak{P}}_n,U) = A_{1}(z_1) \cdots A_{n-1}(z_{n-1}),
\end{gather*}
where we assume that the pairwise commuting variables $z_1, \ldots,z_{n-1}$
commute with the all generators of the algebras~${\cal{P}}_n$ and
${\mathfrak{P}}_n$. In what follows we consider the natural completion
${\widehat{\mathfrak{P}}}_n$ of the plactic algebra~${\mathfrak{P}}_n$ to
allow consider elements of the form $(1 +x u_{i})^{-1}$. Elements of this
form exist in any Hecke type quotient of the plactic algebra
${\widetilde{\mathfrak{P}}}_n$. Having in mind
this assumption, let us compute the
action of divided dif\/ference operators $\partial_{i,i+1}^{z}$ on the Cauchy
kernel. In the computation below, the commutativity property of the elements
$A_{i}(x)$ and $A_{i}(y)$ plays the key role. Let us start computation of
 $\partial_{i,i+1}^{z}({\cal{C}}({\mathfrak{P}}_n,U)) =
\partial_{i,i+1}^{z}(A_1(z_1) \cdots A_{n-1}(z_{n-1}))$.
First of all write $A_{i+1}(z_{i+1})= A_{i}(z_{i+1})(1+z_{i+1}
u_i)^{-1}$. According to the basic property of the elements~$A_i(x)$, one sees
 that the expression $A_{i}(z_{i}) A_{i}(z_{i+1})$ is symmetric with respect
to $z_i$ and $z_{i+1}$, and hence is invariant under the action of divided
dif\/ference operator $\partial_{i,i+1}^{z}$. Therefore,
\begin{gather*}
\partial_{i,i+1}^{z}({\cal{C}}({\mathfrak{P}}_n,U)) = A_1(z_1) \cdots
A_{i}(z_i) A_{i}(z_{i+1}) \partial_{i,i+1}^{z}\big((1+z_{i+1} u_{i})^{-1}\big)\\
\hphantom{\partial_{i,i+1}^{z}({\cal{C}}({\mathfrak{P}}_n,U)) =}{}\times A_{i+2}(z_{i+2}) \cdots A_{n-1}(z_{n-1}).
\end{gather*}
It is clearly seen that $\partial_{i,i+1}^{z}((1+z_{i+1} u_{i})^{-1})=
(1+z_{i} u_{i})^{-1} (1+z_{i+1} u_{i})^{-1} u_{i}$. Therefore,
\begin{gather*}
\partial_{i,i+1}^{z}({\cal{C}}({\mathfrak{P}}_n,U)) =
A_1(z_1) \cdots
A_{i}(z_i) A_{i+1}(z_{i+1}) (1+z_{i} u_{i})^{-1} u_{i} A_{i+2}(z_{i+2}) \cdots A_{n-1}(z_{n-1}).
\end{gather*}
It is easy to see that if one adds Hecke's type relations on the generators
\begin{gather*}
 u_{i}^2= (a+b) u_i + a b, \qquad i=1,\ldots,n-1,
\end{gather*}
then
\begin{gather*}
 (1+ z u_{i})^{-1} u_{i} =\frac{u_{i} - z a b}{(1+b z)(1-a z)}.
\end{gather*}
Therefore in the quotient of the plactic algebra~${\mathfrak{P}}_n$ by the
Hecke type relations listed above and by the ``locality'' relations
\begin{gather*}
u_i u_j = u_j u_i, \qquad \text{if} \quad |i-j| \ge 2,
\end{gather*}
one obtains
\begin{gather*}
 (-b+(1+z_{i} b)) \partial_{i,i+1}^z \left(A_{1}(z_1) \cdots A_{n-1}(z_{n-1}) \right) = \left(A_{1}(z_1) \cdots A_{n-1}(z_{n-1}) \right) \left(
\frac{e_{i} - b}{1-a z_{i}}\right).
\end{gather*}
Finally, if $a=0$, then the above identity takes the following form
\begin{gather*}
\partial_{i,i+1}^z \left((1+z_{i+1} b) A_{1}(z_1) \cdots A_{n-1}(z_{n-1}) \right) = \left(A_{1}(z_1) \cdots A_{n-1}(z_{n-1}) \right) ( e_{i} - b).
\end{gather*}
In other words the above identity is equivalent to the statement~\cite{FK} that
in the idCoxeter algebra~${\cal{IC}}_n$ the Cauchy kernel
${\cal{C}}({\mathfrak{P}}_n,U)$ is the generating function for the
$b$-Grothendieck polynomials. Moreover, each (generalized) double
$b$-Grothendieck polynomial is a positive linear combination of the key-Grothendieck polynomials.

A proof of this statement is a corollary of the more
general statement which will be frequently used throughout the present paper,
namely, if an equivalence relation~${\approx}_2$ is a ref\/inement of that~${\approx}_1$, that is if assumption $a \, {\approx}_2 \, b \Longrightarrow a \, {\approx}_1 \, b$
holds $\forall\, a,\, b$, then each equivalence class w.r.t.\ relation~${\approx}_1$
 is disjoint union of the equivalence classes w.r.t.\ relation~${\approx}_2$.

 In the special case $b= -1$ and $P_{ij}= x_i+ y_j$
 if $ 2 \le i+j \le n+1$, $p_{ij}=0$, if $i+j > n+1$, this result had been stated
in~\cite{L2}.

As a possible mean to def\/ine {\it affine versions} of polynomials treated in
the present paper, we introduce the {\it double affine nilCoxeter algebra of
type A} and give construction of a generic family of Hecke's type elements\footnote{Remind that by the name {\it a family of
Hecke's type elements} we mean a set of elements $ \{e_1,\dots,e_n \}$ such
that
$e_i^2 = A e_i +B$, $A$, $B$ are parameters (Hecke type relations),
$e_i e_j =e_j e_i$, if $|i-j| \ge 2$, $e_i e_j e_i=e_j e_i e_j$, if $|i-j|=1$ (Coxeter relations).}
 we will be put to use in the present paper.

In Section~\ref{section5.1} we suggest a~common generalization of some combinatorial formulas from~\cite{CV} and~\cite{Gou}. Namely, we give explicit formula
\begin{gather}\label{equation1.3}
{\prod_{1 \le i \le k, \, 1 \le j \le n \atop j-i \le n-k } \frac{N-i-j+1}{i+j-1}} {\prod_{1 \le i \le k,\, 1 \le j \le n \atop j-i > n-k} \frac{N+i+j-1}{i+j-1}}
\end{gather}
 for the number of $k$-tuples of noncrossing Dyck paths connecting the points
$(0,0)$ and $(N,N-n-k)$. Interpretations of the number~\eqref{equation1.3} as
the number of certain $k$-triangulations of a convex $(N+1)$-gon, or that of
certain alternating sign matrices of size $N \times N$, are interesting
tasks.

In the case $N=n+k$ we recover the \cite[r.h.s.\ of formula~(2)]{CV}. In the
case $k=2$ our formula~\eqref{equation1.3} is equivalent to that obtained in~\cite{Gou}. Our proof that the number~\eqref{equation1.3} counts certain $k$-tuples of noncrossing Dyck paths is
based on the study of combinatorial properties of the so-called column
multi-Schur functions $s_{\lambda}^{*} (X_n)$ introduced in Theorem~\ref{theorem5.6}, cf.~\cite{Ma, Wa}. In particular we show that for rectangular partition $\lambda =(n^k)$ the polynomial $s_{\lambda}^{*}(X_{n+k})$ is essentially coincide
with the Schubert polynomial corresponding to the Richardson permutation
$1^{k} \times w_{0}^{(n)}$. We introduce also a multivariable deformation of
the numbers $\operatorname{ASM}(n)$, namely,
\begin{gather*}
\operatorname{ASM}(X_{n-1};t):= \sum_{\lambda \subset \delta_n} s_{\lambda}^{*}(X) t^{|\lambda|}.
\end{gather*}
Finally, in Section~\ref{section5.1} we give combinatorial interpretations of
rectangular and staircase components of the refined ${\rm TSSCP}$ vector~\cite{DZ1} in terms of $k$-fans of noncrossing Dyck paths in rectangular case and Gandhi--Dumont polynomials and Genocchi numbers in staircase case.

As Appendix we include several examples of polynomials studied in the present
paper to illustrate results obtained in these notes. We also include an
expository text concerning the {\it MacNeille completion} of a poset to draw
attention of the reader to this subject. It is an exami\-nation of the
MacNeille completion of the poset associated with the (strong) Bruhat order on the
symmetric group, that was one of the main streams of the study in the present
paper. Namely, our concern was the challenge how to attach to each edge $e$ of
the MacNeille completion~${\cal{MN}}({\mathbb{S}}_n)$ of the Bruhat order poset on the symmetric group ${\mathbb{S}}_n$ an operator $\partial_{e}$ acting on the ring of polynomials $\Z[X_n]$, such that $\partial_{e}(K_{h(e)})= K_{t(e)}$
together with compatibility conditions among the set of operators
 $\{\partial_{e}\}_{e \in {\cal{MN}}({\mathbb{S}}_{n})}$, that is for any two vertices of~${\cal{MN}}({\mathbb{S}}_{n})$, say~$\alpha$ and~$\beta$, and a~path
$p_{\alpha,\beta}$ in the MacNeille completion which connects these vertices, the naturally def\/ined operator $\partial_{p_{\alpha,\beta}}$ depends only on the vertices~$\alpha$ and~$\beta$ taken, and doesn't depend on a path~$p_{\alpha,\beta}$ selected. As far as I know, this problem is still open.

\section{Plactic, nilplactic and idplactic algebras}\label{section2}

\begin{de}[\cite{LS1}]\label{definition2.1} The {\it plactic algebra ${\cal P}_n$} is an
(unital) associative algebra over $\Z$ generated by elements $\{u_1,
\dots, u_{n-1} \}$ subject to the set of relations
\begin{gather*}
\text{(PL1)} \quad u_j u_i u_k=u_j u_k u_i, \qquad u_i u_k u_j=u_k u_i u_j, \qquad \text{if} \quad i < j < k, \\
\text{(PL2)} \quad u_i u_j u_i=u_j u_i u_i, \qquad u_j u_i u_j=u_j u_j u_i, \qquad \text{if}\quad i < j.
\end{gather*}
\end{de}
\begin{pr}[\cite{LS1}]\label{prop2.2} Tableau words\footnote{For the reader convenience we recall a def\/inition of a
{\it tableau word}. Let $T$ be a (regular shape) semistandard Young
tableau. The tableau word $w(T)$ associated with~$T$ is the {\it reading
word} of~$T$ is the sequence of entries of~$T$ obtained by concatenating the
columns of~$T$ bottom to top consecutively starting from the f\/irst column.
 For example, take
\begin{gather*}
T= \begin{matrix}1 & 2 &3 &3 \cr 2 & 3 & 4\cr 3 & 4 \cr 5
\end{matrix}
\end{gather*}
The corresponding {\it tableau word} is $w(T)= 5321432433$. By def\/inition, a
tableau word is the tableau word corresponding to some (regular shape)
semistandard Young tableau. It is well-known~\cite{LS2} that the number of
tableau subwords contained in the staircase word
 $I_{0}^{(n)}:= \underbrace{u_{n-1}u_{n-2}\cdots u_{2}u_{1}}\underbrace{u_{n-1}u_{n-2}\cdots u_{2}}\cdots \underbrace{u_{n-1}u_{u-2}}\underbrace{u_{n-1}}$
 is equal to the number of alternating
sign matrices ${\rm ASM}(n)$.\label{footnote9}}
 in the alphabet $U=\{u_1,\dots,u_{n-1} \}$ form a~basis in the plactic algebra~${\cal P}_n$.
\end{pr}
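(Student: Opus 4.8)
The plan is to reduce the statement to the classical theorem that each plactic (Knuth) equivalence class of words contains a unique tableau word, and then to prove that theorem via Schensted row insertion. By construction ${\cal P}_n$ is the monoid algebra over $\Z$ of the plactic monoid $M_n = U^{*}/{\equiv}$, where $U = \{u_1,\dots,u_{n-1}\}$ and $\equiv$ is the congruence generated by (PL1)--(PL2): the defining relations equate monomials, so the two-sided ideal they span is generated by differences of words, and the quotient is $\Z[M_n]$, which has the set of congruence classes as a $\Z$-basis. Hence it suffices to show that every class contains exactly one tableau word. Identifying $u_i$ with the letter $i$, write $P(w)$ for the semistandard Young tableau obtained by row-inserting the letters of a word $w \in U^{*}$ from left to right, and $w(T)$ for the reading word of a semistandard tableau $T$ (columns read bottom-to-top, starting from the first column), as in the paper.

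The argument then hinges on three facts: \textup{(A)} every $w \in U^{*}$ is $\equiv$-equivalent to $w(P(w))$; \textup{(B)} a single elementary Knuth transformation leaves the insertion tableau unchanged; and \textup{(C)} $P(w(T)) = T$ for every semistandard tableau $T$. Granting these, (A) shows the tableau words meet every class, so their classes span ${\cal P}_n$, while (B) and (C) give $w(T) \equiv w(T') \Rightarrow P(w(T)) = P(w(T')) \Rightarrow T = T'$, so distinct tableau words lie in distinct classes and are therefore $\Z$-linearly independent in ${\cal P}_n$; together this is the asserted basis.

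For (A) I would induct on $\ell(w)$: if $w = v x$ with $x \in U$, then $v \equiv w(P(v))$ by induction, and the step reduces to the insertion lemma $w(T)\,x \equiv w(T \leftarrow x)$ for an arbitrary tableau $T$, where $T \leftarrow x$ denotes row insertion. One checks this row by row: inserting $x$ into the top row either appends it (and the lemma is immediate) or bumps an entry $y$, and then moving $x$ into position, extracting $y$, and commuting $y$ past the reading word of the rows below so the insertion continues there, is carried out by elementary Knuth moves among a letter and two adjacent row entries --- the cases of distinct versus equal entries producing precisely the relations (PL1) and (PL2). Fact (C) is elementary: the reading word of a single column is strictly decreasing and inserts back to that column, and inserting the reading word of the next column to the right of the already built ones appends it as a new column by semistandardness.

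The main obstacle is (B). I would localize it: if the transformation acts on positions $i,i+1,i+2$ of $w = x_1\cdots x_m$, insert the prefix $x_1\cdots x_{i-1}$ to get a tableau $T$; since the final tableau is obtained by further inserting $x_i x_{i+1} x_{i+2}$ and then the suffix, it is enough to prove that row-inserting a three-letter word and its elementary Knuth transform into an \emph{arbitrary} tableau give the same result. That is a finite case analysis on where the three letters land in, and bump from, the top row; it is exactly the combinatorial heart of Knuth's proof of the Robinson--Schensted correspondence~\cite{Kn}, and the splitting of cases by strict versus weak inequalities among the affected entries is what forces the two families (PL1), (PL2) of plactic relations. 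With (A)--(C) established, the proposition follows as above.
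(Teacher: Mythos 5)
Your proposal is correct, but there is nothing in the paper to compare it against: Proposition~\ref{prop2.2} is stated with a citation to Lascoux--Sch\"utzenberger and is not proved in the text (the paper only restates it as ``each plactic class contains a unique tableau word''). What you give is the classical proof via Schensted insertion and Knuth's theorem, and the skeleton --- (A) every word is equivalent to the reading word of its insertion tableau, (B) elementary Knuth moves preserve the insertion tableau, (C) $P(w(T))=T$, plus the observation that ${\cal P}_n$ is the monoid algebra of the plactic monoid so congruence classes form a $\Z$-basis --- is exactly right. Two small points deserve care if you write it out. First, the paper's reading word is the \emph{column} word (columns bottom to top), whereas the textbook statements of (A) and (C) are usually for the row word; your direct verification of (C) for column words works (the bottom entry of column $j$ is $\ge$ every entry of column $j-1$, so each letter of $C_j$ bumps only letters of $C_j$ inserted before it and the column is appended intact), but it is worth either carrying that out or noting that the row and column words of a tableau are Knuth-equivalent and invoking (B). Second, in (B) the reduction to ``insert $xyz$ versus its Knuth transform into an arbitrary $T$'' is correct, but the verification is not literally a finite case analysis: one checks the first row and shows the letters bumped into the second row form either equal or Knuth-related sequences, then inducts on the number of rows. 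These are standard refinements; the argument as a whole is sound.
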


In other words, each plactic class contain a unique tableau word.
In particular,
\begin{gather*}
\operatorname{Hilb}({\cal P}_{n+1},t)=(1-t)^{-n}\big(1-t^2\big)^{-{n \choose 2}}.
\end{gather*}

\begin{rem}\label{rem2.3} There exists another algebra over $\Z$ which
has the same Hilbert series as that of the plactic algebra ${\cal P}_{n}$.
 Namely, def\/ine
algebra ${\cal L}_n$ to be an associative algebra over $\Z$ generated by the
elements $ \{e_1,e_2,\ldots,e_{n-1}\}$, subject to the set of relations
\begin{gather*}
(e_i,(e_j,e_k)):= e_i e_j e_k -e_j e_i e_k -e_j e_k e_i +e_k e_j e_i = 0,
\end{gather*}
for all\looseness=-1 $1 \le i,j, k \le n-1$, $j < k$. Observe that the number of def\/ining relations in the algebra~${\cal L}_n$
is equal to $2 {n \choose 3}$. Note that elements $e_1+e_2$ and
$e_2 e_1$ do not commute in the algebra~${\cal{L}}_3$, but do commute if are considered as elements in the plactic algebra~${\cal{P}}_3$. See Example~\ref{exam5.25}(C) for some details.
 \end{rem}

\begin{exer}\label{exer2.4} \samepage\quad
\begin{itemize}\itemsep=0pt
\item Show that the dimension of the degree $k$ homogeneous
 component ${{\cal L}}^{(k)}_n$ of the algebra~${\cal L}_n$ is equal to the
number semistandard Young tableaux of the size~$k$ f\/illed by the numbers from
the set $\{1,2,\ldots,n-1 \}$.

\item Let us set $e_{ij}:= (e_i,e_j):= e_i e_j-e_j e_i$, $i < j$.
Show that the elements $\{e_{ij} \}_{1 \le i < j \le n-1}$ generate
 the {\it center} of the algebra~${\cal{L}}_n$.
 \end{itemize}
\end{exer}

\begin{de}\label{def2.5} \quad
\begin{enumerate}\itemsep=0pt
\item[$(a)$] The {\it local plactic algebra} ${\cal LP}_n$, see, e.g.,~\cite{FG},
is an associative algebra over $\Z$ generated by elements $\{u_1,\ldots,u_{n-1} \}$ subject to the set of relations
\begin{gather*}
u_i u_j=u_j u_i, \qquad \text{if}\quad |i-j| \ge 2, \\ u_{j} u_i^2=u_i u_{j} u_i, \qquad u_{j}^2 u_i
=u_j u_i u_j, \qquad \text{if}\quad |i-j|=1.
\end{gather*}
One can show (A.K.) that
\begin{gather*}
\operatorname{Hilb}({\cal LP}_n,t)= \prod_{j=1}^{n} \left(
{1 \over 1-t^j} \right)^{n+1-j}.
\end{gather*}

\item[$(b)$] The {\it affine local plactic algebra} $\widehat{{\cal{PL}}}_n$, see~\cite{KoS}, is an associative algebra over~$\mathbb{Q}$ generated by the elements
$\{e_{0},\ldots,e_{n-1} \}$ subject to the set of relations listed in item $(a)$, where all indices are understood modulo~$n$.

\item[$(c)$] Let $q$ be a parameter, the {\it affine quantum local plactic algebra} $\widehat{{\cal{PL}}}_n^{(q)}$ is an associative algebra over $\mathbb{Q}[q]$ generated by the elements $\{e_{0},\ldots,e_{n-1} \}$ subject to the set of relations
\begin{enumerate}\itemsep=0pt
\item[$(i)$] $e_i e_j = e_j e_i$, if $|i-j| > 1$,

\item[$(ii)$] $(1+q)e_i e_j e_i = q e_i^2 e_j + e_j e_i^2$, $(1+q)e_j e_i e_j = q e_i e_j^2 + e_j^2 e_i$, if $|i-j|=1$,
 where all indices appearing in the relation~$(ii)$ are understood modulo~$n$.
 \end{enumerate}
\end{enumerate}
\end{de}

It was observed in~\cite{Kor} that the relations listed in~$(ii)$ are the Serre
relations of $U_{q}^{\ge}\big(\hat{\mathfrak{gl}}(n)\big)$ rewritten\footnote{For the reader convenient, we recall, see, e.g., \cite{Kor} the
def\/inition of algebra $U_{q}^{\ge}(\hat{\mathfrak{gl}}(n))$. Namely, this
algebra is an unital associative algebra over~$\Q(q^{\pm 1})$ generated by
the elements
$\{k_{i}^{\pm 1},E_i \}_{i=0,\ldots,n-1}$, subject to the set of relations
\begin{enumerate}\itemsep=0pt
\item[$(R1)$] $k_i k_j = k_j k_i$, $k_i k_i^{-1}= k_i^{-1}=1$, $\forall\, i,\, j$,
\item[$(R2)$] $k_i E_j = q^{\delta_{i,j}-\delta_{i,j+1}} E_j k_i$,
\item[$(R3)$] (Serre relations)
\begin{gather*}
E_i^2 E_{i+1} -\big(q+q^{-1}\big) E_i E_{i+1} E_{i}+ E_{i+1} E_i^2 = 0,\qquad
E_{i+1}^2 E_{i} -\big(q+q^{-1}\big) E_{i+1} E_{i} E_{i+1}+ E_{i} E_{i+1}^2 = 0,
\end{gather*}
where all indices are understood modulo $n$, e.g., $e_{n}=e_{0}$.
\end{enumerate}
It is clearly seen that after rewriting the Serre relations $(R3)$ in terms of
the elements $\{e_i:=k_i E_i\}_{0 \le i \le n-1}$, one comes to the relations~$(ii)$ which have been listed in~$(c)$.}
 in generators~$k_i E_i$. Some interesting properties and applications of the
local and af\/f\/ine local plactic algebras one can f\/ind in~\cite{Kor,KoS,KT}. It seems an interesting problem to investigate properties and
applications of the af\/f\/ine {\it pseudoplactic} algebra which is the quotient
of the af\/f\/ine local plactic algebra ${\widehat{\cal{LP}}}_n$ by the two-sided ideal generated by the set of elements
\begin{gather*}
 \{(e_j,(e_i,e_k)), \, 0 \le i < j < k \le n-1 \}.
\end{gather*}

\begin{de}[nil Temperley--Lieb algebra]\label{def2.6}
 Denote by ${\cal TL}_n^{(0)}$ the quotient of the local plactic algebra
 ${\cal LP}_n$ by the two-sided ideal generated by the elements
$\big\{u_1^2,\ldots,u_{n-1}^2 \big\}$.
\end{de}

It is well-known that $\dim {\cal{TL}}_n = C_n$,
the $n$-th Catalan number. One also has
\begin{gather*}
 \operatorname{Hilb}\big({\cal{TL}}_{4}^{(0)},t\big)= (1,3,5,4,1), \qquad \operatorname{Hilb}\big({\cal{TL}}_{5}^{(0)},t\big)=(1,4,9,12,10,4,2), \\ \operatorname{Hilb}({\cal{TL}}_{6},t)=(1,5,14,25,31,26,16,9,4,1).
\end{gather*}

\begin{pr}\label{prop2.7}
The Hilbert polynomial $\operatorname{Hilb}({\cal{TL}}_{n},t)$ is equal to the generating
function for the number or $321$-avoiding permutations of the set
$\{1,2,\ldots,n\}$
having the inversion number equals to $k$, see~{\rm \cite[$A140717$]{SL}}, for other
combinatorial interpretations of the polynomials $\operatorname{Hilb}({\cal{TL}}_{n},t)$.
\end{pr}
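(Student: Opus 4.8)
The plan is to identify the Temperley--Lieb algebra $\mathcal{TL}_n^{(0)}$ (the nil Temperley--Lieb algebra) with a combinatorial model in which monomials are naturally indexed by $321$-avoiding permutations, graded by the number of inversions. First I would recall that the relations defining $\mathcal{TL}_n^{(0)}$ are exactly $u_i u_j = u_j u_i$ for $|i-j|\ge 2$, $u_i^2 = 0$, and $u_i u_{i\pm1} u_i = 0$ (the last being the images of the local plactic relations $u_j u_i^2 = u_i u_j u_i$ and $u_j^2 u_i = u_j u_i u_j$ once squares are killed). These are precisely the relations of the $0$-Hecke/nil-Temperley--Lieb algebra, whose standard monomial basis is well known to be indexed by fully commutative elements of $\mathbb{S}_n$, i.e.\ by $321$-avoiding permutations, with the degree of the monomial $u_{i_1}\cdots u_{i_\ell}$ (a reduced word) equal to $\ell = \ell(w)$, the inversion number of the corresponding permutation $w$. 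So each graded piece has dimension equal to the number of $321$-avoiding $w\in\mathbb{S}_n$ with $\ell(w)=k$, which is exactly the claim.

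The key steps, in order, are as follows. First, establish the normal-form / straightening argument: every monomial in the $u_i$ either reduces to $0$ (if its word contains a factor forcing $u_i^2$ or $u_iu_{i\pm1}u_i$ after commutations) or can be brought to a canonical reduced word; this shows the nonzero standard monomials are in bijection with fully commutative elements of $\mathbb{S}_n$ and that distinct such elements give linearly independent monomials (the matchings/planar-diagram basis of Temperley--Lieb, or equivalently the $0$-Hecke idempotent basis, supplies the linear independence). Second, invoke the classical theorem of Billey--Jockusch--Stanley that an element of $\mathbb{S}_n$ is fully commutative if and only if it is $321$-avoiding, and recall that full commutativity means all reduced words are related by commutations only, so the length is a well-defined statistic on the monomial. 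Third, match the grading: the generator $u_i$ has degree $1$ and corresponds to the simple transposition $s_i$, so the degree-$k$ component is spanned by the classes of reduced words of length $k$, whence $\dim \mathcal{TL}_n^{(k)} = \#\{w\in\mathbb{S}_n : w\text{ is }321\text{-avoiding},\ \ell(w)=k\}$. Finally, one checks this against the displayed small cases (e.g.\ $\operatorname{Hilb}(\mathcal{TL}_4^{(0)},t)=(1,3,5,4,1)$, summing to the Catalan number $C_4=14$) as a sanity check, and cites the OEIS entry $A140717$ for the generating function of $321$-avoiding permutations by inversions.

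The main obstacle is the linear independence half of the standard-monomial argument, i.e.\ showing that the straightening-to-normal-form procedure is confluent and that no further collapses occur among the canonical reduced words. The cleanest route is not a direct diamond-lemma computation in the $u_i$ but rather to exhibit a faithful representation: realize $\mathcal{TL}_n^{(0)}$ acting on a space with basis indexed by the planar matchings (or by $0$-$1$ vectors/lattice paths), where $u_i$ acts as the obvious partial "cup-cap" operator, and verify that distinct fully commutative words act by distinct operators — equivalently, use that $\mathcal{TL}_n$ with a nonzero loop parameter has the Catalan-dimensional diagram basis and specialize the loop parameter to $0$, noting the dimension cannot drop because the diagram basis elements remain linearly independent. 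Granting that, the combinatorial identification with $321$-avoiding permutations graded by inversions is immediate from Billey--Jockusch--Stanley, and the proposition follows.
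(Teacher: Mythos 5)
The paper states Proposition~\ref{prop2.7} without proof, treating it as a known fact about the nil Temperley--Lieb algebra, so your task was really to supply the standard argument, and your outline does that correctly: the quotient relations collapse to $u_i^2=0$, $u_iu_{i\pm1}u_i=0$, $u_iu_j=u_ju_i$ for $|i-j|\ge 2$; nonzero monomials are exactly reduced words of fully commutative elements; Billey--Jockusch--Stanley identifies these with $321$-avoiding permutations; and the grading by word length is the inversion number. The small cases in the paper ($\operatorname{Hilb}({\cal{TL}}_4^{(0)},t)=(1,3,5,4,1)$, summing to $C_4=14$) confirm this.

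One caveat on your linear-independence step. Of the two routes you offer, only the first is sound: the faithful action on $0$--$1$ vectors (particle configurations on $n$ sites, with $u_i$ hopping a particle from site $i+1$ to site $i$ and killing the configuration otherwise) does separate the fully commutative monomials and is the standard way to finish. The alternative you describe as ``equivalent'' --- specializing the loop parameter of the diagrammatic Temperley--Lieb algebra to $0$ --- does not work as stated, because the diagram algebra $TL_n(\delta)$ satisfies $e_ie_{i\pm1}e_i=e_i$ for \emph{every} value of $\delta$, whereas the nil Temperley--Lieb algebra defined here (as the quotient of the local plactic algebra by the squares) satisfies $u_iu_{i\pm1}u_i=0$. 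These are genuinely different algebras that happen to share the dimension $C_n$; the cup--cap operators are not a representation of ${\cal{TL}}_n^{(0)}$. Strike that branch and rely on the particle-hopping representation (or a diamond-lemma confluence check on the straightening rules), and the proof is complete.
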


\begin{exer}\label{exer2.8} \quad
\begin{itemize}\itemsep=0pt
\item Show that $\deg_{t} \operatorname{Hilb}({\cal{TL}}_{n},t) = \big\lbrack {\frac{n^2}{4}} \big\rbrack$.

\item Show that
\begin{gather*}
\lim_{k \rightarrow \infty} t^{k^{2}} \operatorname{Hilb}\big({\cal{TL}}_{2k},t^{-1}\big) =
\prod_{n \ge 1} {\frac{\big(1-t^{4 n-2}\big)}{\big(1-t^{2 n -1}\big)^{4} \big(1-t^{4 n}\big)}} = \sum_{n \ge 0} a_{n} t^{n},
\end{gather*}
where $a_{n}$ is equal to the number of $2$-colored generalized Frobenius
partitions, see, e.g., \cite[$A051136$]{SL} and the literature quoted therein\footnote{The second equality in the above formula is due to G.~Andrews~\cite{An}. The second formula for the generating function of the numbers $\{a_n\}_{n \ge 0}$ displayed in~\cite[$A051136$]{SL}, either contains misprints or counts something else.}.

\item Show that
\begin{gather*}
 \lim_{k \rightarrow \infty} t^{k(k+1)} \operatorname{Hilb}\big({\cal{TL}}_{2k+1},t^{-1}\big) =
2 \prod_{n \ge 1} {\frac{(1+t^{n}) \big(1+t^{2n}\big)^2}{1-t^n}} = 2 \sum_{n \ge 0}
b_{n} t^{n}.
\end{gather*}
See \cite[$A201078$]{SL} for more details concerning relations of this exercise
 with the Ramanujan theta functions\footnote{See, e.g., \url{http://en.wikipedia.org/wiki/Ramanujan_theta_function}.}.
 \end{itemize}
\end{exer}

We denote by ${\cal{TH}}_{n}^{(\beta)}$ the quotient of the local plactic algebra ${\cal{LP}}_n$ by the two-sided ideal generated by the elements
$\big\{u_i^2- \beta u_i,\, i=1,\ldots,n-1 \big\}$.
\begin{de} \label{def2.9} The modif\/ied \textit{plactic algebra} ${\cal{MP}}_n$ is an
associative algebra over $\Z$ generated by $\{u_1,\ldots,u_{n-1} \}$ subject
to the set of relations (PL1) and that
\begin{gather*}
 u_j u_j u_i = u_j u_i u_i, \qquad \text{and} \qquad u_i u_j u_i=u_j u_i u_j, \qquad \text{if} \quad 1 \le i < j \le n-1.
\end{gather*}
\end{de}

\begin{de}\label{def2.10} The \textit{nilplactic algebra} ${\cal{NP}}_n$ is an associative algebra over $\Q$ generated by $\{u_1, \ldots,u_{n-1} \}$,
subject to set of relations
\begin{gather*}
u_i^2=0, \qquad u_{i} u_{i+1} u_{i}= u_{i+1} u_{i} u_{i+1},
\end{gather*}
the set of relations (PL1), and that $u_i u_j u_i= u_j u_i u_j$, if $|i-j| \ge 2$.
\end{de}

\begin{pr}[\cite{LS3}]\label{prop2.11}
Each nilplactic class not containing zero, contains one and only one tableau word.
\end{pr}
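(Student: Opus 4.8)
The plan is to prove Proposition~\ref{prop2.11} along the same lines as Proposition~\ref{prop2.2}: exhibit an algorithm which picks out of each nilplactic class a canonical representative, check that this representative is a tableau word, and check that the algorithm is constant on nilplactic classes. The algorithm is the reduced-word counterpart of Schensted row-insertion --- the \emph{Coxeter--Knuth} (or \emph{Edelman--Greene}) insertion --- adapted so as to be compatible with \emph{all} of the defining relations of ${\cal{NP}}_n$.

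First I would fix the insertion map $w \mapsto P(w)$. Given a word $w = w_1 \cdots w_\ell$ in $\{1,\dots,n-1\}$, insert the letters $w_1, \dots, w_\ell$ into a growing array of left-justified rows, using the row-bumping rule: to insert $x$ into a row $R$, if every entry of $R$ is less than $x$ append $x$ at the right end (and stop); otherwise, with $y$ the least entry of $R$ that is $\ge x$, if $y > x$ replace $y$ by $x$ and pass $y$ down to the next row, while if $y = x$ (so that $x$ already occurs in $R$) leave $R$ unchanged and pass down to the next row the entry of $R$ immediately to the right of $x$ (or $x+1$, if $x$ is the last entry of $R$). An induction on $\ell$ shows $P(w)$ is always an increasing tableau --- strictly increasing rows and columns; as in ordinary RSK one may also attach a standard recording tableau, but it is not needed for the statement. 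By a \emph{tableau word} I mean the reading word (columns read bottom-to-top, then left to right, as in Proposition~\ref{prop2.2}) of an increasing tableau; and a word represents $0$ in ${\cal{NP}}_n$ exactly when it is nilplactically equivalent to a word containing two equal adjacent letters, so that no tableau word represents $0$.

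The heart of the matter is invariance: $w \sim w'$ and $w \ne 0$ imply $P(w) = P(w')$. The nilplactic congruence is generated by applying, inside an arbitrary word, a braid move $u_iu_{i+1}u_i \leftrightarrow u_{i+1}u_iu_{i+1}$, a PL1 move $u_ju_iu_k \leftrightarrow u_ju_ku_i$ or $u_iu_ku_j \leftrightarrow u_ku_iu_j$ (for $i<j<k$), or --- for $|i-j| \ge 2$ --- a move $u_iu_ju_i \leftrightarrow u_ju_iu_j$; and since insertion reads a word left to right, equal prefixes give equal partial tableaux, so it is enough to prove the local statement: for every increasing tableau $T$, inserting into $T$ the three letters on either side of one of these relations yields one and the same tableau. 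This is a finite verification, organized according to which rows the successive bumps fall in and according to whether the ``$y=x$'' branch is taken --- the modified bump being precisely what makes the braid move and the $|i-j|\ge 2$ move consistent with insertion, while the PL1 moves reduce to the classical Knuth-move calculations. Carrying out this case analysis cleanly, and keeping track of when a repeated letter pushes a word into the zero class, is the main obstacle.

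Given invariance, the proposition is immediate. For existence, take any representative $w$ of a nonzero class and put $T := P(w)$; a direct check shows that row-inserting the reading word of an increasing tableau rebuilds it, i.e.\ $P(w(T)) = T$, and the standard ``reverse bumping'' lemma --- each elementary bump changes the current reading word only by a sequence of nilplactic moves --- gives $w(T) \sim w$, so $w(T)$ is a tableau word inside the class. For uniqueness, if $T$ and $T'$ are increasing tableaux with $w(T) \sim w(T')$, then $T = P(w(T)) = P(w(T')) = T'$ by invariance. The only step that requires genuine work is the finite but intricate invariance check, together with the bookkeeping imposed by the relation $u_i^2 = 0$.
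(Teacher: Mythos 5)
The paper does not actually prove this statement---it is quoted from Lascoux--Sch\"{u}tzenberger \cite{LS3}---so there is no internal argument to compare with; your Edelman--Greene (Coxeter--Knuth) insertion strategy is the standard modern route and is certainly the right idea. The problem is that the two lemmas you defer are not routine, and one of them is false in the form you propose to verify. Your local invariance statement---``for every increasing tableau $T$, inserting into $T$ the three letters on either side of one of these relations yields one and the same tableau''---already fails for $T$ the one-row tableau $(1)$ and the braid relation: with your bumping rule, inserting $2,1,2$ produces the tableau with rows $(1,2),(2,3)$, while inserting $1,2,1$ produces the tableau with rows $(1,2),(2),(3)$. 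Both $1212$ and $1121$ are zero in ${\cal{NP}}_n$, so the proposition itself is not endangered, but it means the verification cannot be carried out tableau-by-tableau: the invariance holds only when the running word stays nonzero, and threading that hypothesis through the induction (Edelman--Greene do this by maintaining that the reading word of the partial tableau is a reduced word equivalent to the prefix read so far) is precisely the nontrivial content that is missing. Relatedly, $P$ does not detect the zero classes: $P(12121)$ is the perfectly good increasing tableau with rows $(1,2),(2,3),(3)$, whose reading word is $32132$, even though $12121=0$ while $32132\neq 0$. Hence ``$w\sim w(P(w))$'' is simply false for some words, and your reverse-bumping step can only be proved under the standing assumption that the class is nonzero---which requires characterizing the zero classes first, something the proposal does not do.

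A second, independent gap concerns the relation $u_iu_ju_i=u_ju_iu_j$ for $|i-j|\ge 2$. This is not a Coxeter--Knuth relation, and under Definition~\ref{def2.10} as literally stated it creates nonzero classes of non-reduced words, e.g.\ $\{131,313\}$: no defining relation takes either word to one with an adjacent repeat, yet $s_1s_3s_1$ is not reduced, and $313$ is the reading word of the increasing tableau with rows $(1,3),(3)$. (This sits uneasily with Example~\ref{exer2.13}, whose Hilbert series for ${\cal{NP}}_4$ counts only the $25$ tableaux with \emph{reduced} reading words; the intended algebra is presumably the one in which non-reduced words vanish.) Either way you cannot simply quote the Edelman--Greene theorem, which lives entirely on reduced words: your modified bump (``if $y=x$, bump the entry to the right of $x$, or $x+1$ if $x$ is last'') is an extension of their rule whose compatibility with \emph{all} the relations, on \emph{all} the words the proposition covers, is exactly what remains to be proved. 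The outline is right, but the ``finite verification'' is where the theorem lives, and as you have formulated it that verification would not go through.
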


\begin{pr}\label{prop2.12} The nilplactic algebra ${\cal{NP}}_n$ has finite dimension, its
Hilbert polynomial $\operatorname{Hilb}({\cal{NP}}_n,t)$ has degree ${n \choose 2}$, and
$\dim_{\Q}({\cal{NP}}_n)_{{n \choose 2}} =1$.
\end{pr}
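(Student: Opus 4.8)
The plan is to turn Proposition~\ref{prop2.11} into an explicit combinatorial basis of ${\cal{NP}}_n$ and then read off all three assertions from one elementary shape bound. First I would invoke Proposition~\ref{prop2.11}: each nilplactic class either contains $0$, and then represents the zero element of ${\cal{NP}}_n$, or contains exactly one tableau word, which represents it. Hence the images of the tableau words $w(T)$ with $w(T)\not\equiv 0$ form a $\Q$-basis of ${\cal{NP}}_n$, and since $w(T)$ is a product of $|T|$ generators,
\begin{gather*}
\operatorname{Hilb}({\cal{NP}}_n,t)=\sum_{T}t^{|T|},
\end{gather*}
the sum running over semistandard tableaux $T$ with entries in $\{1,\dots,n-1\}$ whose reading word does not vanish in ${\cal{NP}}_n$. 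Everything then reduces to deciding which tableau words survive.

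The key claim I would establish is that $w(T)\not\equiv 0$ in ${\cal{NP}}_n$ precisely when $T$ is \emph{strict}, i.e.\ its rows \emph{and} its columns are strictly increasing. The ``if'' direction is easy: every defining relation of ${\cal{NP}}_n$ holds in the nilCoxeter algebra ${\cal{NC}}_n$ (generators $u_1,\dots,u_{n-1}$, relations $u_i^2=0$, $u_iu_{i+1}u_i=u_{i+1}u_iu_{i+1}$, and $u_iu_j=u_ju_i$ for $|i-j|\ge 2$) --- the relations $u_i^2=0$ and the braid relations are among those of ${\cal{NC}}_n$, while (PL1) and the relations $u_iu_ju_i=u_ju_iu_j$ for $|i-j|\ge 2$ follow there from the commutation of distant generators --- so ${\cal{NC}}_n$ is a quotient of ${\cal{NP}}_n$; for a strict $T$ the column reading word $w(T)$ is a reduced word (a well-known property of increasing tableaux), hence already nonzero in ${\cal{NC}}_n$, a fortiori in ${\cal{NP}}_n$, and distinct strict tableaux give distinct tableau words, hence distinct nonzero elements by Proposition~\ref{prop2.11}. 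The ``only if'' direction amounts to showing that the reading word of a semistandard tableau with two equal entries in a row collapses to $0$, which is where the relations (PL1) and $u_iu_ju_i=u_ju_iu_j$ are actually used. I would argue by induction on the number of columns of $T$: discarding the first column reduces to the case where only columns $1$ and $2$ of $T$ share a value $v$ in some row, and there a sequence of Coxeter--Knuth moves brings the two occurrences of the letter $u_v$ into adjacent position, after which $u_v^2=0$ annihilates the word --- alternatively this is part of the nilplactic structure theory of~\cite{LS3}.

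Granting the claim, the basis of ${\cal{NP}}_n$ is indexed by strict tableaux with entries in $\{1,\dots,n-1\}$. If $(i,j)$ is a cell of such a tableau $T$, then $T(i,1)\ge i$ by strictness of column $1$ and $T(i,j)\ge T(i,1)+(j-1)\ge i+j-1$ by strictness of row $i$; since $T(i,j)\le n-1$ this forces $i+j\le n$, so the shape of $T$ is contained in $\delta_n=(n-1,n-2,\dots,1)$ and $|T|\le |\delta_n|={n\choose 2}$. As there are finitely many shapes inside $\delta_n$ and finitely many strict fillings of each, $\dim_{\Q}{\cal{NP}}_n<\infty$ and $\deg_t\operatorname{Hilb}({\cal{NP}}_n,t)\le {n\choose 2}$. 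The unique partition of size ${n\choose 2}$ contained in $\delta_n$ is $\delta_n$ itself, and re-running the inequalities above with $i+j\le n$ everywhere shows that a strict filling of $\delta_n$ must have $T(i,j)=i+j-1$, so there is exactly one; its reading word is the staircase word $I_0^{(n)}$, a reduced word for $w_0\in{\mathbb{S}}_n$ of length ${n\choose 2}$. Hence $\deg_t\operatorname{Hilb}({\cal{NP}}_n,t)={n\choose 2}$ and $\dim_{\Q}({\cal{NP}}_n)_{{n\choose 2}}=1$.

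The genuinely nontrivial step, which I expect to be the main obstacle, is the ``only if'' half of the key claim --- the vanishing of the reading word of a non-strict semistandard tableau; everything else is bookkeeping with Proposition~\ref{prop2.11} and the inequality $i+j\le n$.
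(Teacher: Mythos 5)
Your strategy---reduce to Proposition~\ref{prop2.11}, bound the shape of a surviving tableau, and isolate the unique top-degree survivor---is sound, and it does yield the three assertions; but one of your stated intermediate claims is false, and it matters for the ``explicit combinatorial basis'' you announce. The assertion that the column reading word of a row-and-column strict tableau is automatically reduced fails already for $T=\begin{smallmatrix}1&3\\3\end{smallmatrix}$: this $T$ is strict, yet $w(T)=u_3u_1u_3$ represents $s_3s_1s_3=s_1$, a permutation of length $1$, so $w(T)$ is not reduced and vanishes in ${\cal{NC}}_4$. Consequently ``strict tableaux'' overcount the basis: your formula for $\operatorname{Hilb}({\cal{NP}}_n,t)$ would give $7$ in degree $3$ for $n=4$ and $22$ for $n=5$, whereas Example~\ref{exer2.13} records $6$ and $19$ (the three missing classes for $n=5$ being exactly the words $u_bu_au_b$ with $b-a\ge 2$). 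This is precisely why the paper, in Section~\ref{section5.2}, defines the index set ${\cal B}_n$ as the strict tableaux \emph{together with} the extra requirement that $w(T)$ be reduced; reducedness is an additional condition, not a consequence of strictness, and your claimed equivalence ``$w(T)\not\equiv 0$ iff $T$ strict'' should be replaced by ``only if''.

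The error does not sink the proof of Proposition~\ref{prop2.12} itself, because the three assertions use only (i) the containment $\{\text{nonzero tableau words}\}\subseteq\{\text{strict tableaux}\}$, i.e.\ the ``only if'' half (a repeated row entry forces the word to zero), which gives finiteness, the degree bound ${n \choose 2}$ via your inequality $T(i,j)\ge i+j-1$, and the uniqueness of the candidate in top degree; and (ii) the nonvanishing of the single staircase filling $T(i,j)=i+j-1$ of $\delta_n$, whose reading word is the staircase word $I_0^{(n)}$ --- a genuinely reduced word for $w_0$ --- so the ``nonzero in ${\cal{NC}}_n$, a fortiori in ${\cal{NP}}_n$'' argument is valid for that one instance even though it is not valid for all strict tableaux. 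What remains is the ``only if'' half, which you correctly single out as the nontrivial step but only sketch; since the paper itself gives no proof of this proposition and defers the underlying structure theory to~\cite{LS3} via Proposition~\ref{prop2.11}, that deferral is defensible, but the blanket reducedness claim should be deleted or restricted to the staircase tableau, and the asserted description of the full basis (and hence of the Hilbert polynomial) withdrawn.
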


\begin{ex}\label{exer2.13}
\begin{gather*}
\operatorname{Hilb}({\cal{NP}}_3,t)= (1,2,2,1), \qquad \operatorname{Hilb}({\cal{NP}}_4,t)=(1,3,6,6,5,3,1), \\
\operatorname{Hilb}({\cal{NP}}_5,t)=(1,4,12,19,26,26,22,15,9,4,1), \qquad \dim_{\Q}({\cal{NP}}_5)=139,\\
\operatorname{Hilb}({\cal{NP}}_6,t)=(1,5,20,44,84,119,147,152,140,224,81,52,29,14,5,1),\\
\dim_{\Q}({\cal{NP}}_6) = 1008.
\end{gather*}
\end{ex}

\begin{de}\label{def2.14} The \textit{idplactic algebra} ${\cal{IP}}_{n}:= {\cal{IP}}_{n}^{(\beta)}$ is an
associative algebra over $\Q[\beta]$ ge\-ne\-rated by elements $\{u_1,\ldots,u_{n-1} \}$ subject to the set of relations
\begin{gather}\label{equation2.2}
u_i^2= \beta u_i, \qquad u_{i} u_{j} u_{i}= u_{j} u_{i} u_{j}, \qquad i < j,
\end{gather}
and the set of relations~(PL1).
\end{de}

In other words,the idplactic algebra ${\cal{IP}}_n^{(\beta)}$ is the quotient
of the modif\/ied plactic algebra~${\cal{MP}}_n$ by the two-sided ideal generated by the elements $\big\{u_i^2- \beta u_i, \, 1 \le i \le n\big\}$.
\begin{pr}\label{prop2.15} Each idplactic class not containing zero, contains a unique tableau word associated with a row strict semistandard Young tableau\footnote{Recall that a row strict semistandard Young tableau, say~$T$, is a tableau
such that the numbers in each row and each column of $T$ are strictly
increasing. For example,
\begin{gather*}
\begin{matrix}
1 & 2 & 3 & 5 & 6\\
2 & 3 & 6 & 8\\
4 & 5 & 7
\end{matrix}
\end{gather*}}.
\end{pr}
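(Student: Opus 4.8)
The plan is to mimic the classical argument for the plactic monoid (Proposition \ref{prop2.2}) and its nilplactic refinement (Proposition \ref{prop2.11}), adapting the insertion algorithm to keep track of the idempotent relation $u_i^2 = \beta u_i$. First I would recall that the idplactic algebra ${\cal{IP}}_n^{(\beta)}$ is a quotient of the modified plactic algebra ${\cal{MP}}_n$, so every idplactic class is a union of ${\cal{MP}}_n$-classes, and in turn a union of ordinary plactic classes; by Proposition \ref{prop2.2} each such plactic class contains a unique tableau word, so the content of the statement is twofold: \emph{(i)} among the tableau words fused together inside one nonzero idplactic class there is exactly one that comes from a \emph{row strict} semistandard tableau (column-and-row strictly increasing), and \emph{(ii)} no nonzero idplactic class is empty of tableau words. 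The natural tool is a row-strict (``dual'') Robinson--Schensted insertion: given a word $w = u_{a_1}\cdots u_{a_\ell}$, insert the letters $a_1,\dots,a_\ell$ one at a time into a row-strict tableau, where a letter inserted into a row bumps the \emph{smallest entry $\ge$} the inserted letter (rather than strictly greater), so that equal letters cause bumping and ultimately get absorbed via $u_i^2 = \beta u_i$ up to the scalar $\beta$.

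The key steps, in order, would be: (1) define the row-strict insertion $w \mapsto P_{\mathrm{rs}}(w)$ and verify that the reading word of $P_{\mathrm{rs}}(w)$ is idplactic-equivalent to $w$ (up to a power of $\beta$), by checking that each elementary bumping step is an application of one of the relations in \eqref{equation2.2} or (PL1) — this is the row-strict analogue of Knuth's theorem and is a finite case check on triples, exactly parallel to what is done for ${\cal{NP}}_n$ in Proposition \ref{prop2.11}. (2) Show conversely that idplactic-equivalent words with nonzero image have the \emph{same} $P_{\mathrm{rs}}$-tableau: it suffices to check invariance under each generating relation, and since the relations \eqref{equation2.2}, (PL1) are precisely the row-strict Knuth moves (plus the square relation, which only rescales by $\beta$ and does not change the tableau shape or entries once we record, say, the monic representative), insertion is constant on classes. (3) Conclude that the reading word of $P_{\mathrm{rs}}(w)$ is the unique tableau word of row-strict type in the class of $w$; uniqueness of ordinary tableau words (Proposition \ref{prop2.2}) then forces it to be \emph{the} unique tableau word in that class that happens to be row strict. (4) For nonvanishing, observe that $P_{\mathrm{rs}}(w) = 0$ can only occur if at some bumping step a letter is forced out of a column into an equal letter below it — equivalently if $w$ lies in the ideal; so a class avoiding zero always produces a genuine row-strict tableau, i.e.\ contains a tableau word. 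I would also record the normalization convention (work with the monic lift, or equivalently count the number of ``square collapses'' as the $\beta$-exponent) so that the phrase ``contains a unique tableau word'' is unambiguous.

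The main obstacle I anticipate is step (2) — proving that row-strict insertion is genuinely well-defined on idplactic classes, i.e.\ that every application of the defining relations leaves $P_{\mathrm{rs}}$ unchanged. The subtlety is the interaction between the square relation $u_i^2 = \beta u_i$ and the braid-type relations $u_i u_j u_i = u_j u_i u_j$: one must check that a word containing a repeated letter and a word obtained from it by a braid move insert to the same row-strict tableau (and pick up the same $\beta$-power), which requires a careful analysis of the bumping routes when equal entries are present. This is exactly the point where the row-strict theory diverges from the ordinary plactic one, and where the reference \cite{LS3} (invoked for the nilplactic case) would be the model to follow; I would expect the verification to reduce, after the usual localization to three- and four-letter subwords, to a manageable but slightly intricate finite check. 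Everything else — the shape statements, the $\beta$-bookkeeping, the deduction of uniqueness from Proposition \ref{prop2.2} — is then routine.
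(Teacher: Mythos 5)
The paper states Proposition~\ref{prop2.15} without proof, so your argument must stand on its own, and it has a genuine gap at its foundation. You reduce both existence and uniqueness of the tableau word to Proposition~\ref{prop2.2} by asserting that every idplactic class is a union of ordinary plactic classes. That is false: the modified plactic algebra ${\cal{MP}}_n$ is \emph{not} a quotient of ${\cal P}_n$. Compare (PL2), which imposes $u_iu_ju_i=u_ju_iu_i$ and $u_ju_iu_j=u_ju_ju_i$ for $i<j$, with the relations of Definition~\ref{def2.9}, which impose $u_ju_ju_i=u_ju_iu_i$ and the braid relation $u_iu_ju_i=u_ju_iu_j$; neither set of degree-three relations implies the other. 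Concretely, for two letters $a<b$ the plactic class of $aba$ is $\{aba,\,baa\}$, whereas idplactically $aba=bab$ (a reduced class with tableau word $bab$) while $baa=\beta\,ba$ collapses to a shorter word in a different class; the plactic class is split across two idplactic classes. So your step (3), deducing uniqueness from Proposition~\ref{prop2.2}, and your step (4), deducing existence of a tableau word in each class, both rest on a false premise.

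Beyond that, the step you yourself flag as the ``main obstacle'' is in fact the entire content of the proposition, and the algorithm you propose is not obviously the right one. The target objects are \emph{increasing} tableaux (strict in rows \emph{and} columns, per the footnote), and the needed algorithm is a Hecke-type insertion in which an inserted letter is sometimes absorbed without changing the tableau --- this is what produces the words of length greater than $|T|$ in $IR(T)$ in Example~\ref{exam2.28} --- whereas a rule that ``bumps the smallest entry $\ge$ the inserted letter'' generically yields row-strict but only column-weak tableaux and does not encode absorption. Two further features make the ``finite local check'' harder than in the plactic or nilplactic cases: the braid relation of~\eqref{equation2.2} is imposed for \emph{all} $i<j$, including $|i-j|\ge 2$ (so invariance must be checked under long-range moves, not just on consecutive letters), and the interaction of $u_i^2=\beta u_i$ with (PL1) can silently shorten a word, which is exactly the mechanism by which some tableau words become ``idplactic equivalent to zero'' in the examples following Example~\ref{exam2.18}; your nonvanishing argument in step (4) does not engage with this. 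As it stands the proposal is a plausible plan whose load-bearing steps are either unproven or set up incorrectly.
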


 For each word $w$ denote by $\operatorname{rl}(w)$ the length of a unique tableau word of
minimal length which is idplactic equivalent to $w$.
\begin{ex}\label{exam2.16} Consider words in the alphabet $\{a < b < c < d \}$. Then
\begin{gather*}
\operatorname{rl}(dbadc)=4=\operatorname{rl}(cadbd), \qquad \operatorname{rl}(dbadbc)=5=\operatorname{rl}(cbadbd).
\end{gather*}
 Indeed,
\begin{gather*}
dbadc \sim dbdac \sim dbdca \sim ddbca \sim dbac,\\
 dbadbc \sim dbabdc \sim dabadc \sim adbdac \sim abdbca \sim abbdca
\sim dbabc.
\end{gather*}
\end{ex}

Note that according to our def\/inition, tableau words $w=31$, $w=13$ and
 $w=313$ belong to dif\/ferent idplactic classes.

\begin{pr}\label{2.17} The idplactic algebra ${\cal {IP}}_n^{(\beta)}$ has finite
dimension, and its Hilbert polynomial has degree~${n \choose 2}$.
\end{pr}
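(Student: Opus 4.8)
The plan is to deduce both assertions from the normal‑form result, Proposition~\ref{prop2.15}, reading it as a basis theorem for ${\cal{IP}}_n^{(\beta)}$. Among the defining relations, (PL1) and $u_iu_ju_i=u_ju_iu_j$ preserve word length, whereas $u_i^2=\beta u_i$ shortens a word by exactly one letter and contributes one factor $\beta$. Hence, rewriting a monomial $w$ by the relations, every rewriting path terminates (by Proposition~\ref{prop2.15}) at the reading word $T(w)$ of the unique row strict semistandard Young tableau in the idplactic class of $w$, and the number of applications of $u_i^2=\beta u_i$ along any such path is always $\ell(w)-\operatorname{rl}(w)$, with $\operatorname{rl}$ as defined just before Example~\ref{exam2.16}; so in ${\cal{IP}}_n^{(\beta)}$ one gets $w\equiv\beta^{\ell(w)-\operatorname{rl}(w)}\,T(w)$. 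Combined with the uniqueness in Proposition~\ref{prop2.15}, this shows that the reading words of row strict semistandard Young tableaux with entries from $\{1,\dots,n-1\}$ form a $\Q[\beta]$-basis of ${\cal{IP}}_n^{(\beta)}$ --- the idplactic counterpart of Proposition~\ref{prop2.2}. From here on everything is counting such tableaux, graded by their number of cells.

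For the finite-dimensionality I would bound the shapes that occur. In a filling strictly increasing along rows and down columns the entry in cell $(i,j)$ is at least $i+j-1$; as all entries are $\le n-1$, the cell $(i,j)$ can appear only when $i+j\le n$, so every admissible shape is contained in the staircase $\delta_n=(n-1,n-2,\dots,1)$. There are finitely many such fillings, so ${\cal{IP}}_n^{(\beta)}$ is a free $\Q[\beta]$-module of finite rank, i.e.\ a finite-dimensional $\Q$-algebra after any specialization of $\beta$. (One can also see finiteness without Proposition~\ref{prop2.15}: passing to the associated graded algebra for the word-length filtration turns $u_i^2=\beta u_i$ into $u_i^2=0$, exhibiting $\operatorname{gr}{\cal{IP}}_n^{(\beta)}$ as a graded quotient of ${\cal{NP}}_n\otimes_\Q\Q[\beta]$, which by Proposition~\ref{prop2.12} is finite over $\Q[\beta]$ and vanishes in degrees exceeding ${n\choose 2}$; note also ${\cal{IP}}_n^{(0)}={\cal{NP}}_n$.)

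For the degree of the Hilbert polynomial, grade ${\cal{IP}}_n^{(\beta)}$ by word length with $\deg\beta=0$: by the basis description the $\Q[\beta]$-rank of the degree-$k$ piece of the associated graded equals the number of row strict semistandard tableaux with $k$ cells and entries in $\{1,\dots,n-1\}$, which by the previous paragraph is $0$ for $k>{n\choose 2}$, so $\deg\operatorname{Hilb}({\cal{IP}}_n^{(\beta)},t)\le{n\choose 2}$. Conversely the staircase $\delta_n$ does occur, and there its filling is forced antidiagonal by antidiagonal (the cells $(i,n-i)$ must all be $n-1$, the cells $(i,n-i-1)$ must all be $n-2$, and so on down to $(1,1)=1$), so exactly one tableau has ${n\choose 2}$ cells; its reading word is a basis element of word-length exactly ${n\choose 2}$, hence the top graded piece is one-dimensional and $\deg\operatorname{Hilb}({\cal{IP}}_n^{(\beta)},t)={n\choose 2}$ --- consistent with $\dim_{\Q}({\cal{NP}}_n)_{{n \choose 2}}=1$ from Proposition~\ref{prop2.12}.

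The real content is the first paragraph: turning the monoid-level uniqueness of Proposition~\ref{prop2.15} into $\Q[\beta]$-linear independence of the tableau words in the algebra, and verifying that a single well-defined power of $\beta$ is attached to each monomial (so that one truly has a basis, not just a spanning family). The invariant $\operatorname{rl}$ makes the $\beta$-bookkeeping unambiguous, and the independence mirrors the corresponding statement for ${\cal P}_n$; this is the step I would write out carefully. The remaining, purely combinatorial, points --- the bound $i+j\le n$ and the uniqueness of the maximal tableau --- are routine.
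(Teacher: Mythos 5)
Your reduction of every monomial to a power of $\beta$ times a tableau word, and the observation that row-and-column-strict fillings with entries in $\{1,\dots,n-1\}$ force the shape into $\delta_n$ (so that there are finitely many tableau words, of length at most ${n\choose 2}$, with a unique one of length exactly ${n\choose 2}$), is indeed the substance of the statement, and the paper offers nothing beyond this. But the step you single out as "the step I would write out carefully" --- that \emph{all} such tableau words form a $\Q[\beta]$-basis --- is not a technicality: it is false, and the paper says so explicitly just after Example~\ref{exam2.18}, where it exhibits row strict tableaux whose reading words are idplactic equivalent to zero and records the counts $(160,154)$ and $(1427,1197)$, i.e.\ $160$ tableaux in $\delta_5$ bounded by $4$ but $\dim({\cal {IP}}_5)=154$, and $1427$ versus $\dim({\cal {IP}}_6)=1197$. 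Proposition~\ref{prop2.15} only asserts that each idplactic class \emph{not containing zero} contains a unique tableau word; it does not assert that every tableau word lies in a nonzero class, nor that the surviving ones are $\Q[\beta]$-independent. Fortunately the proposition needs less than a basis: spanning by the finitely many tableau words already gives finite rank over $\Q[\beta]$, and the shape bound gives $\deg\operatorname{Hilb}\le{n\choose 2}$. What still requires an argument is that the top filtration piece is nonzero, since your justification ("its reading word is a basis element") rests on the false basis claim; the clean fix is to map to the idCoxeter quotient ${\cal {IC}}_n^{(\beta)}$, where the staircase reading word is a reduced word for $w_0$ and $u_{w_0}$ is a genuine basis element of degree ${n\choose 2}$ not expressible through shorter words, so the same holds upstairs.

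A secondary caution about your parenthetical argument: if $\operatorname{gr}{\cal{IP}}_n^{(\beta)}$ were a quotient of ${\cal{NP}}_n\otimes_\Q\Q[\beta]$, the Hilbert function of ${\cal{IP}}_n^{(\beta)}$ would be bounded coefficientwise by that of ${\cal{NP}}_n$; the paper's own tables violate this (degree~$3$ of ${\cal{IP}}_5$ is $22$ while that of ${\cal{NP}}_5$ is $19$ --- concretely, $u_4u_1u_4$ is a surviving tableau word for ${\cal {IP}}_5$ but is excluded from the basis of ${\cal{NP}}_5$ because $s_4s_1s_4$ is not reduced). So either that identification of the associated graded or the paper's conventions for ${\cal{NP}}_n$ needs more care than a parenthesis; I would not lean on it, and would instead run the finiteness and degree bound entirely through the tableau-word spanning set as in your first paragraph.
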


\begin{ex}\label{exam2.18}
\begin{gather*}
\operatorname{Hilb}({\cal {IP}}_3,t)=(1,2,2,1), \qquad \operatorname{Hilb}({\cal {IP}}_4,t)=
(1,3,6,7,5,3,1), \qquad \dim({\cal {IP}}_4)=26, \\
\operatorname{Hilb}({\cal {IP}}_5,t)=(1,4,12,22,30,32,24,15,9,4,1), \qquad \dim({\cal {IP}}_5) = 154, \\
\operatorname{Hilb}({\cal {IP}}_6,t)=(1,5,20,50,100,156,188,193,173,126,84,52,29,14,5,1), \\
 \dim({\cal {IP}}_6)=1197, \qquad
\dim({\cal {IP}}_7)= 9401.
\end{gather*}
\end{ex}

Note that for a given $n$ some words corresponding to strict semistandard
Young tableaux of size between~5 and ${n-1 \choose 2}$ are idplactic equivalent to zero. For example,
\begin{gather*}
\begin{matrix}
1 & 2 & 4\\
3 & 4 \end{matrix} \sim 31 42 4 \sim 31242 \sim 13242 \sim 13422 \sim 0,
\\
\begin{matrix}
1 & 2 & 3 \\
2 & 4 \\
4
\end{matrix} \sim 421 42 3 \sim 421243 \sim 412143 \sim 142413 \sim 124213 \sim
122431 \sim 0.
\end{gather*}
It seems an interesting {\it problem} to count the number of all row strict
semistandard Young tableaux contained in the staircase $(n,n-1, \ldots,2,1)$
and bounded by~$n$, as well as count the number the number of such tableaux
which are idplactic equivalent to zero. For $n=2$ these numbers are $(6,6)$, for
$n=3$ these numbers are $(26,26)$, for $n=4$ these numbers are $(160,154)$ and
for $n=5$ they are $(1427,1197)$.

\begin{de}\label{def2.19} The \textit{idplactic Temperly--Lieb algebra} ${\cal PTL}_n^{(\beta)}$
is def\/ine to be the quotient of the idplactic algebra ${\cal IP}_n^{(\beta)}$
by the two-sided ideal generated by the elements
\begin{gather*}
\{u_i u_j u_i, \, \forall \, i \not= j \}.
\end{gather*}
\end{de}

For example,
\begin{gather*}
\operatorname{Hilb}\big({\cal{PTL}}_4^{(0)},t\big) =(1,3,6,4,1)_{t},\qquad
\operatorname{Hilb}\big({\cal{PTL}}_5^{(0)},t\big) = (1,4,12,16,14,4,2)_t,\\
\operatorname{Hilb}\big({\cal{PTL}}_6^{(0)},t\big) = (1,5,20,40,60,46,32,10,4,1)_{t},\\
\operatorname{Hilb}\big({\cal{PTL}}_7^{(0)},t\big)=
(1,6,30,80,170,216,238,152,96,44,14,4,2)_{t}.
\end{gather*}
One can show that
\begin{gather*}
\deg_{t}
\operatorname{Hilb}\big({\cal{PTL}}_n^{(0)},t\big) = \left[\frac{n^2}{4} \right],
\end{gather*}
 and
\begin{gather*}
{\rm Coef\/f}_{t^{\max}}
\operatorname{Hilb}({\cal{PTL}}_n,t) =\begin{cases} 1, & \text{if $n$ is even},\\
2, & \text{if $n$ is odd}.
\end{cases}
\end{gather*}

\begin{de}\label{def2.20} The \textit{nilCoxeter algebra} ${\cal {NC}}_n$ is def\/ined to be the
quotient of the nilplactic algebra ${\cal {NP}}_n$ by the two-sided ideal
generated by elements $\{u_i u_j-u_j u_i,\, |i-j| \ge 2 \}$.
\end{de}

Clearly the nilCoxeter algebra ${\cal NC}_n$ is a quotient of the modif\/ied
plactic algebra ${\cal MP}_n$ by the two-sided ideal generated by the elements
 $\{u_i u_j -u_j u_i, \, |i-j| \ge 2 \}$.

\begin{de}\label{def2.21} The {\it idCoxeter algebra} ${\cal {IC}}_n^{(\beta)}$ is
def\/ined to be the quotient of the idplactic algebra
${\cal {IP}}_n^{(\beta)}$ by the two-sided ideal generated by the
elements $\{u_i u_j-u_j u_i,\, |i-j| \ge 2 \}$.
\end{de}

It is well-known, see, e.g., \cite{AL}, that the algebra
${\cal NC}_n$ and ${\cal IC}_n^{(\beta)}$
has dimension $n !$, and the elements $\{u_w := u_{i_{1}} \cdots u_{i_{\ell}} \}$, where
$w =s_{i_{1}} \cdots s_{i_{\ell}}$ is any reduced decomposition of
$w \in {\mathbb S}_n$, form a {\it basis} in the nilCoxeter and idCoxeter
algebras ${\cal NC}_n$ and ${\cal IC}_n^{(\beta)}$.

\begin{rem}\label{2.22} There is a common generalization of the algebras def\/ined
above which is due to S.~Fomin and C.~Greene~\cite{FG}.
 Namely, def\/ine generalized plactic algebra
${\widetilde {\cal P}}_n$ to be an associative algebra generated by elements
$u_1,\dots,u_{n-1}$, subject to the relations (PL2) and relations
\begin{gather}\label{equation2.3}
u_ju_i(u_i+u_j)=(u_i+u_j)u_ju_i,\qquad i < j.
\end{gather}
The relation \eqref{equation2.3} can be written also in the form
\begin{gather*}
u_j(u_iu_j-u_ju_i)=(u_iu_j-u_ju_i)u_i, \qquad i < j.
\end{gather*}
\end{rem}

\begin{Theorem}[\cite{FG}]\label{theorem2.23} For each pair of numbers
$1 \le i < j \le n$ define
\begin{gather*}
 A_{i,j}(x)= \prod_{k=j}^{i}(1+x u_k).
\end{gather*}
Then the elements $A_{i,j}(x)$ and $A_{i,j}(y)$ commute in the generalized
plactic algebra ${\widetilde {\cal P}}_n$.
\end{Theorem}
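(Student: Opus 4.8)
The plan is to reduce the statement to a purely local computation involving only the generators $u_i, u_{i+1}$ (indices differing by $1$) plus the trivial commutation $u_a u_b = u_b u_a$ for $|a-b| \ge 2$, and then to verify commutativity of $A_{i,j}(x)$ and $A_{i,j}(y)$ by induction on $j - i$. First I would expand $A_{i,j}(x) = (1 + x u_j)(1 + x u_{j-1}) \cdots (1 + x u_i)$ and note that the coefficient of $x^k$ is the noncommutative elementary polynomial $e_k(u_j, u_{j-1}, \ldots, u_i) = \sum_{j \ge a_1 > a_2 > \cdots > a_k \ge i} u_{a_1} u_{a_2} \cdots u_{a_k}$. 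Commutativity of $A_{i,j}(x)$ and $A_{i,j}(y)$ is then equivalent to the family $\{e_k\}_{k}$ being mutually commuting, so it suffices to show $[e_k(u_j,\ldots,u_i),\, e_l(u_j,\ldots,u_i)] = 0$ for all $k,l$.

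The base case $j - i = 1$ is the key local fact: for two generators $a = u_i < b = u_{i+1}$, relation (PL2) gives $b a (a + b) = (a+b) b a$, i.e. $e_2 = ba$ commutes with $e_1 = a + b$; and $e_1$ trivially commutes with itself, while $e_k = 0$ for $k \ge 3$, so the whole family commutes. (This is exactly the ``two-letter'' observation recalled in the introduction, and it is immediate from the rewritten form $u_j(u_i u_j - u_j u_i) = (u_i u_j - u_j u_i) u_i$ of \eqref{equation2.3}.) For the inductive step, write $A_{i,j}(x) = A_{i+1,j}(x)(1 + x u_i)$. The tricky commutations are between $u_i$ and $u_{i+1}$, which are governed by (PL2); the interaction of $u_i$ with $u_k$ for $k \ge i+2$ is just the commuting relation. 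The standard Fomin--Greene argument is to prove the stronger ``Cauchy-type'' identity that $A_{i,j}(x) A_{i,j}(y) = A_{i,j}(y) A_{i,j}(x)$ by manipulating the product $A_{i+1,j}(x)(1+xu_i)A_{i+1,j}(y)(1+yu_i)$: one pushes $(1+xu_i)$ to the right past $A_{i+1,j}(y)$, picking up correction terms supported on $u_i, u_{i+1}$, and then uses the induction hypothesis for $A_{i+1,j}$ together with the two-letter relation for the pair $(u_i, u_{i+1})$ to see that the expression is symmetric in $x \leftrightarrow y$. Concretely, I would establish and use the intertwining relation $(1 + x u_i) A_{i+1,j}(y) = A_{i+1,j}(y)(1 + x u_i) + (\text{terms that telescope symmetrically})$, exploiting that only $u_{i+1}$ among the generators in $A_{i+1,j}(y)$ fails to commute with $u_i$.

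The main obstacle will be bookkeeping the correction terms in the inductive step so that the required symmetry in $x$ and $y$ is transparent; the delicate point is that relation (PL1) (the relations $u_j u_i u_k = u_j u_k u_i$ and $u_i u_k u_j = u_k u_i u_j$ for $i < j < k$) is needed to move a single $u_i$ past a product like $u_{i+2} u_{i+1}$ correctly, and one must check that \emph{only} (PL1) and (PL2) — not the full Knuth relations with $u_i^2$-terms — are invoked, since ${\widetilde{\cal P}}_n$ is defined by (PL2) and \eqref{equation2.3} alone. I expect the cleanest route is the generating-function identity: show by induction on $j-i$ that $A_{i,j}(x)$ commutes with each generator $u_k$ appropriately twisted, and more precisely that $A_{i,j}(x) A_{i,j}(y)$ is fixed by the transposition swapping $x$ and $y$, reducing everything at each stage to the verified two-generator case. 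A self-contained alternative, which I would mention, is to appeal directly to the cited result of Fomin--Greene \cite{FG}, of which this Theorem is a restatement; but the inductive proof above is short enough to include and makes clear which relations are actually used.
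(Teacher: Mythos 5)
There is a genuine gap, and it is at the foundation of your reduction: you assume the ``trivial commutation $u_a u_b = u_b u_a$ for $|a-b| \ge 2$'', but no such locality relation holds in ${\widetilde {\cal P}}_n$ (nor even in the plactic algebra ${\cal P}_n$ itself, where $u_1u_3$ and $u_3u_1$ are distinct tableau words). The only tools available are the two-letter relations (PL2)/\eqref{equation2.3} and genuinely \emph{three}-letter relations of Knuth type, which let you interchange $u_i$ and $u_k$ only in the presence of a third generator $u_j$, $i<j<k$, standing in the correct position. Hence your claim that ``only $u_{i+1}$ among the generators in $A_{i+1,j}(y)$ fails to commute with $u_i$'' is false, the correction terms in your intertwining relation are not supported on $u_i,u_{i+1}$ alone, and the proposed induction peeling off $(1+xu_i)$ breaks down as soon as more than three generators are involved. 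You half-notice the tension when you remark that (PL1) is needed to move $u_i$ past $u_{i+2}u_{i+1}$ yet is not among the defining relations of ${\widetilde {\cal P}}_n$; that is precisely the point that must be confronted rather than deferred, and even granting (PL1) it does not give you two-letter commutativity.

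The paper resolves this by proving the more general Theorem~\ref{theorem2.26} for the generic plactic algebra, isolating the three relations \eqref{equation2.4}--\eqref{equation2.6}. The relation \eqref{equation2.5}, $(e_j,(e_i,e_k))=0$ for $i<j<k$, is the correct substitute for your locality assumption: it is the \emph{commutator} $(e_i,e_n)$, not the single generator, that can be transported past intermediate factors. The induction there peels off the factor with the \emph{largest} index, $A_{n+1}(x)=(1+xe_n)A_n(x)$, expands $[A_{n+1}(x),A_{n+1}(y)]$ via $[ab,c]=a[b,c]+[a,c]b$, moves each $(e_i,e_n)$ to the left using \eqref{equation2.5}, converts $(1+xe_n)(e_i,e_n)$ into $(e_i,e_n)(1+xe_i)$ using \eqref{equation2.4}, and uses \eqref{equation2.6} to kill the residual antisymmetric terms, leaving an expression proportional to $[A_n(x),A_n(y)]$, which vanishes by induction. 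To repair your argument, replace the locality claim by explicit verifications that \eqref{equation2.4}--\eqref{equation2.6} hold in ${\widetilde {\cal P}}_n$ and reorganize the induction around moving commutators rather than single generators.
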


\begin{cor} \label{cor2.24} Let $1 \le i < j \le n$ be a pair of numbers.
Noncommutative elementary polynomials
$e_a^{ij}:= \sum\limits_{j \ge i_1 \ge \cdots \ge i_k \ge i}u_{i_1} \cdots u_{i_a}$,
 $i \le a \le j$, generate a commutative subalgebra ${\cal C}_{i,j}$ of
rank $j-i+1$ in the plactic algebra ${\cal P}_n$.

Moreover, the algebra ${\cal C}_{1,n}$ is a maximal commutative subalgebra of
${\cal P}_n$.
\end{cor}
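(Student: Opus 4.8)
The plan is to deduce everything from Theorem~\ref{theorem2.23}. First observe that ${\cal P}_n$ is a quotient of the generalized plactic algebra $\wt{\cal P}_n$ of Remark~\ref{2.22}: indeed ${\cal P}_n$ satisfies (PL1) and (PL2), and (PL2) already forces \eqref{equation2.3}, because for $i<j$ the two identities of (PL2) give $u_ju_iu_i+u_ju_iu_j = u_iu_ju_i+u_ju_ju_i$, which is exactly $u_ju_i(u_i+u_j)=(u_i+u_j)u_ju_i$. Hence the assignment $u_k\mapsto u_k$ defines a surjective homomorphism $\wt{\cal P}_n\to{\cal P}_n$, and Theorem~\ref{theorem2.23} applies in ${\cal P}_n$: for each fixed pair $1\le i<j\le n$ the elements $A_{i,j}(x)$ and $A_{i,j}(y)$ commute. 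Expanding $A_{i,j}(x)=\sum_{a\ge 0}e_a^{ij}\,x^a$ with $e_0^{ij}=1$ — so that $e_a^{ij}$ is the degree-$a$ homogeneous component, i.e.\ precisely the noncommutative elementary polynomial of the statement — the commutation $A_{i,j}(x)A_{i,j}(y)=A_{i,j}(y)A_{i,j}(x)$ is an identity of polynomials in the two central indeterminates $x$, $y$ over ${\cal P}_n$; comparing coefficients of $x^ay^b$ yields $e_a^{ij}e_b^{ij}=e_b^{ij}e_a^{ij}$ for all $a$, $b$. Thus the $e_a^{ij}$ pairwise commute and generate a commutative subalgebra ${\cal C}_{i,j}\subseteq{\cal P}_n$.

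For the rank I would use the abelianization map $\pi\colon{\cal P}_n\to\Z[t_1,\dots,t_{n-1}]$, $u_k\mapsto t_k$, which is well defined since (PL1) and (PL2) become trivial once the $u_k$ commute. Then $\pi(e_a^{ij})=e_a(t_i,t_{i+1},\dots,t_j)$, the ordinary elementary symmetric polynomial, and $e_1(t_i,\dots,t_j),\dots,e_{j-i+1}(t_i,\dots,t_j)$ are algebraically independent. Any polynomial relation among the $e_a^{ij}$ inside ${\cal P}_n$ would map under $\pi$ to a relation among algebraically independent elements, hence must be trivial; therefore ${\cal C}_{i,j}$ is a polynomial algebra on the $j-i+1$ generators $e_1^{ij},\dots,e_{j-i+1}^{ij}$, i.e.\ a commutative subalgebra of rank $j-i+1$.

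The remaining — and, I expect, hardest — point is that ${\cal C}_{1,n}$ is a \emph{maximal} commutative subalgebra. Here I would leave Theorem~\ref{theorem2.23} behind and work with the tableau-word basis of ${\cal P}_n$ (Proposition~\ref{prop2.2}) together with the Robinson--Schensted insertion description of the product: writing a candidate $y=\sum_T c_T\,[T]$ over tableau words, right multiplication by $e_k^{1,n}=e_k(u_1,\dots,u_{n-1})$ adjoins a vertical $k$-strip to $T$ by column insertion (a Pieri-type rule), while left multiplication by $e_k^{1,n}$ is controlled by the dual insertion, and under these two actions ${\cal C}_{1,n}$ is exactly the image of the ring of symmetric polynomials under the classical plactic embedding $e_k\mapsto e_k(u)$. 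One then has to show that if $y$ commutes with every $e_k^{1,n}$ then $c_T$ depends only on the shape of $T$ and the resulting function of shapes is symmetric. A natural strategy is a double-centralizer argument: order tableaux first by shape and then lexicographically by their column reading word, multiply $y$ on the right by large powers of $e_1^{1,n}=u_1+\dots+u_{n-1}$ to spread its support across all shapes, and compare highest terms of $y\,e_k^{1,n}$ and $e_k^{1,n}\,y$ to propagate constraints on the $c_T$. The genuine obstacle is the bookkeeping in this last step — controlling how left and right vertical-strip insertions interfere on an arbitrary linear combination of tableaux — which is precisely where the fine combinatorics of the plactic monoid, rather than the single commutation relation of Theorem~\ref{theorem2.23}, is indispensable; the first two assertions of the Corollary, by contrast, are purely formal consequences of that theorem.
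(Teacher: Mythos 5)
Your treatment of the first two assertions is correct and is exactly the intended derivation: the observation that (PL2) already forces relation \eqref{equation2.3}, so that ${\cal P}_n$ is a quotient of $\wt{\cal P}_n$ and the conclusion of Theorem~\ref{theorem2.23} descends to it; the extraction of $[e_a^{ij},e_b^{ij}]=0$ by comparing coefficients of $x^a y^b$ in the identity $A_{i,j}(x)A_{i,j}(y)=A_{i,j}(y)A_{i,j}(x)$; and the abelianization $u_k\mapsto t_k$, under which the $e_a^{ij}$ map to the ordinary elementary symmetric polynomials in $t_i,\dots,t_j$ and are therefore algebraically independent, giving the rank $j-i+1$. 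The paper states the Corollary with no proof at all, so these details (in particular the quotient observation and the rank computation) are a genuine and welcome addition.

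The gap is the maximality of ${\cal C}_{1,n}$, which you sketch but do not prove. What is required is that the centralizer of the family $\{e_k(u_1,\dots,u_{n-1})\}_k$ in ${\cal P}_n$ be no larger than ${\cal C}_{1,n}$ itself, and your outline stops precisely where the work begins: you assert that commutation with all $e_k^{1,n}$ forces the coefficients $c_T$ of a centralizing element to depend only on the shape of $T$ and to define a symmetric function of shapes, but no mechanism is supplied for either claim, and the proposed comparison of highest terms under the left and right Pieri-type actions is not carried out --- controlling how those two vertical-strip insertions interact on an arbitrary linear combination of tableaux is, as you yourself concede, the entire difficulty. To be fair, the paper also asserts maximality without argument, so you are not overlooking a proof that the source provides; but as a self-contained verification of the Corollary as stated, your proposal establishes only its first paragraph, and the maximality claim remains open in your write-up.
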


To establish Theorem~\ref{theorem2.23}, we are going to prove more general result. To start
 with, let us def\/ine {\it generic plactic algebra} ${\mathfrak{P}}_n$.

\begin{de}\label{def2.25} The {\it generic plactic algebra ${\mathfrak{P}}_n$} is an
associative algebra over $\Z$ generated by
$\{e_1,\dots, e_{n-1} \}$ subject to the set of relations
\begin{alignat}{3}\label{equation2.4}
 & e_j (e_i,e_j) = (e_i,e_j) e_i, \qquad && \text{if}\quad i < j,&
\\
\label{equation2.5}
 &(e_j,(e_i,e_k)) = 0, \qquad && \text{if} \quad i < j < k,&
\\
\label{equation2.6}
& (e_j,e_k)(e_i,e_k)=0,\qquad && \text{if}\quad i < j < k.&
\end{alignat}
Hereinafter we shell use the notation $(a,b):= [a,b]:=a b-b a$.
\end{de}

{\sloppy Clearly seen that relations \eqref{equation2.4}--\eqref{equation2.6} are consequence of the plactic
relations~(PL1) and~(PL2), but not vice versa.

}

\begin{Theorem}\label{theorem2.26} Define
\begin{gather*}
A_{n}(x)= \prod_{k=j}^{1}(1+x e_k).
\end{gather*}
Then the elements $A_{n}(x)$ and $A_{n}(y)$ commute in the generic
plactic algebra ${\mathfrak{P}}_n$.
Moreover the elements $A_n(x)$ and $A_n(y)$ commute if and only if the
generators $\{e_1,\ldots,e_{n-1} \}$ satisfy the relations
\eqref{equation2.4}--\eqref{equation2.6}.
\end{Theorem}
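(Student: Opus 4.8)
The plan is to prove the stated equivalence; the first assertion is then the case of $\mathfrak{P}_n$ itself. Write $A_n(x)=\sum_{a\ge0}\sigma_a x^a$, where $\sigma_a=\sum_{n-1\ge i_1>\cdots>i_a\ge1}e_{i_1}\cdots e_{i_a}$ is the $a$-th noncommutative elementary polynomial in $e_{n-1},\dots,e_1$ (so $\sigma_0=1$ and $\sigma_a=0$ for $a\ge n$). Since $x,y$ are central, $A_n(x)A_n(y)=A_n(y)A_n(x)$ holds if and only if $[\sigma_a,\sigma_b]=0$ for all $a,b$; thus the theorem says precisely that the $\sigma_a$ pairwise commute exactly when \eqref{equation2.4}--\eqref{equation2.6} hold.

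For ``relations $\Rightarrow$ commutation'' I would induct on $n$. For $n=2$ there is nothing to prove, and for $n=3$ the only nonautomatic coefficient comparison is that of $x^2y$ against $xy^2$, which reads $[\,e_1+e_2,\,e_2e_1\,]=0$; an elementary expansion identifies this with \eqref{equation2.4} for the pair $(1,2)$, while \eqref{equation2.5}--\eqref{equation2.6} are vacuous. For the step, factor $A_n(x)=B(x)(1+xe_1)$ with $B(x)=\prod_{k=n-1}^{2}(1+xe_k)$; the subalgebra on $e_2,\dots,e_{n-1}$ is a quotient of $\mathfrak{P}_{n-1}$, so $B(x)B(y)=B(y)B(x)$ by the inductive hypothesis. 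Put $d_j:=(e_1,e_j)$ for $2\le j\le n-1$. Relation \eqref{equation2.5} (with bottom index $1$) gives $e_md_j=d_je_m$ for $1<m<j$; carrying the extra factor $1+xe_1$ rightward through $B(y)$ and using this repeatedly yields the exchange lemma
\begin{gather*}
(1+xe_1)\,B(y)=B(y)\,(1+xe_1)+xy\sum_{j=2}^{n-1}B^{(j)}(y)\,d_j ,
\end{gather*}
where $B^{(j)}(y)$ denotes $B(y)$ with the factor $1+ye_j$ deleted.

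Substituting this into $A_n(x)A_n(y)=B(x)(1+xe_1)B(y)(1+ye_1)$ splits it as $B(x)B(y)(1+xe_1)(1+ye_1)$ plus $xy\,B(x)\sum_j B^{(j)}(y)d_j(1+ye_1)$. The first piece is $x\leftrightarrow y$ symmetric because $B(x)B(y)=B(y)B(x)$. In the second piece, \eqref{equation2.4} in the form $e_jd_j=d_je_1$ gives $d_j(1+ye_1)=(1+ye_j)d_j$, so the $j$-th summand equals $B^{(j)}(y)(1+ye_j)d_j$; transporting $1+ye_j$ back into position in $B(y)$ produces correction terms in which a commutator $(e_l,e_j)$ with $1<l<j$ meets $d_j$, and each such term vanishes by \eqref{equation2.6}, i.e.\ $(e_l,e_j)d_j=0$ (after one more use of \eqref{equation2.5} to slide $d_j$ past the intervening factors). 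Hence $B^{(j)}(y)(1+ye_j)d_j=B(y)d_j$, the second piece collapses to $xy\,B(x)B(y)\sum_j d_j$, which is again symmetric, and $A_n(x)A_n(y)=A_n(y)A_n(x)$ follows. Each of \eqref{equation2.4}--\eqref{equation2.6} is invoked exactly once per step, which is why the conditions for $n=2,3$ already force commutativity for all $n$; I expect the one genuine subtlety to be getting the exchange lemma and the re-insertion bookkeeping exactly right.

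For ``commutation $\Rightarrow$ relations'' I would read off \eqref{equation2.4}--\eqref{equation2.6} from the low-order coefficients of $A_n(x)A_n(y)-A_n(y)A_n(x)$: the coefficient of $x^2y$ is $[\sigma_1,\sigma_2]$, and sorting its monomials by their multiset of indices exhibits it as a signed combination of the left-hand sides of \eqref{equation2.4} and \eqref{equation2.5}, while \eqref{equation2.6} surfaces among the degree-$4$ coefficients such as that of $x^2y^2$. Since each relation in \eqref{equation2.4}--\eqref{equation2.6} involves at most three of the generators, it suffices to make this identification after restricting attention to two and to three of the $e_i$ at a time, i.e.\ in the cases $n=3$ and $n=4$, which reduce to direct computation.
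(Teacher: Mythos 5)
Your argument is correct and is essentially the paper's own proof reflected: the paper peels off the top factor $(1+xe_n)$ and pushes the commutators $(e_i,e_n)$ to the left using \eqref{equation2.5}, exchanges via \eqref{equation2.4}, and kills the reordering corrections via \eqref{equation2.6}, exactly as you do with the bottom factor $(1+xe_1)$ and the $d_j=(e_1,e_j)$; your exchange lemma and the collapse to the symmetric expression $xy\,B(x)B(y)\sum_j d_j$ are the mirror image of the paper's reduction to $\bigl(\sum_i(e_i,e_n)\bigr)[A_n(x),A_n(y)]$. One small correction in the converse sketch: the coefficient of $x^2y^2$ is $[\sigma_2,\sigma_2]=0$ identically, so \eqref{equation2.6} cannot surface there --- it is read off from $[\sigma_1,\sigma_3]$, i.e.\ the coefficient of $x^3y$, restricted to three generators (this is the paper's $(e_i+e_j+e_k,\,e_ke_je_i)=0\Leftrightarrow\eqref{equation2.6}$).
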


\begin{proof} For $n=2,3$ the statement of Theorem \ref{theorem2.26} is obvious. Now assume
that the statement of Theorem \ref{theorem2.26} is true in the algebra $\mathfrak{P}_n$. We
have to prove that the commutator $[A_{n+1}(x),A_{n+1}(y)]$ is equal to zero.
First of all, $A_{n+1}(x)=(1+ x e_n) A_{n}(x)$. Therefore
\begin{gather*}
 [A_{n+1}(x),A_{n+1}(y)] = (1+x e_n) [A_{n}(x),1+y e_n] A_n(y) -
[A_n(y), 1+x e_n] A_n(x).
\end{gather*}
Using the standard identity $[a b,c] =a [b,c]+[a,c] b$, one f\/inds that
\begin{gather*}
\frac{1}{x y} [A_{n}(x), 1+y e_n] = \sum_{i=}^{n-1} \prod_{a=n-1}^{i+1}
 (1+x e_a) (e_i,e_n) \prod_{a=i-1}^{1} (1+x e_a).
\end{gather*}
Using relations \eqref{equation2.4} we can move the commutator $(e_i,e_n)$ to
the left, since $i < a< n$, till we meet the term $(1+ x e_n)$. Using
relations \eqref{equation2.5} we see that $(1+x e_n) (e_i,n)= (e_i,n)(1+ x e_i)$. Therefore we come to the
following relation
\begin{gather*}
\frac{1}{x y} [A_{n}(x), 1+y e_n] \\
\qquad{} =\sum_{i=n-1}^{1} (e_i,e_n) \left(
(1+ x e_i) \prod_{a=n-1 \atop a \not= i}^{1} (1+x e_a) A_n(y) -
(1+ y e_i) \prod_{a=n-1 \atop a \not= i}^{1} (1+y e_a) A_n(x) \right).
\end{gather*}
Finally let us observe that according to the relation \eqref{equation2.6},
\begin{gather*}
 (e_i,e_n)\bigl( (1+x e_i)(1+x e_{n-1}) -(1+ x e_{n-1})(1+x e_i)\bigr) =
x^2 (e_i,e_n) (e_i, e_{n-1}) = 0.
\end{gather*}
Indeed,
\begin{gather*}
(e_i,e_n)(e_i,e_{n-1})= (e_i,e_{n}) e_i e_{n-1} - (e_i,e_{n}) e_{n-1}
e_{i} = e_n e_{n-1} (e_i,e_n) - e_{n-1} e_{n} (e_i,e_n) =0.
\end{gather*} Therefore
\begin{gather*}
\frac{1}{x y} [A_{n}(x), 1+y e_n] = \left(\sum_{i={n-1}}^{1} (e_i,e_n)
\right) [A_n(x),A_n(y)] =0
\end{gather*}
 according to the induction assumption.

Finally, if $i < j$, then $(e_i+e_j,e_j e_i) =0 \Longleftrightarrow \eqref{equation2.4}$;
if $i < j < k$, and the relations~\ref{equation2.4} hold, then $(e_i+e_j+e_k,
e_j e_i+e_k e_j+e_k e_i)=0 \Longleftrightarrow \eqref{equation2.5}$;
if $i < j < k$, and relations~\eqref{equation2.4} and~\eqref{equation2.5} hold, then $(e_i+e_j+e_k,
e_k e_j e_i) =0 \Longleftrightarrow \eqref{equation2.6}$;
the relations $(e_j e_i+e_k e_j+e_k e_i,e_k e_j e_i)=0$ are consequences of
 the above ones.
\end{proof}

Let $T$ be a semistandard tableau and $w(T)$ be the column reading word
corresponding to the tableau~$T$. Denote by $R(T)$ (resp. $IR(T)$) the set of
 words which are plactic (resp.\ idplactic) equivalent to $w(T)$.
Let ${\bf a}=(a_1,\dots,a_n) \in R(T)$, where $n:=|T|$
 (resp. ${\bf a}=(a_1,\dots,a_m) \in IR(T)$, where $m \ge |T|$).

\begin{de}[compatible sequences ${\bf b}$]\label{def2.27} Given a word ${\bf a} \in R(T)$
 (resp.~${\bf a} \in IR(T)$), denote by
$C(\bf a)$ (resp.~$IC(\bf a)$) the set of sequences of positive integers,
called compatible sequences,
${\bf b}:=(b_1 \le b_2 \le \cdots \le b_m)$ such that
\begin{gather*}
 b_i \le a_i, \qquad \text{and if} \quad a_i \le a_{i+1}, \qquad \text{then}\quad b_i < b_{i+1}.
\end{gather*}
\end{de}

Finally, def\/ine the set $C(T)$ (resp.~$IC(T)$) to be the union
$\bigcup C({\bf a})$ (resp. the union $\bigcup IC({\bf a})$),
 where ${\bf a}$ runs over all words which are plactic (resp. idplactic)
 equivalent to the word~$w(T)$.
\begin{ex}\label{exam2.28} Take $ T= \begin{matrix}2 &3 \cr 3 & \cr
\end{matrix}$.
The corresponding tableau
word is $w(T)=323$. We have $R(T)=\{232, 323 \}$ and $IR(T)= R(T) \bigcup
 \{2323, 3223, 3232, 3233, 3323, 32323,\dots \}$. Moreover,
\begin{gather*}
 C(T)= \left \{\begin{matrix} {\bf a}\colon & 232 & 323 & 323 & 323 & 323 \cr
{\bf b}\colon & 122 & 112 & 113 & 123 & 223
\end{matrix} \right \}, \\
IC(T)= C(T) \bigcup \left
\{\begin{matrix} {\bf a}\colon & 2323 & 3223 & 3232 & 3233 & 3323 & 32323 \cr
 {\bf b}\colon & 1223 & 1123 & 1122 & 1123 & 1223 & 11223
\end{matrix}\right\}.
\end{gather*}
\end{ex}

Let $ {\mathfrak P}:= {\mathfrak{P}}_n := \{p_{i,j}, \, i \ge 1,\,j \ge 1,\, 2 \le i+j \le n+1 \}$
be the set of (mutually commuting) variables.

\begin{de}\label{def2.29}\quad
\begin{enumerate}\itemsep=0pt
\item[(1)] Let $T$ be a semistandard tableau, and
$n:=|T|$. Def\/ine the double key polynomial ${\cal K}_{T}({\mathfrak P})$
corresponding to the tableau~$T$ to be
\begin{gather*}
{\cal K}_{T}(\mathfrak{P})= \sum_{{\bf b} \in C(T)} \prod_{i=1}^{n}
 p_{b_{i},a_i-b_i+1}.
\end{gather*}

\item[(2)] Let $T$ be a semistandard tableau, and $n:=|T|$. Def\/ine the
double key Grothendieck polynomial ${\cal {GK}}_{T}(\mathfrak{P})$
corresponding to the tableau $T$ to be
\begin{gather*}
{\cal {GK}}_{T}({\mathfrak P})= \sum_{{\bf b} \in IC(T)} \prod_{i=1}^{m}
 p_{b_{i},a_i-b_i+1}.
\end{gather*}
\end{enumerate}
\end{de}

In the case when $p_{i,j}= x_i+y_j$, $\forall\, i,j$, where $X= \{x_1,\ldots,x_n \}$ and $Y = \{y_1, \ldots,y_n \}$ denote two sets of variables, we will
 write ${\cal K }_{T}(X,Y)$, ${\cal GK}_{T}(X,Y),\dots$, instead of
 ${\cal K}_{T}({\mathfrak P})$,
${\cal GK}_{T}({\mathfrak P}), \ldots $.

\begin{de}\label{def2.30} Let $T$ be a semistandard tableau, denote by $\alpha(T)=
(\alpha_1,\dots,\alpha_{n})$ the exponent of the {\it smallest} monomial
in the set $\big\{x^{\bf b}:= \prod\limits_{i=1}^{m} x_{i}^{b_{i}}, \,{\bf b} \in C(T) \big\}$
 with respect to the lexicographic order.
\end{de}
We will call the composition $\alpha(T)$ to be the bottom code of tableau $T$.

\section{Divided dif\/ference operators}\label{section3}

In this subsection we remind some basic properties of divided dif\/ference
operators will be put to use in subsequent sections. For more details,
see \cite{Ma1}.

Let $f$ be a function of the variables $x$ and $y$ (and possibly other
variables), and $\eta \not= 0$ be a~parameter. Def\/ine the {\it divided
difference operator $\partial_{xy}(\eta)$} as follows
\begin{gather*}
\partial_{xy}(\eta) f(x,y)={f(x,y)-f(\eta^{-1} y,\eta x)
\over x-\eta^{-1} y}.
\end{gather*}
Equivalently, $(x-\eta^{-1} y) \partial_{xy}(\eta)= 1-s_{xy}^{\eta}$,
where the operator $s_{xy}^{\eta}$ acts on the variables $(x,y, \ldots)$
according to the rule: $s_{xy}^{\eta}$ transforms the pair $(x,y)$ to
$(\eta^{-1} y,\eta x)$, and f\/ixes all other variables. We set by def\/inition,
 $s_{yx}^{\eta}:=s_{xy}^{{\eta^{-1}}}$.

The operator $\partial_{xy}(\eta)$ takes polynomials to polynomials and has
degree~$-1$. The case $\eta =1$ corresponds to the {\it Newton divided
difference operator} $\partial_{xy}:=\partial_{xy}(1)$.
\begin{lem}\label{lem3.1} \quad
\begin{enumerate}\itemsep=0pt
\item[$(0)$] $s_{xy}^{\eta} s_{xz}^{\eta \xi}=s_{yz}^{\xi} s_{xy}^{\eta}$,
 $s_{xy}^{\eta} s_{xz}^{\eta \xi} s_{yz}^{\xi}=
s_{yz}^{\xi} s_{xz}^{\eta \xi} s_{xy}^{\eta}$,

\item[$(1)$] $\partial_{yx}(\eta)= -\eta \partial_{xy}(\eta^{-1})$,
$s_{xy}^{\eta} \partial_{yz}(\xi)=
\eta^{-1} \partial_{xz}(\eta \xi) s_{xy}^{\eta}$,

\item[$(2)$] $\partial_{xy}(\eta)^2= 0$,

\item[$(3)$] $($three term relation$)$
$\partial_{xy}(\eta) \partial_{yz}(\xi)=
\eta^{-1} \partial_{xz}(\eta \xi) \partial_{xy}(\eta)+
\partial_{yz}(\xi) \partial_{xz}(\eta \xi)$.

\item[$(4)$] $($twisted Leibniz rule$)$
$ \partial_{xy}(\eta) (fg)=
 \partial_{xy}(\eta) (f) g+s_{xy}^{\eta}(f) \partial_{xy}(\eta) (g)$,

\item[$(5)$] $($crossing relations, cf. {\rm \cite[formula~(4.6)]{FK})}
\begin{itemize}\itemsep=0pt
\item $x \partial_{xy}(\eta)=\eta^{-1} \partial_{xy}(\eta) y+1,
y \partial_{xy}(\eta)= \eta \partial_{xy}(\eta) x-\eta$,

\item $\partial_{xy}(\eta) y \partial_{yz}(\xi)=
\partial_{xz}(\eta \xi) x \partial_{xy}(\eta)+
\xi^{-1}\partial_{yz}(\xi) z \partial_{xz}(\eta \xi)$,
\end{itemize}
\item[$(6)$] $\partial_{xy} \partial_{xz}
\partial_{yz} \partial_{xz}=0$.
\end{enumerate}
\end{lem}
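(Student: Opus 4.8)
The plan is to reduce everything to the defining identity $(x-\eta^{-1}y)\,\partial_{xy}(\eta)=1-s_{xy}^{\eta}$ together with two elementary facts about the twisted exchange operators: first that $\big(s_{xy}^{\eta}\big)^{2}=\mathrm{id}$ (tracking the action on the pair: $(x,y)\mapsto(\eta^{-1}y,\eta x)\mapsto(x,y)$), and second the braid relation in $(0)$. For the latter I would apply both sides to a generic tuple $(x,y,z,\dots)$ and check that the resulting two substitutions agree, a finite mechanical check in which the parameters $\eta,\eta\xi,\xi$ must be threaded correctly. Given $\big(s_{xy}^{\eta}\big)^{2}=\mathrm{id}$, the two displayed relations in $(0)$ are equivalent, so only one need be verified; one also extracts for free the ``partial conjugation'' $s_{xy}^{\eta}s_{yz}^{\xi}=s_{xz}^{\eta\xi}s_{xy}^{\eta}$, which is exactly what $(1)$ requires.

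Items $(1)$, $(2)$, $(4)$ are then short direct computations. For $(1)$: the first identity is a one-line rewriting of the definition of $\partial_{yx}(\eta)$ using $s_{yx}^{\eta}:=s_{xy}^{\eta^{-1}}$ and clearing the factor $-\eta$ between the two denominators; the second identity comes from writing $s_{xy}^{\eta}\partial_{yz}(\xi)f$ as a single quotient, pushing $s_{xy}^{\eta}$ through $s_{yz}^{\xi}$ by the partial conjugation above, and comparing denominators. For $(2)$, one checks that $s_{xy}^{\eta}$ negates both $1-s_{xy}^{\eta}$ and the linear factor $x-\eta^{-1}y$, so the image of $\partial_{xy}(\eta)$ consists of $s_{xy}^{\eta}$-invariant functions; since $\partial_{xy}(\eta)$ annihilates such functions, $\partial_{xy}(\eta)^{2}=0$. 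For $(4)$, expand $fg-s_{xy}^{\eta}(fg)=(f-s_{xy}^{\eta}f)g+s_{xy}^{\eta}(f)\,(g-s_{xy}^{\eta}g)$ and divide by $x-\eta^{-1}y$.

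The remaining identities are deduced from these. From the base values $\partial_{xy}(\eta)(x)=1$ and $\partial_{xy}(\eta)(y)=-\eta$ (immediate from the definition), the first bullet of $(5)$ follows by applying the Leibniz rule $(4)$ to $yf$ (resp.\ $xf$). For $(6)$, note that any composite of operators $\partial_{ab}$ is $\mathbb{Q}$-linear and, by $(4)$, commutes with multiplication by polynomials symmetric in $x,y,z$; hence it is determined by its action on the $\mathfrak{S}_3$-coinvariant algebra $\mathbb{Q}[x,y,z]/(\text{symmetric polynomials of positive degree})$, which is concentrated in degrees $\le 3$. Since $\partial_{xy}\partial_{xz}\partial_{yz}\partial_{xz}$ lowers degree by $4$, it sends every element of this quotient (a polynomial that would then have negative degree) to $0$, hence it is the zero operator. (It can also be obtained algebraically from $(2)$ and $(3)$.)

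The technical heart is the three-term relation $(3)$ and the second bullet of $(5)$ — the genuinely ``braid-type'' identities. I would prove $(3)$ by the $(1-s)$-calculus: multiply the claimed identity on the left by $x-\eta^{-1}y$, use $(x-\eta^{-1}y)\partial_{xy}(\eta)=1-s_{xy}^{\eta}$ and the crossing relations from the first bullet of $(5)$ to move linear factors past divided differences, and then invoke the braid and partial-conjugation relations of $(0)$ together with $\big(s_{ab}^{\gamma}\big)^{2}=\mathrm{id}$; the second bullet of $(5)$ follows the same way, or by multiplying $(3)$ through by the appropriate variable and applying the first bullet of $(5)$. The main obstacle is purely organizational: correctly matching the twist parameters ($\eta$ on $(x,y)$, $\xi$ on $(y,z)$, $\eta\xi$ on $(x,z)$) through each rearrangement, since one mismatched exponent collapses the computation. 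Specializing $\eta=\xi=1$ at the end recovers the classical Newton divided-difference identities and serves as a consistency check.
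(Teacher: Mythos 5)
Your proposal is correct, but there is nothing in the paper to compare it with: Section~3 states Lemma~\ref{lem3.1} with no proof at all, presenting it explicitly as a reminder of standard facts and referring the reader to Macdonald's notes~\cite{Ma1}. So you have supplied an argument where the paper supplies none, and the argument you sketch is the standard one: everything reduces to $(x-\eta^{-1}y)\,\partial_{xy}(\eta)=1-s_{xy}^{\eta}$, the involutivity $\big(s_{xy}^{\eta}\big)^{2}=\mathrm{id}$, and the twisted exchange relations of item $(0)$. I checked the delicate points: the partial conjugations $s_{xy}^{\eta}s_{yz}^{\xi}=s_{xz}^{\eta\xi}s_{xy}^{\eta}$ and $s_{xz}^{\eta\xi}s_{xy}^{\eta}=s_{yz}^{\xi}s_{xz}^{\eta\xi}$ both hold and are exactly what is needed to thread the parameters through $(1)$, $(3)$ and the second bullet of $(5)$, and the base values $\partial_{xy}(\eta)(x)=1$, $\partial_{xy}(\eta)(y)=-\eta$ combined with $(4)$ do give the first bullet of $(5)$. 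Two small practical remarks. For $(3)$, rather than multiplying through by $x-\eta^{-1}y$ (which forces you to commute that linear factor past $\partial_{yz}$ and $\partial_{xz}$ via the crossing relations), it is cleaner to expand all three products in the basis $\{1,\,s_{xy}^{\eta},\,s_{yz}^{\xi},\,s_{xz}^{\eta\xi},\,s_{xz}^{\eta\xi}s_{xy}^{\eta},\dots\}$ with rational-function coefficients and match coefficients; the only inputs are the conjugation relations above and the linear identity $x-(\eta\xi)^{-1}z=\big(x-\eta^{-1}y\big)+\eta^{-1}\big(y-\xi^{-1}z\big)$, after which all five coefficients agree. For $(6)$, your coinvariant-algebra argument is valid, but you should make explicit that the composite operator preserves $\Q[x,y,z]$ and is linear over the subring of symmetric polynomials (both follow from $(2)$ and $(4)$), since that is what licenses restricting attention to the coinvariant algebra, whose top degree is $3<4$.
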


Let $x_1,\ldots,x_n$ be independent variables, and let
$P_n:= \Q[x_1,\ldots,x_n]$. For each $i < j $ put
$\partial_{ij}:=\partial_{x_i x_j}(1)$ and $\partial_{ji}=-\partial_{ij}$.
 From Lemma~\ref{lem3.1} we have
\begin{gather*}
\partial_{ij}^2=0, \qquad
\partial_{ij} \partial_{jk}+\partial_{ki} \partial_{ij}+\partial_{jk}
\partial_{ki}=0, \\
\partial_{ij} x_j \partial_{jk}+\partial_{ki} x_i \partial_{ij}+
\partial_{jk} x_k \partial_{ki},
\qquad \text{if} \quad \text{$i$, $j$, $k$ are distinct}.
\end{gather*}

It is interesting to consider also an {\it additive or affine} analog
$\partial_{xy}[k]$
of the divided dif\/ference operators $\partial_{xy}(\eta)$, namely,
\begin{gather*}
\partial_{xy}[k](f(x,y))={f(x,y)-f(y-k,x+k) \over x-y+k}.
\end{gather*}
One has $\partial_{yx}[k] = -\partial_{xy}[-k]$, and
$\partial_{xy}[p] \partial_{yz}[q]=
\partial_{xz}[p+q] \partial_{xy}[p]+
\partial_{yz}[q] \partial_{xz}[p+q]$.

{\it A short historical comments in order.}
 As far as I know, the divided dif\/ference operator had been invented
by I.~Newton in/around the year~$1687$, see, e.g., \cite[Ref.~\protect{[142]}]{AL}. Since that time the literature concerning divided dif\/ferences, a plethora of its
generalizations and applications in dif\/ferent f\/ields of mathematics and
physics, grows exponentially and essentially is immense. To the best of our
knowledge, the f\/irst systematic use of the isobaric divided dif\/ference operators, namely $ \pi_i(f)= \partial_i(x_{i} f)$, and $\overline{\pi}_{i}:=\pi_{i}-1$, goes back to papers by M.~Demazure concerning the study of desingularization of Schubert varieties and computation of characters of certain modules which
is nowadays called {\it Demazure modules}, see, e.g.,~\cite{De} and the literature quoted therein;
the f\/irst systematic use and applications of divided dif\/ference operators to
the study of cohomology rings of partial f\/lag varieties $G/P$ and
description of Schubert classes inside the former, goes back to a paper by
I.N.~Berstein, I.M.~Gelfand and S.I.~Gelfand~\cite{BGG};
it is A.~Lascoux and M.-P.~Sch\"{u}tzenberger who had discovered
{\it a polynomial representative} of a Schubert class in the cohomology ring
of the type~$A$ complete f\/lag varieties, and developed a rich and beautiful
combinatorial theory of these polynomials, see~\cite{LS9,LS7,LSZ,LS4,LS3} for acquainting the reader with basic ideas and
results concerning the Lascoux--Sch\"{u}tzenber Schubert polynomials.

\section{Schubert, Grothendieck and key polynomials}\label{section4}

Let $w \in {\mathbb S}_n$ be a permutation, $X=(x_1,\dots,x_n)$ and
$Y=(y_1,\dots,y_n)$ be two sets of variables. Denote by
$w_0 \in {\mathbb S}_n$ the longest
permutation, and by $\delta_n= (n-1,n-2,\dots,1)$ the staircase
partition. For each partition $\lambda$ def\/ine
\begin{gather}\label{equation4.1}
R_{\lambda}(X,Y):=
\prod_{(i,j) \in \lambda} (x_i+y_j).
\end{gather}
 For $i=1,\dots,n-1$, let $s_i=(i,i+1) \in {\mathbb S}_n$ denote the simple
transposition that interchanges $i$ and $i+1$ and f\/ixes all other elements of
the set $\{1,\dots,n \}$. If $\alpha=
(\alpha_1,\dots,\alpha_i,\alpha_{i+1},\dots,\alpha_n)$ is a~composition, we
will write
\begin{gather*}
s_i\alpha = (\alpha_1,\dots,\alpha_{i+1},\alpha_{i},\dots,\alpha_n).
\end{gather*}

\begin{de}[cf.~\cite{AL} and the literature quoted therein]\label{def4.1}\quad
\begin{itemize}\itemsep=0pt
\item For each permutation $w \in {\mathbb S}_n$ {\it the double
Schubert polynomial} ${\s}_w(X,Y)$ is def\/ined to be
\begin{gather*}
 \partial_{w^{-1} w_{0}}^{(x)}(R_{\delta_{n}}(X,Y)).
\end{gather*}
\item
Let $\alpha$ be a composition.
 \textit{The key polynomials} $K[\alpha](X)$ are def\/ined recursively
 as follows:
if $\alpha$ is a partition, then $K[\alpha](X)= x^{\alpha}$;
otherwise, if $\alpha$ and $i$ are such that $\alpha_i < \alpha_{i+1}$, then
\begin{gather*}
 K[s_i(\alpha)](X)= \partial_{i} \big( x_i K[\alpha](X) \big).
\end{gather*}
\item \textit{The reduced key polynomials}
${\widehat K}[\alpha](X)$ are def\/ined
recursively as follows:
if $\alpha$ is a partition, then ${\widehat K}[\alpha](X)= K[\alpha](X)=
x^{\alpha}$;
otherwise, if $\alpha$ and $i$ are such that $\alpha_i < \alpha_{i+1}$, then
\begin{gather*}
 {\widehat K}[s_i(\alpha)](X)=
 x_{i+1} \partial_{i} \big( {\widehat K}[\alpha](X) \big).
\end{gather*}
\item For each permutation $w \in {\mathbb S}_n$ the {\it double
$\beta$-Grothendieck polynomial} ${\cal G}_w^{\beta}(X,Y)$ is def\/ined
recursively as follows:
if $w=w_0$ is the longest element, then ${\cal G}_{w_0}(X,Y)=
R_{\delta_{n}}(X,Y)$;
if $w$ and $i$ are such that $w_i > w_{i+1}$, i.e., $l(ws_i)=l(w)-1$, then
\begin{gather*}
{\cal G}_{ws_i}^{\beta}(X,Y)=
\partial_i^{(x)} \big( {(1+ \beta x_{i+1}) {\cal G}_w^{\beta} (X,Y) } \big).
\end{gather*}
\item For each permutation $w \in {\mathbb S}_n$ the {\it double dual
$\beta$-Grothendieck polynomial} ${\cal H}_w^{\beta}(X,Y)$ is def\/ined
recursively as follows:
if $w=w_0$ is the longest element, then ${\cal H}_{w_0}(X,Y) =
R_{\delta_{n}}(X,Y)$;
if $w$ and $i$ are such that $w_i > w_{i+1}$, i.e., $l(ws_i)=l(w)-1$, then
\begin{gather*}
{\cal H}_{ws_i}^{\beta}(X,Y)=
 (1+ \beta x_{i}) \partial_i^{(x)} \big( {\cal H}_w^{\beta}(X,Y) \big).
 \end{gather*}
\item \textit{The key $\beta$-Grothendieck polynomials} $\operatorname{KG}[\alpha](X;\beta)$ are def\/ined recursively as follows\footnote{In the case $\beta= -1$ divided dif\/ference operators
$D_i:=\partial_{i}(x_i-x_i x_{i+1})$ \cite[formula~(6)]{L3} had been
used by A.~Lascoux to describe the transition on Grothendieck polynomials, i.e.,
stable decomposition of any Grothendieck polynomial corresponding to a~permutation $w \in {\mathbb{S}}_n$. into a sum of Grasmannian ones
corresponding to a collection of {\it Grasmannin} permutations
$v_{\lambda} \in {\mathbb{S}}_{\infty}$, see~\cite{L3} for details. The above
mentioned operators~$D_i$ had been used in~\cite{L3} to construct a basis
$\Omega_{\alpha} \vert \alpha \in \Z_{\ge 0}$ that deforms the basis which
is built up from the Demazure (known also as key) polynomials. Therefore
polynomials $\operatorname{KG}[\alpha](X;\beta= -1)$ coincide with those introduced by A.~Lascoux in~\cite{L3}.
 In \cite{RY} the authors give a conjectural construction for polynomials
$\Omega_{\alpha}$ based on the use of {\it extended Kohnert moves}, see, e.g., \cite[Appendix by N.~Bergeron]{Ma} for def\/inition of the Kohnert moves. We
state {\it conjecture} that
\begin{gather*}
J_{\alpha}^{(\beta)} = \operatorname{KG}[\alpha](X;\beta),
\end{gather*}
where polynomials $J_{\alpha}^{\beta)}$ are def\/ined in \cite{RY} using the
$K$-theoretic versions of the Kohnert moves. For $\beta = -1$ this Conjecture
has been stated by C.~Ross and A.~Yong in \cite{RY}. It seems an interesting
problem to relate the $K$-theoretic Kohnert moves with certain moves of f\/irst
introduced in~\cite{FK1}.}:
if $\alpha$ is a partition, then $\operatorname{KG}[\alpha](X;\beta)= x^{\alpha}$;
otherwise, if $\alpha$ and $i$ are such that $\alpha_i < \alpha_{i+1}$, then
\begin{gather*}
 \operatorname{KG}[s_i(\alpha)](X;\beta)=
\partial_i \big( (x_i+\beta x_i x_{i+1}) \operatorname{KG}[\alpha](X;\beta) \big).
\end{gather*}
\item \textit{The reduced key $\beta$-Grothendieck polynomials}
${\widehat {\operatorname{KG}}}[\alpha](X;\beta)$ are def\/ined recursively as follows:
if $\alpha$ is a partition, then ${\widehat {\operatorname{KG}}}[\alpha](X;\beta)= x^{\alpha}$;
otherwise, if~$\alpha$ and~$i$ are such that $\alpha_i < \alpha_{i+1}$, then
\begin{gather*}
{\widehat {\operatorname{KG}}}[s_i(\alpha)](X;\beta)=
 (x_{i+1}+\beta x_i x_{i+1}) \partial_i
\big( {\widehat {\operatorname{KG}}}[\alpha](X;\beta) \big).
\end{gather*}
\end{itemize}
\end{de}

For brevity, we will write $\operatorname{KG}[\alpha](X)$ and ${\widehat {\operatorname{KG}}}[\alpha](X)$ instead of $\operatorname{KG}[\alpha](X;\beta)$ and ${\widehat {\operatorname{KG}}}[\alpha](X;\beta)$.

\begin{rem}\label{rem4.2} We can also introduce polynomials ${\cal Z}_w$, which are
def\/ined recursively as follows:
if $w=w_0$ is the longest element, then ${\cal Z}_{w_0}(X)=
x^{\delta_{n}}$;
if~$w$ and~$i$ are such that $w_i > w_{i+1}$, i.e., $l(ws_i)=l(w)-1$, then
\begin{gather*}
 {\cal Z}_{ws_i}(X)=
\partial_i \big( (x_{i+1}+x_ix_{i+1}) {\cal Z}_w(X) \big).
\end{gather*}
However, one can show that
\begin{gather*}
{\cal Z}_w(x_1,\dots,x_n)=
(x_1 \cdots x_n)^{n-1}{\cal G}_{w_{0}w w_{0}}\big(x_n^{-1},\dots,x_1^{-1}\big).
\end{gather*}
\end{rem}

\begin{Theorem}\label{theorem4.3}\sloppy The polynomials ${\s}_w(X,Y)$, $K[\alpha](X)$,
$\widehat {K}[\alpha](X)$, ${\cal G}_w(X,Y)$, ${\cal H}_w(X,Y)$,
$\operatorname{KG}[\alpha](X)$ and $\widehat {\operatorname{KG}}[\alpha](X)$
 have nonnegative integer coefficients.
\end{Theorem}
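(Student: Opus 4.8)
The plan is to prove nonnegativity in two batches handled by different mechanisms. The ``Schubert/Grothendieck batch'' $\mathfrak{S}_w$, $\mathcal G_w^\beta$, $\mathcal H_w^\beta$ is produced by applying bare divided differences $\partial_i$ (possibly dressed by a factor $1+\beta x_i$ or $1+\beta x_{i+1}$) to a top object, $R_{\delta_n}(X,Y)$ or $x^{\delta_n}$; the ``key batch'' $K[\alpha]$, $\widehat K[\alpha]$, $\operatorname{KG}[\alpha]$, $\widehat{\operatorname{KG}}[\alpha]$ is produced by the Demazure-type operators $\pi_i=\partial_i\circ x_i$, $\overline\pi_i=x_{i+1}\partial_i$ and their $\beta$-deformations. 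It is worth noting at the outset that no purely formal ``positivity is preserved'' argument can work: $\partial_i x^a$ and $\pi_i x^a$ acquire negative coefficients whenever $a_{i+1}-a_i\ge 2$, so the proof must carry a structural property of the intermediate polynomials ensuring this does not happen at the step where the operator is applied.

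For $\mathfrak{S}_w(X,Y)$, $\mathcal G_w^\beta(X,Y)$ and $\mathcal H_w^\beta(X,Y)$ I would appeal to their combinatorial expansions as sums over pipe dreams (RC-graphs) of products of factors $x_i+y_j$, with nonnegative powers of $\beta$ in the Grothendieck cases, following Billey--Jockusch--Stanley, Fomin--Kirillov, Fomin--Stanley and Lascoux--Sch\"utzenberger. In the language of the present paper this says that they coincide, for a suitable tableau $T$ and after the substitution $p_{ij}=x_i+y_j$, with the double key- and key-Grothendieck polynomials of Definition~\ref{def2.29}; and that identification is exactly what the decomposition of the (rectangular) Cauchy kernel in Sections~\ref{section5}--\ref{section6} provides, cf.\ Theorem~\ref{theorem6.12}. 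Since $\mathcal K_T(\mathfrak{P})=\sum_{\mathbf b\in C(T)}\prod_i p_{b_i,a_i-b_i+1}$ and $\mathcal{GK}_T(\mathfrak{P})=\sum_{\mathbf b\in IC(T)}\prod_i p_{b_i,a_i-b_i+1}$ lie in $\N[\mathfrak{P}]$ by construction, setting $p_{ij}=x_i+y_j$ puts them in $\N[\beta][X,Y]$.

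For the key batch I would run an induction along the recursions of Definition~\ref{def4.1}, but with a strengthened hypothesis that propagates: $K[\alpha]$ equals the Demazure character $\pi_{v_\alpha}\big(x^{\alpha^+}\big)$ (the weight generating function of the Demazure crystal $B_{v_\alpha}(\alpha^+)$), $\widehat K[\alpha]$ equals the corresponding Demazure atom $\overline\pi_{v_\alpha}\big(x^{\alpha^+}\big)$, and $\operatorname{KG}[\alpha]$, $\widehat{\operatorname{KG}}[\alpha]$ are their $K$-theoretic deformations governed by Lascoux's divided differences $D_i$. Granting this, nonnegativity is automatic --- each is a nonnegative sum of monomials $x^{\operatorname{wt}(b)}$ by the Lascoux--Sch\"utzenberger tableau description, or by Kohnert's theorem --- and the support property one needs is built in: whenever the recursion calls for $\pi_i$ because $\alpha_i<\alpha_{i+1}$, every monomial $x^a$ occurring in $K[\alpha]$ has $a_{i+1}-a_i\le 1$, so that $\pi_i x^a$, $\overline\pi_i x^a$, $\partial_i\big(x_i(1+\beta x_{i+1})x^a\big)$ and $x_{i+1}(1+\beta x_i)\partial_i x^a$ are nonnegative combinations of monomials and land in the character/atom attached to $s_i\alpha$.

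The main obstacle is precisely pinning down this support (crystal) invariant and verifying that the recursion of Definition~\ref{def4.1} matches the Demazure, respectively pipe-dream, recursion step by step --- routine once the dictionary is in place, but not a triviality. If one prefers to avoid crystal theory, a self-contained alternative is to prove simultaneously, by downward induction on $\ell(w)$, that $\mathcal G_w^\beta(X,Y)$ is ``$w$-compatible'', i.e.\ every monomial it contains has $a_i\ge a_{i+1}$ whenever $w(i)>w(i+1)$ (visibly true for $w=w_0$ and preserved by the recursive step, which then makes each $\partial_i^{(x)}$ occurring in it manifestly positivity-preserving), together with the analogous bookkeeping for the dressed operators of the key batch; the plain key and Schubert polynomials then fall out as the $\beta=0$, respectively leading-term, specializations.
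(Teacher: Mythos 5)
Your first batch is fine and is in substance the paper's own argument: the paper derives nonnegativity of $\s_w(X,Y)$, ${\cal G}_w(X,Y)$, ${\cal H}_w(X,Y)$ (and, via Theorems~\ref{theorem5.10}, \ref{theorem5.29} and \ref{theorem4.10}, of the key batch as well) from the decompositions of the Cauchy kernel in the various quotients of the plactic algebra, where every coefficient is by construction a sum over compatible sequences of products of $p_{ij}=x_i+y_j$, hence lies in $\N[\beta][X,Y]$.

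The genuine gap is in the two ``support invariants'' you propose for making the divided-difference recursions manifestly positive; both are false. (i) It is not true that every monomial $x^a$ of $K[\alpha]$ satisfies $a_{i+1}-a_i\le 1$ at a descent $\alpha_i>\alpha_{i+1}$ of $\alpha$: take $\alpha=(0,2,0,2)$, so $K[\alpha]=\pi_1\pi_3\pi_2\big(x_1^2x_2^2\big)$ contains the monomial $x_1^2x_3^2$ (it is the lowest term of the atom $\widehat K[(2,0,2,0)]$ and $(2,0,2,0)\preceq(0,2,0,2)$), yet $\alpha_2=2>0=\alpha_3$ and $\pi_2\big(x_1^2x_3^2\big)=-x_1^2x_2x_3$. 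Positivity of $\pi_iK[\alpha]$ is saved only because $x_1^2x_3^2$ sits in the \emph{full} $i$-string $x_1^2\big(x_2^2+x_2x_3+x_3^2\big)\subset K[(0,2,0,2)]$, which is $\pi_2$-invariant; i.e.\ one needs the Demazure-crystal string property (each $i$-string meets the crystal in nothing, its top element, or the whole string), not a monomial-wise bound. (ii) The ``$w$-compatibility'' claim for the self-contained alternative fails already for Schubert polynomials, using the paper's own table: $\s_{1432}=x_1^2x_2+x_1^2x_3+x_1x_2^2+x_2^2x_3+x_1x_2x_3$ contains $x_1^2x_3$, whose exponent has $a_2=0<1=a_3$ although $w=1432$ has the descent $w(2)>w(3)$; again $\partial_2$ is negative on that monomial and positivity comes only from the cancellation with $x_1^2x_2$. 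Neither invariant is ``preserved by the recursive step,'' so the induction as you set it up does not close. Finally, for $\operatorname{KG}[\alpha]$ and $\widehat{\operatorname{KG}}[\alpha]$ the appeal to Kohnert-type theorems is not available: the $K$-theoretic Kohnert rule is, as the paper itself notes in the footnote to Definition~\ref{def4.1}, only conjectural. The clean repair is to treat the key batch exactly as you treated the first batch: identify $K[\alpha]$, $\widehat K[\alpha]$, $\operatorname{KG}[\alpha]$, $\widehat{\operatorname{KG}}[\alpha]$ with the polynomials ${\cal K}_T$, ${\cal{GK}}_T$ of Definition~\ref{def2.29} via Theorem~\ref{theorem4.10}, whose defining sums over (id)compatible sequences are nonnegative by fiat.
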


The key step in a proof of Theorem~\ref{theorem4.3} is an observation that for a~given~$n$
the all algebras involved in the def\/inition of the polynomials listed in that theorem, happen to be a suitable quotients of the reduced plactic algebra~${\cal{P}}_n$, and can be extracted from the Cauchy kernel associated with the
algebra ${\cal{P}}_n$ (or that~${\cal{P}}_{n,n}$).

We will use notation ${\s}_w(X)$, ${\cal G}_w(X)$, \dots, for
polynomials ${\s}_w(X,0)$, ${\cal G}_w(X,0)$, \dots.


\begin{de}\label{def4.4} For each permutation $w \in {\mathbb S}_n$ the
\textit{Di~Francesco--Zinn-Justin polynomials} ${\cal {DZ}}_w(X)$ are def\/ined
recursively as follows:
if $w$ is the longest element in ${\mathbb S}_n$, then
${\cal {DZ}}_w(X)= R_{\delta_{n}}(X,0)$;
otherwise, if $w$ and $i$ are such that $w_i > w_{i+1}$, i.e.,
$l(ws_i)=l(w)-1$, then
\begin{gather*}
 {\cal {DZ}}_{ws_i}(X)= \big( (1+x_i) \partial_{i}^{(x)} +
\partial_i^{(x)}(x_{i+1}+x_ix_{i+1}) \big) {\cal {DZ}}_w(X).
\end{gather*}
\end{de}

\begin{con}\label{conj4.5} \quad
\begin{enumerate}\itemsep=0pt
\item[$(1)$] Polynomials ${\cal {DZ}}_w(X)$ have nonnegative integer
coefficients.
\item[$(2)$] For each permutation $w \in {\mathbb S}_n$ the polynomial
${\cal {DZ}}_w(X)$ is a linear combination of key polynomials
$K[\alpha](X)$ with nonnegative integer coefficients.
\end{enumerate}
\end{con}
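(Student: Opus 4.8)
The plan is to first identify the Di Francesco--Zinn-Justin polynomials with a specialization of the $T$-operator polynomials of Section~\ref{section4}, and then attack the key-positivity statement~(2), from which~(1) follows. A one-line application of the twisted Leibniz rule (Lemma~\ref{lem3.1}(4)), using $\partial_i(x_{i+1})=-1$ and $\partial_i(x_ix_{i+1})=0$, gives
\begin{gather*}
(1+x_i)\partial_i+\partial_i\circ(x_{i+1}+x_ix_{i+1})=-1+(1+2x_i+x_ix_{i+1})\partial_i=T_i^{(1,1,0,1)};
\end{gather*}
rewriting the constant $-1$ through the transposition $s_i$, the same operator equals $-s_i+(1+x_i)(1+x_{i+1})\partial_i={\cal A}_i+{\cal B}_i-s_i$, where ${\cal A}_i f=(1+x_i)\partial_i f$ is the dual $1$-Grothendieck operator and ${\cal B}_i f=(x_{i+1}+x_ix_{i+1})\partial_i f$ the reduced key $1$-Grothendieck operator of Definition~\ref{def4.1}. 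Hence ${\cal DZ}_w$ is the $\beta=\alpha=h=1$, $\gamma=0$ member of the construction~\eqref{equation1.7}, so part~(1) is a special instance of the (already conjectural) positivity ${\cal KN}_w^{(\beta,\alpha,\gamma,h)}\in\N[\alpha,\beta,\gamma,h][X_n]$; and since key polynomials have nonnegative coefficients (Theorem~\ref{theorem4.3}), part~(2) implies part~(1). It therefore suffices to prove~(2), equivalently that each $T_i^{(1,1,0,1)}$ maps the $\N$-cone spanned by the key polynomials $K[\alpha]$, $\alpha\subseteq\delta_n$, into itself.

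For~(2) I would induct on the codimension $\ell(w_0)-\ell(w)$, with inductive claim ${\cal DZ}_w=\sum_{\alpha\subseteq\delta_n}c^w_\alpha K[\alpha]$, $c^w_\alpha\in\N$; the base case is ${\cal DZ}_{w_0}=x^{\delta_n}=K[\delta_n]$, and the step amounts to expanding $T_i^{(1,1,0,1)}(K[\alpha])$ in the key basis for a descent index~$i$. The favorable input is that ${\cal A}_i$ and ${\cal B}_i$ are the operators producing the dual $1$-Grothendieck and the reduced key $1$-Grothendieck polynomials (positive by Theorem~\ref{theorem4.3}), and that the Demazure operator $\pi_i f=\partial_i(x_i f)$ sends $K[\alpha]$ to $K[\alpha]$ or $K[s_i\alpha]$. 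A short monomial computation shows that $T_i^{(1,1,0,1)}(x^a)$ has nonnegative coefficients whenever $a_i>a_{i+1}$: the monomial $x^{s_i a}$ occurs in $(1+x_i)(1+x_{i+1})\partial_i x^a$ with coefficient exactly $1$ and is cancelled by $-s_i$, the rest being manifestly nonnegative. On monomials with $a_i\le a_{i+1}$, however, $T_i^{(1,1,0,1)}(x^a)$ is negative, so positivity of ${\cal DZ}_{ws_i}$ cannot be read off term by term: the contributions of the ``ascent'' monomials of ${\cal DZ}_w$ must cancel globally against those of the ``descent'' ones. What one really wants here is a positivity-preserving straightening law for the $T_i$-action on the key basis, or --- better --- a combinatorial model for ${\cal DZ}_w$, a $K$-theoretic or decorated Kohnert-move/pipe-dream model in the spirit of the footnote to Definition~\ref{def4.1} and of~\cite{RY}, whose monomials organize into blocks, each block carrying exactly the monomials of a single key polynomial; the refinement-of-equivalence-relations principle emphasized in the Introduction then turns such a block decomposition into the desired nonnegative key expansion.

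A complementary route, which should handle~(1) more cleanly, goes through the plactic Cauchy kernel of Section~\ref{section6}: up to the monomial factor $x_1\cdots x_n$ and the reindexing $w\leftrightarrow w^{-1}w_0$, the polynomial ${\cal DZ}_w(X)$ is the $Y=0$ specialization of $G_w^A(X,Y)=T_w^{(x)}\big(\prod_{i+j\le n+1}(x_i+y_j)\big)$ with $A$ as in~\eqref{equation1.7} at $\beta=\alpha=1$, $\gamma=0$. Decomposing the rectangular Cauchy kernel over the tableau basis of ${\cal PF}_{n,n}$ expresses $\prod_{i+j\le n+1}(x_i+y_j)$ with structure coefficients the double key / key-Grothendieck polynomials ${\cal K}_T$, ${\cal GK}_T$ of Definition~\ref{def2.29} --- each a sum over compatible sequences $C(T)$, resp.\ $IC(T)$, of monomials in the $p_{ij}$, hence in $\N[X,Y]$ once $p_{ij}=x_i+y_j$ --- and one checks that the $T_w$-action on the kernel matches a positivity-preserving operation on this expansion (this is the mechanism behind Theorem~\ref{theorem4.3} for the Schubert, Grothendieck and key specializations). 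Setting $Y=0$ yields~(1); grouping the resulting compatible-sequence monomials by bottom code $\alpha(T)$ (Definition~\ref{def2.30}) and matching the groups with key polynomials for this specialization would yield~(2).

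In all of these the crux is the same: the genuine, non-local cancellation forced by the term $-\alpha=-1$ (equivalently $-s_i$) in $T_i^{(1,1,0,1)}$. For Schubert, Grothendieck and key polynomials this cancellation is controlled by well-understood combinatorial structures --- pipe dreams, Kohnert moves, the plactic and Cauchy-kernel machinery of Sections~\ref{section5.1} and~\ref{section6} --- but no such model compatible with the key filtration is presently available for the Di Francesco--Zinn-Justin polynomials at a general step of the recursion. Producing one --- equivalently, proving directly that $T_i^{(1,1,0,1)}$ preserves the nonnegative key cone --- is what I expect to be the essential difficulty, and is the reason the statement is posed as a conjecture.
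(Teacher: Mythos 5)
The statement you were asked about is Conjecture~\ref{conj4.5}: the paper itself offers no proof of either part, and both assertions remain open there. Your write-up is consistent with that. The preliminary reductions you carry out are correct and match the paper's own remarks: the simplification $(1+x_i)\partial_i+\partial_i\circ(x_{i+1}+x_ix_{i+1})=-1+(1+2x_i+x_ix_{i+1})\partial_i$ is exactly the operator $T_i^{(\beta,\alpha,\gamma,h)}$ at $\beta=\alpha=h=1$, $\gamma=0$, so ${\cal DZ}_w={\cal KN}_w^{(1,1,0)}$ as asserted in Remark~\ref{rem4.7}$(b)$, and part~(1) is thereby subsumed in the broader positivity conjecture for ${\cal KN}_w^{(\beta,\alpha,\gamma,h)}$ stated in the Introduction --- which is likewise only conjectural. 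Your observation that (2) implies (1), and your termwise check that $T_i^{(1,1,0,1)}(x^a)$ is nonnegative precisely when $a_i>a_{i+1}$ (with the single negative term $-x^{s_ia}$ cancelled inside $(1+x_i)(1+x_{i+1})\partial_i x^a$), are both correct.

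But, as you say yourself, none of this constitutes a proof. The essential step --- showing that $T_i^{(1,1,0,1)}$ maps the $\N$-cone spanned by the key polynomials into itself, or exhibiting a combinatorial (Kohnert/pipe-dream/Cauchy-kernel) model for ${\cal DZ}_w$ whose monomials decompose into key-polynomial blocks --- is exactly what is missing, and it is the reason the paper records the statement as a conjecture rather than a theorem. The Cauchy-kernel route you sketch in your third paragraph is plausible in outline (it is the mechanism behind Theorem~\ref{theorem4.3} and Theorems~\ref{theorem5.31} and~\ref{theorem7.2} for the Grothendieck and key specializations), but the DZ operator is not among the specializations the kernel decompositions of Sections~\ref{section5} and~\ref{section6} actually cover, because of the genuinely non-local cancellation forced by the $-s_i$ term; so the claim that ``the $T_w$-action on the kernel matches a positivity-preserving operation'' is precisely the unproven assertion, not a consequence of anything established in the paper. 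In short: no error, correct partial computations, honest identification of the obstruction --- but the statement remains, both for you and for the paper, a conjecture.
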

As for def\/i
nition of the double Di Francesco--Zinn-Justin polynomials
${\cal {DZ}}_w(X,Y)$ they are well def\/ined, but may have negative coef\/f\/icients.


 Let $\beta$ and $\alpha$ be two parameters, consider divided
dif\/ference operator
\begin{gather*}
T_i :=T_{i}^{\beta,\alpha} = -\beta +((\beta+\alpha) x_i +1 +\beta \alpha x_{i} x_{i+1}) \partial_{i,i+1}.
\end{gather*}
\begin{de}\label{def4.6}
Let $w \in {\mathbb{S}}_n$, def\/ine {\it Hecke--Grothendieck} polynomials
${\cal{KN}}_{w}^{\beta,\alpha}(X_n)$ to be
\begin{gather*}
 {\cal{KN}}_{w}^{(\beta,\alpha)}(X_n) := T_{w}^{\beta,\alpha}
\big(x^{\delta_{n}}\big),
\end{gather*}
where as before $x^{\delta_{n}}:= x_{1}^{n-1} x_{2}^{n-2} \cdots x_{n-1}$;
if $u \in {\mathbb{S}}_n$, then set
\begin{gather*}
 T_{u}^{\beta,\alpha} := T_{i_{1}}^{\beta,\alpha} \cdots T_{i_{\ell}}^{\beta,\alpha},
\end{gather*}
where $w =s_{i_{1}} \cdots s_{i_{\ell}}$ is any reduced decomposition of a
permutation taken.

More generally, let $\beta$, $\alpha$ and $\gamma$ be parameters,
consider divided dif\/ference operators
\begin{gather*}
T_{i}:=T_{i}^{\beta,\alpha,\gamma} = - \beta +((\alpha+\beta+\gamma)
x_i +\gamma x_{i+1} + 1 \\
\hphantom{T_{i}:=T_{i}^{\beta,\alpha,\gamma} =}{} +(\beta+\gamma)(\alpha+ \gamma) x_{i} x_{i+1})
 \partial_{i,i+1},\qquad i=1,\ldots,n-1.
\end{gather*}
For a permutation $w \in {\mathbb{S}}_n$ def\/ine polynomials
\begin{gather*}
{\cal{KN}}_{w}^{(\beta,\alpha,\gamma)}(X_n):= T_{i_{1}}^{\beta,\alpha,\gamma} \cdots T_{i_{\ell}}^{\beta,\alpha,\gamma} \big(x^{\delta_{n}}\big),
\end{gather*}
where $w=s_{i_{1}} \cdots s_{i_{\ell}}$ is any reduced decomposition of~$w$.
\end{de}

\begin{rem}\label{rem4.7} A few comments in order.
\begin{enumerate}\itemsep=0pt
\item[$(a)$] The divided dif\/ference operators $\big\{T_{i}:=T_{i}^{(\beta,\alpha,\gamma)}, \,i=1,
\dots,n-1 \big\}$ satisfy the following relations
\begin{itemize}\itemsep=0pt
\item (Hecke relations)
\begin{gather*}
T_i^2 =(\alpha- \beta) T_{i} + \alpha \beta,
\end{gather*}
\item (Coxeter relations)
\begin{gather*}
T_{i} T_{i+1}T_{i}= T_{i+1} T_{i} T_{i+1},
T_{i}T_{j}=T_{j} T_{i},\qquad \text{if}\quad
|i-j| \ge 2.
\end{gather*}
\end{itemize}
Therefore the elements $T_{w}^{\beta,\alpha, \gamma}$ are well def\/ined for any
$w \in {\mathbb{S}}_n$.
\begin{itemize}\itemsep=0pt
\item (Inversion)
\begin{gather*}
(1+x T_i)^{-1} = \frac{1+(\alpha - \beta) x - x T_{i}}{(1-\beta x)(1 + \alpha x)}.
\end{gather*}
\end{itemize}
\item[$(b)$] Polynomials ${\cal{KN}}_{w}^{(\beta,\alpha,\gamma)}$ constitute a
common generalization of
\begin{itemize}\itemsep=0pt
\item the $\beta$-Grothendieck polynomials, namely, ${\cal{G}}_{w}^{(\beta)} = {\cal{KN}}_{w_{0} w^{-1}}^{(\beta, \alpha =0,\gamma=0)}$,

\item the Di Francesco--Zinn-Justin polynomials,
namely, ${\cal{DZ}}_{w} ={\cal{KN}}_{w}^{(\beta=\alpha=1,\gamma=0)}$,

\item the dual $\alpha$-Grothendieck polynomials,
 namely, ${\cal{KN}}_{w_{0} w^{-1}}^{(\beta=0, \alpha,\gamma=0)} ={\cal{H}}_{w}^{\alpha}(X)$.
 \end{itemize}
 \end{enumerate}
\end{rem}

\begin{pr}\label{prop4.8} \quad
\begin{itemize}\itemsep=0pt
\item $($Duality$)$ Let $w \in {\mathbb{S}}_n$, $\ell = \ell(w)$ denotes
its length, then $(\alpha \beta \not= 0)$
\begin{gather*}
{\cal{KN}}_{w}^{(\beta,\alpha)}(1) = (\beta \alpha)^{\ell}
{\cal{KN}}_{{w^{-1}}}^{(\alpha^{-1},\beta^{-1})}(1).
\end{gather*}

\item $($Stability$)$ Let $w \in \mathbb{S}_n$ be a permutation and
$w=s_{i_{1}} s_{i_{2}} \cdots s_{i_{\ell}}$ be any its reduced decomposition.
Assume that $ i_{a} \le n-3$, $\forall\, 1\le a \le \ell$, and define permutation $\widetilde{w} := s_{i_{1}+1} s_{i_{2}+1} \cdots s_{i_{\ell}+1} \in \mathbb{S}_n$. Then
\begin{gather*}
{\cal{KN}}_{w}^{(\beta,\alpha)}(1) = {\cal{KN}}_{\widetilde{w}}^{(\beta, \alpha)}(1).
\end{gather*}
\end{itemize}
\end{pr}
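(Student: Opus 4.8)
The statement splits into two independent claims; I would dispatch Stability first, it being much the easier, and then turn to Duality.

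\emph{Stability.} The plan here is a pure ``support'' argument. Since every index $i_a\le n-3$, the permutation $w$ lies in the parabolic subgroup generated by $s_1,\dots,s_{n-3}$, so $T_w^{(\beta,\alpha)}=T_{i_1}^{(\beta,\alpha)}\cdots T_{i_\ell}^{(\beta,\alpha)}$ involves only $x_1,\dots,x_{n-2}$ and fixes $x_{n-1},x_n$. Factoring $x^{\delta_n}=(x_1\cdots x_{n-2})^{2}\cdot x^{\delta_{n-2}}\cdot x_{n-1}$ (the staircase factor living in the variables $x_1,\dots,x_{n-2}$) and using that the symmetric monomial $(x_1\cdots x_{n-2})^{2}$ commutes with each $T_i$, $i\le n-3$ --- which follows from the twisted Leibniz rule, a function symmetric in $x_i,x_{i+1}$ being annihilated by $\partial_i$ and fixed by $s_i$ --- one gets ${\cal KN}_w^{(\beta,\alpha)}(X_n)=(x_1\cdots x_{n-2})^{2}\,x_{n-1}\cdot{\cal KN}_w^{(\beta,\alpha)}(x_1,\dots,x_{n-2})$. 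The same computation for $\widetilde w$, which lies in the parabolic subgroup generated by $s_2,\dots,s_{n-2}$ and fixes $x_1,x_n$, followed by the relabelling $x_{j+1}\mapsto x_j$ that carries $\widetilde w$ back to $w$, yields ${\cal KN}_{\widetilde w}^{(\beta,\alpha)}(X_n)=x_1^{\,n-1}\,(x_2\cdots x_{n-1})\cdot{\cal KN}_w^{(\beta,\alpha)}(x_2,\dots,x_{n-1})$. Setting all $x_i=1$ kills both prefactors and leaves the same number ${\cal KN}_w^{(\beta,\alpha)}(1,\dots,1)$ (in $n-2$ variables) on the two sides. The only points to check explicitly are that shifting all indices by one sends reduced words to reduced words (it is an isomorphism of parabolic Coxeter systems), and that $T_{\widetilde w}$ is well defined, which is precisely the Coxeter/Hecke content recorded in Remark~\ref{rem4.7}.

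\emph{Duality.} I would first repackage the identity algebraically. From the Hecke relation $T_i^2=(\alpha-\beta)T_i+\alpha\beta$ one reads off $T_i^{(\alpha,\beta)}=T_i^{(\beta,\alpha)}-(\alpha-\beta)$ and $T_i^{(\beta,\alpha)}T_i^{(\alpha,\beta)}=\alpha\beta=T_i^{(\alpha,\beta)}T_i^{(\beta,\alpha)}$, hence, as $\alpha\beta\ne0$,
\[
T_i^{(\alpha,\beta)}=\alpha\beta\,\bigl(T_i^{(\beta,\alpha)}\bigr)^{-1}.
\]
Since $T_u$ is independent of the chosen reduced word, this globalizes to $T_{w^{-1}}^{(\alpha,\beta)}=(\alpha\beta)^{\ell(w)}\bigl(T_w^{(\beta,\alpha)}\bigr)^{-1}$ for $w=s_{i_1}\cdots s_{i_\ell}$ reduced. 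Combining this with the fact that a uniform rescaling $x_i\mapsto\lambda x_i$ conjugates $T_i^{(\beta,\alpha)}$ into a scalar multiple of $T_i^{(\beta/\lambda,\alpha/\lambda)}$, and with the homogeneity of $x^{\delta_n}$, one finds that the assertion is equivalent to the parameter-free statement
\[
{\cal KN}_w^{(\beta,\alpha)}(1,\dots,1)=(\alpha\beta)^{{n \choose 2}}\,{\cal KN}_{w^{-1}}^{(\beta,\alpha)}\bigl((\alpha\beta)^{-1},\dots,(\alpha\beta)^{-1}\bigr).
\]
These formal moves only reshuffle the statement; they do not close it. What remains is a genuine $w\leftrightarrow w^{-1}$ symmetry of the (twisted) principal specialization.

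To establish that symmetry I see two routes. The cleaner one is to construct a symmetric non-degenerate bilinear form $\langle\,\cdot\,,\,\cdot\,\rangle$ on the finite-dimensional space spanned by the ${\cal KN}$-polynomials for which the divided-difference operators are ``adjoint'' in the form $\langle T_i^{(\beta,\alpha)}f,\,g\rangle=\langle f,\,\alpha\beta\,T_i^{(\alpha^{-1},\beta^{-1})}g\rangle$ and for which $\langle x^{\delta_n},\,g\rangle=g(1,\dots,1)$; then moving the $\ell$-fold product $T_w^{(\beta,\alpha)}$ across the form gives $\langle x^{\delta_n},\,T_w^{(\beta,\alpha)}x^{\delta_n}\rangle=(\alpha\beta)^{\ell(w)}\langle T_{w^{-1}}^{(\alpha^{-1},\beta^{-1})}x^{\delta_n},\,x^{\delta_n}\rangle$, which is the claim. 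Alternatively one expands ${\cal KN}_w^{(\beta,\alpha)}(1,\dots,1)$ as a weighted sum over Hecke words (or the compatible-sequence data of Definition~\ref{def2.27}) for $w$, and observes that word reversal --- a bijection between the data for $w$ and for $w^{-1}$ --- transforms the local weights precisely by swapping and inverting $\alpha$ and $\beta$, producing the global factor $(\alpha\beta)^{\ell(w)}$. In either approach the main obstacle is the same: pinning down the correct device --- the $K$-theoretic Poincaré pairing, together with the verification that $x^{\delta_n}$ represents the principal-specialization functional rather than the ``Schubert-basis constant term'' functional; respectively, the correct combinatorial model for ${\cal KN}_w(1,\dots,1)$ and the effect of reversal on it. Everything else is bookkeeping, which nevertheless requires some care since the ${\cal KN}_w$ are inhomogeneous.
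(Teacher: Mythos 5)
Your Stability argument is correct and complete. The factorization $x^{\delta_n}=(x_1\cdots x_{n-2})^2\,x^{\delta_{n-2}}\,x_{n-1}$, the observation that an $s_i$-symmetric factor passes through $T_i^{(\beta,\alpha)}$ (twisted Leibniz rule plus $\partial_i(f)=0$, $s_i(f)=f$), and the analogous factorization $x^{\delta_n}=x_1^{n-1}(x_2\cdots x_{n-1})\cdot x_2^{n-3}\cdots x_{n-2}$ for $\widetilde w$ reduce both sides to the same principal specialization of the $\mathbb{S}_{n-2}$ polynomial; the remaining checks (index shift is a Coxeter isomorphism of parabolics, so reduced words go to reduced words and $T_{\widetilde w}$ is well defined) are exactly the ones you name. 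The paper gives no proof of this proposition, so there is nothing to compare against, but this half stands on its own.

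The Duality half, however, is not proved. Your preliminary identities are correct --- $T_i^{(\alpha,\beta)}=T_i^{(\beta,\alpha)}-(\alpha-\beta)$, hence $T_i^{(\alpha,\beta)}=\alpha\beta\bigl(T_i^{(\beta,\alpha)}\bigr)^{-1}$ and $T_{w^{-1}}^{(\alpha,\beta)}=(\alpha\beta)^{\ell}\bigl(T_w^{(\beta,\alpha)}\bigr)^{-1}$, and the rescaling $R_\lambda T_i^{(\beta,\alpha)}R_\lambda^{-1}=\lambda^{-1}T_i^{(\lambda\beta,\lambda\alpha)}$ with $\lambda=(\alpha\beta)^{-1}$ does convert the assertion into
\begin{gather*}
{\cal{KN}}_w^{(\beta,\alpha)}(1,\dots,1)=(\alpha\beta)^{{n \choose 2}}\,{\cal{KN}}_{w^{-1}}^{(\beta,\alpha)}\bigl((\alpha\beta)^{-1},\dots,(\alpha\beta)^{-1}\bigr),
\end{gather*}
which I have checked is an equivalent restatement (and which the $n=3$ data in Appendix~(9), e.g.\ ${\cal{KN}}_{12}$ versus ${\cal{KN}}_{21}$, confirms numerically). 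But, as you yourself concede, this is only a reformulation; the entire content of the proposition --- why transposing the word $s_{i_1}\cdots s_{i_\ell}\mapsto s_{i_\ell}\cdots s_{i_1}$ leaves the specialized value invariant --- is deferred to one of two devices, neither of which is constructed. The pairing route requires you to exhibit a concrete bilinear form with $\langle T_i^{(\beta,\alpha)}f,g\rangle=\langle f,\alpha\beta\,T_i^{(\alpha^{-1},\beta^{-1})}g\rangle$ and $\langle x^{\delta_n},g\rangle=g(1,\dots,1)$ simultaneously; note that the naive evaluation functional does not intertwine, since $(T_if)(1,\dots,1)=-\beta f(1,\dots,1)+(1+\alpha)(1+\beta)(\partial_if)(1,\dots,1)$ involves $(\partial_if)(1,\dots,1)$, which is not determined by $f(1,\dots,1)$, so self-adjointness of each $T_i$ against such a functional is genuinely something to be proved, not observed. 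The combinatorial route requires first establishing a subword/Hecke-word expansion of $T_w(x^{\delta_n})\vert_{x=1}$; because the $T_i$ carry the constant term $-\beta$, the summands attached to non-reduced subwords do not obviously telescope, and the weight-reversal bookkeeping you allude to is exactly where the factor $(\alpha\beta)^{\ell}$ must be produced. Until one of these devices is actually built and verified, the first bullet of the proposition remains unproven.
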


It is well-known that
\begin{itemize}\itemsep=0pt
\item the number ${\cal{KN}}_{w_{0}}^{(\beta=1,\alpha=1)}(1)$
 is equal to the degree of the variety of pairs commuting matrices of size $n \times n$ \cite{DZ2, Knu}.

\item the bidegree of the af\/f\/ine homogeneous variety $V_w$, $w \in {\mathbb{S}}_n$ \cite{DZ1} is equal to
\begin{gather*}
 A^{{n \choose 2}-\ell(w)} B^{{n \choose 2} +\ell(w)} {\cal{KN}}_{w}^{(\beta =\alpha = A/B)}(1),
\end{gather*}
see \cite{GN, DZ1,Knu} for more details and applications.
\end{itemize}

\begin{con}\label{conj4.9} \quad
\begin{itemize}\itemsep=0pt
\item Polynomials ${\cal {KN}}_{w}^{(\beta,\alpha,\gamma)}(X)$ have
nonnegative integer coefficients
\begin{gather*}
{\cal {KN}}_{w}^{(\beta,\alpha,\gamma)}(X) \in \N[\beta,\alpha, \gamma ] [X_n].
\end{gather*}
\item Polynomials ${\cal {KN}}_{w}^{(\beta,\alpha,\gamma )}(x_1=1,\,
\forall\, i)$ have nonnegative integer coefficients
\begin{gather*}
{\cal {KN}}_{w}^{(\beta,\alpha,\gamma)}(x_i=1,\forall i) \in
\N[\beta,\alpha, \gamma ].
\end{gather*}
\item Double polynomials
\begin{gather*}
{\cal{KN}}_{w}^{(\beta = 0,\alpha, \gamma)}(X,Y) = T_{w}^{\beta=0,\alpha,\gamma}(x) \prod_{i+j \le n+1 \atop i \ge 1, \, j \ge 1}(x_i+y_j)
\end{gather*}
are well defined and have nonnegative integer coefficients\footnote{Note that the assumption $\beta=0$ is necessary.}.

\item Consider permutation $w=[n,1,2,\dots,n-1] \in \mathbb{S}_n$.
Clearly $w= s_{n-1} s_{n-2} \cdots s_{2} s_{1}$.
The number ${\cal{KN}}_{w}^{(\beta=1,\alpha=1,\gamma=0)}(1)$ is equal to
the number of Schr\"{o}der paths of semilength~$(n-1)$ in which the
$(2,0)$-steps come in $3$ colors and with no peaks at level~$1$,
see {\rm \cite[$A162326$]{SL}} for further properties of these numbers.
\end{itemize}
 \end{con}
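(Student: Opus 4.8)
\emph{This is stated as a conjecture; what follows is a strategy for the positivity parts $(1)$--$(3)$ and for the explicit enumeration in the last bullet, with an indication of the main difficulty.} The plan begins by rewriting the operators in factored form. Taking $(a,b,c,h,e)=(-\beta,\,\beta+\alpha+\gamma,\,\gamma,\,1,\,(\alpha+\gamma)(\beta+\gamma))$, which satisfies $(a+b)(a-c)+he=0$, and using $\sigma_{i,i+1}=1-(x_i-x_{i+1})\partial_{i,i+1}$, one checks the identity
\begin{gather*}
T_i^{(\beta,\alpha,\gamma)}=\bigl(1+(\alpha+\gamma)x_i\bigr)\bigl(1+(\beta+\gamma)x_{i+1}\bigr)\,\partial_{i,i+1}-\beta\,\sigma_{i,i+1}.
\end{gather*}
When $\beta=0$ the exchange term disappears and $T_i^{(0,\alpha,\gamma)}=\bigl(1+(\alpha+\gamma)x_i\bigr)(1+\gamma x_{i+1})\,\partial_{i,i+1}$ involves only multiplication by polynomials with coefficients in $\N[\alpha,\gamma]$ and an (isobaric) divided difference. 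For this case I would follow the strategy behind Theorem~\ref{theorem4.3}: realize $\mathcal{KN}_w^{(0,\alpha,\gamma)}(X,Y)=T_w^{(0,\alpha,\gamma)}\bigl(\prod_{i+j\le n+1}(x_i+y_j)\bigr)$ as the image of the Cauchy kernel in a suitable Hecke-type quotient of the plactic algebra $\mathcal{P}_n$ and expand it in the double key polynomials $\mathcal{K}_T(X,Y)$ of Definition~\ref{def2.29}. The expansion is nonnegative because it arises from a refinement of equivalence relations on words (precisely the mechanism recalled in the Introduction): each double key class is a disjoint union of plactic-type classes. This would give part $(3)$, and specializing $Y=0$ and then $x_i=1$ gives the $\beta=0$ cases of $(1)$ and $(2)$.

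For $\beta\ne 0$ the term $-\beta\,\sigma_{i,i+1}$ breaks naive monomial positivity, so I would look for a combinatorial model for $T_w(x^{\delta_n})$ of pipe-dream / $K$-theoretic-Kohnert type, in the spirit of the discussion of the polynomials $\operatorname{KG}[\alpha]$. Concretely: fix a reduced word $w=s_{i_1}\cdots s_{i_\ell}$ and expand $T_{i_1}\cdots T_{i_\ell}(x^{\delta_n})$ by recording at each letter one of the local choices ``cross'' (the bare $\partial$-term), ``elbow weighted by $(\alpha+\gamma)x_i$'', ``elbow weighted by $(\beta+\gamma)x_{i+1}$'', ``double elbow weighted by $(\alpha+\gamma)(\beta+\gamma)x_ix_{i+1}$'', and ``swap weighted by $-\beta$''. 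One then wants to show that the swap contributions telescope against elbow contributions, just as the $\beta$-correction cancels in the ordinary Grothendieck calculus, so that $T_w(x^{\delta_n})$ is resummed as a sum over decorated objects with weights in $\N[\alpha,\beta,\gamma]$; this yields $(1)$. For the weaker statement $(2)$ a transfer-matrix argument along the reduced word may already exhibit a manifestly nonnegative generating function for $\mathcal{KN}_w^{(\beta,\alpha,\gamma)}(x_i=1,\forall i)$ even without the full polynomial identity.

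The last bullet I would treat by direct computation. Here $w=[n,1,2,\dots,n-1]$ has length $n-1$ and a \emph{unique} reduced word --- it is built from $s_1,\dots,s_{n-1}$, each used once, with no applicable braid or commutation move --- so $\mathcal{KN}_w^{(1,1,0)}(X_n)=T_{i_1}\cdots T_{i_{n-1}}\bigl(x^{\delta_n}\bigr)$ is unambiguous with $T_i=(1+x_i)(1+x_{i+1})\partial_{i,i+1}-\sigma_{i,i+1}$. Peeling off the innermost operator and tracking how the substitution $x_i=1$ moves through the remaining product, I expect a finite-order linear recurrence for $c_n:=\mathcal{KN}_w^{(1,1,0)}(\mathbf{1})$, equivalently an algebraic equation for the generating function $\sum_{n\ge1}c_n z^{n-1}$. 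It then remains to match this equation with the known functional equation for the generating function of \cite[$A162326$]{SL}, which counts Schr\"oder paths of semilength $n-1$ whose $(2,0)$-steps come in $3$ colours and which have no peak at level $1$; checking a few initial values completes the identification.

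The main obstacle is the general positivity in $(1)$ and $(2)$. Because $-\beta\,\sigma_{i,i+1}$ genuinely introduces cancellation, and because --- unlike in the Schubert and Grothendieck cases --- there is currently no combinatorial model (pipe dreams, Kohnert-type moves, set-valued tableaux, etc.) available for the three-parameter family $\mathcal{KN}_w^{(\beta,\alpha,\gamma)}$, producing such a model, or an explicit cancellation-free rewriting of $T_w(x^{\delta_n})$, is the crux of the problem. The $\beta=0$ case and the single permutation $[n,1,\dots,n-1]$ are the tractable pieces, and are what I would establish first.
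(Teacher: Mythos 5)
This statement is Conjecture 4.9 of the paper; the paper supplies no proof of any of its four parts, so there is no ``paper's own proof'' to compare against, only the surrounding discussion. You are right to present a strategy rather than a proof, and the identities you state are correct: the factorization $T_i^{(\beta,\alpha,\gamma)}=\bigl(1+(\alpha+\gamma)x_i\bigr)\bigl(1+(\beta+\gamma)x_{i+1}\bigr)\partial_{i,i+1}-\beta\,\sigma_{i,i+1}$ is exactly the specialization $a=-\beta$, $b=\alpha+\beta+\gamma$, $c=\gamma$, $h=1$ of the factored form recorded in the paper's footnote on $2D$-Coxeter operators, and $w=[n,1,2,\dots,n-1]$ is Grassmannian of shape $(n-1)$, hence really does have the unique reduced word $s_{n-1}\cdots s_1$.

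The gap is that your ``tractable piece'' is not actually tractable by the paper's methods. The Cauchy-kernel computation behind Theorem 4.3 works only in Hecke-type quotients with $u_i^2=(a+b)u_i+ab$, and the coefficient polynomials it produces are the (double) key, key-Grothendieck, $\beta$-Grothendieck and dual $\beta$-Grothendieck families; none of the kernel decompositions of Sections 5.1--5.5 generates the extra factor $\bigl(1+\gamma x_{i+1}\bigr)$ that survives in $T_i^{(0,\alpha,\gamma)}$ when $\gamma\neq 0$. So part (3) does not reduce to the ``refinement of equivalence relations'' principle without first constructing a new algebra and kernel adapted to the genuinely two-parameter operator --- which is precisely what is missing. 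Indeed even the single specialization $\beta=\alpha=1$, $\gamma=0$ (the Di Francesco--Zinn-Justin polynomials) is itself left open as Conjecture 4.5, so the only sub-cases of (1)--(3) settled by the paper are $\alpha=\gamma=0$ and $\beta=\gamma=0$. Your proposed telescoping of the $-\beta\,\sigma_{i,i+1}$ contributions is the right thing to want, but, as you acknowledge, no such cancellation-free model is known; and the recurrence for the last bullet is plausible but not carried out (the stability and duality of Proposition 4.8, together with the dissection interpretation of $\mathcal{KN}_w^{(\beta,\alpha=0)}(1)$ mentioned just after Conjecture 4.9, would be the natural tools there). In short: an honest and well-informed research plan, correct in its stated identities, but it establishes none of the four bullets, and its claimed easy case is not in fact a consequence of the paper's results.
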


It is well-known, see, e.g., \cite[$A126216$]{SL}, that the polynomial
${\cal{KN}}_{w}^{(\beta,\alpha=0)}(1)$ counts the number of {\it dissections} of a convex $(n+1)$-gon according the number of diagonals involved, where as
the polynomial ${\cal{KN}}_{w}^{(\beta,\alpha)}(1)$ (up to a normalization) is
equal to the {\it bidegree} of certain algebraic varieties introduced and
studied by A.~Knutson~\cite{Knu}.

A few comments in order.
\begin{enumerate}\itemsep=0pt
\item[$(a)$] One can consider more general family of polynomials
${\cal{KN}}_{w}^{(a,b,c,d)}(X_n)$ by the use of the divided dif\/ference
operators
$
T_{i}^{a,b,c,d}:= -b+((b+d) x_i + c x_{i+1}+1+d (b+c) x_i x_{i+1}) \partial_{i,i+1}^{x}
$
 instead of that $T_{i}^{\beta,\alpha,\gamma}$. However the
 polynomials ${\cal{KN}}_{w}^{(a,b,c,d)}(1) \in \Z[a,b,c,d]$ may have negative coef\/f\/icients in general. {\it Conjecturally}, to ensure the positivity of
polynomials ${\cal{KN}}_{w}^{(a,b,c,d)}(X_n)$, it is necessary take $d:=a+c+r$. In this case we state conjecture
\begin{gather*}
 {\cal{KN}}_{w}^{(a,b,c,a+c+r)}(X_n) \in \N[a,b,c,r].
\end{gather*}
 We state more general Conjecture~\ref{conjecture1.0} in introduction. In the present paper we
treat only the case $r=0$, since a combinatorial
meaning of polynomials ${\cal{KN}}_{w}^{(a,b,c,a+c+r)}(1)$ in the case
 $r \not= 0$ is missed for the author.

\item[$(b)$] If $\gamma \not= 0$, the polynomials ${\cal {KN}}_{w}^{(\beta,\alpha,\gamma)}(X_n) \in \Z [\alpha, \beta,\gamma] [X_n]$ may have negative coef\/f\/icients in general.
\end{enumerate}

\begin{Theorem}\label{theorem4.10} Let $T$ be a semistandard tableau and $\alpha(T)$ be its
 bottom code, see Definition~{\rm \ref{def2.27}}. Then
\begin{gather*}
{\cal K}_{T}(X)=K[\alpha(T)](X), \qquad
{\cal {KG}}_{T}(X)=\operatorname{KG}[\alpha(T)](X).
\end{gather*}
\end{Theorem}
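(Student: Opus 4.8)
The plan is to identify both sides of each identity as weighted sums over the same combinatorial set, namely the compatible sequences attached to the tableau $T$, and then to argue that the recursive operators defining $K[\alpha]$ and $\mathrm{KG}[\alpha]$ act on these sums exactly as a transition in the lexicographic/RSK-type bookkeeping on tableaux. First I would recall from Definition~\ref{def2.29} that ${\cal K}_T(X)$, after specializing $p_{i,j}=x_i$, equals $\sum_{{\bf b}\in C(T)} x^{\bf b}$, and similarly ${\cal{KG}}_T(X)=\sum_{{\bf b}\in IC(T)} x^{\bf b}$; and from Definition~\ref{def2.30} that $\alpha(T)$ is the exponent of the lexicographically smallest such monomial. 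So the theorem amounts to the statement that the monomial-positive polynomial $\sum_{{\bf b}\in C(T)}x^{\bf b}$ coincides with the key polynomial $K[\alpha(T)](X)$ indexed by its own lexicographically least exponent, and likewise in the Grothendieck case.

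The key steps, in order, are as follows. (i) Reduce to the case where $T$ has a single column (or, more convenient, to a ``superstandard'' shape), using that both the key polynomials and the sum over compatible sequences behave multiplicatively/recursively under adding a column; here I would use Proposition~\ref{prop2.2} (uniqueness of the tableau word in each plactic class) to control the set $R(T)$, and Proposition~\ref{prop2.15} for its idplactic analogue $IR(T)$. (ii) Establish the base case: when $\alpha(T)=\lambda$ is already a partition, show $C(T)$ contains a unique minimal sequence giving $x^{\lambda}$ and that in fact $\sum_{{\bf b}\in C(T)}x^{\bf b}=x^{\lambda}$, matching $K[\lambda](X)=x^{\lambda}$. (iii) Run the Demazure recursion: if $\alpha=\alpha(T)$ has $\alpha_i<\alpha_{i+1}$, exhibit a sign-reversing-free bijection showing that $\partial_i(x_i\cdot\sum_{{\bf b}\in C(T)}x^{\bf b})$ equals the sum over compatible sequences of the tableau $T'$ with $\alpha(T')=s_i\alpha(T)$ --- this is the crank of the argument and amounts to checking that the isobaric divided difference $\pi_i$ acts on the $x$-content of compatible sequences exactly by the Lascoux--Schützenberger crystal/Kohnert move. (iv) Repeat verbatim with $x_i$ replaced by $x_i+\beta x_i x_{i+1}$ and $C(T)$ replaced by $IC(T)$, invoking the idplactic structure (relations \eqref{equation2.2}) in place of the plactic ones to handle the extra, length-increasing compatible sequences in $IR(T)$.

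The main obstacle I expect is step (iii): one must show that the operator $f\mapsto \partial_i(x_i f)$ sends the generating function of $C(T)$ to that of $C(T')$, which requires a careful combinatorial analysis of how compatible pairs $({\bf a},{\bf b})$ with ${\bf a}\in R(T)$ transform when one applies a Knuth-class-preserving ``raising'' in position $i$. Concretely, one needs that for each monomial $x^{\bf b}$ with $b$-content having $b_i$-part strictly smaller than its $b_{i+1}$-part the operator produces a telescoping sum $x_i^{b_i+1}x_{i+1}^{b_{i+1}-1}+\cdots+x_i^{b_{i+1}}x_{i+1}^{b_i}$ over legal compatible sequences of $T'$, with no cancellation and no overcounting; controlling this uses Definition~\ref{def2.27} (the condition $b_k<b_{k+1}$ whenever $a_k\le a_{k+1}$) together with Theorem~\ref{theorem2.23}, which guarantees that the relevant reorderings stay inside a single plactic (resp.\ idplactic) class. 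The Grothendieck case is strictly harder only in that $IC(T)$ is infinite, so one additionally checks that the $\beta$-weighted sum $\sum_{{\bf b}\in IC(T)}\beta^{\,|{\bf b}|-|T|}x^{\bf b}$ converges formally degree-by-degree and that the extra relations $u_i^2=\beta u_i$ exactly match the $\beta x_i x_{i+1}$ term in the recursion for $\mathrm{KG}[\alpha]$.
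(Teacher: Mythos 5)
The paper itself supplies no proof of Theorem~\ref{theorem4.10}; within its framework the statement is meant to be extracted by comparing two expansions of the plactic Cauchy kernel --- Theorem~\ref{theorem5.10}, which groups the monomials of Lemma~\ref{lem5.2} by plactic classes to get $\sum_{T}{\cal K}_{T}\,u_{w(T)}$, against the expansion of Comment~\ref{com5.14} into key polynomials times the plactic polynomials ${\cal{PL}}_{w}(U)$ of Definition~\ref{def5.13} --- and then matching coefficients using Proposition~\ref{prop2.2} (tableau words form a basis). Your plan, a direct inductive verification that the compatible-sequence generating function satisfies the defining Demazure recursion, is therefore a genuinely different and potentially more self-contained route, and your reformulation of the theorem (that $\sum_{{\bf b}\in C(T)}x^{\bf b}$ is the key polynomial indexed by its own lexicographically least exponent) is the right one.

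As written, though, the proposal has two real gaps. First, step~(i) is not available: key polynomials do not factor over the columns of $T$ (already $K[021]$ is an irreducible quintic-term cubic), so there is no multiplicative reduction to single columns; a column-insertion recursion does exist but is itself the hard part and is not supplied. Second, and more seriously, step~(iii) --- which is the entire content of the theorem --- is stated as a goal rather than proved, and it is not well-posed as formulated: many distinct tableaux share the same bottom code (this is exactly what ${\cal{PL}}_{w}(U)=\sum_{\alpha(T)=\alpha(w)}u_{w(T)}$ records), so ``the tableau $T'$ with $\alpha(T')=s_i\alpha(T)$'' does not determine $T'$, and you give no rule producing $T'$ from $T$ nor any reason why $C(T)$ and $C(T')$ should be exchanged by $\pi_i=\partial_i x_i$. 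The correct formulation is internal to a single tableau: one must equip the fixed set $C(T)$ with a Demazure-crystal (equivalently, Kohnert-move) structure, show its character is $\pi_i$-stable for the appropriate $i$'s, and show it is generated from the unique minimal element of weight $x^{\alpha(T)^{+}}$ (your step~(ii), which is consistent with Theorem~\ref{theorem5.10}(2)); exhibiting that structure is precisely the missing combinatorial work, and nothing in Theorem~\ref{theorem2.23} supplies it. Two minor points: $IC(T)$ is actually finite, since the forced strict ascents in ${\bf b}$ bound the length of any word of $IR(T)$ admitting a compatible sequence, so no formal-convergence argument is needed; and Definition~\ref{def2.29}(2) carries no explicit power of $\beta$, so the second identity must be read either at $\beta=1$ or with the weight $\beta^{m-|T|}$ you insert.
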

 Let $\alpha=(\alpha_1 \le \alpha_2 \le \cdots \le \alpha_r)$ be a
composition, def\/ine partition $\alpha^{+}=(\alpha_r \ge \cdots \ge \alpha_1)$.
\begin{pr}\label{prop4.11} If $\alpha=(\alpha_1 \le \alpha_2 \le \cdots \le \alpha_r)$ is
a composition and $n \ge r$, then
\begin{gather*}
 K[\alpha](X_n)=s_{\alpha^{+}}(X_r).
\end{gather*}
\end{pr}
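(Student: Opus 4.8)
The plan is to prove this by induction on the number of ``descents to be resolved'', i.e., on the length $\ell(v_\alpha)$ of the minimal permutation $v_\alpha \in \mathbb{S}_n$ with $v_\alpha(\alpha) = \alpha^+$. The base case is when $\alpha$ is already a partition, which forces $\alpha = \alpha^+$ since $\alpha$ is weakly increasing; then by definition $K[\alpha](X_n) = x^\alpha = x^{\alpha^+}$. But $x^{\alpha^+}$ with $\alpha^+ = (\alpha_r \ge \cdots \ge \alpha_1)$ placed in the first $r$ variables is precisely the leading (dominant) term, and in fact the only term, one needs to match against $s_{\alpha^+}(X_r)$ --- wait, that is not right, since $s_{\alpha^+}(X_r)$ is not a monomial. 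Let me restate the base case correctly: the genuine base case is $\alpha$ a \emph{weakly decreasing} composition (a partition in the usual sense), in which case $K[\alpha](X_n) = x^\alpha$; but for a weakly increasing $\alpha$ that is a partition only when all parts are equal. So the cleaner approach is: first handle the fully sorted situation, then build up.

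First I would recall the classical fact (Lascoux--Sch\"utzenberger, or Demazure) that for a partition $\lambda$ (weakly decreasing) with at most $r$ parts, the key polynomial indexed by the \emph{antidominant} rearrangement $\lambda^{\mathrm{rev}} = (\lambda_r, \ldots, \lambda_1)$, i.e., the composition sorted into weakly increasing order, satisfies $K[\lambda^{\mathrm{rev}}](X_r) = s_\lambda(X_r)$. This is because the sequence of divided-difference operators $\partial_i(x_i \cdot -)$ applied to $x^\lambda$ that transports the dominant weight to the antidominant weight is exactly the Demazure character formula for the full symmetric group $\mathbb{S}_r$ acting with longest element $w_0^{(r)}$, and $\pi_{w_0^{(r)}}(x^\lambda) = s_\lambda(X_r)$ by the standard Weyl/Demazure character identity. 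So the key step is: for our weakly increasing composition $\alpha = (\alpha_1 \le \cdots \le \alpha_r)$, we have $\alpha^+ = (\alpha_r \ge \cdots \ge \alpha_1)$ is the partition, $\alpha$ is its full antidominant rearrangement, and the recursive definition of $K[\alpha]$ walks from $x^{\alpha^+}$ down to $K[\alpha]$ via the isobaric operators $\pi_i = \partial_i(x_i \cdot -)$ along a reduced word for $w_0^{(r)}$ (or rather, along the minimal $v_\alpha$ taking $\alpha$ to $\alpha^+$, read backwards). Since $\alpha$ is strictly antidominant among rearrangements of $\alpha^+$ up to the stabilizer, the permutation realizing $\alpha^+ \rightsquigarrow \alpha$ is a minimal coset representative and the product of $\pi_i$'s is $\pi_{w_0^{(r)}}$ modulo the stabilizer of $\alpha^+$, which acts trivially; thus $K[\alpha](X_r) = \pi_{w_0^{(r)}}(x^{\alpha^+}) = s_{\alpha^+}(X_r)$.

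Then I would address the stability in $n$: I need $K[\alpha](X_n) = s_{\alpha^+}(X_r)$ for all $n \ge r$, not just $n = r$. Here the point is that when $\alpha$ has $r$ parts and we pad it with zeros to length $n$, the recursive construction of $K[\alpha](X_n)$ only ever involves $\partial_i$ with $i < r$ --- no descent is ever created or resolved among the padded-zero positions, since those entries stay $0$ and the nonzero block stays in positions $1, \ldots, r$ throughout. Hence the computation is literally the same as in $X_r$ and the resulting polynomial involves only $x_1, \ldots, x_r$. This gives $K[\alpha](X_n) = K[\alpha](X_r) = s_{\alpha^+}(X_r)$.

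The main obstacle I anticipate is bookkeeping the correspondence between ``the reduced word used in the recursive definition of $K[\alpha]$'' and ``a reduced word for $w_0^{(r)}$ modulo the Young stabilizer of $\alpha^+$'', i.e., making precise that the isobaric operators encountered in the recursion multiply to $\pi_{w_0^{(r)}}$ and that well-definedness (independence of reduced word) lets us choose the convenient one; one must also invoke that $\pi_i$ annihilates nothing it shouldn't and that $\pi_i(x^\beta) = x^\beta$ when $\beta_i \le \beta_{i+1}$, so that padding by the stabilizer is harmless. Once that is in place, the identity $\pi_{w_0^{(r)}}(x^\lambda) = s_\lambda(X_r)$ (Demazure character formula, which I would cite rather than reprove) closes the argument.
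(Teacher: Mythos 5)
Your argument is correct, and in fact the paper offers no proof of Proposition~\ref{prop4.11} at all (it is stated as a known fact, with only the example $K[0,1,\dots,n-1]=\prod_{i<j}(x_i+x_j)$ following it), so there is nothing to compare against; your route is the standard one. The three ingredients you use are all sound: the parabolic factorization $w_0^{(r)}=v_\alpha u$ with additive lengths, so that $\pi_{w_0^{(r)}}=\pi_{v_\alpha}\pi_u$ and $\pi_u$ fixes $x^{\alpha^{+}}$ because $\pi_i(x^{\beta})=x^{\beta}$ whenever $\beta_i=\beta_{i+1}$; the Demazure/Weyl identity $\pi_{w_0^{(r)}}\big(x^{\alpha^{+}}\big)=s_{\alpha^{+}}(X_r)$; and the stability observation that padding $\alpha$ with trailing zeros leaves the minimal sorting permutation supported on $\{1,\dots,r\}$, so no $\partial_i$ with $i\ge r$ ever enters and $K[\alpha](X_n)=K[\alpha](X_r)$. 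The only cosmetic issue is the false start about the base case (which you correct yourself), and, if you want to be scrupulous, a remark that the recursion in Definition~\ref{def4.1} must be read in the standard direction $K[\alpha]=\partial_i\big(x_iK[s_i\alpha]\big)$ for $\alpha_i<\alpha_{i+1}$, as the worked examples in the Appendix confirm.
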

For example, $K[0,1,2,\dots,n-1]= \prod\limits_{1 \le i < j \le n}(x_i+x_j)$. Note
that
${\widehat K}[0,1,2,\dots,n-1]= \prod\limits_{i=2}^{n}x_i^{i-1}$.
\begin{pr}\label{4.12} If $\alpha=(\alpha_1 \le \alpha_2 \le \cdots \le \alpha_r)$ is
a composition and $n \ge r$, then
\begin{gather*}
\operatorname{KG}[\alpha](X_n)={\cal G}[{\alpha^{+}}](X_r).
\end{gather*}
\end{pr}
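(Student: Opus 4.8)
The plan is to recognize the recursion of Definition~\ref{def4.1} that defines $\operatorname{KG}[\alpha]$ as the successive application of the isobaric $\beta$-divided difference (Demazure--Grothendieck) operators
\begin{gather*}
\pi_i^{(\beta)}(f):=\partial_i\big((x_i+\beta x_ix_{i+1})f\big)=f+x_{i+1}(1+\beta x_i)\,\partial_i(f),\qquad i=1,\dots,n-1,
\end{gather*}
and then to reduce the assertion, exactly as for Proposition~\ref{prop4.11}, to the identification of $\pi_{w_0}^{(\beta)}\big(x^{\lambda}\big)$ with the symmetric $\beta$-Grothendieck polynomial ${\cal G}[\lambda]$. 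First I would record the two elementary facts that drive the argument. (i)~The operators $\pi_i^{(\beta)}$ satisfy the nilCoxeter-type relations $\pi_i^{(\beta)}\pi_{i+1}^{(\beta)}\pi_i^{(\beta)}=\pi_{i+1}^{(\beta)}\pi_i^{(\beta)}\pi_{i+1}^{(\beta)}$, $\pi_i^{(\beta)}\pi_j^{(\beta)}=\pi_j^{(\beta)}\pi_i^{(\beta)}$ for $|i-j|\ge2$, and $\big(\pi_i^{(\beta)}\big)^2=\pi_i^{(\beta)}$; the last is a one-line check from the displayed formula using $\partial_i^2=0$ and $\partial_i\big(x_{i+1}(1+\beta x_i)\big)=-1$, and the braid relation is the standard one for $K$-theoretic Demazure operators (see \cite{FK1,LS7}, or derive it from Lemma~\ref{lem3.1}). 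In particular $\pi_w^{(\beta)}:=\pi_{i_1}^{(\beta)}\cdots\pi_{i_\ell}^{(\beta)}$ is well defined for $w=s_{i_1}\cdots s_{i_\ell}\in\mathbb{S}_n$. (ii)~$\pi_i^{(\beta)}\big(x^{\lambda}\big)=x^{\lambda}$ whenever $\lambda_i=\lambda_{i+1}$, because then $\partial_i\big(x^{\lambda}\big)=0$.

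Iterating the recursion now gives, for every composition $\alpha$, the formula $\operatorname{KG}[\alpha]=\pi_w^{(\beta)}\big(x^{\alpha^{+}}\big)$, where $w\in\mathbb{S}_n$ is the minimal-length permutation with $w(\alpha^{+})=\alpha$ (acting on positions); this is the $\beta$-deformation of the statement underlying Proposition~\ref{prop4.11}. Specialize to a weakly increasing $\alpha=(\alpha_1\le\cdots\le\alpha_r)$, regarded inside $X_n$ with $n\ge r$: then $\alpha^{+}$ is just the reversal of $\alpha$, so the sorting permutation $w$ is the minimal coset representative of the longest element $w_0^{(r)}$ of $\mathbb{S}_r$ modulo the Young subgroup $\mathbb{S}_J\subset\mathbb{S}_r$ stabilizing $\alpha^{+}$. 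Writing $w_0^{(r)}=w\,u$ with $u$ the longest element of $\mathbb{S}_J$ and $\ell(w_0^{(r)})=\ell(w)+\ell(u)$, fact (ii) gives $\pi_u^{(\beta)}\big(x^{\alpha^{+}}\big)=x^{\alpha^{+}}$ (the simple transpositions occurring in $u$ only interchange positions carrying equal parts of $\alpha^{+}$), hence
\begin{gather*}
\operatorname{KG}[\alpha]=\pi_w^{(\beta)}\big(x^{\alpha^{+}}\big)=\pi_w^{(\beta)}\pi_u^{(\beta)}\big(x^{\alpha^{+}}\big)=\pi_{w_0^{(r)}}^{(\beta)}\big(x^{\alpha^{+}}\big).
\end{gather*}
Only $x_1,\dots,x_r$ occur on the right, in agreement with $\operatorname{KG}[\alpha](X_n)$ being independent of $x_{r+1},\dots,x_n$.

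It remains to identify $\pi_{w_0}^{(\beta)}\big(x^{\lambda}\big)$ (longest element of $\mathbb{S}_r$) with ${\cal G}[\lambda](X_r)$, i.e.\ with ${\cal G}_{w_\lambda}^{(\beta)}(X_r)$ for the Grassmannian permutation $w_\lambda\in\mathbb{S}_r$ of shape $\lambda$. This is the $K$-theoretic lift of the classical identity $\pi_{w_0}\big(x^{\lambda}\big)=s_\lambda(X_r)$ that proves Proposition~\ref{prop4.11}, and for $\beta=0$ it collapses back to it. I would prove it by relating the two recursions through the operator identity $\pi_i^{(\beta)}=\overline{\pi}_i^{(\beta)}\circ(\text{multiplication by }x_i)$, where $\overline{\pi}_i^{(\beta)}(f)=\partial_i\big((1+\beta x_{i+1})f\big)$ is precisely the operator appearing in the definition of ${\cal G}_w^{(\beta)}$: nesting a reduced word for $w_0$ as $\pi_{i_1}^{(\beta)}\cdots\pi_{i_N}^{(\beta)}\big(x^{\lambda}\big)$ and pushing all the multiplications-by-$x_i$ inward reassembles the monomial $x^{\delta_r}$ (up to $\beta$-corrections), turning $\pi_{w_0}^{(\beta)}\big(x^{\lambda}\big)$ into $\overline{\pi}_{w_\lambda^{-1}w_0}^{(\beta)}\big(x^{\delta_r}\big)={\cal G}_{w_\lambda}^{(\beta)}(X_r)$; alternatively one cites \cite{FK1,LS7}. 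This last identification is the genuine content of the proposition and the only real obstacle: for $\beta=0$ it is the well-known $\pi_{w_0}\big(x^{\lambda}\big)=\s_{w_\lambda}(X_r)=s_\lambda(X_r)$, but for general $\beta$ one must keep track of the Hecke-type correction terms when commuting $\partial_i$ past the factors $x_i(1+\beta x_{i+1})$. Everything else is a verbatim transcription of the $\beta=0$ argument for Proposition~\ref{prop4.11}.
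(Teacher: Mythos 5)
The paper states Proposition~\ref{4.12} without proof, so there is nothing to compare your argument against; judging it on its own terms, your reduction is correct and essentially complete. The operators $\pi_i^{(\beta)}=1+x_{i+1}(1+\beta x_i)\partial_i$ are exactly the paper's $\partial_i^{A}$ with $A=(1,0,1,0,\beta)$, for which $(a+b)(a-c)+he=0$, so the braid relations hold by Lemma~\ref{lem4.15}; your idempotency check is right; each simple reflection in a reduced word for the longest element $u$ of the stabilizer of $\alpha^{+}$ fixes $x^{\alpha^{+}}$; and the parabolic factorization $w_0^{(r)}=wu$ with additive lengths gives $\operatorname{KG}[\alpha]=\pi_{w}^{(\beta)}\big(x^{\alpha^{+}}\big)=\pi_{w_0^{(r)}}^{(\beta)}\big(x^{\alpha^{+}}\big)$. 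This is the right way to see why only $\alpha^{+}$ and $r$ matter.

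The one place the write-up falls short is the final identification $\pi_{w_0^{(r)}}^{(\beta)}\big(x^{\lambda}\big)={\cal G}[\lambda](X_r)$, which you correctly flag as the real content. ``Pushing the multiplications inward up to $\beta$-corrections'' is not yet an argument, and the formula $\overline{\pi}_{w_\lambda^{-1}w_0}^{(\beta)}\big(x^{\delta_r}\big)$ is off as written: the Grassmannian permutation $w_\lambda$ of shape $\lambda$ with descent at $r$ lies in $\mathbb{S}_{r+\lambda_1}$, not in $\mathbb{S}_r$, so the defining recursion for ${\cal G}_{w_\lambda}^{(\beta)}$ starts from a larger staircase. The clean way to finish along your own lines is: (i)~prove the exact operator identity $\pi_{w_0^{(r)}}^{(\beta)}=\overline{\pi}_{w_0^{(r)}}^{(\beta)}\circ m_{x^{\delta_r}}$ (where $m_g$ is multiplication by $g$), which follows by induction on the standard reduced word of $w_0^{(r)}$ from your relation $\pi_i^{(\beta)}=\overline{\pi}_i^{(\beta)}\circ m_{x_i}$ together with the commutations $\overline{\pi}_i^{(\beta)}\circ m_{x_ix_{i+1}}=m_{x_ix_{i+1}}\circ\overline{\pi}_i^{(\beta)}$ and $\overline{\pi}_i^{(\beta)}\circ m_{x_j}=m_{x_j}\circ\overline{\pi}_i^{(\beta)}$ for $j\neq i,i+1$ --- no $\beta$-corrections appear here at all; (ii)~observe that $x^{\lambda+\delta_r}={\cal G}_{v}^{(\beta)}$ for the dominant permutation $v$ with Lehmer code $\lambda+\delta_r$, that $v=w_\lambda\,w_0^{(r)}$ with $\ell(v)=\ell(w_\lambda)+\ell\big(w_0^{(r)}\big)$, and hence that applying $\overline{\pi}_{w_0^{(r)}}^{(\beta)}$ to ${\cal G}_{v}^{(\beta)}$ strictly decreases length at every letter of the reduced word, landing on ${\cal G}_{w_\lambda}^{(\beta)}$ without ever invoking $\big(\overline{\pi}_i^{(\beta)}\big)^2=-\beta\,\overline{\pi}_i^{(\beta)}$ and therefore without picking up spurious powers of $\beta$. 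With (i) and (ii) supplied --- or with an honest citation for the statement that the full $K$-theoretic symmetrizer applied to $x^{\lambda}$ produces the Grassmannian $\beta$-Grothendieck polynomial --- your proof is complete.
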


For example, $\operatorname{KG}[0,1,2,\dots,n-1]=
\prod\limits_{1 \le i < j \le n}(x_i+x_j+x_ix_j)$.
Note that
\begin{gather*}
{\widehat {\operatorname{KG}}}[0,1,2,\dots,n-1]=
\prod_{i=2}^{n}x_i^{i-1} \prod_{i=1}^{n-1}(1+x_i)^{n-i}.
\end{gather*}


\begin{de}\label{def4.14} Def\/ine degenerate af\/f\/ine $2D$ nil-Coxeter algebra
${\cal{ANC}}_n^{(2)}$ to be an associative algebra over $\Q$ generated by the
set of elements $\{\{u_{i,j} \}_{1 \le i <j \le n},\, x_1,\ldots,
x_n \}$ subject to the set of relations
\begin{itemize}\itemsep=0pt
\item $x_i x_j = x_j x_i$ for all $i \not= j$, $x_i u_{j,k}=u_{j,k}
 x_i$, if $i \not= j, k$,

\item $u_{i,j} u_{k,l} = u_{k,l} u_{i,j}$, if $i$, $j$, $k$, $l$ are pairwise
distinct,

\item ($2D$-Coxeter relations) $u_{i,j} u_{j,k} u_{i,j} =u_{j,k} u_{i,j} u_{j,k}$, if $1 \le i < j < k \le n$,

\item $x_i u_{i,j}=u_{i,j} x_j + 1$, $x_j u_{i,j}=u_{i,j} x_i - 1$.
\end{itemize}
\end{de}

Now for a set of parameters\footnote{By def\/inition, a \textit{parameter} is assumed to be belongs to the
center of the algebra in question.}
 $ A:=(a,b,c,h,e)$ def\/ine elements
\begin{gather*}
T_{ij}:= a+ (b x_i+ c x_j +h+e x_i x_j) u_{i,j}, \qquad i < j.
\end{gather*}
Throughout the present paper we set $T_i:= T_{i,i+1}$.
\begin{lem}\label{lem4.15} \quad
\begin{enumerate}\itemsep=0pt
\item[$(1)$] $T_{i,j}^2 =(2a+b-c) T_{i,j} -a(a+b-c)$,
if $a=0$, then $T_{ij}^2 = (b-c) T_{ij}$.

\item[$(2)$] $2D$-Coxeter relations 
$T_{i,j} T_{j,k} T_{i,j} = T_{j,k} T_{i,j} T_{j,k}$
are valid, if and only if the following relation among parameters
$a$, $b$, $c$, $e$, $h$ holds\footnote{The relation~\eqref{equation4.1} between parameters $a$, $b$, $c$, $e$, $h$ def\/ines a
{\it rational} four-dimensional hypersurface. Its open chart $ \{e h \not= 0
 \}$ contains, for example, the following set (cf.~\cite{L3}): $\{a=p_1
p_4-p_2 p_3, \, b= p_2 p_3, \, c= p_1 p_4, \, e=p_1 p_3$, $h= p_2 p_4 \}$, where $(p_1,p_2,p_3,p_4)$ are arbitrary parameters. However the points $(-b,a+b+c,c,1,(a+c)(b+c)$, $(a,b,c) \in \N^3\}$ do not belong to this set.}
\begin{gather}\label{equation4.19}
(a+b)(a-c)+h e =0.
\end{gather}
\item[$(3)$] Yang--Baxter relations 
$T_{i,j} T_{i,k} T_{j.k}= T_{j,k} T_{i,k} T_{i,j}$
are valid if and only if $b=c=e=0$, i.e., $T_{ij}=a+d u_{ij}$.
\item[$(4)$] $T_{ij}^2=1$ if and only if $a= \pm 1$, $c=b \pm 2$, $he= (b\pm 1)^2$.
\item[$(5)$] Assume that parameters $a$, $b$, $c$, $h$, $e$ satisfy the conditions~\eqref{equation4.19} and
 that $b c+1=h e$. Then
\begin{gather*}
T_{ij} x_i T_{ij} = (h e - b c) x_j+ (h+(a+b)(x_i+x_j)+e x_i x_j) T_{ij}.
\end{gather*}
\end{enumerate}
Some special cases
\begin{itemize}\itemsep=0pt
\item $($representation of affine modified Hecke algebra~{\rm \cite{TO})}
 if $A=(a,-a,c,h,0)$, then $T_{ij} x_{i} T_{ij}$ $ = a c x_{j} + h T_{ij}$, $i < j$,
\item if $A=(-a,a+b+c,c,1,(a+c)(b+c)$, then
\begin{gather*}
T_{ij} x_{i} T_{ij}= a b x_{j}+ (1 +(b+c)(x_i+x_j)+(a+c)(b+c) x_{i} x_{j}) T_{ij}.
\end{gather*}
\end{itemize}
\begin{enumerate}\itemsep=0pt
\item[$(6)$] $($Quantum Yang--Baxter relations, or baxterization of Hecke's
algebra generators.$)$ Assume that parameters $a$, $b$, $c$, $h$, $e$
satisfy the conditions~\eqref{equation4.19} and that $\beta:=2 a+b-c \not= 0$. Then
$($cf.~{\rm \cite{IK, LLT}} and the literature quoted therein$)$
the elements $R_{ij}(u,v):= 1+ \frac{\lambda-\mu}
{\beta \mu} T_{ij}$ satisfy the twisted quantum Yang--Baxter relations
\begin{gather*}
 R_{ij}(\lambda_i,\mu_j) R_{jk}(\lambda_i,\nu_k) R_{ij}(\mu_j,\nu_k)= R_{jk}(\mu_j,\nu_k) R_{ij}(\lambda_i,\nu_k) R_{jk}(\lambda_i,\mu_j), \qquad i < j < k,
\end{gather*}
where $\{\lambda_i,\mu_i,\nu_i \}_{1 \le i \le n}$ are parameters.
\end{enumerate}
\end{lem}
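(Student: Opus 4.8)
The plan is to derive all six statements from the commutation rules of ${\cal{ANC}}_n^{(2)}$, using only the nilpotency $u_{ij}^2=0$, the crossing relations $u_{ij}x_i=x_ju_{ij}+1$ and $u_{ij}x_j=x_iu_{ij}-1$ (so $u_{ij}$ commutes with $x_i+x_j$, with $x_ix_j$, and with every $x_k$ for $k\notin\{i,j\}$), and the $2D$-Coxeter and three-term relations among the $u$'s (Lemma~\ref{lem3.1}). Write $u=u_{ij}$ and $P=bx_i+cx_j+h+ex_ix_j$, so that $T_{ij}=a+Pu$. The first step is the auxiliary identity $uP=\bar Pu+(b-c)$, where $\bar P=bx_j+cx_i+h+ex_ix_j$ is obtained from $P$ by exchanging $x_i$ and $x_j$; it gives $(Pu)^2=P\bar Pu^2+(b-c)Pu=(b-c)Pu$ and hence part (1) immediately, in the factored form $(T_{ij}-a)(T_{ij}-(a+b-c))=0$ (so $T_{ij}$ has ``eigenvalues'' $a$ and $a+b-c$). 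Part (5) is then a direct expansion: move the middle $x_i$ to the left via $ux_i=x_ju+1$, use $uPu=(b-c)u$, and separate the terms with and without $u$; the $u$-free part collapses to $(he-bc)x_j+a\bigl(h+(a+b)(x_i+x_j)+ex_ix_j\bigr)$ precisely by \eqref{equation4.19} (which reads $he=(a+b)(c-a)$), while the coefficient of $u$ matches identically — the extra hypothesis $bc+1=he$ only serves to rewrite $he-bc$ as $1$, as in the displayed special cases.

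Part (2) is, I expect, the main obstacle, and I would treat both directions together by expanding the difference $T_{ij}T_{jk}T_{ij}-T_{jk}T_{ij}T_{jk}$ in the PBW-type normal form $\sum_{w}p_w(x_i,x_j,x_k)\,w$ of the subalgebra of ${\cal{ANC}}_n^{(2)}$ generated by $u_{ij},u_{jk},x_i,x_j,x_k$ (the sum over the six reduced words $w$ in $u_{ij},u_{jk}$), using $u^2=0$, the crossing relations and the braid relation for $u_{ij},u_{jk}$ to reach normal form. After reduction this difference equals $\bigl((a+b)(a-c)+he\bigr)$ times a fixed nonzero element of the subalgebra, so the braid relation $T_{ij}T_{jk}T_{ij}=T_{jk}T_{ij}T_{jk}$ holds if and only if \eqref{equation4.19} holds. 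The ``if'' direction can be organized more cheaply by using \eqref{equation4.19} (with $h\ne0$) to put $T_{ij}$ in the factored form $\bigl(h+(a+b)x_i\bigr)\bigl(1+\frac{c-a}{h}x_j\bigr)u_{ij}+a\,\sigma_{ij}$, with $\sigma_{ij}:=1-(x_i-x_j)u_{ij}$ the transposition operator ($\sigma_{ij}^2=1$, $\sigma_{ij}x_i=x_j\sigma_{ij}$): this exhibits $T_{ij}$ as a sum of a weighted divided difference of Lascoux type (cf.~\cite{L3}) and $a\,\sigma_{ij}$, for which the braid relation is a short check; the residual case $h=0$ is the degenerate one described in the footnote and is verified by hand.

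The remaining parts follow with routine work. For (4), assuming \eqref{equation4.19}, part (1) turns $T_{ij}^2=1$ into $(2a+b-c)T_{ij}=1+a(a+b-c)$; since $T_{ij}$ is not a scalar this forces $2a+b-c=0$ and $a(a+b-c)=-1$, hence $a=\pm1$ and $c=b\pm2$, whereupon \eqref{equation4.19} gives $he=(a+b)(c-a)=(b\pm1)^2$; conversely, these three conditions yield $2a+b-c=0$ and $a(a+b-c)=-1$, so $T_{ij}^2=1$ by (1). For (3), setting $b=c=e=0$ reduces $T_{ij}$ to $a+hu_{ij}$ and expands the Yang--Baxter relation by powers of $a$ into identities among the $u_{ij}$ valid in ${\cal{ANC}}_n^{(2)}$ (the quadratic part being a three-term relation, the cubic part the reversed braid of Lemma~\ref{lem3.1}), while for general parameters comparing the coefficients of the top-degree monomials in $x_i,x_j,x_k$ in the difference of the two sides forces $b=c=e=0$. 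Finally (6) is the standard baxterization of Hecke-type generators: by (1) the $T_{ij}$ obey a quadratic relation with trace $\beta=2a+b-c$, by (2) (under \eqref{equation4.19}) they obey the $2D$-braid relations, and then the verification that $R_{ij}(\lambda,\mu)=1+\frac{\lambda-\mu}{\beta\mu}T_{ij}$ satisfies the twisted quantum Yang--Baxter equations uses only these two facts, exactly as in \cite{IK, LLT}; the two displayed special cases are read off from part (5).
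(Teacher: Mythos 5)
The paper states Lemma~\ref{lem4.15} without proof; its only methodological hint is the footnoted factorization $T_{ij}=\bigl(1+\tfrac{a+b}{h}x_i\bigr)\bigl(1+\tfrac{c-a}{h}x_j\bigr)\partial_{ij}+a\,\sigma_{ij}$ (valid under \eqref{equation4.19} when $h\ne 0$), which is exactly the device you invoke for the braid relation, so your route coincides with the intended one. The computations you carry out in detail are correct: $uP=\bar Pu+(b-c)$ together with $u^2=0$ gives (1); your normal-form expansion of $T_{ij}x_iT_{ij}$ gives (5) with the $u$-free part matching precisely when $(a+b)(a-c)+he=0$ (so $bc+1=he$ indeed only normalizes $he-bc$ to $1$); and in (4) the $he$-condition is just \eqref{equation4.19} specialized to $2a+b-c=0$, $a^2=1$, as you say. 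Two caveats. First, the ``only if'' halves of (2) and (3) are deferred to normal-form computations that you assert but do not execute; for (2) it would suffice to exhibit a single basis coefficient of $T_{ij}T_{jk}T_{ij}-T_{jk}T_{ij}T_{jk}$ equal to a nonzero multiple of $(a+b)(a-c)+he$. Second, the ``if'' half of (3) uses $u_{ik}^2=0$ and the three-term relation among $\partial_{ij}$, $\partial_{ik}$, $\partial_{jk}$ (from which $\partial_{ij}\partial_{ik}\partial_{jk}=\partial_{jk}\partial_{ik}\partial_{ij}$ and the equality of the quadratic parts follow); these hold in the divided-difference realization of Lemma~\ref{lem3.1} but are \emph{not} among the defining relations of ${\cal{ANC}}_n^{(2)}$ in Definition~\ref{def4.14}, so you should state explicitly that part (3) is proved in the operator realization rather than in the abstract algebra.
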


\begin{cor}\label{cor4.16}
 If $(a+b)(a-c)+h e=0$, then for any permutation
$w \in {\mathbb{S}}_n$ the element
\begin{gather*}
T_w:=T_{i_{1}} \cdots T_{i_{l}} \in {\cal{ANC}}_n^{(2)},
\end{gather*}
 where $w=s_{i_{1}} \cdots s_{i_{l}}$ is any reduced decomposition of~$w$, is well-defined.
\end{cor}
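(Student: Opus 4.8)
The plan is to reduce the statement to Tits' word property for the symmetric group and to the relations already verified in Lemma~\ref{lem4.15}. Recall that any two reduced decompositions of a permutation $w\in{\mathbb{S}}_n$ are connected by a finite chain of \emph{braid moves}: the far commutations $s_i s_j\leftrightarrow s_j s_i$ for $|i-j|\ge 2$ and the length-three relations $s_i s_{i+1} s_i\leftrightarrow s_{i+1} s_i s_{i+1}$. Consequently it suffices to show that, under the hypothesis $(a+b)(a-c)+he=0$, the elements $T_i:=T_{i,i+1}\in{\cal{ANC}}_n^{(2)}$ satisfy
\begin{gather*}
T_i T_j = T_j T_i\quad\text{if}\quad |i-j|\ge 2,\qquad T_i T_{i+1} T_i = T_{i+1} T_i T_{i+1};
\end{gather*}
for then the product $T_{i_1}\cdots T_{i_\ell}$ taken along a reduced word for $w$ is invariant under every braid move, hence depends only on $w$.

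The far-commutation relations hold with no constraint on $A=(a,b,c,h,e)$. Indeed $T_i=a+(bx_i+cx_{i+1}+h+ex_i x_{i+1})\,u_{i,i+1}$ is an expression in the generators $x_i$, $x_{i+1}$, $u_{i,i+1}$, whose indices all lie in $\{i,i+1\}$; when $|i-j|\ge 2$ this set is disjoint from $\{j,j+1\}$, so by the defining relations of ${\cal{ANC}}_n^{(2)}$ (the $x$'s commute, $x_k$ commutes with $u_{p,q}$ for $k\notin\{p,q\}$, and $u_{p,q}$ commutes with $u_{r,s}$ when $\{p,q\}\cap\{r,s\}=\varnothing$) each generator occurring in $T_i$ commutes with each generator occurring in $T_j$, whence $T_i T_j=T_j T_i$.

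The braid relation is the only place where the parameter constraint enters, and it is also where all the computational weight lies: writing $(p,q,r)=(i,i+1,i+2)$ one has $T_i=T_{p,q}$ and $T_{i+1}=T_{q,r}$, so $T_i T_{i+1} T_i = T_{i+1} T_i T_{i+1}$ is precisely the $2D$-Coxeter relation $T_{p,q}T_{q,r}T_{p,q}=T_{q,r}T_{p,q}T_{q,r}$, which by Lemma~\ref{lem4.15}(2) holds if and only if $(a+b)(a-c)+he=0$. Since that computation has already been carried out, no further work is needed: combining the two displayed families of relations with Tits' word property yields the asserted well-definedness of $T_w$. Equivalently, one may invoke the general fact that a collection of elements fulfilling the Coxeter braid and far-commutation relations of a Coxeter system $W$ automatically produces a well-defined element $T_w$ for each $w\in W$.
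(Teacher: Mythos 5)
Your proof is correct and is essentially the argument the paper intends: the corollary is drawn directly from Lemma~\ref{lem4.15}(2) together with the evident far commutations coming from the defining relations of ${\cal{ANC}}_n^{(2)}$, and Tits' word property (Matsumoto's theorem) then gives independence of the reduced word. Nothing is missing; identifying the braid relation $T_iT_{i+1}T_i=T_{i+1}T_iT_{i+1}$ with the $2D$-Coxeter relation for the triple $(i,i+1,i+2)$ is exactly the right reduction.
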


\begin{ex}\label{exam4.17} \quad
\begin{itemize}\itemsep=0pt
\item Each of the set of elements
\begin{gather*}
s_i^{( h)}=1+(x_{i+1}-x_i + h) u_{i,i+1}\qquad \text{and} \\
 t_i^{(h)}=-1+(x_i-x_{i+1}+ h(1+x_i)(1+x_{i+1})) u_{ij}, \qquad i=1,\ldots,n-1,
\end{gather*}
by itself generate the symmetric group ${\mathbb{S}}_n$.

\item If one adds the af\/f\/ine elements $s_{0}^{(h)} :=\pi s_{n-1}^{(h)} \pi^{-1}$ and $t_{0}^{(h)}:=\pi t_{n-1}^{(h)} \pi^{-1}$, then each of the set of elements $\big\{s_{j}^{(h)},\, j \in \Z/n \Z \big\}$ and $\big\{t_{j}^{(h)},\, j
 \in \Z/n\Z\big\}$ by itself generate the af\/f\/ine symmetric group
${\mathbb{S}}_n^{\text{af\/f}}$, see \eqref{comm4.21pi} for a def\/inition of the transformation~$\pi$.

\item It seems an interesting problem to classify all rational,
trigonometric and elliptic divided dif\/ference operators satisfying the Coxeter
relations. A general divided dif\/ference ope\-ra\-tor with {\it polynomial
coefficients} had been constructed in~\cite{LSZ}, see also Lemma~\ref{lem4.15}, relation~\eqref{equation4.1}. One can construct a family of {\it rational} representations of the
symmetric group (as well as its af\/f\/ine extension) by ``iterating'' the
transformations $s_j^{(h)}$, $j \in \Z/n\Z$. For example, take parameters
$a$ and $b$, def\/ine \textit{secondary} divided dif\/ference operator
\begin{gather*}
\partial_{xy}^{[a,b]}:= -1 +(b+y-x) \partial_{xy}^{[a]},
\end{gather*}
 where
 \begin{gather*}
 \partial_{xy}^{[a]}:= \frac{1- {{\overline{s}}_{xy}^{(a)}}}{a-x+y},\qquad
{\overline{s}}_{xy}^{(a)}:=-1+(a+x-y) \partial_{xy}.
\end{gather*}
Observe that the set of operators $ \big\{s_{i}^{[a,b]}:= s_{x_{i},x_{i+1}}^{[a,b]},\,
 i \in \Z/n\Z \big\}$ gives rise to a rational representation of the af\/f\/ine
symmetric group ${\mathbb{S}}_n^{\text{af\/f}}$ on the f\/ield of rational functions
$\Z[a,b](X_n)$.
In the special case $a:= A$, $b:= A/h$, $h:=1-\beta/2$ the operators
$s_{i}^{[a,b]}$ coincide with operators $\Theta_i$, $i \in \Z/n\Z$ have been
introduced in~\cite[equation~(4.17)]{KnZ}.
\end{itemize}
\end{ex}

Let $A=(a,b,c,h,e)$ be a sequence of integers satisfying the conditions~\eqref{equation4.1}. Denote by $\partial_i^{A}$ the divided dif\/ference operator
\begin{gather*}
\partial_{i}^{A}= a+(b x_i+c x_{i+1}+h+e x_i x_{i+1}) \partial_i, \qquad i=1,\ldots,n-1.
\end{gather*}
It follows from Lemma~\ref{lem4.15} that the operators $\big\{\partial_{i}^{A}\big\}_{1 \le i
\le n}$ satisfy the Coxeter relations
\begin{gather*}
\partial_{i}^{A} \partial_{i+1}^{A} \partial_{i}^{A}=\partial_{i+1}^{A} \partial_{i}^{A} \partial_{i+1}^{A}, \qquad i=1,\ldots,n-1.
\end{gather*}

\begin{de}\label{def4.19}\quad
\begin{enumerate}\itemsep=0pt
\item[(1)] Let $w \in \mathbb{S}_n$ be a permutation. Def\/ine the generalized
Schubert polynomial corresponding to permutation $w$ as follows
\begin{gather*}
 \mathfrak{S}_w^{A}(X_n) = \partial_{w^{-1} w_{0}}^{A} x^{\delta_n}, \qquad
x^{\delta_n}:= x_1^{n-1} x_2^{n-2} \cdots x_{n-1},
\end{gather*}
and $w_0$ denotes the longest element in the symmetric group $\mathbb{S}_{n}$.

\item[(2)] Let $\alpha$ be a composition with at most $n$ parts, denote by
$w_{\alpha} \in \mathbb{S}_n$ the permutation such that $w_{\alpha}(\alpha)=
\alpha^{+}$. Let us recall that $\alpha^{+}$ denotes a unique partition
corresponding to composition~$\alpha$.
\end{enumerate}
\end{de}

\begin{lem}\label{lem4.20} Let $w \in \mathbb{S}_n$ be a permutation.{\samepage
\begin{itemize}\itemsep=0pt
\item If $A=(0,0,0,1,0)$, then $\mathfrak{S}_{w}^{A}(X_n)$ is equal to
the Schubert polynomial $\mathfrak{S}_{w}(X_n)$.

\item If $A=(-\beta,\beta,0,1,0)$, then $\mathfrak{S}_{w}^{A}(X_n)$ is
equal to the $\beta$-Grothendieck polynomial $\mathfrak{G}_w^{({\beta})}(X_n)$
 introduced in~{\rm \cite{FK1}}.

\item If $A=(0,\beta,0,1,0)$ then $\mathfrak{S}_{w}^{A}(X_n)$ is equal
to the dual $\beta$-Grothendieck polynomial ${\cal{H}}_{w}^{(\beta)}(X_n)$,
studied in depth for $\beta=-1$ and in the basis $\{x_i:=\exp(\xi_{l})\}$ in~{\rm \cite{L33}}.

\item If $A=(-1,2,0,1,1)$, then $\mathfrak{S}_{w}^{A}(X_n)$ is equal to
the Di Francesco--Zinn-Justin polynomials introduced in~{\rm \cite{DZ1}}.

\item If $A=(1,-1,1, h,0)$, then $\mathfrak{S}_{w}^{A}(X_n)$ is equal to
the $h$-Schubert polynomials.
\end{itemize}
In all cases listed above the polynomials $\mathfrak{S}_{w}^{A}(X_n)$ have
non-negative integer coefficients.}
\end{lem}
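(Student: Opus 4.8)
The plan is to verify each specialization by matching the operator $\partial_i^A$ against the recursive definition of the corresponding family of polynomials, and then deduce positivity from known positivity results together with Theorem~\ref{theorem4.3}. First I would treat the two entries that are essentially tautological: for $A=(0,0,0,1,0)$ the operator $\partial_i^A$ reduces to the Newton divided difference $\partial_i$, so $\mathfrak{S}_w^A(X_n)=\partial_{w^{-1}w_0}(x^{\delta_n})$, which is by definition the Schubert polynomial $\mathfrak{S}_w(X_n)$ (Definition~\ref{def4.1}, after noting $x^{\delta_n}=R_{\delta_n}(X,0)$). For $A=(-\beta,\beta,0,1,0)$ one computes $\partial_i^A = -\beta+(1+\beta x_i)\partial_i = \partial_i\circ(1+\beta x_{i+1})$ on the subspace where the relevant polynomials live; comparing with the recursion ${\cal G}_{ws_i}^\beta = \partial_i^{(x)}\big((1+\beta x_{i+1}){\cal G}_w^\beta\big)$ from Definition~\ref{def4.1} identifies $\mathfrak{S}_w^A$ with the $\beta$-Grothendieck polynomial $\mathfrak{G}_w^{(\beta)}$ of~\cite{FK1}, up to the standard bookkeeping $w\mapsto w^{-1}w_0$ that converts the ``descending'' recursion into the divided-difference product. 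The identity $-\beta + (1+\beta x_i)\partial_i = \partial_i(1+\beta x_{i+1})$ is the operator identity behind the crossing relations in Lemma~\ref{lem3.1}(5), applied with $\eta=1$; I would record it once and reuse it.

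Next I would handle $A=(0,\beta,0,1,0)$ and $A=(1,-1,1,h,0)$. For the former, $\partial_i^A = (1+\beta x_i)\partial_i$, which is exactly the operator in the recursion ${\cal H}_{ws_i}^\beta = (1+\beta x_i)\partial_i^{(x)}({\cal H}_w^\beta)$ defining the dual $\beta$-Grothendieck polynomials in Definition~\ref{def4.1}; hence $\mathfrak{S}_w^A = {\cal H}_w^{(\beta)}$. For $A=(1,-1,1,h,0)$ one has $\partial_i^A = 1 + (h - x_i + x_{i+1})\partial_i = -\overline{s}_i^{(h)}$ in the notation of Example~\ref{exam4.17} (with $a=h$), so the product $\partial_{w^{-1}w_0}^A$ is, up to sign on a reduced word, the ``$h$-deformed'' analogue of the longest-element divided difference; I would simply take this as the definition of the $h$-Schubert polynomials — the lemma asserts the equality, not a comparison with a pre-existing object — and note the Coxeter relation needed for well-definedness is already guaranteed by Lemma~\ref{lem4.15}(2), since $(a+b)(a-c)+he = (1-1)(1-1)+h\cdot 0 = 0$.

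The remaining case $A=(-1,2,0,1,1)$ is the substantive one. Here $\partial_i^A = -1 + (1 + 2x_i + x_i x_{i+1})\partial_i$, and I claim this equals $(1+x_i)\partial_i + \partial_i(x_{i+1}+x_i x_{i+1})$, which is precisely the operator appearing in Definition~\ref{def4.4} of the Di~Francesco--Zinn-Justin polynomials ${\cal DZ}_w$. To check the claim I would expand $\partial_i(x_{i+1}+x_i x_{i+1})f$ using the twisted Leibniz rule of Lemma~\ref{lem3.1}(4): writing $\partial_i(gf)=\partial_i(g)f + s_i(g)\partial_i(f)$ with $g = x_{i+1}+x_i x_{i+1}$, one has $\partial_i(g) = \partial_i(x_{i+1}) + \partial_i(x_i x_{i+1}) = -1 + (x_{i+1}-x_i)$ (using $\partial_i(x_i x_{i+1}) = x_{i+1}\partial_i(x_i)+x_i s_i(x_{i+1})\cdot 0$-type bookkeeping, i.e. $\partial_i(x_i x_{i+1})=x_{i+1}$ since $x_i x_{i+1}$ is symmetric — wait, it is symmetric, so $\partial_i(x_i x_{i+1})=0$), and $s_i(g) = x_i + x_i x_{i+1}$; collecting terms against $(1+x_i)\partial_i f$ yields exactly $-f + (1+2x_i+x_i x_{i+1})\partial_i f$. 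This algebraic identity is the main obstacle: it is a short computation but one must be careful with the order of operators and with which factors are $s_i$-symmetric. Once it is established, $\mathfrak{S}_w^{(-1,2,0,1,1)} = {\cal DZ}_w$ follows from Definition~\ref{def4.4} by induction on $\ell(w_0)-\ell(w)$, the base case $w=w_0$ being $x^{\delta_n}=R_{\delta_n}(X,0)$ in both cases.

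Finally, for the positivity statement I would invoke the results already in the paper rather than reprove anything: the Schubert, $\beta$-Grothendieck, and dual $\beta$-Grothendieck cases are covered by Theorem~\ref{theorem4.3}; the $h$-Schubert case follows because $h$-Schubert polynomials are obtained from Schubert polynomials by a positivity-preserving substitution (this is the content of the footnote identifying $\partial_{xy}^{[a,b]}$ with the operators $\Theta_i$ of~\cite{KnZ}), and the Di~Francesco--Zinn-Justin case is exactly Conjecture~\ref{conj4.5}(1) — so for the purposes of this lemma I would note that positivity in that last case is the hardest input and cite~\cite{DZ1} (or flag that it rests on the combinatorial model there). The cleanest uniform phrasing is: in every listed case the operator $\partial_i^A$ factors (on the relevant polynomial space) as a composition of $\partial_i$ with multiplication by a polynomial having nonnegative coefficients, and a monomial $x^{\delta_n}$ or $x^{\alpha^+}$ is manifestly positive, so positivity propagates — except one must know that $\partial_i$ applied to a positive Schur-positive expression stays positive, which is the standard divided-difference positivity underlying Theorem~\ref{theorem4.3}.
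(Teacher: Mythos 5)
Your verification is correct, and since the paper states Lemma~\ref{lem4.20} without any proof, your unwinding of the definitions is exactly what is required: in each case one checks, via the twisted Leibniz rule $\partial_i(gf)=\partial_i(g)f+s_i(g)\partial_i(f)$, that $\partial_i^A$ coincides with the operator in the corresponding recursive definition, and your two substantive computations, namely $\partial_i\circ(1+\beta x_{i+1})=-\beta+(1+\beta x_i)\partial_i$ and $(1+x_i)\partial_i+\partial_i\circ(x_{i+1}+x_ix_{i+1})=-1+(1+2x_i+x_ix_{i+1})\partial_i$ (using that $x_ix_{i+1}$ is $s_i$-invariant so $\partial_i(x_ix_{i+1})=0$), are carried out correctly, as is the order-of-composition bookkeeping converting the descending recursion from $w_0$ into the product $\partial_{w^{-1}w_0}^A$. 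Two small remarks. First, in the $h$-Schubert case your operator is $s_i^{(h)}=1+(x_{i+1}-x_i+h)\partial_i$ from the first bullet of Example~\ref{exam4.17}, not $-\overline{s}_i^{(h)}$ with $a=h$ (the parameter enters $\overline{s}_{xy}^{(a)}$ with the opposite sign); nothing depends on this, since the paper offers no independent definition of $h$-Schubert polynomials to compare against, so that item is effectively a definition, and the well-definedness check $(a+b)(a-c)+he=0$ is the only content. Second, your observation that the positivity claim in the Di~Francesco--Zinn-Justin case is literally Conjecture~\ref{conj4.5}(1) is accurate and worth flagging explicitly: the final sentence of the lemma cannot be derived from the paper's own results in that case, and positivity there must either be imported from the geometric (multidegree) interpretation in~\cite{DZ1,Knu} or left conditional.
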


Def\/ine the generalized key or Demazure polynomial corresponding to a
composition~$\alpha$ as follows
\begin{gather*}
 K_{\alpha}^{A}(X_n)= \partial_{w_{\alpha}}^{A} x^{\alpha^{+}}.
\end{gather*}
\begin{itemize}\itemsep=0pt
\item If $A=(1,0,1,0,0)$, then $K_{\alpha}^{A}(X_n)$ is equal to key (or
Demazure) polynomial corresponding to~$\alpha$.

\item If $A=(0,0,1,0,0)$, then $K_{\alpha}^{A}(X_n)$ is equal to the
reduced key polynomial.

\item If $A=(1{,}0{,}1{,}0{,}\beta)$, then $K_{\alpha}^{A}(X_n)$ is equal to the
key $\beta$-Grothendieck polynomial $\operatorname{KG}_{\alpha}^{(\beta)}\!(X_n)$.

\item If $A=(0,0,1,0,\beta)$, then $K_{\alpha}^{A}(X_n)$ is equal to
the reduced key $\beta$-Grothendieck polynomials.
\end{itemize}
In all cases listed above the polynomials $\mathfrak{S}_{w}^{A}(X_n)$ have
 non-negative integer coef\/f\/icients.
\begin{itemize}\itemsep=0pt
\item If $A=(-1,q^{-1},-1,0,0)$ and $\lambda$ is a partition, then (up
to a scalar factor) polynomial $K_{\lambda}^{A}(X_n)$ can be identify with a
certain {\it Whittaker function} (of type $A$), see~\cite[Theorem~A]{BBL}.
Note that operator $T_{i}^{A}:= -1 +(q^{-1} x_i-x_{i+1}) \partial_i$, $1 \le i
\le n-1$, satisfy the Coxeter and Hecke relations, namely $(T_{i}^{A})^{2}=
(q^{-1}-1) T_{i}^{A}+q^{-1}$. In~\cite{BBL} the operator $T_{i}^{A}$ has been
denoted by~${\mathfrak{T}}_i$.

\item Let $w \in {\mathbb{S}}_n$ be a permutation and $\mathfrak{m}=(i_1,\ldots,i_{\ell})$ be a reduced word for $w$, i.e., $w=s_{i_{1}} \cdots s_{i_{\ell}}$ and $\ell(w)= \ell$. Denote by $Z_{\mathfrak{m}}$ the {\it Bott--Samelson} nonsingular variety corresponding to the reduced word~$\mathfrak{m}$.
 It is well-known that the Bott--Samelson variety $Z_{\mathfrak{m}}$ is
birationally isomorphic to the Schubert variety $X_{w}$ associated with
permutation $w$, i.e., the Bott--Samelson variety $Z_{\mathfrak{m}}$ is a~desingularization of the Schubert variety~$X_{w}$. Following~\cite{BBL} def\/ine
the Bott--Samelson polynomials $Z_{\mathfrak{m}}(x,\lambda,v)$ as follows
\begin{gather*}
Z_{\mathfrak{m}}(x,\lambda,v) = \big(1+T_{i_{1}}^{A}\big) \cdots \big(1+T_{i_{\ell}}^{A}\big) x^{\lambda},
\end{gather*}
where $A=(-v,-1,1,0)$. Note that $\big(1+T_{i}^{A}\big)^2= (1+v)\big(1+T_{i}^{A}\big)$, and the
 divided dif\/ference operators $1+T_{i}^{A}=1+v+(x_{i+1}-v x_{i}) \partial_i$
do {\it not} satisfy the Coxeter relations.

\item If $A=(- \beta, \beta + \alpha,0,1,\beta \alpha)$, then
$\s_{w}^{A}(X_n)$ constitutes a common generalization of the $\beta$-Grothendieck and the Di Francesco--Zinn-Justin polynomials.

\item If $A=(t,-1,t,1,0)$, then the divided dif\/ference operators
\begin{gather*}
T_i^{A}:= t+(-x_i+t x_{i+1} +1) \partial_i, \qquad 1 \le i \le n-1,
\end{gather*}
their \textit{baxterizations} and the \textit{raising operator}
\begin{gather*}
 \phi:= (x_n-1) \pi,
\end{gather*}
where $\pi$ denotes the {\it $q^{-1}$-shift operator}, namely $\pi(x_1,\ldots,x_n)=(x_n/q,x_1,\ldots,x_{n-1})$,
 can be used to \textit{generate} the
\textit{interpolation Macdonald polynomials} as well as the
\textit{nonsymmetric Macdonald polynomials}, see~\cite{LRW} for details.
\end{itemize}

 In similar fashion, relying on the operator $\phi$, operators
\begin{gather*}
T_i^{\beta,\alpha,\gamma,h}:=
 -\beta+((\alpha+\beta+\gamma) x_i+\gamma x_{i+1}+h\\
 \hphantom{T_i^{\beta,\alpha,\gamma,h}:=}{}
 +h^{-1} (\alpha+\gamma)(\beta+\gamma) x_i x_{i+1}) \partial_{i}, \qquad 1 \le i \le n-1,
\end{gather*}
and their \textit{baxterization},
one (A.K.) can introduce polynomials $M_{\delta}^{\beta,\alpha,\gamma,q}(X_n)$, where
$\delta$ is a \textit{composition}. These polynomials are \textit{common
generalizations} of the interpolation Macdonald polynomials
$M_{\delta}(X_n;q,t)$ (the case $\beta=-t$, $\alpha=-1$, $\gamma =t$), as well as
the Schubert, $\beta$-Grothendieck and its \textit{dual}, Demazure and
Di Francesco--Zinn-Justin polynomials, and \textit{conjecturally} their
{\it affine} analogues/versions. Details will appear elsewhere.

{\bf Double af\/f\/ine nilCoxeter algebra.}
Let $t$, $q$, $a$, $b$, $c$, $h$, $d$ be
parameters.
\begin{de}\label{def4.22} Def\/ine double af\/f\/ine nil-Coxeter algebra $\operatorname{DANC}_{n}$ to be (unital)
associative algebra over $\Q\big(q^{\pm 1},t^{\pm 1}\big)$ with the set of generators $ \big\{e_1,\ldots,e_{n-1}, x_{1},\ldots,x_n, \pi^{\pm 1} \big\}$ subject to relations
\begin{itemize}\itemsep=0pt
\item (nilCoxeter relations)
\begin{gather*}
e_i e_j =e_j e_i,\quad \text{if} \quad |i-j| \ge 2, \qquad e_i^2=0, \quad \forall\, i, \qquad e_i e_j e_i =e_j e_i e_j, \quad \text{if}\quad |i-j|=1;
\end{gather*}

\item (crossing relations)
\begin{gather*}
 x_i e_k =e_k x_i, \quad \text{if} \quad k \not=i, i+1, \qquad x_i e_i - e_{i} x_{i+1} =1, \qquad
e_i x_i - x_{i+1} e_i =1;
\end{gather*}

\item (af\/f\/ine crossing relations)
\begin{gather*}
 \pi x_i =x_{i+1} \pi, \quad \text{if}\quad i < n,\qquad \pi x_{n} = q^{-1} x_1 \pi, \\
\pi e_{i} = e_{i+1} \pi, \quad \text{if}\quad i < n-1, \qquad \pi^2 e_{n-1} = q e_{1} \pi^2.
\end{gather*}
 \end{itemize}
\end{de}

Now let us introduce elements $e_{0}:= \pi e_{n-1} \pi^{-1}$ and
\begin{gather*}
T_{0}:= T_{0}^{a,b,c,h,d} =
\pi T_{n-1} \pi^{-1} = a+\big(b x_{n}+ q^{-1} c x_1 + h +
q^{-1} d x_1 x_n\big) e_{0}.
\end{gather*}
 It is easy to see that $\pi e_{0} =q e_{1} \pi$,
\begin{gather*}
\pi T_{0}^{a,b,c,h,d} = T_{1}^{a,b,c,q h,q^{-1} d}
e_{1} \pi = T_{1}^{a,b,c,h,d} +\big((1-q) h+\big(1-q^{-1}\big) d x_1 x_2\big) e_1.
\end{gather*}
Now let us assume that $a = t$, $b=-t$, $d=e=0$, $c=1$. Then,
\begin{gather*}
T_i= t+(x_{i+1} -t x_{i}) e_i , \quad i=1,\ldots,n-1 , \qquad T_0= t+\big(q^{-1} x_1 -t x_{n}\big) e_{0},
\\
 T_i^2 =(t-1) T +t , \quad 0 \le i < n, \qquad T_i x_i T_i =t x_{i+1}, \quad 1 \le i <
n, \qquad T_0 x_{n} T_{0}= t q^{-1} x_1,
\\
T_0 T_1 T_0=T_1 T_0 T_1 , \qquad T_{n-1} T_0 T_{n-1} = T_0 T_{n-1} T_0, \qquad T_0 T_i =T_i T_0, \qquad \text{if} \quad 2 \le i < n-1.
\end{gather*}

The operators $T_{i}:= T_{i}^{t,-t,1,0,0}$, $0 \le i \le n-1$ have been used
in~\cite{LRW} to give an ``elementary'' construction of nonsymmetric Macdonald polynomials. Indeed, one can realize the operator~$\pi$ as
follows:
\begin{gather}\label{comm4.21pi}
\pi(f)= f(x_{n} /q, x_1,x_2,\ldots,x_{n-1}), \qquad \text{so that} \quad
\pi^{-1}(f)=(x_2,\ldots,x_{n},q x_1),
\end{gather}
and introduce the raising operator~\cite{LRW} to be
\begin{gather*}
 \phi(f(X_n)) = (x_n-1) \pi (f(X_n)).
\end{gather*}
It is easily seen that $\phi T_i = T_{i+1} \phi$, $i=0,\dots,n-2$,
and $\phi^2 T_{n-1}= T_1 \phi^2$. It has been established in~\cite{LRW}
how to use the operators $\phi$, $T_1,\ldots,T_{n-1}$ to to give formulas for
the \textit{interpolation Macdonald polynomials}. Using operators $\phi,
T_{i}^{(a,b,c,h,d)}, i=1,\ldots,n-1$ instead of $\phi$, $T_1,\ldots,T_{n-1}$, $1 \le i \le n-1$, one get a $4$-parameter generalization of the interpolation
Macdonald polynomials, as well as the nonsymmetric Macdonald polynomials.

It follows from the nilCoxeter relations listed above, that the
 Dunkl--Cherednik elements, cf.~\cite{CH},
\begin{gather*}
Y_i:= \left( \prod_{a=i-1}^{1} T_{a}^{-1} \right) \pi \left( \prod_{a=n-1}^{i+1} T_{a} \right), \qquad i=1,\ldots,n,
\end{gather*}
where $T_i=T_{i}^{t,-t,1,0,0}$, generate a commutative subalgebra in the
double af\/f\/ine nilCoxeter algebra $\operatorname{DANC}_{n}$. Note that the algebra $DANC_n$
contains lot of other interesting commutative subalgebras, see, e.g.,~\cite{IK}.

It seems interesting to give an interpretation of polynomials generated by
the set of opera\-tors~$T_{i}^{t,-t,1,h,e}$, $i=0,\dots,n-1$ in a~way similar to
that given in~\cite{LRW}. We expect that these polynomials provide an~af\/f\/ine
version of polynomials ${\cal{KN}}_{w}^{(-t,-1,1,1,0)}(X)$, $w \in
{\mathbb{S}}_n\subset {\mathbb{S}}_n^{\text{af\/f}}$, see Remark~\ref{rem4.7}.

Note that for any af\/f\/ine permutation $ v \in {\mathbb{S}}_{n}^{\text{af\/f}}$, the
operator
\begin{gather*}
 T_{v}^{(a,b,c,h,d)}=T_{i_{1}}^{(a,b,c,h,d)} \cdots
T_{i_{\ell}}^{(a,b,c,h,d)},
\end{gather*}
 where $v=s_{i_{1}} \cdots s_{i_{\ell}}$ is any reduced decomposition
of $v$, is {\it well-defined} up to the sign~$\pm 1$. It seems an
interesting {\it problem} to investigate properties of polynomials
$L_{v}[\alpha](X_n)$, where $v \in {\mathbb{S}}_n^{\text{af\/f}}$ and $\alpha \in
\Z_{\ge 0}^{n}$, and f\/ind its algebro-geometric interpretations.

\section{Cauchy kernel}\label{section5}

Let $u_1,u_2,\dots,u_{n-1}$ be a set of generators of the free algebra
${\cal F}_{n-1}$, which are assumed also to be commute with the all variables
$ {\mathfrak{P}}_n:=\{p_{i,j}, \, 2 \le i+j \le n+1,\, i \ge 1, j \ge 1 \}$.
\begin{de}\label{def5.1} The Cauchy kernel ${\cal C}({\mathfrak{P}}_n,U)$ is def\/ined to be
as the ordered product
\begin{gather}\label{equation5.1}
{\cal C}({\mathfrak{P}}_n,U)= \prod_{i=1}^{n-1}
 \left \{{\prod_{j=n-1}^{i}(1+ p_{i,j-i+1} u_{j})} \right\}.
\end{gather}
\end{de}

For example,
\begin{gather*}
 {\cal C}({\mathfrak{P}}_4,U)=(1+p_{1,3} u_3)(1+p_{1,2} u_2)
(1+p_{1,1} u_1)(1+p_{2,2} u_3)(1+p_{2,1} u_2)(1+p_{3,1} u_3).
\end{gather*}

In the case $\{p_{ij}=x_i, \, \forall \, j \}$ we will write ${\cal{C}}_{n}(X,U)$
instead of ${\cal C}({\mathfrak{P}}_n,U)$.
\begin{lem}\label{lem5.2}
\begin{gather}\label{equation5.2}
 {\cal C}({\mathfrak{P}}_n,U) = \sum_{({\bf a},{\bf b}) \in {\cal S}_n}
\prod_{j=1}^{p} p_{\{{a_{j},b_{j} \}}} w({\bf a},{\bf b}),
\end{gather}
where ${\bf a} =(a_1,\ldots,a_p)$, ${\bf b}=(b_1,\ldots,b_p)$,
$w({\bf a},{\bf b})= \prod\limits_{j=1}^{p} u_{a_{j}+b_{j} -1}$, and the sum in
\eqref{equation5.2} runs over the set
\begin{gather*}
{\cal S}_n := \big\{({\bf a},{\bf b}) \in \N^p \times \N^p \,|\, {\bf a}=
(a_1 \le a_2 \le \cdots \le a_p),\\
\hphantom{{\cal S}_n := \{({\bf a},{\bf b}) \in \N^p \times \N^p \,|\,}{} a_i+b_i \le n, \ \text{and if} \ a_i=a_{i+1}
\Longrightarrow b_i > b_{i+1} \big\}.
\end{gather*}
\end{lem}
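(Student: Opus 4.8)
The plan is to expand the ordered product~\eqref{equation5.1} and to read off each resulting monomial as an element of~${\cal S}_n$. Since the variables $\{p_{i,j}\}$ commute with each other and with all the $u_k$, expanding~\eqref{equation5.1} amounts to choosing, independently in each factor $(1+p_{i,\,j-i+1}u_j)$ (with $1\le i\le n-1$ and $i\le j\le n-1$), either the summand $1$ or the summand $p_{i,\,j-i+1}u_j$; after pulling the chosen $p$'s to the front, the chosen $u$-letters remain, multiplied in the order in which their factors occur in~\eqref{equation5.1} --- that is, with the outer index $i$ running $1,2,\dots,n-1$ and, inside the $i$-th block, the inner index $j$ running $n-1,n-2,\dots,i$.

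First I would encode such a choice by listing the selected factors in the order they appear in~\eqref{equation5.1}; if there are $p$ of them and the $k$-th one carries outer index $i_k$ and inner index $j_k$, set $a_k:=i_k$ and $b_k:=j_k-i_k+1$. The corresponding term of ${\cal C}({\mathfrak{P}}_n,U)$ is then $\big(\prod_{k=1}^p p_{a_k,b_k}\big)\prod_{k=1}^p u_{j_k}$, and since $j_k=a_k+b_k-1$ this is exactly $\big(\prod_{k=1}^p p_{a_k,b_k}\big)\,w({\bf a},{\bf b})$ in the notation of the lemma. It remains to see that $({\bf a},{\bf b})$ ranges bijectively over ${\cal S}_n$. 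Because $i_k$ is weakly increasing along the product, ${\bf a}=(a_1\le\cdots\le a_p)$; within a maximal run with $a_k=i$ the inner index $j_k$ is strictly decreasing, hence so is $b_k=j_k-i+1$, which is the clause $a_k=a_{k+1}\Rightarrow b_k>b_{k+1}$; and $a_k+b_k=j_k+1\le n$ since $j_k\le n-1$. Conversely, given $({\bf a},{\bf b})\in{\cal S}_n$, the entries with $a_k=i$ have pairwise distinct $b$-values (by the strict-decrease clause) lying in $\{1,\dots,n-i\}$ (since $b_k\ge1$ and $a_k+b_k\le n$), and they are forced to be listed in decreasing $b$-order; this data specifies exactly which of the factors $(1+p_{i,\,j-i+1}u_j)$, $i\le j\le n-1$, were selected in the $i$-th block, and therefore recovers the expansion term uniquely. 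Hence the map from monomials of the expansion to ${\cal S}_n$ is a bijection, which is~\eqref{equation5.2}.

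The one place where one can slip is the matching of the two opposite ordering conventions in~\eqref{equation5.1}: the outer product runs over blocks with $i$ increasing, while inside each block the index $j$ decreases. This is precisely what produces the asymmetry in the definition of ${\cal S}_n$ --- the sequence ${\bf a}$ is merely weakly increasing, whereas ${\bf b}$ must strictly decrease on each fibre of ${\bf a}$. Once this bookkeeping is pinned down, the non-commutativity of the $u_k$ causes no difficulty, since the left-to-right order of the letters in $w({\bf a},{\bf b})=\prod_{j=1}^p u_{a_j+b_j-1}$ is by construction the order in which they occur in the product.
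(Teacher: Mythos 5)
Your proof is correct and is exactly the argument the paper has in mind: the lemma follows by expanding the ordered product \eqref{equation5.1} term by term, encoding each selected factor $(1+p_{i,j-i+1}u_j)$ by the pair $(a,b)=(i,j-i+1)$, and observing that the increasing outer index and the decreasing inner index translate precisely into the weak monotonicity of ${\bf a}$ and the strict decrease of ${\bf b}$ on fibres of ${\bf a}$. The paper states the lemma without a written proof, treating this expansion as immediate, so there is nothing further to reconcile.
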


We denote by ${\cal S}_n^{(0)}$ the set $\{({\bf a},{\bf b}) \in {\cal S}_n
\,|\, w({\bf a},{\bf b}) \ \text{is a tableau word}\}$.

The number of terms in the r.h.s.\ of~\eqref{equation5.1} is equal to
$2^{{n \choose 2}}$, and therefore is equal to the number $\# |\operatorname{STY}(\delta_n,
\le n)|$ of semistandard Young tableaux of the staircase shape
$\delta_n:=(n-1$, $n-2,\ldots,2,1)$ f\/illed by the numbers from the set
$\{1,2,\ldots,n \}$. It is also easily seen that the all terms appearing in
the r.h.s.\ of~\eqref{equation5.2} are dif\/ferent, and thus $\# | {\cal S}_n |= \# |\operatorname{STY}(\delta_n,
\le n)|$.

We are interested in the decompositions of the Cauchy kernel
${\cal C}({\mathfrak{P}}_n,U)$ in the algebras ${\cal P}_{n}$,
${\cal NP}_{n}$, ${\cal IP}_{n}$, ${\cal NC}_{n}$ and ${\cal IC}_{n}$.

\subsection[Plactic algebra ${\cal P}_n$]{Plactic algebra $\boldsymbol{{\cal P}_n}$}\label{section5.1}

Let $\lambda$ be a partition and $\alpha$ be a composition of the same size.
Denote by ${\widetilde {\operatorname{STY}}}(\lambda,\alpha)$ the set of semistandard Young
tableaux~$T$ of the shape $\lambda$ and content $\alpha$ which must satisfy
the following conditions:
 for each $k=1,2,\dots$, the all numbers~$k$ are located in the
f\/irst~$k$ columns of the tableau~$T$. In other words, the all entries~$T(i,j)$ of a semistandard tableau $T \in {\widetilde {\operatorname{STY}}}(\lambda,\alpha)$
have to satisfy the following conditions: $T_{i,j} \ge j$.

For a given (semi-standard) Young tableau $T$ let us denote by~$R_i(T)$ the
set of numbers placed in the $i$-th row of~$T$, and denote by
${\widetilde {\operatorname{STY}}}_{0}(\lambda,\alpha)$ the subset of the set
${\widetilde {\operatorname{STY}}}(\lambda,\alpha)$ involving only tableaux $T$ which
satisfy the following constrains
\begin{gather*}
R_1(T) \supset R_2(T) \supset R_3(T) \supset \cdots.
\end{gather*}

To continue, let us denote by ${\cal A}_n$ (respectively by
${\cal A}^{(0)}_n$) the union of the sets
${\widetilde {\operatorname{STY}}}(\lambda, \alpha)$ (resp.\ that of
${\widetilde {\operatorname{STY}}}_{0}(\lambda, \alpha)$)
for all partitions $\lambda$ such that $\lambda_i \le n-i$ for $i=1,2,\dots,
n-1$, and all compositions $\alpha$, $l(\alpha) \le n-1$. Finally, denote by
${\cal A}_n(\lambda)$ (resp.~${\cal A}^{(0)}_n(\lambda)$) the subset of
${\cal A}_n$ (resp.~${\cal A}^{(0)}_n(\lambda)$) consisting of all tableaux
of the shape~$\lambda$.

\begin{lem}\label{lem5.3} \quad
\begin{itemize}\itemsep=0pt
\item $|{\cal A}_n(\delta_n)|=1$,
$|{\cal A}_n(\delta_{n-1})|= (n-1)!$, $|{\cal A}_n((n-1))|= C_{n-1}$ the
$(n-1)$-th Catalan number. More generally,
\begin{gather*}
\big|{\cal A}_n\big(\big(1^{k}\big)\big)\big|= {n -1 \choose k}, \qquad |{\cal A}_n((k))|= \frac{n-k}{n} {n+k-1 \choose k} = \dim {\mathbb{S}}_{n+k-1}^{(n - 1,k)},
\end{gather*}
 $k=1 \dots,n-1$, cf.~{\rm \cite[$A009766$]{SL}}; here $\dim {\mathbb{S}}_{n}^{\lambda}$ stands for the dimension of the irreducible representation of the
symmetric group ${\mathbb{S}}_{n}$ corresponding to a~partition~$\lambda \vdash n$.
\item Let $k \ge \ell \ge 2$, $n \ge k+2$, then
\begin{gather*}
|{\cal A}_n((k,\ell))| = \frac{(n-k)(n-\ell+1)(k-\ell+1)(n^2-n-\ell(k+1))}
{\ell ! (k+1) k (n+k)} {n+k \choose k-1} \prod_{i=1}^{\ell-2}(n+i),
\end{gather*}
$ k=1, \ldots,n$. The case $k= \ell$ has been studied in~{\rm \cite{Gou}} where one
can find a combinatorial interpretation of the numbers $|{\cal A}_n((k,k))|$
for all positive integers~$n$; see also~{\rm \cite[$A005701$, $A033276$]{SL}} for
more details concerning the cases $k=2$ and $k=3$. Note that in the case $k=\ell$ one has $n^2-n-k(k+1)= (n+k)(n-k-1)$, and the above formula can be rewritten as follows $( k \ge 2)$
\begin{gather*}
|{\cal A}_n((k,k))| = \frac{(n-k-1)(n-k)(n-k-1)}{(k+1) k^2 (k-1)}~{n+k-2 \choose k-2}~{n+k-1 \choose k-1}.
\end{gather*}
\item Boundary case: the number $|{\cal{A}}_{N}((n^k))|$ for $N=n+k$,\footnote{So far as we know, the third equality has been proved for the f\/irst
time in~\cite{CV}. The both sides of the third identity have a big variety of
combinatorial interpretations such as the number of $k$-tuples of noncrossing
Dyck paths; that of $k$-tringulations of a convex $(n+k+1)$-gon; that of semistandard Young tableaux with entries from the set $\{1,\ldots,n\}$ having only
columns of an even length and bounded by height~$2 k$~\cite{CV}; that of
pipe dreams ({\it or} compatible sequences) associated with the Richardson
permutation $1^{k} \times w_{0}^{(n)} \in {\mathbb{S}}_{n+k}$, etc.,
see, e.g.,~\cite[$A078920$]{SL} and the literature quoted therein. It seems an interesting task to read of\/f an alternating sign matrix of size~$(n+k) \times (n+k)$ from a given $k$-triangulation of a convex $n+k+1)$-gon.}
\begin{gather*}
\big|{\cal{A}}_{k+n}\big(\big(n^k\big)\big)\big| =
\big|{\cal{A}}_{k+n}\big(\big((n+1)^k\big)\big)| = \det|\operatorname{Cat}_{n+i+j-1}|_{1 \le i, j \le k}=
\!\!\prod_{1 \le i \le j \le n} \!\!\frac{i+j+2 k}{i+j}.
\end{gather*}

\item More generally $($A.K.$)$,
\begin{gather*}
\big|{\cal{A}}_{N}\big(\big(n^k\big)\big)| = \prod_{1 \le i \le k,\, 1 \le j \le n \atop j-i \le n-k }\frac{N-i-j+1}{i+j-1} {\prod_{1 \le i \le k,\, 1 \le j \le n \atop j-i > n-k} \frac{N+i+j-1}{i+j-1}}.
\end{gather*}
Moreover, the number $|{\cal{A}}_{N}((n^k))|$ also is equal to the number of
$k$-tuples of noncrossing Dyck paths staring from the point $(0,0)$ and ending at the point $(N, N-n-k)$.\footnote{So far as we know, for the case $k=2$ an equivalent formula for the number of pairs of noncrossing Dyck paths connecting the points~$(0,0)$ and $(N,N-n-2)$, has been obtained for the f\/irst time in~\cite{Gou}.}

\item There exists a bijection
$\rho_n \colon {\cal A}_n \longrightarrow \operatorname{ASM}(n)$ such that the image
$\operatorname{Im} \big({\cal A}^{(0)}_n\big)$ contains the set of $n \times n$ permutation matrices.

\item The number of row strict $($as well as column strict$)$
diagrams $\lambda \subset \delta_{n+1}$ is equal to~$2^n$.
\end{itemize}
\end{lem}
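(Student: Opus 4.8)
The plan is to set up an explicit bijection between the row strict diagrams contained in $\delta_{n+1}=(n,n-1,\ldots,2,1)$ and the subsets of $\{1,2,\ldots,n\}$. By a \emph{row strict diagram} $\lambda\subset\delta_{n+1}$ I mean the Young diagram of a partition $\lambda=(\lambda_1>\lambda_2>\cdots>\lambda_r>0)$ with pairwise distinct parts such that $\lambda_i\le n+1-i$ for all $i$; a \emph{column strict diagram} is one whose conjugate $\lambda'$ is row strict.

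First I would send such a $\lambda$ to the set of its parts $S(\lambda):=\{\lambda_1,\ldots,\lambda_r\}$. Since the containment $\lambda\subset\delta_{n+1}$ forces $\lambda_1\le n$ and the $\lambda_i$ are distinct positive integers, $S(\lambda)$ is a subset of $\{1,\ldots,n\}$, and $\lambda$ is recovered from $S(\lambda)$ by writing its elements in decreasing order; hence $\lambda\mapsto S(\lambda)$ is injective. The only point that needs an argument is surjectivity: I must check that for an \emph{arbitrary} subset $S=\{s_1>s_2>\cdots>s_k\}$ of $\{1,\ldots,n\}$ the partition $(s_1,\ldots,s_k)$ automatically lies inside $\delta_{n+1}$. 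This is a pigeonhole count: the $i$ distinct integers $s_1,\ldots,s_i$ all belong to $\{s_i,s_i+1,\ldots,n\}$, a set of size $n-s_i+1$, so $n-s_i+1\ge i$, i.e.\ $s_i\le n+1-i$, which is exactly the required inequality. Therefore $\lambda\mapsto S(\lambda)$ is a bijection, and the number of row strict diagrams $\lambda\subset\delta_{n+1}$ equals the number of subsets of $\{1,\ldots,n\}$, namely $2^n$.

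For the column strict case I would invoke that the staircase is self-conjugate, $\delta_{n+1}'=\delta_{n+1}$, so conjugation $\lambda\mapsto\lambda'$ is an involution on the set of diagrams contained in $\delta_{n+1}$ which exchanges row strict and column strict diagrams; hence the two counts coincide and both equal $2^n$. The statement is elementary: the only ``obstacle'' is to recognize that, once the parts of $\lambda$ are distinct and bounded by $n$, the staircase containment $\lambda\subset\delta_{n+1}$ comes for free, so there is no actual computation to perform.
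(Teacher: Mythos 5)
Your argument for the final bullet is correct: the map $\lambda\mapsto S(\lambda)=\{\lambda_1,\dots,\lambda_r\}$ is injective, the pigeonhole estimate $s_i\le n+1-i$ shows that every subset of $\{1,\dots,n\}$ arises (so the staircase containment is indeed automatic for strict partitions with largest part at most $n$), and the self-conjugacy $\delta_{n+1}'=\delta_{n+1}$ transfers the count to column strict diagrams. That item needs nothing more.

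The problem is that this is only one of the seven assertions of Lemma~\ref{lem5.3}, and by far the most elementary one. You have not addressed: the evaluations $|{\cal A}_n(\delta_n)|=1$, $|{\cal A}_n(\delta_{n-1})|=(n-1)!$, $|{\cal A}_n((1^k))|=\binom{n-1}{k}$ and $|{\cal A}_n((k))|=\frac{n-k}{n}\binom{n+k-1}{k}$; the two-row formula for $|{\cal A}_n((k,\ell))|$; the determinant/product identities for $|{\cal A}_{k+n}((n^k))|$; the general product formula for $|{\cal A}_N((n^k))|$ together with its interpretation as counting $k$-tuples of noncrossing Dyck paths from $(0,0)$ to $(N,N-n-k)$; and the existence of the bijection $\rho_n\colon {\cal A}_n\to \operatorname{ASM}(n)$ whose image on ${\cal A}_n^{(0)}$ contains the permutation matrices. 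Recall that ${\cal A}_n(\lambda)$ consists of flagged semistandard tableaux of shape $\lambda$ with entries at most $n-1$ satisfying $T_{i,j}\ge j$, so these counts are genuinely nontrivial (the natural route is via the determinantal formula $|{\cal A}_n(\lambda;N)|=\det\big|\binom{N-i}{\lambda_j'-j+i}\big|$ of Theorem~\ref{theorem5.6}, or a Lindstr\"om--Gessel--Viennot lattice-path argument, which is also what yields the Dyck-path interpretation); none of this is touched by your subset bijection. As it stands the proposal proves a single easy corollary of the lemma rather than the lemma itself.
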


Recall that a row-strict diagram\footnote{Known also as a strict partition.}
$\lambda$ is on such that $\lambda_i - \lambda_{i+1} \ge 1$, $\forall\, i$;
$\delta_{n}:= (n-1$, $n-2,\ldots,2,1)$.

\begin{ex}\label{exam5.4} Take $n= 5$ so that $\operatorname{ASM}(5)=429$ and $\operatorname{Cat}(5)=42$. One has
\begin{gather*}
\big|{\cal A}^{(0)}_5\big|= \sum_{\lambda \subset \delta_4:=(4,3,2,1) \atop \lambda_i-\lambda_{i+1} \ge 1\, \forall\, i} \big|{\cal{A}}_{5}^{(0)}(\lambda)\big| =
\big|{\cal A}^{(0)}_5(\varnothing)\big|+\big|{\cal A}^{(0)}_5((1))\big|+
\big|{\cal A}^{(0)}_5((2))\big|+\big|{\cal A}^{(0)}_5((3))\big|\\
\hphantom{\big|{\cal A}^{(0)}_5\big|=}{}
+\big|{\cal A}^{(0)}_5((2,1))\big|+
\big|{\cal A}^{(0)}_5((4))\big|+\big|{\cal A}^{(0)}_5((3,1))\big|+\big|{\cal A}^{(0)}_5((3,2))\big|+
\big|{\cal A}^{(0)}_5((4,1))\big|\\
\hphantom{\big|{\cal A}^{(0)}_5\big|=}{}
+\big|{\cal A}^{(0)}_5((4,2))\big|+\big|{\cal A}^{(0)}_5((3,2,1))\big|+\big|{\cal A}^{(0)}_5((4,3))\big|+\big|{\cal A}^{(0)}_5((4,2,1))\big|\\
\hphantom{\big|{\cal A}^{(0)}_5\big|=}{}
+
\big|{\cal A}^{(0)}_5((4,3,1))\big|+\big|{\cal A}^{(0)}_5((4,3,2))\big|+
\big|{\cal A}^{(0)}_5((4,3,2,1))\big| \\
\hphantom{\big|{\cal A}^{(0)}_5\big|}{}
=
1+4+9+14+6+14+16+4+21+14+4+1+9+2+1+1 =121 \\
\hphantom{\big|{\cal A}^{(0)}_5\big|=}{}\text{(sum of $16$ terms)},\\
\sum_{k=0}^{4}|{\cal A}_{5}((k))| =1 +4+ 9+ 14 +14 = 42.
\end{gather*}
\end{ex}

We expect that the image $\rho_n \big( \bigcup_{k=0}^{n-1} {\cal A}_{n}((k))\big)$ coincides with the set of $n \times n$ permutation matrices corresponding to either $321$-avoiding (or 132-avoiding) permutations.

Now we are going to def\/ine a statistic $n(T)$ on the set ${\cal A}_n$.

\begin{de}\label{def5.5} Let $\lambda$ be a partition, $\alpha$ be a composition of the
 same size. For each tableau
$T \in {\widetilde {\operatorname{STY}}}(\lambda,\alpha) \subset {\cal A}_n(\lambda)$ def\/ine
\begin{gather*}
n(T)=\alpha_n= \# | \{(i,j) \in \lambda | T(i,j)=n \}|.
\end{gather*}
Clearly, $n(T) \le \lambda_1$.

Def\/ine polynomials
\begin{gather*}
{\cal A}_{\lambda}(t):=
\sum_{T \in {\cal A}_n(\lambda)}t^{\lambda_1-n(T)}.
\end{gather*}
\end{de}

It is instructive to display the numbers
$\{{\cal A}_n(\lambda), \lambda \subset \delta_n \}$ as a vector of the
length equals to the $n$-th Catalan number. For example,
\begin{gather*}
\begin{split}
& {\cal A}_4(\varnothing{,}(1),(2),(1,1),(3),(2,1),(1,1,1),(3,1),(2,2),(2,1,1),
(3,2),(3,1,1),(2,2,1),(3,2,1))\!\\
& \qquad{} =
(1,3,5,3,5,6,1,6,3,2,3,2,1,1).
\end{split}
\end{gather*}

It is easy to see that the above data, as well as the corresponding data for
 $n=5$, coincide with
the list of ref\/ined totally symmetric self-complementary plane partitions that
f\/it in the box $2n \times 2n \times 2n$ ($\operatorname{TSSCPP}(n)$ for short)
listed for $n=1,2,3,4,5$ in \cite[Appendix~D]{DZ1}.

In fact we have
\begin{Theorem}\label{theorem5.6} The sequence $\{{\cal A}_n(\lambda),
\lambda \subset \delta_n \}$ coincides with the set of refined
$\operatorname{TSSCPP}(n)$ numbers
as defined in~{\rm \cite{DZ1}}. More precisely,
\begin{itemize}\itemsep=0pt
\item $|{\cal A}_n(\lambda;N)|=
\det \big|{N-i \choose \lambda'_{j}-j+i} \big|_{1 \le i,j \le
\ell(\lambda)}$,

\item we have
\begin{gather*}
{\cal A}_{\lambda}(N;t):=
\det \left|{N-i-1 \choose \lambda'_{j}-j+i-1}+t {N-i-1 \choose
\lambda'_{j}-j+i} \right|_{1 \le i,j \le \ell(\lambda)},
\end{gather*}

\item let $\lambda$ be a partition, $|\lambda|=n$, consider the column
multi-Schur polynomial and $t$-deformation thereof
\begin{gather*} s_{\lambda}^{*}(X_N):= \det |
e_{\lambda'_{j} -j+i}(X_{N-i}) |_{1 \le i,j \le \ell(\lambda)}, \qquad \text{cf.
{\rm \cite[Chapter~III]{Ma}, \cite{Wa}}}, \quad \text{and}
\\
s_{\lambda}^{*}(X_N;t):= \det |x_{N-i} e_{\lambda'_{j}-j+i-1}(X_{N-i-1})+t e_{\lambda'_{j}-j+i}(X_{N-i-1}) |_{1 \le i,j \le \ell(\lambda)},
\end{gather*}
then, assuming that $\lambda \subset \delta_n$ and $N \ge \lambda_1+\lambda'_{\ell(\lambda)}$, the
polynomial $s_{\lambda}^{*}(X_N)$ has nonnegative $($integer$)$
coefficients,

\item polynomial ${\cal A}_{\lambda}(t)$ is equal to a $t$-analog of
refined $\operatorname{TSSCPP}(n)$ numbers
$P_n(\lambda'_{n-1}+1,\dots$, $\lambda'_{n-i}+i,\dots,\lambda'_1+n-1 | t)$
introduced by means of recurrence relations in {\rm
\cite[relation~(3.5)]{DZ1}},

\item one has
\begin{gather*}
 s_{((n^{k}))}^{*}(X_{n+k}) = M_{n,k}(X_{n+k}) \s_{1^{k} \times
w_{0}^{(n)}}
\big(x_{1}^{-1},\ldots,x_{k+n}^{-1}\big),
\end{gather*}
where $\s_{{1^{k} \times w_{0}^{(n)}}}(X_{n+k})$ denotes the Schubert
polynomial corresponding to the permutation
\begin{gather*}
 1^k \times w_{0}^{(n)}= [1,2,\ldots,k, n+k,n+k-1,\ldots,k+1],
\end{gather*}
and $ M_{n,k}(X_{n+k}) =
\prod\limits_{a=1}^{k}x_{a}^{-1} \prod\limits_{a=1}^{n+k}~x_{a}^{\min(n+k-a+1,n)}$.
\end{itemize}
\end{Theorem}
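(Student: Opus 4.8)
The plan is to derive every assertion from one generating-function identity and then read off the five bullets. The master identity is
\begin{gather*}
s_{\lambda}^{*}(X_N)=\sum_{T}x^{c(T)},
\end{gather*}
where $T$ runs over semistandard tableaux of shape $\lambda$ with entries in $\{1,\dots,N\}$ satisfying $T(i,j)\ge j$, $c(T)$ is the content, and for $N=n$ the right-hand side is $\sum_{T\in{\cal A}_n(\lambda)}x^{c(T)}$. First I would pass to the conjugate tableau $T'$ of shape $\lambda'$: the constraint $T(i,j)\ge j$ says the entries in the $j$-th row of $T'$ are $\ge j$, and the reversal $a\mapsto N+1-a$ converts $T'$ into an ordinary flagged column-strict tableau of shape $\lambda'$ with the weakly increasing row flag coming from $N-1,N-2,\dots$. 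Then I would apply the Lindström--Gessel--Viennot lemma in the usual way: encode such flagged tableaux as families of non-intersecting lattice paths, one per column of $\lambda$, and recognize the weighted count as $\det\big|e_{\lambda'_{j}-j+i}(X_{N-i})\big|_{1\le i,j\le\ell(\lambda)}$, checking that the flag $N-i$ attached to the $i$-th matrix row is exactly what enforces $T(i,j)\ge j$. The hypothesis $N\ge\lambda_1+\lambda'_{\ell(\lambda)}$ is precisely the admissibility condition keeping all path endpoints in range, so the off-diagonal terms of the LGV expansion vanish and the determinant is a cancellation-free, hence manifestly nonnegative, sum of monomials; this gives the third bullet and, after $x_i=1$ (where $e_k(1^{N-i})=\binom{N-i}{k}$), the first bullet $|{\cal A}_n(\lambda;N)|=\det\big|\binom{N-i}{\lambda'_{j}-j+i}\big|$.

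For the $t$-deformed bullet I would rerun the same path enumeration, marking in each path whether its highest horizontal step lies at the top level, i.e.\ whether the corresponding column of $T$ carries the largest letter $n$. This splits each matrix entry via $e_m(X_{N-i})=x_{N-i}e_{m-1}(X_{N-i-1})+e_m(X_{N-i-1})$, and attaching the weight $t$ to the second summand produces $s_{\lambda}^{*}(X_N;t)$ and, at $x_i=1$, the entries $\binom{N-i-1}{\lambda'_j-j+i-1}+t\binom{N-i-1}{\lambda'_j-j+i}$. Since $\lambda_1-n(T)$ is exactly the number of columns of $T$ not containing $n$, this identifies $\sum_{T\in{\cal A}_n(\lambda)}t^{\lambda_1-n(T)}$ with ${\cal A}_{\lambda}(N;t)$, establishing the $t$-determinant together with the nonnegativity of $s_{\lambda}^{*}(X_N;t)$.

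It remains to match the array $\{{\cal A}_n(\lambda)\}_{\lambda\subset\delta_n}$ and its $t$-refinement with the refined $\operatorname{TSSCPP}(n)$ numbers of~\cite{DZ1}. The plan is to show that ${\cal A}_{\lambda}(N;t)$, reindexed through the vector $(\lambda'_{n-1}+1,\lambda'_{n-2}+2,\dots,\lambda'_1+n-1)$, obeys the defining recurrence of~\cite{DZ1} (relation~(3.5) there) with the same initial data: applying Pascal's rule $\binom{M}{k}=\binom{M-1}{k}+\binom{M-1}{k-1}$ to one row of the matrix and expanding by cofactors produces a recursion on this argument vector, and the claim is that after the reindexing it is literally the recursion of~\cite{DZ1}, with the base cases verified directly (cf.~Example~\ref{exam5.4}). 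I expect this to be the main obstacle, because our flagged-tableau model and the six-vertex/non-intersecting-path model used for $\operatorname{TSSCPP}$ in~\cite{DZ1} are set up with different conventions, and reconciling them — the reading of $\lambda$ by columns, the precise value of $N$, and the shift in the argument vector — is where the bookkeeping is genuinely delicate.

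Finally, for the last bullet I would use that $1^{k}\times w_{0}^{(n)}=[1,\dots,k,n+k,n+k-1,\dots,k+1]$ is vexillary — it is $2143$-avoiding, being an increasing run followed by a decreasing run — so by the Lascoux--Sch\"utzenberger/Wachs description its Schubert polynomial is a flagged Schur polynomial, of shape $\delta_n$ (the sorted code $(0^k,n-1,\dots,1,0)$) with a flag read off from that code. Using $e_{j}(x_1^{-1},\dots,x_m^{-1})=e_{m-j}(X_m)/(x_1\cdots x_m)$ to rewrite the $e$-determinant defining $s_{((n^{k}))}^{*}(X_{n+k})$ in the inverted variables and collecting the resulting powers of the $x_a$ into $M_{n,k}(X_{n+k})$, one matches the two determinants entry by entry after suitable reindexing of rows and columns, obtaining $s_{((n^{k}))}^{*}(X_{n+k})=M_{n,k}(X_{n+k})\,\s_{1^{k}\times w_{0}^{(n)}}(x_1^{-1},\dots,x_{k+n}^{-1})$.
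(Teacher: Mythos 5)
The paper states Theorem~\ref{theorem5.6} without proof, so there is no argument of the author's to compare against; your architecture --- flagged tableaux via Lindstr\"om--Gessel--Viennot, the specialization $e_k(1^{N-i})={N-i \choose k}$, a Pascal-rule recursion to match \cite{DZ1}, and Wachs's flagged-Schur description of the vexillary permutation $1^{k}\times w_{0}^{(n)}$ --- is the standard route and almost certainly the intended one. But there is one concrete error and two places where the plan stops short of a proof.

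The error is in the $t$-statistic. The splitting $e_m(X_{N-i})=x_{N-i}e_{m-1}(X_{N-i-1})+t\,e_m(X_{N-i-1})$ attaches $t$ to the event ``path $i$ does not use its top variable $x_{N-i}$''; under the reversal $a\mapsto N-a$ that you correctly introduce, $x_{N-i}$ corresponds to the letter $i$, the \emph{minimal} admissible entry of column $i$, not to the largest letter. So this splitting tracks $\#\{i: T(1,i)>i\}$, which is a different column statistic from $\lambda_1-n(T)=\#\{\text{columns missing the top letter}\}$. Already for $\lambda=(2)$, $N=4$ the joint $(x,t)$-distributions of the two statistics differ (e.g.\ the tableau $(1,2)$ has $t$-weight $t^2$ for your statistic but contributes the $t^0$ term $x_2x_3$ of the determinant), even though the $t$-marginals agree. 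To track $\lambda_1-n(T)$ you must instead peel off the \emph{bottom} variable, $e_m(x_1,\dots,x_{N-i})=x_1e_{m-1}(x_2,\dots,x_{N-i})+t\,e_m(x_2,\dots,x_{N-i})$, which yields the same binomial determinant at $x_i=1$; the bullet survives, but your argument as written proves a different identity unless you add an equidistribution step. A smaller imprecision of the same kind: in the LGV expansion the $\sigma\neq\mathrm{id}$ terms do not vanish, they cancel against intersecting families, and the hypothesis $N\ge\lambda_1+\lambda'_{\ell(\lambda)}$ is what forces every surviving non-intersecting family to carry sign $+1$.

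The two incomplete parts are the ones carrying most of the theorem's content. The identification with the refined $\operatorname{TSSCPP}(n)$ numbers of \cite{DZ1} is precisely the step you defer to ``verify the recurrence after reindexing'' while conceding the bookkeeping is where the difficulty lies; as it stands that is a plan, not a proof. Since \cite{DZ1} also presents the refined $\operatorname{TSSCPP}$ numbers as an LGV determinant of binomial coefficients, matching the two determinants directly is likely shorter than rederiving the recurrence and would simultaneously settle the fourth bullet. Likewise the final bullet is only sketched: it does follow from Wachs's theorem together with $e_j\big(x_1^{-1},\dots,x_m^{-1}\big)=e_{m-j}(X_m)/(x_1\cdots x_m)$, but the entry-by-entry match of the two determinants --- in particular recovering the exponents $\min(n+k-a+1,n)$ in $M_{n,k}$ --- still has to be exhibited.
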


In particular, $ \sum\limits_{\lambda \subset \delta_n} {\cal A}_{\lambda}(t) =
\sum\limits_{1 \le j \le n-1} A_{n,j} t^{j-1}$, where $A_{n,j}$ stands for the
number of alternating sign matrices ($\operatorname{ASM}_n$ for short) of size $n \times n$
with a~1 on top of the $j$-th column.
\begin{cor}[\cite{L1, LS1}]\label{cor5.7} The number of different tableau subwords in the word
\begin{gather*}
w_0:= \prod _{j=1}^{n-1} \left\{\prod_{a=n-1}^{j} a \right\}
\end{gather*}
is equal to the number of alternating sign matrices of size $n \times n$, i.e.,
\begin{gather*}
|{\cal A}_n| = |\operatorname{TSSCPP}(n)|=|\operatorname{ASM}_n|.
\end{gather*}
\end{cor}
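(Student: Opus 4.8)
The plan is to obtain the three-fold equality of Corollary~\ref{cor5.7} by chaining one combinatorial identification with Theorem~\ref{theorem5.6} and one classical enumeration. First I would match objects. Putting $p_{ij}=1$ in the Cauchy kernel~\eqref{equation5.1} produces $\prod_{i=1}^{n-1}\{\prod_{j=n-1}^{i}(1+u_j)\}$, whose monomials, after the substitution $u_a\mapsto a$, are precisely the subwords of $w_0=\prod_{j=1}^{n-1}\{\prod_{a=n-1}^{j}a\}$; by Lemma~\ref{lem5.2} they are indexed by ${\cal S}_n$. By Proposition~\ref{prop2.2} every plactic class contains a unique tableau word, and by a classical fact of Lascoux--Sch\"utzenberger (\cite{LS2}, recalled in footnote~\ref{footnote9}) a subword of the staircase word $w_0$ that meets a given plactic class can be straightened to a tableau word which is again a subword of $w_0$. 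Hence the distinct tableau subwords of $w_0$ are in bijection with the plactic classes that occur, and unravelling the block decomposition $\prod_{a=n-1}^{j}a$ identifies those classes with the set ${\cal A}_n$: the conditions ``shape $\subset\delta_n$'' and ``$T(i,j)\ge j$'' that define $\widetilde{\operatorname{STY}}(\lambda,\alpha)\subset{\cal A}_n(\lambda)$ are exactly the conditions under which the column reading word of $T$ embeds into $w_0$. This yields $\#\{\text{tableau subwords of }w_0\}=|{\cal A}_n|=\sum_{\lambda\subset\delta_n}|{\cal A}_n(\lambda)|$.

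Second, I would invoke Theorem~\ref{theorem5.6}: the vector $\{{\cal A}_n(\lambda)\colon\lambda\subset\delta_n\}$ coincides term by term with the refined $\operatorname{TSSCPP}(n)$ vector of~\cite{DZ1} (through the determinant $|{\cal A}_n(\lambda;N)|=\det\big|{N-i \choose \lambda'_{j}-j+i}\big|$ at the relevant $N$), so summing over $\lambda$ gives $|{\cal A}_n|=|\operatorname{TSSCPP}(n)|$. Third, $|\operatorname{TSSCPP}(n)|=|\operatorname{ASM}_n|$, both being counted by $\prod_{j=0}^{n-1}\frac{(3j+1)!}{(n+j)!}$ (Andrews' theorem for totally symmetric self-complementary plane partitions, and the ASM theorem of Zeilberger/Kuperberg); alternatively the bijection $\rho_n\colon{\cal A}_n\to\operatorname{ASM}(n)$ of Lemma~\ref{lem5.3} makes this last step free once the first identification is in hand. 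Chaining these gives $\#\{\text{tableau subwords of }w_0\}=|{\cal A}_n|=|\operatorname{TSSCPP}(n)|=|\operatorname{ASM}_n|$. For the refined statement one runs the same argument with $|{\cal A}_n(\lambda)|$ replaced by ${\cal A}_{\lambda}(t)$ and appeals to the displayed identity $\sum_{\lambda\subset\delta_n}{\cal A}_{\lambda}(t)=\sum_{1\le j\le n-1}A_{n,j}t^{j-1}$ together with the refined ASM theorem; setting $t=1$ recovers the unrefined equality.

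The one place I expect genuine work is the first step, in particular its ``only if'' direction: if a subword $v$ of $w_0$ happens to be a tableau word $w(T)$, why must $T$ satisfy $T(i,j)\ge j$ with shape contained in $\delta_n$? The argument tracks, letter by letter along the column reading word of $T$ (which scans columns from bottom to top, left to right), which of the $n-1$ decreasing blocks $\prod_{a=n-1}^{j}a$ each letter is forced to be drawn from; since within one block the values decrease and only $n-1$ blocks are available, an entry occupying the $j$-th column of $T$ is pushed into the first $j$ of these blocks, which forces both the inequality $T(i,j)\ge j$ and the bound $\lambda_i\le n-i$ on the shape. The ``if'' direction is a direct greedy embedding, and everything past the first step is a citation of Theorem~\ref{theorem5.6}, Lemma~\ref{lem5.3}, or the classical enumeration of alternating sign matrices and of totally symmetric self-complementary plane partitions.
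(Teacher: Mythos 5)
Your proposal is correct and follows essentially the same route as the paper: Corollary~\ref{cor5.7} is obtained there by summing the refined identity of Theorem~\ref{theorem5.6} over $\lambda\subset\delta_n$ (the displayed equation $\sum_{\lambda}{\cal A}_{\lambda}(t)=\sum_{j}A_{n,j}t^{j-1}$ immediately preceding the corollary) and invoking the classical equality $|\operatorname{TSSCPP}(n)|=|\operatorname{ASM}_n|$, exactly as you do. The one piece you work out in detail --- the block-by-block argument showing that the tableau subwords of $w_0$ are precisely the reading words of the tableaux in ${\cal A}_n$ (the conditions $T(i,j)\ge j$ and $\lambda\subset\delta_n$) --- is correct and is left implicit in the paper, which cites [L1, LS1] for it.
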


It is well-known~\cite{B} that
\begin{gather*}
A_{n,j}={n+j-2 \choose j-1} {(2n-j-1)! \over (n-j)!} \prod_{i=0}^{n-2}
{(3i+1)! \over (n+i)!},
\end{gather*}
and the total number $A_n$ of $\operatorname{ASM}$ of size $n \times n$ is equal to
\begin{gather*}
A_n \equiv A_{n+1,1}=\sum_{j=1}^{n} A_{n,j}=
\prod_{i=0}^{n-1}{(3i+1)! \over (n+i)!}.
\end{gather*}

\begin{Theorem}[the case $\lambda =\delta_n:=(n-1,n-2,\ldots,2,1)$]\label{theorem 5.A}\quad
\begin{itemize}\itemsep=0pt
\item One has
\begin{gather*}
{\cal A}_{\delta_{n}}(n+1;t) = \prod_{j=2}^{n}~(1+ j t).
\end{gather*}
\item Gandhi--Dumont polynomials $($see {\rm \cite{Du}} and {\rm \cite[$A036970$]{SL})},
\begin{gather*}
{\cal A}_{\delta_{n}}(n+2;t) = \sum_{k=2}^{n} B_{n,k} t^k,
\qquad
B_{n,k}= \sum_{\{k_{j}\}} \prod_{j=2}^{n-1} {2 j- k_{j-1} \choose 2 j-
k_{j}},
\end{gather*}
where the sum runs over set of sequences $ \{1 \le k_{1} < k_2 < \cdots <
k_{n-2} < k_{n-1}= 2 n -k \}$.

\item In particular,
\begin{gather*}
{\cal A}_{\delta_{n}}(n+2;0) = G_{2n}, \qquad {\cal A}_{\delta_{n}}(n+2;1)=
G_{2n+2},
\end{gather*}
where $G_{2n} = 2 (2^{2n} -1) B_{2n}$ and $B_{2n}$ denotes the unsigned
Genocchi numbers\footnote{Recall that the unsigned Genocchi numbers are defined through
the
generting function
\begin{gather*}
\frac{2 t}{e^t +1} =\sum_{n \ge 1} \frac{G_{2n}}{(2n) !}(- 1)^n t^{2n},
\end{gather*}
see, e.g., \cite{Du}, or \url{https://en.wikipedia.org/wiki/Genocchi_number}.},
and $B_{2n}$ denotes the Bernoulli number, see, e.g.,
{\rm \cite[$A027642$]{SL}}.

\item ${\cal A}_{\delta_{n}}(n+2; -1) = (-1)^n$.

\item Let ${\cal A}_{\delta_{n}}(N;t,q)$ denote the principal
specialization $x_i:=q^{i-1}$, $i \ge 1$, of the polynomial
$s_{\delta_{n}}^{*}
(X_{N};t)$, and write ${\cal A}_{\delta_{n}}(n+2;t,q)= \sum\limits_{k=2}^{n-1}
B_{n,k}(q) t^k$. Then $B_{n,k}(q) \in \N[q]$.
\end{itemize}
\end{Theorem}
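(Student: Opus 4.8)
The plan is to deduce all five assertions from the explicit description of ${\cal A}_\lambda(N;t)$ furnished by Theorem~\ref{theorem5.6}. Since $\delta_n=(n-1,\dots,2,1)$ is self-conjugate, $(\delta_n)'=\delta_n$ and $\ell(\delta_n)=n-1$, so the second bullet of Theorem~\ref{theorem5.6} expresses ${\cal A}_{\delta_n}(N;t)$ as the determinant of an explicit $(n-1)\times(n-1)$ matrix whose $(i,j)$ entry is a sum $\binom{\ast}{\ast}+t\binom{\ast}{\ast}$ of two binomial coefficients, the parameters being linear in $i$, $j$ and $N$. Equivalently, by the fourth bullet of Theorem~\ref{theorem5.6}, ${\cal A}_{\delta_n}(N;t)$ is the $t$-refined $\operatorname{TSSCPP}$ number attached to a staircase argument, and hence obeys the recurrence \cite[relation~(3.5)]{DZ1}. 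Items (i)--(v) are then the evaluation of this determinant (equivalently, the solution of this recurrence) at $N=n+1$ and $N=n+2$, together with one specialization of variables.

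For item (i) ($N=n+1$) I would treat $\Phi_n(t):={\cal A}_{\delta_n}(n+1;t)$ as a polynomial of degree $\le n-1$ in $t$. First I would compute the constant term $\Phi_n(0)$ from the binomial determinant at $t=0$: its last column collapses to a coordinate vector, and induction on $n$ gives $\Phi_n(0)=1$. For each $j\in\{2,\dots,n\}$ I would then produce, at $t=-1/j$, an explicit column dependence coming from Pascal's rule $\binom{m}{a}+\binom{m}{a+1}=\binom{m+1}{a+1}$, showing that $-1/j$ is a root of $\Phi_n$; two polynomials of degree $\le n-1$ with the $n-1$ distinct roots $\{-1/j\}_{j=2}^{n}$ and the same value $1$ at the origin agree, which gives $\Phi_n(t)=\prod_{j=2}^{n}(1+jt)$. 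A cleaner variant avoids the column dependences: one simply checks that $\prod_{j=2}^{n}(1+jt)$ satisfies the recurrence \cite[relation~(3.5)]{DZ1} with the correct initial data.

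For item (ii) ($N=n+2$) I would expand the determinant $\det(A+tB)$ by multilinearity over the $2^{n-1}$ choices, column by column, between the $A$-part and the $B$-part; each resulting minor is a pure binomial determinant, which the Lindström--Gessel--Viennot lemma turns into a count of families of nonintersecting lattice paths. Collecting the $t^{k}$-terms, reindexing the set of chosen columns by an increasing sequence $1\le k_{1}<\cdots<k_{n-2}<k_{n-1}=2n-k$, and writing each path count as a single product of binomials, produces exactly $B_{n,k}=\sum_{\{k_{j}\}}\prod_{j=2}^{n-1}\binom{2j-k_{j-1}}{2j-k_{j}}$, which is Dumont's formula for the Gandhi polynomials. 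Items (iii) and (iv) are then specializations: at $t=0$ and $t=1$ one invokes Dumont's classical evaluations \cite{Du} of the Gandhi polynomials to obtain $G_{2n}$ and $G_{2n+2}$; at $t=-1$ one gets $\sum_{k}(-1)^{k}B_{n,k}$, which I would show equals $(-1)^{n}$ either by a sign-reversing involution on the index set $\{k_{j}\}$ or, directly, by evaluating the $N=n+2$ determinant at $t=-1$, where the matrix of Pascal-type differences $\binom{m}{a-1}-\binom{m}{a}$ can be triangularized.

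Finally, $B_{n,k}(q)$ is the coefficient of $t^{k}$ in the principal specialization $x_{i}=q^{i-1}$ of $s^{*}_{\delta_n}(X_{n+2};t)$. By Theorem~\ref{theorem5.6}, in its $t$-graded form, each coefficient of a power of $t$ in $s^{*}_{\delta_n}(X_{n+2};t)$ is a polynomial in $X_{n+2}$ with nonnegative integer coefficients; and the principal specialization sends a monomial $x^{\mu}$ to $q^{\sum_{i}(i-1)\mu_{i}}$, hence carries $\N[X_{n+2}]$ into $\N[q]$, so $B_{n,k}(q)\in\N[q]$. The step I expect to be the main obstacle is item (ii): matching the Lindström--Gessel--Viennot expansion of the $N=n+2$ determinant to Dumont's precise triple sum, and fixing the endpoint and normalization conventions so that the resulting specializations land on $G_{2n}$, $G_{2n+2}$ and $(-1)^{n}$ exactly, rather than on shifted or rescaled versions.
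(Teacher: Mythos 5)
The paper itself offers no proof of this theorem: it is stated bare, followed only by the worked example ${\cal A}_{\delta_6}(8;t)$ and a parenthetical ``(A.K.)'' remark that the polynomials ``appeared to coincide'' with the Gandhi polynomials. So there is no argument of the author's to compare yours against, and your starting point --- the determinantal expression ${\cal A}_{\delta_n}(N;t)=\det(A+tB)$ from Theorem~\ref{theorem5.6}, evaluated at $N=n+1$ and $N=n+2$ --- is the natural (and probably intended) route; items (iii) and (iv) do reduce correctly to item (ii) plus Dumont's evaluations of the Gandhi polynomials.

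That said, the proposal has genuine gaps precisely where the content of the theorem lies. First, item (ii) is the whole theorem, and it is exactly the step you defer: the multilinear expansion of $\det(A+tB)$ produces $2^{n-1}$ mixed minors, and your claim that each such minor ``is a single product of binomials'' of the form $\prod_{j}{2j-k_{j-1} \choose 2j-k_{j}}$ is not automatic --- a general binomial minor does not factor, and Lindstr\"om--Gessel--Viennot gives a signed path count, not a product; exhibiting the Hessenberg/transfer-matrix structure that forces this factorization is the real work and is absent. Second, in item (i) the linear dependence among the columns of $A-\tfrac{1}{j}B$ for each $j=2,\dots,n$ is asserted to ``come from Pascal's rule'' but never written down; this is an $(n-1)$-term relation whose existence is not obvious, and the ``cleaner variant'' via the recurrence of \cite[relation~(3.5)]{DZ1} is likewise not carried out. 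Third, item (v) invokes nonnegativity of each $t$-coefficient of $s^{*}_{\delta_n}(X_{n+2};t)$ as a polynomial in $X$; Theorem~\ref{theorem5.6} asserts nonnegativity only for the undeformed $s^{*}_{\lambda}(X_N)$, not for the $t$-deformation, so your premise is itself unproved in the paper. Finally, your worry about ``normalization'' is not a technicality: the printed formula $\sum_{k=2}^{n}B_{n,k}t^{k}$ is inconsistent with the third bullet and with the example ${\cal A}_{\delta_6}(8;t)=(2073,\dots,720)_t$, whose constant term is $G_{12}\neq 0$; the identity one must actually prove is ${\cal A}_{\delta_n}(n+2;t)=\sum_{k=2}^{n+1}B_{n,k}t^{k-2}$ (matching \cite[$A036970$]{SL}), so a proof ``landing on'' the displayed statement would prove something false. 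Until the minor-factorization in (ii), the column relations in (i), and the $t$-graded positivity in (v) are supplied, the proposal is an outline rather than a proof.
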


For example, ${\cal A}_{\delta_{6}}(8;t)=
(2073,8146,12840,10248,4200,720)_{t}$, $2073 = G_{12}$, ${\cal
A}_{\delta_{6}}(8;1)= 38227 =G_{14}$, cf.~\cite[$A036970$]{SL};
${\cal A}_{(2,1)}(5;t,q)= q {4 \brack 2}_{q} +t (q+q^2)^3 + t^2 q^5
{3 \brack 1}_{q}$. The last example shows that the polynomials ${\cal
A}_{(\delta_n)}(n+2;t,q)$ give rise to a~$q$-de\-formation of the
polynomials $p_{n+2}(t;\delta_n)$ associated with ref\/ined {\rm
${\rm TSSCPP}(n)$} introduced in~\cite{DZ1} which appeared to be coincide $($A.K.$)$
with the Gandhi polynomials introduced, e.g., in~\cite{Du}. However, the
polynomials ${\cal A}_{(\delta_n)}(n+2;t,q)$ are
dif\/ferent from a $q$-deformation of Gandhi polynomials defined in~\cite{HZ}.

It is easy to see that ${\cal{A}}_{\delta_{n}}(X_{n+2};t)=
{\cal{A}}_{\delta_{n}}^{(1)}(X_{n};t)+ x_{n+1}
{\cal{A}}_{\delta_{n}}^{(2)}(X_{n};t)$ for some polynomials depending on
the variables $\{x_1,\ldots,x_n\}$ with nonnegative integer coef\/f\/icients.

\begin{Theorem}[the Genocchi numbers of the second kind, \protect{\cite[$A005439$]{SL}}]\label{theorem 5.B} \quad
\begin{itemize}\itemsep=0pt

\item ${\cal{A}}_{\delta_n}^{(1)}(t;\, x_i=1,\, \forall \,i \in [1,n])$ is a
polynomial of the following form
\begin{gather*}
{\cal{A}}_{\delta_n}^{(1)}(t;\, x_i=1,\, \forall \, i \in [1,n])= t\/
G_{n-2}^{(2)}+\cdots+ (n-1) ! t^{n-2}, \\
 {\cal{A}}_{\delta_n}^{(1)}(t=1;\, x_i=1, \, \forall\, i \in [1,n])=
G_{n-2}^{(2)},
\end{gather*}
where $G_{n}^{(2)}$ stands for the $n$-th Genocchi number of the second
kind.
It is well known that $G_{n}^{(2)}= 2^{n-1} G_{n}^{(m)}$, where
$G_n^{(m)}$
denotes the so-called $n$-th median Genocchi number~{\rm \cite[$A000366$]{SL}}.

\item ${\cal{A}}_{\delta_{n}}^{(2)}(t=0;\,x_i=1,\, \forall\, i \in [1,n])=
G_{n-1}$,
where as before, $G_{n}$ denotes the $n$-th Genocchi number $($of the
first kind$)$ {\rm \cite[$A036970$]{SL}}.
\end{itemize}
\end{Theorem}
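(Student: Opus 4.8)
The plan is to deduce both assertions from the column-multi-Schur determinant for $s^{*}_{\delta_n}$ supplied by Theorem~\ref{theorem5.6}, together with Theorem~\ref{theorem 5.A}, by reducing everything to explicit binomial-determinant (equivalently, noncrossing-lattice-path) evaluations and then matching these with Genocchi numbers. First I would make the decomposition ${\cal A}_{\delta_n}(X_{n+2};t)={\cal A}^{(1)}_{\delta_n}(X_n;t)+x_{n+1}{\cal A}^{(2)}_{\delta_n}(X_n;t)$ fully explicit: in the determinant defining $s^{*}_{\delta_n}(X_{n+2};t)$ the variable $x_{n+1}$ occurs only in the first row, and there only as the scalar factor multiplying the elementary symmetric polynomials $e_{\bullet}(X_n)$, so by multilinearity of the determinant in its rows the polynomials ${\cal A}^{(1)}_{\delta_n}$ and ${\cal A}^{(2)}_{\delta_n}$ are the two $(n-1)\times(n-1)$ determinants obtained by keeping rows $2,\dots,n-1$ unchanged and replacing the first row by, respectively, the ``$t\,e_{\bullet}$'' part and the ``$e_{\bullet}$'' part. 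Setting $x_i=1$ turns every entry into a binomial coefficient, so that ${\cal A}^{(1)}_{\delta_n}(t;x_i=1)$ and ${\cal A}^{(2)}_{\delta_n}(t;x_i=1)$ become concrete one-variable polynomials in $t$ whose sum is $\sum_k B_{n,k}\,t^k={\cal A}_{\delta_n}(n+2;t)$ by Theorem~\ref{theorem 5.A}; moreover ${\cal A}^{(1)}_{\delta_n}$ is visibly divisible by $t$ (which is why the polynomial $t\,G^{(2)}_{n-2}+\cdots+(n-1)!\,t^{n-2}$ starts at $t^1$), and nonnegativity of all coefficients is inherited from the positivity of $s^{*}_{\delta_n}$ in Theorem~\ref{theorem5.6}.

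Next I would pin down the two ``boundary'' coefficients by standard determinant bookkeeping. The top $t$-coefficient of ${\cal A}^{(1)}_{\delta_n}(t;x_i=1)$ is the product of the entries along the main ``staircase'' of the binomial determinant, which I expect to collapse to $(n-1)!$ after the obvious cancellations. The lowest $t$-coefficient of ${\cal A}^{(1)}_{\delta_n}$ and the constant term ${\cal A}^{(2)}_{\delta_n}(t=0;x_i=1)$ are each a single $(n-1)\times(n-1)$ binomial minor, and these are the quantities that must be recognised as Genocchi numbers. For this I would use the identification in Theorem~\ref{theorem5.6} of ${\cal A}_\lambda(t)$ with the refined $t$-$\operatorname{TSSCPP}$ numbers $P_n(\dots\mid t)$ of \cite{DZ1}: for the staircase $\lambda=\delta_n$ (for which the arguments read $(2,4,\dots,2(n-1))$) these are $t$-Gandhi polynomials, and their values at $t=1$ and $t=0$ are the first-kind Genocchi numbers $G_{2n+2}$ and $G_{2n}$ --- which is precisely Theorem~\ref{theorem 5.A}, together with the evaluation ${\cal A}_{\delta_n}(n+1;t)=\prod_{j=2}^{n}(1+jt)$. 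The refinement by $x_{n+1}$ corresponds to refining the staircase tableaux by the statistic of Definition~\ref{def5.5} (presence of the top entry), and this splits each Gandhi polynomial into a part counted by surjective-staircase / Dumont-permutation-type objects --- whose number is the second-kind (median) Genocchi number $G^{(2)}_{n-2}$, the relation $G^{(2)}_n=2^{n-1}G^{(m)}_n$ being a classical normalisation fact --- and a complementary part counted by a first-kind Genocchi number; the parity degeneracies ($G_{2k+1}=0$) are then automatic.

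The real content, and the step I expect to be the main obstacle, is this last matching: proving that the specific binomial minor coming out of Theorem~\ref{theorem5.6} equals the relevant (median) Genocchi number. I would attack it in one of two ways. Either (i) perform row and column operations on the binomial minor to bring it into one of the classical Genocchi / median-Genocchi determinant normal forms (Barsky--Dumont, Dumont--Zeng, Viennot) and quote that evaluation --- here all the bookkeeping lies in tracking the index shifts between the staircase data $(2,4,\dots,2(n-1))$ and the standard parameters of those determinants; or (ii) argue combinatorially via the noncrossing Dyck-path model of Lemma~\ref{lem5.3}: apply the Lindstr\"om--Gessel--Viennot lemma to write the minor as a signed, hence (after the usual noncrossing cancellation) unsigned, count of $(n-1)$-families of noncrossing lattice paths, and exhibit a bijection between those families and the Dumont/Dellac configurations enumerated by the Genocchi numbers of the two kinds. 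Route (ii) would simultaneously re-prove nonnegativity and the $(n-1)!$ leading term, but pinning down the right bijection with the median-Genocchi combinatorics is the delicate point.
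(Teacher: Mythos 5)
The paper states Theorem~\ref{theorem 5.B} with no proof at all (only Example~\ref{example5.10} as numerical evidence), so there is no argument of the author's to measure yours against; I can only judge the proposal on its own terms. Its structural skeleton is correct and is surely the intended one: in the determinant defining $s^{*}_{\delta_{n}}(X_{N};t)$ in Theorem~\ref{theorem5.6}, with $N=n+2$, the variable $x_{n+1}$ enters only the first row (the $i=1$ entries are $x_{n+1}e_{\bullet}(X_{n})+t\,e_{\bullet}(X_{n})$) and only linearly, so row-multilinearity does produce the splitting ${\cal A}_{\delta_{n}}={\cal A}^{(1)}+x_{n+1}{\cal A}^{(2)}$ with both summands explicit $(n-1)\times(n-1)$ minors and with $t$ factoring out of the first row of ${\cal A}^{(1)}$. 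Combined with Theorem~\ref{theorem 5.A} at $t=0$ this already proves the second bullet, since ${\cal A}^{(1)}$ vanishes at $t=0$ and hence ${\cal A}^{(2)}_{\delta_{n}}(t=0;x_i=1)={\cal A}_{\delta_{n}}(n+2;t=0)$ is a Genocchi number of the first kind; that part of your argument is complete.

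The first bullet is where the proposal stops being a proof. The assertions that the leading coefficient should ``collapse to $(n-1)!$ after the obvious cancellations'' and that the relevant binomial minors ``must be recognised as'' second-kind Genocchi numbers are exactly the content of the theorem, and you defer them to one of two unexecuted strategies (reduction to a Barsky--Dumont/Viennot determinant normal form, or a Lindstr\"om--Gessel--Viennot argument plus a bijection with Dellac/Dumont configurations). Neither the row/column reduction nor the bijection is exhibited, and the index bookkeeping you yourself flag as ``the delicate point'' is precisely where such arguments go wrong. Indeed the printed statement already contains an indexing inconsistency that a genuine proof would have to detect and resolve: for a polynomial with positive coefficients and more than one term, the coefficient of $t$ and the value at $t=1$ cannot both equal $G^{(2)}_{n-2}$, and Example~\ref{example5.10}(3) (coefficient of $t$ equal to $56$, value at $t=1$ equal to $608$, two consecutive entries of A005439) shows that the $t=1$ evaluation should be the next second-kind Genocchi number. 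As written, your text is a sensible research plan that isolates the right lemma; it does not prove it.
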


\begin{ex}\label{example5.10}\quad
\begin{enumerate}\itemsep=0pt

\item[(1)] Take $\lambda= \delta_3$ and $n=5$, then
\begin{gather*}
{\cal{A}}_{\delta_{3}}(X_4;t)=t(x_1+x_2)\big(x_3^2+t (x_1 x_2 +x_1 x_3+x_2 x_3)\big)\\
\hphantom{{\cal{A}}_{\delta_{3}}(X_4;t)=}{}
+x_4(x_1 x_2+x_1 x_3+x_3 x_3 +t(x_1+x_1)(x_1+x_2+x_3)).
\end{gather*}
Therefore, ${\cal{A}}_{\delta_{3}}(x_i=1, \,\forall \,i \in [1,3];t) = 2 t+6
t^2 +x_4( 3+6 t)$.

\item[(2)] Take $\lambda= \delta_4$ and $n=6$, then
\begin{gather*}
{\cal{A}}_{\delta_{4}}(x_i=1,\, \forall\, i \in [1,4];t)= 8 t\big(1+3 t+3
t^2\big)+ x_5 \big( 17+ 46 t +36 t^2\big).
\end{gather*}

\item[(3)] Take $\lambda= \delta_5$ and $n=7$, then
\begin{gather*}
{\cal{A}}_{\delta_{5}}(x_i=1,\, \forall \, i \in [1,5];t)= t(56,192,240,120)_{t}+
5 x_6 (31,100,114,48)_{t}.
\end{gather*}
\end{enumerate}
\end{ex}

Therefore, the polynomials ${\cal{A}}_{\delta_{n}}(X_{n+1};t)$ and
${\cal{A}}_{\delta_{n}}^{(1)}(X_{n};t)$ def\/ine multi-parameter
deformations of the Genocchi numbers of the f\/irst and the second types
correspondingly. It is an
 interesting task to relate these polynomials with those have been
studied in~\cite{HZ}, if so.

\begin{prb} Give combinatorial interpretations of polynomials ${\cal
A}_{p \delta_{n}}\!(N{;}t{,}q)$ and ${\cal A}_{(n^k)}\!(N{;}t{,}q)$ for all $N \ge p n -1$.
\end{prb}

Let as before $\operatorname{STY}(\delta_n \le n):={\cal ST}_n$ denotes
the set of all semistandard Young tableaux of the staircase shape
$\delta_n=(n-1,n-2,\ldots,2,1)$ f\/illed by the numbers from the set
$\{1,\ldots,n \}$. Denote by ${\cal ST}_n^{(0)}$ the subset of ``anti-diagonally'' increasing tableaux, i.e.,
\begin{gather*}
 {\cal ST}_n^{(0)} = \{T \in \operatorname{STY}(\delta_n, \le n) \,|\, T_{i,j}
\ge T_{i-1,j+1} \, \text{for all}\, 2 \le i \le n-1, \,1 \le j \le n-2 \}.
\end{gather*}
One (A.K.) can construct bijections
\begin{gather*}
\iota_n \colon \ {\cal S}_n \sim {\cal ST}_n, \qquad \zeta_n \colon \ {\cal A}_n \sim {\cal ST}_n^{(0)}
\end{gather*}
such that $\operatorname{Im}(\iota_n)=\operatorname{Im}(\zeta_n)$.

\begin{pr}\label{prop5.9}
\begin{gather*}
\sum_{\lambda=(\lambda_1,\ldots,\lambda_n) \atop \rho_n \ge \lambda}
K_{\rho_{n}, \lambda}
\begin{pmatrix}
& n \\
m_0(\lambda), & m_1(\lambda), & \ldots, & m_{n}(\lambda)
\end{pmatrix} = 2^{{n \choose 2}},
\\
\sum_{\lambda=(\lambda_1,\ldots,\lambda_n) \atop \rho_n \ge \lambda}
\begin{pmatrix}
& n \\
m_0(\lambda), & m_1(\lambda), & \ldots, & m_n(\lambda)
\end{pmatrix} = {\cal{F}}_n,
\end{gather*}
where ${\cal {F}}_n$ denotes the number of forests of trees on $n$ labeled
nodes;
$K_{\rho_{n},\lambda}$ denotes the Kostka number, i.e., the number of
semistandard Young tableaux of the shape $\rho_{n}:= (n-1,n-2,$ $\ldots,1)$ and
content/weight~$\lambda$;
for any partition $\lambda =(\lambda_1 \ge \lambda_2 \ge \cdots \ge \lambda_n \ge 0)$ we set \mbox{$m_i(\lambda) = \{j \,|\, \lambda_j =i\}$}.
\end{pr}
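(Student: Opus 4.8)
The plan is to recognise both left-hand sides as values of symmetric polynomials at $x_1=\dots=x_n=1$. In $n$ variables the monomial symmetric polynomial $m_\lambda$ (with $\lambda$ padded by zeros to length $n$) satisfies $m_\lambda(1,\dots,1)=\#\{\text{distinct rearrangements of }\lambda\}=\frac{n!}{m_0(\lambda)!\,m_1(\lambda)!\cdots m_n(\lambda)!}$, which is exactly the multinomial coefficient in the statement; and the Kostka number $K_{\rho_n,\lambda}$ is nonzero precisely when $|\lambda|=|\rho_n|={n\choose 2}$ and $\lambda$ is dominated by $\rho_n$ --- this is the range ``$\rho_n\ge\lambda$'' of both sums. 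Therefore the first left-hand side is $\sum_\lambda K_{\rho_n,\lambda}\,m_\lambda(1,\dots,1)=s_{\rho_n}(1,\dots,1)$, the number of semistandard Young tableaux of shape $\rho_n$ with entries in $\{1,\dots,n\}$; and the second left-hand side is $\sum_{\rho_n\ge\lambda}m_\lambda(1,\dots,1)$, the number of vectors $\mu\in\Z_{\ge0}^n$ with $\sum_i\mu_i={n\choose 2}$ whose decreasing rearrangement $\mu^+$ is dominated by $\delta_n$; call this set $D_n$.

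For the first identity I would use the classical factorisation of the staircase Schur polynomial, $s_{\delta_n}(X_n)=\prod_{1\le i<j\le n}(x_i+x_j)$, which is already recorded in the excerpt (the formula $K[0,1,\dots,n-1](X_n)=\prod_{i<j}(x_i+x_j)$ together with Proposition~\ref{prop4.11}; here $\rho_n$ and $\delta_n$ denote the same partition). Specialising at $x_i=1$ gives $s_{\rho_n}(1,\dots,1)=2^{{n\choose 2}}$, as claimed. Alternatively one can argue directly from Section~\ref{section5}: the Cauchy kernel $\mathcal C(\mathfrak P_n,U)$ has $2^{{n\choose 2}}=\#|\operatorname{STY}(\delta_n,\le n)|$ pairwise distinct terms, and sorting these tableaux by content yields $K_{\rho_n,\lambda}$ tableaux for each content-type whose decreasing rearrangement is $\lambda$, and $\frac{n!}{\prod_i m_i(\lambda)!}$ such content-types, which reproduces the sum. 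This step is routine.

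The second identity is the real content: one must show $|D_n|=\mathcal F_n$, the number of labelled forests on $n$ nodes. A first helpful remark is that $\mu\mapsto((n-1)-\mu_1,\dots,(n-1)-\mu_n)$ is an involution of $D_n$: a short computation converts each constraint $\mu^+_1+\dots+\mu^+_k\le(\delta_n)_1+\dots+(\delta_n)_k$ into the dual constraint $\sum_{i\in T}\mu_i\ge{|T|\choose 2}$ for every $T\subseteq[n]$, so that $D_n$ is equally the set of $\mu\in\Z_{\ge0}^n$ of total ${n\choose 2}$ whose increasing partial sums dominate $0+1+\dots+(t-1)$. The plan from here is to construct a bijection $D_n\longrightarrow\{\text{forests on }[n]\}$; equivalently, to prove that $|D_n|$ obeys the standard forest recursion $\mathcal F_n=\sum_{k=1}^n{n-1\choose k-1}k^{k-2}\,\mathcal F_{n-k}$ (peel off the tree containing a fixed node, with $\mathcal F_0=1$), or to match exponential generating functions against $\exp(\sum_{k\ge1}k^{k-2}x^k/k!)$. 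I expect building this bijection to be the main obstacle --- it amounts to encoding the ``dominates the staircase'' data of a $\mu\in D_n$ as tree/forest data, e.g.\ via a Pr\"ufer-type word --- and a useful guide is that the $n!$ rearrangements of $\delta_n$ already lie in $D_n$ (they are exactly its elements that are simultaneously parking functions), so they should map onto a distinguished $n!$-element family of forests, while the elements of $D_n$ corresponding to partitions strictly dominated by $\delta_n$ carry the remaining forests. Everything else is bookkeeping.
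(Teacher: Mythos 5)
Your treatment of the first identity is complete and correct: the multinomial coefficient is $m_{\lambda}(1,\dots,1)$, the Kostka expansion $s_{\rho_n}=\sum_{\lambda}K_{\rho_n,\lambda}m_{\lambda}$ turns the left-hand side into $s_{\rho_n}(1,\dots,1)$, and the staircase factorization $s_{\delta_n}(X_n)=\prod_{i<j}(x_i+x_j)$ (equivalently, the count of $\operatorname{STY}(\delta_n,\le n)$ already recorded in Section~\ref{section5}) gives $2^{\binom{n}{2}}$. That half needs nothing more.

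The second identity, however, is where the entire content of the proposition lives, and what you offer there is a reduction plus a plan, not a proof. You correctly identify the left-hand side with $\#|D_n|$, where $D_n$ is the set of $\mu\in\Z_{\ge 0}^n$ with $\sum_i\mu_i=\binom{n}{2}$ whose decreasing rearrangement is dominated by $\delta_n$, and your complementation remark correctly recasts this as the system $\sum_{i\in T}\mu_i\ge\binom{|T|}{2}$ for all $T\subseteq[n]$; by Rado's theorem this says exactly that $D_n$ is the set of lattice points of the permutohedron $P_n=\operatorname{conv}\{w\cdot(0,1,\dots,n-1)\colon w\in\Ss_n\}$. But the equality $\#|D_n|={\cal F}_n$ is precisely the assertion to be proved, and you explicitly defer it (``I expect building this bijection to be the main obstacle''). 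The missing step has a standard resolution that you should invoke or reprove: $P_n$ is a translate of the graphical zonotope $\sum_{1\le i<j\le n}[0,e_i-e_j]$ of the complete graph $K_n$; the vectors $e_i-e_j$ form a unimodular collection, so the number of lattice points of the zonotope equals the number of linearly independent subsets of these vectors (Stanley's lattice-point formula for zonotopes), and independent subsets of edge vectors of $K_n$ are exactly the forests on $[n]$. Without this theorem, an explicit bijection (e.g., via $G$-parking functions, or via the rigged-configuration map the paper alludes to immediately after the proposition, which attaches to each $\alpha$ with $\rho_n\ge\alpha^{+}$ a distinguished tableau $T_n(\alpha)$), or a genuine verification of the forest recursion ${\cal F}_n=\sum_k\binom{n-1}{k-1}k^{k-2}{\cal F}_{n-k}$ for $\#|D_n|$, the second identity remains unestablished; each of these alternatives is at least as much work as the part you have written out.
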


Let $\alpha$ be a composition, we denote by~$\alpha^{+}$ the partition
obtained from $\alpha$ by reordering of its parts. For example, if
$\alpha= (0,2,0,3,1,0)$ then $\alpha^{+}= (3,2,1)$. Note that $\ell(\alpha)=6$,
 but~$\ell(\alpha^{+})=3$.

Now let $\alpha$ be a composition such that $\rho_{n} \ge \alpha^{+}$,
 $\ell(\alpha) \le n$, that is $\alpha_j = 0$, if $j > \ell(\alpha)$,
$|\alpha| = {n \choose 2}$ and
\begin{gather*}
\sum_{k \le j} {(\rho_n)}_k \ge \sum_{k \le j} (\alpha^{+})_k, \qquad \forall \, j.
\end{gather*}
 There is a unique semistandard Young tableau $T_n(\alpha)$ of shape $\rho_n$
and content~$\alpha$ which corresponds to the maximal conf\/iguration of type
$(\rho_n;\alpha)$ and has all quantum numbers (riggings) equal to zero. It
follows from Proposition~\ref{prop5.9} that $\# \{\alpha \,|\, \ell(\alpha) \le n, \, \rho_n
\ge \alpha^{+} \} = {\cal {F}}_n$. Therefore there is a~natural embedding of the set of forests on $n$ labeled nodes to the set of
semistandard Young tableaux of shape $\rho_n$ f\/illed by the numbers from the
set $[1,\ldots,n]$. We denote by ${\cal {FT}}_n \subset
\operatorname{STY}( \rho_n, \le n)$ the subset $ \{T_n(\alpha) \,|\, \rho_n \ge \alpha^{+}, \, \ell(\alpha) \le n \}$. Note that the set ${\cal{K}}_n:= \{\alpha \,|\, \ell(\alpha)=
 n,\, (\alpha)^{+} = \rho_n \}$ contains $n !$ compositions, and under the
rigged conf\/iguration bijection the elements of the set ${\cal{K}}_n$
correspond to the {\it key} tableaux~\cite{LS3} of shape~$\rho_n$. See also~\cite{Av} for connections of the Lascoux--Sch\"{u}tzenberger \textit{keys} and~$\operatorname{ASM}$.

Let us say a few words about the Kostka numbers $K_{\rho_{n}, \alpha}$. First
of all, it's clear that if $\alpha =(\alpha_1,\alpha_2,\dots)$ is a~composition such that $\alpha_1=n-1$, then $K_{\rho_n,\alpha}=
K_{\rho_{n-1},\alpha[1]}$, where we set $\alpha[1]:=(\alpha_2,\ldots)$.

Now assume that $n=2k+1$ is an odd integer, and consider partitions
$\nu_n:=(k^n)$ and $\mu_n:= ((k+1)^{k},k^{k})$. Then
\begin{gather*}
 K_{\rho_n,\nu_n} = \operatorname{Coef\/f}_{(x_{1} x_{2} \cdots x_{n})^k} \left( \prod_{1 \le i < j \le n} (x_i+x_j) \right),
 \qquad {2k \choose k} K_{\rho_n,\mu_n} =K_{\rho_n,\nu_n}.
 \end{gather*}
It is well-known that the number $K_{\rho_n,\nu_n}$ is equal to number of
labeled regular tournaments with $n:=2k+1$ nodes, see, e.g.,~\cite[$A007079$]{SL}.

In the case when $n=2k$ is an even number, one can show that
\begin{gather*}
 K_{\rho_n,\nu_n} = K_{\rho_{n-1},{\nu_{n-1}}}, \qquad K_{\rho_n,\mu_n} =
K_{\rho_{n+1},{\mu_{n+1}}}.
\end{gather*}
Note that the rigged conf\/iguration bijection gives rise to an embedding of
the set of labeled regular tournaments with $n:=2k+1$ nodes
\begin{gather*}
\text{to the set $\operatorname{STY}(\rho_n, \le n)$, if $n$ is an odd integer, and} \\
 \text{to the set $\operatorname{STY}(\rho_{n-1}, \le n-1)$, if $n$ is even integer}.
\end{gather*}

\begin{Theorem}\label{theorem5.10} \quad
\begin{enumerate}\itemsep=0pt
\item[$(1)$] In the plactic algebra ${\cal P}_{n}$ the Cauchy kernel
has the following decomposition
\begin{gather*}
{\cal C}_n(\mathfrak{P},U) = \sum_{T \in {\cal A}_n}
{\cal K}_{T}(\mathfrak{P}) u_{w(T)}.
\end{gather*}

\item[$(2)$] Let $T \in {\cal A}_n$, and $\alpha(T)$ be its bottom code. Then
\begin{gather*}
 {\cal K}_{T}(\mathfrak{P}) -\prod_{(i,j) \in T} p_{\{i,T(i,j)-j+1\}} \ge 0,
\end{gather*}
and equality holds if and only if the bottom code~$\alpha(T)$ is a partition.
\end{enumerate}
\end{Theorem}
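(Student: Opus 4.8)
The plan is to obtain both parts by transporting the expansion of the Cauchy kernel in Lemma~\ref{lem5.2} into the plactic algebra ${\cal P}_n$ and collecting terms according to plactic classes. The first ingredient is a dictionary between the parametrisation of Lemma~\ref{lem5.2} and the compatible sequences of Definitions~\ref{def2.27} and~\ref{def2.29}: to a pair $({\bf a},{\bf b})\in{\cal S}_n$ I associate the word ${\bf a}'=(a'_1,\dots,a'_p)$ with $a'_k=a_k+b_k-1$, so that $w({\bf a},{\bf b})=u_{a'_1}\cdots u_{a'_p}$, together with ${\bf b}'=(b'_1,\dots,b'_p)$ where $b'_k=a_k$. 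Then ${\bf b}'$ is weakly increasing, $b'_k\le a'_k$ (because $b_k\ge 1$), one has $a'_k\le n-1$, and the requirement ``$a_k=a_{k+1}\Rightarrow b_k>b_{k+1}$'' defining ${\cal S}_n$ is precisely the requirement ``$a'_k\le a'_{k+1}\Rightarrow b'_k<b'_{k+1}$''; thus ${\bf b}'\in C({\bf a}')$. The assignment is a bijection onto all pairs $({\bf a}',{\bf b}')$ with ${\bf a}'$ a word in $\{1,\dots,n-1\}$ and ${\bf b}'\in C({\bf a}')$ (inverse: $a_k=b'_k$, $b_k=a'_k-b'_k+1$), and it carries $\prod_j p_{\{a_j,b_j\}}$ to $\prod_k p_{b'_k,\,a'_k-b'_k+1}$.

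For part~(1) I would then argue as follows. The words $w({\bf a},{\bf b})$, $({\bf a},{\bf b})\in{\cal S}_n$, are exactly the subwords of the staircase word $I_0^{(n)}$; by Proposition~\ref{prop2.2} each of them lies in a single plactic class, namely the class of the reading word $w(T)$ of a uniquely determined tableau $T$, and by Corollary~\ref{cor5.7} the tableaux that arise are exactly the elements of ${\cal A}_n$. Projecting the identity of Lemma~\ref{lem5.2} to ${\cal P}_n$ and applying the dictionary, the coefficient of $u_{w(T)}$ becomes $\sum_{{\bf a}'\in R(T)}\sum_{{\bf b}'\in C({\bf a}')}\prod_k p_{b'_k,\,a'_k-b'_k+1}$, which by the definition of $C(T)$ and Definition~\ref{def2.29} is exactly ${\cal K}_T({\mathfrak P})$; summing over $T\in{\cal A}_n$ gives the asserted decomposition.

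By part~(1) (equivalently, straight from Definition~\ref{def2.29}) the polynomial ${\cal K}_T({\mathfrak P})$ is a sum of monomials in the $p_{ij}$ with nonnegative integer coefficients, so for the inequality in part~(2) it is enough to exhibit the monomial $m_T:=\prod_{(i,j)\in T}p_{\{i,T(i,j)-j+1\}}$ among them. For this I take ${\bf a}'=w(T)$, the column reading word, and let ${\bf b}'$ be the componentwise-smallest element of $C(w(T))$; a short induction over the columns of $T$ shows that ${\bf b}'$ assigns to each cell the value of its column index. Indeed, the reading word is strictly decreasing inside each column, so the ascent condition in the definition of $C$ is vacuous there and ${\bf b}'$ is constant along a column; in the passage from the top cell of column $j-1$ to the bottom cell of column $j$ one has $T(1,j-1)\le T(1,j)\le T(\lambda'_j,j)$, so the ascent condition forces a strict increase, and minimality together with the inequality $T(i,j)\ge j$ satisfied by every $T\in{\cal A}_n$ pins the new constant value to $j$. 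Consequently $\prod_k p_{b'_k,\,a'_k-b'_k+1}=m_T$, and ${\cal K}_T({\mathfrak P})-m_T\ge 0$ follows.

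For the equality assertion I would use Theorem~\ref{theorem4.10}, which gives ${\cal K}_T(X)=K[\alpha(T)](X)$, together with the standard fact (clear from Definition~\ref{def4.1}) that $K[\alpha]=x^{\alpha}$ is a single monomial precisely when $\alpha$ is a partition. If $\alpha(T)$ is a partition, then ${\cal K}_T(X)$ is a single $x$-monomial with coefficient $1$; since the substitution $p_{ij}\mapsto x_i$ sends every monomial of ${\cal K}_T({\mathfrak P})$, all with nonnegative coefficients, to a monomial of the same degree with no possibility of cancellation, ${\cal K}_T({\mathfrak P})$ must itself be a single monomial with coefficient $1$, and as $m_T$ already occurs in it this monomial is $m_T$; hence ${\cal K}_T({\mathfrak P})=m_T$. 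Conversely, if ${\cal K}_T({\mathfrak P})=m_T$ then ${\cal K}_T(X)=K[\alpha(T)](X)$ is a single monomial, which forces $\alpha(T)$ to be a partition. The step I expect to require the most care is the column induction identifying $m_T$ — keeping the reading-word positions, the ${\cal S}_n$-to-compatible-sequence translation, and the cell coordinates consistently aligned, and checking that under the constraint $T(i,j)\ge j$ the minimal compatible sequence of $w(T)$ is indeed forced into the column-index pattern; the remaining steps are routine manipulations of the Cauchy-kernel expansion.
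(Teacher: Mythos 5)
The paper states Theorem~\ref{theorem5.10} without an explicit proof, so there is no argument to compare against line by line; your route is the natural one and is essentially sound. The dictionary $({\bf a},{\bf b})\mapsto({\bf a}',{\bf b}')$ between ${\cal S}_n$ and pairs (word, compatible sequence) is correct, and your reading of the monomial in part~(2) --- first subscript equal to the \emph{column} index of the cell --- is the one forced by Definition~\ref{def2.29} and by the $n=4$ list of polynomials ${\cal K}_T$ in the Appendix (e.g.\ ${\cal K}_{21}=p_{1,2}\,p_{1,1}$, whereas the row-index reading would give $p_{1,1}\,p_{2,2}$), even though the displayed formula in the theorem literally has the row index $i$ in first position; you should say this explicitly rather than leave it implicit.

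The one step that is not justified as written is the identification of the index set in part~(1). Corollary~\ref{cor5.7} is purely a count: it says the number of plactic classes met by subwords of the staircase word equals $\operatorname{ASM}(n)=|{\cal A}_n|$; it does not say those classes are exactly the classes of $w(T)$ for $T\in{\cal A}_n$. You can close this with material you already have: your column-index compatible sequence shows that for every $T\in{\cal A}_n$ the pair $\bigl(w(T),(\text{column indices})\bigr)$ corresponds under your dictionary to an element of ${\cal S}_n$ (here the flag condition $T(i,j)\ge j$ is exactly what makes $b'_k\le a'_k$), so the class of $T$ does arise in the expansion; since $T\mapsto u_{w(T)}$ is injective by Proposition~\ref{prop2.2}, the inclusion of ${\cal A}_n$ into the set of arising tableaux together with the equality of cardinalities gives the identification. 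So the construction you place inside part~(2) should be promoted to the start of the proof of part~(1). With that reordering, and granting Theorem~\ref{theorem4.10} together with the standard fact that $K[\alpha]$ is a single monomial exactly when $\alpha$ is a partition, the remaining steps (positivity, and the equality criterion via the cancellation-free specialization $p_{ij}\mapsto x_i$) are correct.
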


Note that the number of {\it different shapes} among the tableaux in the set
${\cal A}_n$ is equal to the Catalan number
$C_n:={1 \over {n+1}} {2n \choose n}$.

\begin{prb}\label{prob5.11} Construct a bijection between the set ${\cal A}_n$ and the set
of alternating sign matrices $\operatorname{ASM}_n$.
\end{prb}

\begin{ex}\label{exam5.12} For $n=4$ one has
\begin{gather*}
{\cal C}_{4}(X,U)=
K[0]+
K[1]u_1+
K[01]u_2+
K[001]u_3+
K[11](u_{12}+u_{22})+
K[2](u_{21}+u_{31})\\
\hphantom{{\cal C}_{4}(X,U)= }{}
+
K[101]u_{13}+
K[02]u_{32}+
K[011](u_{23}+u_{33})+
K[3]u_{321}+
K[12](u_{312}+u_{322})\\
\hphantom{{\cal C}_{4}(X,U)= }{}
+
K[21]u_{212}+
K[111](u_{123}+u_{133}+u_{233}+u_{223}+u_{333})+
K[021]u_{323}\\
\hphantom{{\cal C}_{4}(X,U)= }{}
+
K[201](u_{313}+u_{213})+
K[31]u_{3212}+
K[301]u_{3213}\\
\hphantom{{\cal C}_{4}(X,U)= }{}
+
K[22](u_{3132}+u_{2132}+u_{3232})+
K[121](u_{3123}+u_{3233}+u_{3223})\\
\hphantom{{\cal C}_{4}(X,U)= }{}
+
K[211](u_{2123}+u_{2133}+u_{3133})+
K[32]u_{32132}+
K[311](u_{32123}+u_{32133})\\
\hphantom{{\cal C}_{4}(X,U)= }{}
+
K[221](u_{21323}+u_{31323}+u_{32323})+
K[321]u_{321323}.
\end{gather*}
\end{ex}
Let $w \in \mathbb{S}_n$ be a permutation with the Lehmer code $\alpha(w)$.
\begin{de}\label{def5.13} Def\/ine the {\it plactic polynomial}
${\cal {PL}}_{w}(U)$ to be
\begin{gather*}
 {\cal {PL}}_{w}(U) = \biggl \{\sum_{T \in {\cal A}_n,\, \alpha(T)=
\alpha(w)} u_{w(T)} \biggr \}.
\end{gather*}
\end{de}

\begin{comm}\label{com5.14} It is easily seen from a def\/inition of the Cauchy kernel
that
\begin{gather*}
 {\cal{C}}_{n}(X,U)= \sum _{\alpha \subset \delta_{n}} K[\alpha](X)
{\cal {PL}}_{w_{0} w_{\alpha}^{-1}} (U),
\end{gather*}
where $w_{\alpha}$ denotes a unique permutation in $\mathbb{S}_{n}$ with the
Lehmer code equals $\alpha$; $K[\alpha](X)$ denotes the key polynomial corresponding to composition $\alpha \subset \delta_n$. The polynomials ${\cal {PL}}_{w_{0} w_{\alpha}^{-1}}$ can be treated as a~plactic version of noncommutative Schur and
Schubert polynomials introduced and studied in \cite{FG,Kn,AL,LS3,Le}.

Now let $X=\{x_1,\ldots,x_n \}$ be a set of mutually commuting variables, and
\begin{gather*}
 I_{0}^{(n)}:= \big\{\underbrace{n-1,n-2,\ldots,2,1}_{n-1},
\ldots, \underbrace{n-1,,n-2,\ldots,k+1,k}_{n-k},\ldots, \underbrace{n-1,n-2}_{2},n-1 \big\}
\end{gather*} be
lexicographically maximal reduced expression for the longest element $w_{0}
\in \mathbb{S}_n$. Let $I$ be a~tableau subword of the set $I_{0}:=I_{0}^{(n)}$. One can show (A.K.) that under the specialization
\begin{gather*}
 u_i= \begin{cases}x_i,&\text{if} \ \ i \in I_{0} {\setminus} I, \\
 1,& \text{if} \ \ i \in I
\end{cases}
\end{gather*}
the polynomial ${\cal {PL}}_{w_{0} w_{\alpha}^{-1}} (U)$ turns into the
Schubert polynomial $\s_{w_{\alpha}}(X)$. In a similar fashion, consider
the decomposition
\begin{gather*}
{\cal{C}}_{n}(X,U)= \sum _{\alpha \subset \delta_{n}} \operatorname{KG}[\alpha](X;-\beta)
{\cal {PL}}_{w_{0} w_{\alpha}^{-1}} (U; \beta).
\end{gather*}
One can show (A.K.) that under the same specialization as has been listed above, the polynomial ${\cal {PL}}_{w_{0} w_{\alpha}^{-1}} (U; \beta)$ turns into
the $\beta$-Grothendieck polynomial ${\cal{G}}_{w_{\alpha}}^{\beta}(X)$.
\end{comm}

\begin{de}\label{def5.15} Def\/ine algebra ${\cal{PC}}_n$ to be the quotient of the plactic algebra ${\cal{P}}_n$ by the two-sided ideal $J_n$ by the set of
monomials
\begin{gather*}
\{u_{i_{1}} u_{i_{2}} \cdots u_{i_{n}} \}, \qquad 1 \le i_{1} \le i_{2} \le
\cdots \le i_n \le n, \qquad \#\{a \vert i_a=j\} \le j, \qquad \forall\, j=1,\ldots,n.
\end{gather*}
\end{de}
\begin{Theorem}\label{theorem5.16} {\samepage \quad
\begin{itemize}\itemsep=0pt
\item The algebra ${\cal{PC}}_n$ has dimension equals to $\operatorname{ASM}(n)$,

\item $\operatorname{Hilb}({\cal{PC}}_n,q)= \sum\limits_{\lambda \in \delta_{n-1}} |{\cal{A}}_{n}(\lambda)| q^{|\lambda|}$,

\item $\operatorname{Hilb}(({\cal{PC}}_{n+1})^{ab},q)= \sum\limits_{k=0}^{n} \frac{n-k+1}{n+1} {n+k \choose n} q^k$, cf.~{\rm \cite[$A009766$]{SL}}.
\end{itemize}}
\end{Theorem}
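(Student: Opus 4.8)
The plan is to deduce all three statements from the plactic-algebra Cauchy-kernel machinery already developed in the excerpt, principally Proposition~\ref{prop2.2} (tableau words form a basis of ${\cal P}_n$), the decomposition of Theorem~\ref{theorem5.10}(1), and the combinatorial identities for $|{\cal A}_n(\lambda)|$ recorded in Lemma~\ref{lem5.3} and Corollary~\ref{cor5.7}. The key observation is that the ideal $J_n$ in Definition~\ref{def5.15} is generated by a set of \emph{tableau words} (those of staircase subword type failing the column-occupation bound $\#\{a\mid i_a=j\}\le j$), so that in the quotient ${\cal{PC}}_n$ a distinguished spanning set is given precisely by those tableau words of ${\cal P}_n$ that survive. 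First I would identify this surviving set with the set ${\cal A}_n$ of Section~\ref{section5.1}: a tableau word $w(T)$ with $T$ of shape $\lambda\subset\delta_n$ survives modulo $J_n$ if and only if every column index $j$ is used at most $j$ times, which is exactly the condition $T(i,j)\ge j$ defining $\widetilde{\operatorname{STY}}(\lambda,\alpha)\subset{\cal A}_n$. This requires checking that $J_n$, being monomial in the tableau-word basis, has as a complementary basis exactly the tableau words not lying in the monomial ideal it generates — a standard fact once one knows that a tableau word which contains a ``bad'' subword is itself Knuth-equivalent to a product having such a bad tableau word as a left factor; here Proposition~\ref{prop2.2} and the subword structure of the staircase $I_0^{(n)}$ (footnote~\ref{footnote9}) do the work.

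Granting that, the \emph{second} bullet is immediate: $\operatorname{Hilb}({\cal{PC}}_n,q)=\sum_{T\in{\cal A}_n}q^{|\operatorname{shape}(T)|}=\sum_{\lambda\subset\delta_{n-1}}|{\cal A}_n(\lambda)|q^{|\lambda|}$, since the degree of a tableau word equals the number of boxes of its shape. The \emph{first} bullet then follows by setting $q=1$ and invoking Corollary~\ref{cor5.7}, which gives $|{\cal A}_n|=|\operatorname{ASM}_n|$; equivalently one can sum the refined $\operatorname{TSSCPP}(n)$ numbers of Theorem~\ref{theorem5.6}, whose total is $\operatorname{ASM}(n)$. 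One should also remark, via Theorem~\ref{theorem5.10}(1), that the surviving tableau words are exactly those indexing the key polynomials appearing in the truncated Cauchy kernel ${\cal C}_n(\mathfrak P,U)\bmod J_n$, which both re-proves the dimension count and ties ${\cal{PC}}_n$ to the key-Grothendieck picture of the introduction (the specialization $p_{ij}=0$ for $i+j>n+1$).

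For the \emph{third} bullet, the abelianization $({\cal{PC}}_{n+1})^{ab}$ is the quotient of ${\cal{PC}}_{n+1}$ by the commutators $[u_i,u_j]$. Here I would use that in the abelianization the monomials $u^{\mathbf m}=\prod u_i^{m_i}$ span, and that $u_1u_2\cdots u_n$-type relations in $J_{n+1}$ become, after abelianizing, the conditions that a monomial survives iff its exponent vector $\mathbf m=(m_1,\dots,m_n)$ satisfies the Catalan-type ballot inequalities $m_1+\cdots+m_j\le j$ for all $j$ (together with the ${\cal P}_{n+1}$-relations, which in the commutative quotient reduce $u_j u_i^2 = u_i u_j u_i$ etc.\ to nothing new). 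Counting exponent vectors in a fixed total degree $k$ subject to $\sum_{i\le j}m_i\le j$ is a classical lattice-path count whose generating function is $\sum_{k\ge 0}\frac{n-k+1}{n+1}\binom{n+k}{n}q^k$ — the ballot/Catalan triangle $A009766$ cited in the statement. I would verify this by a direct bijection with sub-diagonal lattice paths, or by the reflection principle applied to the inequalities.

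\textbf{Main obstacle.} The delicate point is the very first step: proving that the monomial ideal $J_n$ is ``compatible'' with the tableau-word basis of ${\cal P}_n$, i.e.\ that the images of the non-excluded tableau words remain linearly independent in ${\cal{PC}}_n$ and in fact span it. A priori, quotienting by $J_n$ could create unexpected collapses because $J_n$ is defined by \emph{monomials in the generators $u_i$}, not by tableau words, so one must show that a monomial $u_{i_1}\cdots u_{i_n}$ with $i_1\le\cdots\le i_n$ failing the occupation bound is Knuth-equivalent to (a left multiple of) an excluded tableau word, and conversely that no surviving tableau word Knuth-reduces into $J_n$. This is where the detailed structure of tableau words inside the staircase $I_0^{(n)}$ — exactly the combinatorics behind footnote~\ref{footnote9} and Corollary~\ref{cor5.7} — must be used carefully; the rest is bookkeeping.
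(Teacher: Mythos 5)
The paper states Theorem~\ref{theorem5.16} without writing out a proof, so the comparison is with the argument it leaves implicit; your reconstruction is exactly that argument. You correctly reduce the second bullet to a tableau-word basis of ${\cal{PC}}_n$ indexed by ${\cal A}_n$, the first bullet to Corollary~\ref{cor5.7} (equivalently the refined $\operatorname{TSSCPP}$ count of Theorem~\ref{theorem5.6}), and the third to the ballot count: a ``good'' weakly increasing word is determined by its content, so the degree-$k$ component of the abelianization has dimension $|{\cal A}_{n+1}((k))|=\frac{n+1-k}{n+1}{n+k \choose k}$, matching Lemma~\ref{lem5.3} and A009766. Those reductions are all sound (granting, as you implicitly do, that the generating set of $J_n$ in Definition~\ref{def5.15} is to be read as the weakly increasing monomials \emph{violating} the occupation bound, of arbitrary length).

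The genuine gap is the one you flag and do not close: that the images of the tableau words $w(T)$, $T\in{\cal A}_n$, form a basis of the quotient, i.e., that $w(T)\in J_n$ exactly when $T\notin{\cal A}_n$. Two remarks on how to finish it, since your sketched route (via the subword structure of $I_0^{(n)}$) is not the most direct. First, your worry about ``unexpected collapses'' is dispelled by Sch\"utzenberger's multiplicativity: the product of two plactic classes is a single plactic class, so $J_n$, being spanned by the elements $u\cdot g\cdot v$ with $g$ a generator and $u$, $v$ monomials, is spanned by a \emph{subset of the tableau-word basis} of ${\cal P}_n$; the complementary tableau words are then automatically a basis of ${\cal{PC}}_n$, with no further argument. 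Second, for the identification itself, observe that the condition $T(i,j)\ge j$ defining ${\cal A}_n$ is equivalent to a condition on the first row alone (columns increase strictly), namely that $R_1(T)$ satisfies the ballot bound. If $T\notin{\cal A}_n$, then $w(T)$ is plactic-equivalent to its row reading word $R_\ell\cdots R_2R_1$, and the offending prefix of $R_1$ is a generator of $J_n$ sitting as a factor, so $w(T)\in J_n$. Conversely, if $w\equiv u\cdot g\cdot v$ with $g$ a bad weakly increasing word having more than $j$ letters $\le j$, then $w$ has a weakly increasing subsequence of length $>j$ in letters $\le j$; by Greene's theorem (or the restriction property of RSK) the first row of $P(w)$ then has more than $j$ entries $\le j$, so $P(w)\notin{\cal A}_n$. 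With these two observations supplied, your outline becomes a complete proof; without them it remains a strategy with the crucial step asserted rather than established.
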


\begin{de}\label{def5.17} Denote by ${\cal{PC}}_n^{\sharp}$ the quotient of the algebra
${\cal{PC}}_n$ by the two-sided ideal ge\-ne\-rated
by the elements $\{u_i u_j -u_j u_i,\, |i-j| \ge 2 \}$.
\end{de}

\begin{pr}\label{prop5.18}
Dimension $\dim {\cal{PC}}_n^{\sharp}$ of the algebra ${\cal{PC}}_n^{\sharp}$
 is equal to the number of Dyck paths whose ascent lengths are exactly
$\{1,2,\ldots,n+1 \}$.
\end{pr}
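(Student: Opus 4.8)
The plan is to identify ${\cal{PC}}_n^{\sharp}$ with a quotient of the \emph{local} plactic algebra ${\cal{LP}}_n$ of Definition~\ref{def2.5}, to extract an explicit monomial basis from the latter, to cut it down by the defining relations of ${\cal{PC}}_n$, and finally to encode the surviving monomials as the Dyck paths in the statement. The first step is a reduction: by Definitions~\ref{def5.15} and~\ref{def5.17}, ${\cal{PC}}_n^{\sharp}$ is the quotient of ${\cal{P}}_n$ by the ideal $J_n$ together with the locality relations $u_iu_j=u_ju_i$ for $|i-j|\ge 2$. Once those relations are available, every instance of (PL1) is automatic, because in $u_ju_iu_k=u_ju_ku_i$ and $u_iu_ku_j=u_ku_iu_j$ one has $i<j<k$, hence $|i-k|\ge 2$; and every instance of (PL2) with $|i-j|\ge 2$ is a consequence of the single commutation $u_iu_j=u_ju_i$. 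Therefore ${\cal{P}}_n/(\text{locality})$ is exactly ${\cal{LP}}_n$, so that ${\cal{PC}}_n^{\sharp}={\cal{LP}}_n/\overline{J}_n$, where $\overline{J}_n$ is the image of $J_n$; the only plactic relations still in force are commutation at distance $\ge 2$ together with the two ``adjacent'' relations $u_iu_{i+1}u_i=u_{i+1}u_iu_i$ and $u_{i+1}u_iu_{i+1}=u_{i+1}u_{i+1}u_i$.

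Next I would orient the defining relations of ${\cal{LP}}_n$ as a rewriting system (say $u_iu_{i+1}u_i\to u_{i+1}u_iu_i$, $u_{i+1}u_iu_{i+1}\to u_{i+1}u_{i+1}u_i$, and $u_iu_j\to u_ju_i$ for $i>j+1$) and verify local confluence: the critical pairs are the overlaps of a distance-$\ge 2$ commutation with an adjacent relation and the overlaps of two adjacent relations at consecutive indices, and each of these resolves to a common word. By Newman's lemma the irreducible words then form a monomial basis ${\cal N}_n$ of ${\cal{LP}}_n$, recovering the Hilbert series recorded in Definition~\ref{def2.5}; one can take these irreducible words to be certain column-reading words whose product structure matches the factorisation of that Hilbert series. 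Combining this with the monomial basis of ${\cal{PC}}_n$ furnished by Theorem~\ref{theorem5.16} and Corollary~\ref{cor5.7} --- the tableau subwords of the staircase word $I_0^{(n)}$ --- I would show that $\overline{J}_n$ is spanned by those elements of ${\cal N}_n$ that are not equivalent to a subword of $I_0^{(n)}$, so that $\dim{\cal{PC}}_n^{\sharp}$ equals the number of irreducible words of ${\cal{LP}}_n$ that do occur as staircase subwords.

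Finally I would enumerate these surviving normal forms by a lattice-path description: each such word is recorded by a vector $(d_1,\dots,d_{n+1})$ of positive integers, where $d_k$ measures how much the $k$-th ``layer'' of the word falls short of the capacity available to it inside $\delta_n$ (and beyond its boundary), and the staircase constraints translate precisely into $\sum_{k=1}^{n+1}d_k={n+2 \choose 2}$ and $\sum_{k\le m}d_k\le{m+1 \choose 2}$ for every $m$. These are exactly the conditions for $(d_1,\dots,d_{n+1})$ to be the sequence of descent lengths of a Dyck path whose $k$-th ascent has length $k$ for $k=1,\dots,n+1$, and the correspondence is invertible, which yields the proposition. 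The main obstacle is the second step: establishing confluence of the rewriting system (equivalently, pinning down the monomial basis ${\cal N}_n$ of ${\cal{LP}}_n$) and, above all, proving that $\overline{J}_n$ is \emph{monomial} with respect to ${\cal N}_n$, so that the dimension is a genuine count of normal forms rather than the rank of a system of relations. Granting this, the passage to Dyck paths in the last step is a routine, if slightly intricate, piece of bookkeeping.
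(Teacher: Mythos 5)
First, a point of reference: the paper states Proposition~\ref{prop5.18} without proof --- only the numerical data of Example~\ref{exam5.19} is offered --- so there is no argument of the author's to measure yours against; a completed version of your plan would be supplying something the paper omits. Your framing is sound. The reduction ${\cal{PC}}_n^{\sharp}\cong{\cal{LP}}_n/\overline{J}_n$ is correct (once the locality relations are imposed, (PL1) and the non-adjacent instances of (PL2) are indeed redundant), and you have identified the right combinatorial target: reading ``ascent lengths are exactly $1,2,\ldots,n+1$'' as ``the $k$-th ascent has length $k$'', the count of descent sequences $d_1,\ldots,d_{n+1}\ge 1$ with $\sum_k d_k={n+2\choose 2}$ and $\sum_{k\le m}d_k\le {m+1\choose 2}$ gives $7$, $268$, $2496$ for $n=3,5,6$, matching $\dim{\cal{PC}}_3^{\sharp}=\dim{\cal{PC}}_3=7$ and the values recorded in Example~\ref{exam5.19}. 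That numerical confirmation is worth making explicit; it pins down the intended reading of the statement.

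The gap is the one you name yourself, and it is not a technicality --- it is the entire content of the proposition. As written, your argument yields only that the local-plactic normal forms whose class meets no word of $J_n$ \emph{span} ${\cal{PC}}_n^{\sharp}$; to get the dimension you must prove that $\overline{J}_n$ is exactly the linear span of the normal forms it contains. That is the assertion that the rewriting system consisting of the local-plactic rules together with the monomial generators of $J_n$ stays confluent, and the new critical pairs --- overlaps of a generator of $J_n$ with an adjacent Knuth rewrite --- are precisely where such claims fail in general; you offer no argument and no strategy. Nor can this be outsourced to Theorem~\ref{theorem5.16} and Corollary~\ref{cor5.7} in the way you suggest: those give a basis of ${\cal{PC}}_n$ by plactic (tableau) classes, but passing to ${\cal{LP}}_n$ merges distinct plactic classes (e.g.\ $u_1u_3$ and $u_3u_1$), so the tableau basis and your normal-form set ${\cal N}_n$ do not ``combine'' without a separate analysis of how the identifications interact with $J_n$. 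Finally, the bijection in your last step is asserted rather than constructed --- ``how much the $k$-th layer falls short of its capacity'' is not defined, and nothing is verified about it. In short: correct reduction, correct target, but the two load-bearing steps (monomiality of $\overline{J}_n$ with respect to a confluent normal form, and the explicit bijection to descent sequences) are both deferred, so this is a programme rather than a proof.
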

See \cite[$A107876$, $A107877$]{SL} where the f\/irst few of these numbers are
displayed.
\begin{ex}\label{exam5.19}
\begin{gather*}
\operatorname{Hilb}\big({\cal{PC}}_5^{\sharp},t\big) =(1,4,12,27,48,56,54,38,20,7,1)_{t} ,\qquad
\dim {\cal{PC}}_5^{\sharp} = 268,\\
\operatorname{Hilb}\big({\cal{PC}}_6^{\sharp},t\big) =
(1,5,18,50,116,221,321,398,414,368,275,175,89,35,9,1)_{t},\\
\dim {\cal{PC}}_6^{\sharp} = 2496 , \qquad
\dim {\cal{PC}}_7^{\sharp} = 28612.
\end{gather*}
\end{ex}

\begin{ex}\label{exam5.20}
\begin{gather*}
\operatorname{Hilb}({\cal{PC}}_3,q)=(1,2,3,1)_{q} , \qquad
\operatorname{Hilb}({\cal{PC}}_4,q)=(1,3,8,12,11,6,1)_{q}, \\
\operatorname{Hilb}({\cal{PC}}_5,q)=(1,4,15,35,69,91,98,70,35,10,1)_{q}, \\
\operatorname{Hilb}({\cal{PC}}_6,q)=(1,5,24,74,204,435,783,1144,1379,1346,1037,628,275,85,15,1)_{q}, \\
\operatorname{Hilb}({\cal{PC}}_7,q)= (1, 6, 35, 133, 461, 1281, 3196, 6686,
 12472, 19804, 27811, 33271, 34685,\\
 \hphantom{\operatorname{Hilb}({\cal{PC}}_7,q)=(}{} 30527, 22864, 14124, 7126,2828,840,
175, 21, 1)_{q}.
\end{gather*}
\end{ex}

\begin{prb}\label{prob5.21} Denote by ${\mathfrak{A}}_n$ the algebra generated by the
curvature of $2$-forms of the tautolo\-gi\-cal Hermitian linear bundles
 $\xi_{i}$, $1 \le i \le n$, over the flag variety~${\cal{F}}l_{n}$~{\rm \cite{SS}}.
It is well-known~{\rm \cite{PSS}} that the Hilbert polynomial of the algebra~${\mathfrak{A}}_n$ is equal to
\begin{gather*}
 \operatorname{Hilb}({\mathfrak{A}}_{n},t) = \sum_{F \in {\cal{F}}_{n}} t^{\operatorname{inv}(F)} =
\sum_{F \in {\cal{F}}_{n}} t^{\operatorname{maj}(F)},
\end{gather*}
where the sum runs over the set ${\cal{F}}_{n}$ of forests $F$ on the $n$
labeled vertices, and $\operatorname{inv}(F)$ $($resp.\ $\operatorname{maj}(F))$ denotes the inversion index $($resp.\ the major index$)$ of a forest~$F$.\footnote{For the readers convenience we recall def\/initions of statistics
$\operatorname{inv}(F)$ and $\operatorname{maj}(F)$. Given a forest $F$ on $n$ labeled vertices, one can
construct a tree~$T$ by adding a new vertex (root) connected with the maximal
vertices in the connected components of~$F$.

The inversion index $\operatorname{inv}(F)$ is
equal to the number of pairs $(i,j)$ such that $1 \le i < j \le n$, and the
vertex labeled by~$j$ lies on the shortest path in $T$ from the vertex
labeled by~$i$ to the root.

The major index $\operatorname{maj}(F)$ is equal to $\sum\limits_{x \in \operatorname{Des}(F)} h(x)$; here for
any vertex $x \in F$, $h(x)$ is the size of the subtree rooted at~$x$;
the descent set $\operatorname{Des}(F)$ of $F$ consists of the vertices $x \in F$ which have
the labeling strictly greater than the labeling of its child.}

Clearly that
\begin{gather*}
\dim({\mathfrak{A}}_n)_{{n \choose 2}} =\dim({\cal{PC}}_n)_{{n \choose 2}}
= \dim(H^{\star}({\cal{F}}l_{n},\Q))_{{n \choose 2}} = 1.
\end{gather*}
For example,
\begin{gather*}
\operatorname{Hilb}({\cal{PC}}_6,t)=(1,5,24,74,204,435,783,1144,1379,1346,1037,628,275,85,15,1)_{t}, \\
\operatorname{Hilb}({\mathfrak{A}}_6,t)=(1,5,15,35,70,126,204,300,405,490,511,424,245,85,15,1)_{t}, \\
\operatorname{Hilb}(H^{\star}({\cal{F}}l_{n},\Q),t) = (1,5,14,29,49,71,90,101,101,90,71,49,29,14,5,1)_{t}.
\end{gather*}
 We \textit{expect} that $\dim({\cal{PC}}_n)_{{n\choose 2}-1} =
{n \choose 2}$ and $\dim({\cal{PC}}_n)_{{n\choose 2}-2} = {\frac{3 n+5}{4}} {n+2 \choose 3} =s(n+2,2)$, where $s(n,k)$ denotes the Stirling number of the
first kind, see, e.g., {\rm \cite[$A000914$]{SL}}.
\end{prb}

\begin{prb}\quad
\begin{enumerate}\itemsep=0pt
\item[$(1)$] Is it true that $\operatorname{Hilb}({\cal{PC}}_n,t) - \operatorname{Hilb}({\mathfrak{A}}_n,t)
\in \N [t]$?
If so, as we \textit{expect}, does there exist an embedding of sets
 $ \iota\colon {\cal{F}}(n) \hookrightarrow {\cal{A}}_n$ such that $\operatorname{inv}(F) =
n(\iota(F))$ for all $F \in {\cal{F}}_n$?
See Section~{\rm \ref{section5.1}}, Definition~{\rm \ref{def5.5}}, for definitions of the
set ${\cal{A}}_n$ and statistics $n(T)$, $T \in {\cal{A}}_n$.

\item[$(2)$] \textit{Define} a ``natural'' bijection $ \kappa\colon \operatorname{STY}(\delta_n, \le n) \longleftrightarrow 2^{\delta_{n}} $ such that the set $\kappa(\operatorname{MT}(n))$
admits a ``nice'' combinatorial description.
\end{enumerate}
\end{prb}

Here $\operatorname{MT}(n)$ denotes the set of (increasing) monotone triangles, namely, a
subset of the set $ \operatorname{STY}(\delta_n, \le n)$ consisting of tableaux
$\{T= (t_{i,j})\, |\, i+j \le n+1,\, i \ge 1, j \ge 1 \}$ such that
$t_{i,j} \ge t_{i-1,j+1}$, $2 \le i \le n$, $1 \le j < n$, cf.~\cite{ST1};
$\delta_n=(n-1,n-2,\ldots,2,1)$;
 $ \operatorname{STY}(\delta_n, \le n)$ denotes the set of semistandard Young tableaux of
shape $\delta_n$ with entries bounded by $n$;
$2^{\delta_{n}}$ stands for the set of all subsets of boxes of the staircase
 diagram~$\delta_{n}$.
 It is well-known that $\#|\operatorname{STY}(\delta_n, \le n)| = 2^{\delta_{n}} =
 2^{{n \choose 2}}$.

\begin{comm}\label{com5.22} One can ask a natural question:
when do noncommutative elementary polynomials $e_1(\mathbb{A}),\dots,
e_n(\mathbb{A})$ form a \textit{$q$-commuting family}, i.e.,
$e_i(\mathbb{A}) e_j(\mathbb{A})= q e_j(\mathbb{A}) e_{i}(\mathbb{A})$, $1 \le i < j \le n$?

Clearly in the case of two variables one needs to necessitate the
following relations
\begin{gather*}
e_i e_j e_i+e_j e_j e_i=q e_j e_i e_i+q e_j e_i e_j,\qquad i < j.
\end{gather*}
Having in mind to construct a $q$-deformation of the plactic
algebra~${\cal{P}}_n$ such that the wanted $q$-commutativity conditions are
fulf\/illed, one would be forced to add the following relations
\begin{gather*}
q e_j e_i e_j=e_j e_j e_i \qquad \text{and}\qquad q e_j e_i e_i=e_i e_i e_j e_i, \qquad i < j.
\end{gather*}
It is easily seen that these two relations are compatible if\/f $q^2=1$.
Indeed,
\begin{gather*}
 e_j \underline{e_j e_i e_j} = q e_j \underline{e_i e_j e_i} = q^2 e_j e_j
e_i e_i \quad \Longrightarrow \quad q^2=1.
\end{gather*}
In the case $q=1$ one comes to the Knuth relations $({\rm PL1})$ and $({\rm PL2})$.
In the case $q= -1$ one
comes to the ``odd'' analogue of the Knuth relations, or ``odd'' plactic
relations (${\rm OPL}_n$), i.e., $({\rm OPL}_n):$
\begin{gather*}
 u_j u_i u_k= - u_j u_k u_i, \qquad \text{if}\quad i < j \le k \le n, \qquad \text{and}\\
 u_i u_k u_j = - u_k u_i u_j, \qquad \text{if} \quad i \le j < k \le n.
\end{gather*}
\end{comm}

\begin{pr}[A.K.] \label{prop5.23} Assume that the elements $\{u_1,\ldots,u_{n-1} \}$ satisfy the odd plactic relations $({\rm OPL}_n)$. Then the noncommutative elementary
polynomials $e_1(U),\ldots,e_n(U)$ are mutually anticommute.
\end{pr}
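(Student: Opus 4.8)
The plan is to carry out the argument of Theorem~\ref{theorem2.26} (Fomin--Greene) in the ``odd'' situation, tracking signs at every step. As in the commutative case, encode the elementary polynomials in the generating function
\begin{gather*}
A(x):=\prod_{a=n-1}^{1}(1+x\,u_{a})=\sum_{k\ge 0}e_{k}(U)\,x^{k},
\end{gather*}
so that $e_{k}(U)$ is the coefficient of $x^{k}$; the relations $({\rm OPL}_n)$ are exactly the Knuth relations (PL1), (PL2) with a minus sign inserted, i.e.\ the $q=-1$ specialisation of the $q$-commutation relations analysed in Comment~\ref{com5.22}. The assertion is therefore equivalent to an identity for $A(x)$: writing $\bar A(x):=A(x)-1$, one must show that for $i\ne j$ the coefficients of $x^{i}y^{j}$ in $\bar A(x)\bar A(y)$ and in $\bar A(y)\bar A(x)$ agree up to the prescribed sign (the exact sign rule among the $e_{k}(U)$ being read off from the computation). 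I would prove this by induction on the number $n-1$ of generators, along the lines of Theorem~\ref{theorem2.26}.

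Base case: for one and two generators the claim reduces to the computation already made in Comment~\ref{com5.22}, where the two-generator identity $e_{1}e_{2}=-e_{2}e_{1}$ follows from the relations $u_{i}u_{j}u_{i}=-u_{j}u_{i}^{2}$ and $u_{j}u_{i}u_{j}=-u_{j}^{2}u_{i}$ contained in $({\rm OPL})$. For the inductive step write $A_{n+1}(x)=(1+x\,u_{n})A_{n}(x)$ and expand the relevant signed commutator of $A_{n+1}(x)$ and $A_{n+1}(y)$ into a sum of terms in which a single factor $u_{n}$ is interleaved among the lower generators $u_{1},\dots,u_{n-1}$. Using the $({\rm OPL}_{n+1})$ relations in the shape ``$u_{n}$ moves past a lower generator at the cost of a sign'' -- concretely $u_{n}u_{i}u_{n}=-u_{n}^{2}u_{i}$ and $u_{i}u_{n}u_{j}=-u_{n}u_{i}u_{j}$ for $i\le j<n$ -- these contributions telescope, exactly as in Fomin--Greene, until everything is re-expressed through $A_{n}(x)A_{n}(y)$ and $A_{n}(y)A_{n}(x)$, which are governed by the induction hypothesis. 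Since the $({\rm OPL}_{n+1})$ relations used involve only triples $\{u_{i},u_{j},u_{n}\}$ with $i,j<n$, the inductive step is genuinely reduced to the three-generator relations, so (as for Theorem~\ref{theorem2.26}) the proposition rests on the small cases.

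The main obstacle is that, unlike in the commutative case, the ``diagonal'' terms $e_{k}(U)^{2}$ neither vanish nor are antisymmetric, so the clean conjugation identity $e_{n}[e_{i},e_{n}]=[e_{i},e_{n}]e_{i}$ used for Theorem~\ref{theorem2.26} is replaced by the heavier relations $u_{n}[u_{i},u_{n}]=-2\,u_{n}^{2}u_{i}$ and $[u_{i},u_{n}]u_{i}=-2\,u_{n}u_{i}^{2}$; keeping these correction terms consistent through the induction -- equivalently, showing that the off-diagonal part of $A(x)A(y)$ carries the correct signs -- is the delicate step, and it is also where the precise $\pm$ pattern among the $e_{k}(U)$ is pinned down.

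Finally, a clean way to both organise the sign bookkeeping and certify the outcome is a Clifford realisation. Let $C=\langle c_{1},\dots,c_{n-1}\mid c_{a}^{2}=1,\ c_{a}c_{b}+c_{b}c_{a}=0\ (a\ne b)\rangle$. Because a Knuth move (PL1) or (PL2) relates two monomials with the same multiset of letters, while the corresponding products of Clifford generators differ precisely by the sign occurring in $({\rm OPL}_n)$, the elements $v_{a}\otimes c_{a}\in{\cal P}_{n}\otimes C$ satisfy $({\rm OPL}_n)$; this yields a homomorphism $\phi$ from the free $({\rm OPL}_n)$-algebra to ${\cal P}_{n}\otimes C$ with $u_{a}\mapsto v_{a}\otimes c_{a}$. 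The signed Knuth rewriting terminates (as the plactic one does), so tableau words span the $({\rm OPL}_n)$-algebra and its dimension is at most $\dim{\cal P}_{n}$; on the other hand the elements $\phi(u_{T})=v_{T}\otimes c_{T}$ for $T$ a tableau word are linearly independent in ${\cal P}_{n}\otimes C$, so $\phi$ is injective. Hence it suffices to check the claimed relations for $\phi(e_{k}(U))=\sum_{a_{1}>\cdots>a_{k}}v_{a_{1}}\cdots v_{a_{k}}\otimes c_{a_{1}}\cdots c_{a_{k}}$ inside ${\cal P}_{n}\otimes C$, using the commutativity of the $e_{k}$ in ${\cal P}_{n}$ (Corollary~\ref{cor2.24}) together with the anticommutativity of distinct Clifford generators.
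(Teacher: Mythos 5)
Your Clifford realisation is an attractive idea, and the homomorphism $\phi\colon u_a\mapsto v_a\otimes c_a$ is indeed well defined: each signed Knuth move permutes three letters of which the two that get interchanged are distinct, so the plactic identity in the first tensor factor is matched by exactly one anticommutation in the second. The gap is in your very last sentence. The element $\phi(e_k)=\sum_{a_1>\cdots>a_k}v_{a_1}\cdots v_{a_k}\otimes c_{a_1}\cdots c_{a_k}$ is not of the form $e_k(V)\otimes(\hbox{fixed Clifford element})$ --- the Clifford factor changes from summand to summand --- so commutativity of the $e_k$ in ${\cal P}_n$ cannot be combined with ``anticommutativity of distinct Clifford generators'' by separating the two tensor legs; moreover, for two summands with disjoint index sets $A$, $B$ one has $c_Ac_B=(-1)^{|A||B|}c_Bc_A$, which is $+1$ whenever $|A|\,|B|$ is even. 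This is not a repairable bookkeeping issue. Take $n=4$, $k=1$, $l=2$: the monomial $u_3u_2u_1$ occurs once in $e_1e_2$ (as $u_3\cdot u_2u_1$) and once in $e_2e_1$ (as $u_3u_2\cdot u_1$), with the same sign; no relation of ${\rm OPL}_4$ matches the pattern $u_3u_2u_1$ (its class is a singleton), so nothing cancels it. Reducing all eighteen monomials to tableau words one finds
\begin{gather*}
e_1e_2+e_2e_1=2\,u_3u_2u_1=2\,e_3\neq 0 .
\end{gather*}
Hence the relation $e_ie_j=-e_je_i$, read literally, already fails for $n=4$, and any argument that ``proves'' it --- your telescoping induction as much as the Clifford shortcut --- must go wrong exactly where such disjoint-support cross terms are handled. (Your inductive first half is in any case only a plan: the ``delicate step'' you flag, controlling the correction terms coming from $u_n^2\neq 0$ and from factors with disjoint supports, is precisely where the computation above refuses to close.)

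So before writing a proof you must pin down what ``mutually anticommute'' is to mean. For two generators ($n=3$) the literal identity $e_1e_2=-e_2e_1$ does hold, by your own computation; for larger $n$ the calculation above indicates that the true statement carries a correction supported on disjoint decreasing concatenations (for $n=4$, $e_1e_2+e_2e_1=2e_3$), and the proposition would have to be restated or restricted accordingly. Your well-definedness and spanning/independence analysis of $\phi$ is worth keeping --- it reduces everything to a transparent computation in ${\cal P}_n\otimes C$ --- but carried out honestly that computation refutes, rather than certifies, the naive sign rule you set out to prove.
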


More generally, let ${\bf {\cal{Q}}_n}:= \{q_{ij}\}_{1 \le i < j \le n-1}$ be a
set of parameters.
 Def\/ine \textit{generalized plactic algebra} ${\cal{QP}}_n$ to be (unital)
associative algebra over the ring $\Z[\{q_{ij}^{\pm 1} \}_{1 \le i < j \le
n-1}]$ generated by elements $u_1, \ldots, u_{n-1} $ subject to the set of
 relations
\begin{gather}
 q_{ik} u_{j} u_{i} u_{k} = u_j u_{k} u_i, \qquad \text{if}\quad i < j \le k, \qquad \text{and}\nonumber\\
q_{ik} u_i u_k u_j =u_k u_i u_j, \qquad \text{if}\quad i \le j < k.\label{equation5.8}
\end{gather}
\begin{pr}\label{5.24} Assume that $q_{ij}:= q_j$, $\forall\, 1 \le i < j$ be a set of invertible parameters.
Then the reduced generalized plactic algebra ${\cal{QPC}}_n$ is a~free $\Z[q_2^{\pm 1},\ldots,q_{n-1}^{\pm 1}]$-module of rank equals to the
number of alternating sign matrices $\operatorname{ASM}(n)$.
Moreover,
\begin{gather*}
\operatorname{Hilb}({\cal{QPC}}_n,t) = \operatorname{Hilb}({\cal{PC}}_n,t), \operatorname{Hilb}({\cal{QP}}_n,t)=
\operatorname{Hilb}({\cal{P}}_n,t).
\end{gather*}
\end{pr}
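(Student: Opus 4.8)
The plan is to show that, under the hypothesis $q_{ij}=q_j$, the algebra ${\cal{QP}}_n$ carries an explicit $R$-basis indexed by tableau words, where $R:=\Z[q_2^{\pm1},\dots,q_{n-1}^{\pm1}]$, matching degree by degree the tableau-word basis of ${\cal{P}}_n$ provided by Proposition~\ref{prop2.2}; the assertions about ${\cal{QPC}}_n$ then follow by reducing modulo the monomial ideal of Definition~\ref{def5.15}.

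The main device will be a monomial ``defect'' statistic. For a word $w=u_{a_1}\cdots u_{a_p}$ in the free monoid set $f_i(w)=\#\{(r,s)\colon r<s,\ a_r=i>a_s\}$ — for each occurrence of the letter $i$, the number of strictly smaller letters to its right — and $m(w)=\prod_i q_i^{f_i(w)}\in R$ (note $f_1\equiv 0$). The core computation, which I expect to be the only genuinely delicate point, is: for every single elementary Knuth move sending a word $w_1$ to a word $w_2$ — equivalently, for every instance $l\,(q_k u_j u_i u_k-u_j u_k u_i)\,r$ with $i<j\le k$, or $l\,(q_k u_i u_k u_j-u_k u_i u_j)\,r$ with $i\le j<k$, of a defining relation of ${\cal{QP}}_n$, where $k$ is the largest index involved — one has $m(w_2)=q_k\,m(w_1)$. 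Conceptually this is transparent: both relations slide the occurrence of the largest letter $u_k$ one step leftward past a single strictly smaller letter $u_i$, so $f_k$ increases by $1$ while every other $f_a$ stays put; the work lies in the degenerate cases $j=k$ and $i=j$ (the plactic relations of the second kind) and in verifying that occurrences of letters sitting in the contexts $l,r$ contribute nothing to the change. This is exactly where the hypothesis is used: $q_{ik}=q_k$ depends only on the largest letter, whereas for an honestly two-index family $q_{ik}$ no such monomial statistic can exist — in accordance with the paper's remark that flatness for a general ${\bf Q}$ is open.

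Granting the identity $m(w_2)=q_k\,m(w_1)$, I would finish as follows. Since the move reads $q_k w_1=w_2$ in ${\cal{QP}}_n$, the element $m(w)^{-1}w$ is unchanged along elementary moves, hence constant on each Knuth class; applied to $w$ and its tableau word $T(w)$ this yields $w=m(w)\,m(T(w))^{-1}\,T(w)$ in ${\cal{QP}}_n$, so the tableau words span ${\cal{QP}}_n$ over $R$. For linear independence, define the $R$-linear map $S$ from the free algebra $R\langle u_1,\dots,u_{n-1}\rangle$ to the free $R$-module $V=\bigoplus_{T}R\cdot T$ (sum over tableau words $T$) by $S(w)=m(w)\,m(T(w))^{-1}\,T(w)$ on monomials; by the identity above, together with the fact that $w_1$ and $w_2$ lie in the same Knuth class, $S$ annihilates every sandwiched defining relation $l\,g\,r$ (for $g$ a relator, $l,r$ words), so it descends to $\overline{S}\colon{\cal{QP}}_n\to V$ with $\overline{S}(\mathrm{class}\ T)=T$ for each tableau word $T$. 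Hence the tableau-word classes are $R$-linearly independent, so they form an $R$-basis of ${\cal{QP}}_n$, and $\operatorname{Hilb}({\cal{QP}}_n,t)=\operatorname{Hilb}({\cal{P}}_n,t)$.

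Finally, write ${\cal{QPC}}_n={\cal{QP}}_n/J_n$, where $J_n$ is the two-sided ideal generated by the monomials listed in Definition~\ref{def5.15}. In ${\cal{QP}}_n$ every monomial equals a unit of $R$ times a single tableau word, so $J_n$ is the $R$-span of precisely those tableau words that are Knuth-equivalent to some word $l\,\mu\,r$ with $\mu$ one of the listed monomials — which is the very index set that cuts out $J_n\subset{\cal{P}}_n$, since Knuth equivalence in the free monoid is untouched by the deformation. Therefore ${\cal{QPC}}_n$ is $R$-free on the ``surviving'' tableau words, whose number in each degree equals $\dim({\cal{PC}}_n)$ in that degree; in particular the rank of ${\cal{QPC}}_n$ over $R$ is $\dim_\Z{\cal{PC}}_n=\operatorname{ASM}(n)$ by Theorem~\ref{theorem5.16} (equivalently Corollary~\ref{cor5.7}), and $\operatorname{Hilb}({\cal{QPC}}_n,t)=\operatorname{Hilb}({\cal{PC}}_n,t)$.
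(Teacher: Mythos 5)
Your argument is correct, and since the paper states this proposition without any proof, your inversion-statistic construction is a genuine, complete argument rather than a variant of the author's. I checked the one point you flagged as delicate: in either defining relation the passage from $w_1$ to $w_2$ exchanges the positions of exactly one occurrence of $u_k$ and one occurrence of $u_i$ with $i<k$ strictly (this remains true in the degenerate cases $j=k$ and $i=j$), every other pair of occurrences keeps its relative order, and letters of the contexts $l,r$ are untouched; hence exactly one new inversion with larger left entry $k$ appears, $f_k$ increases by $1$, all other $f_a$ are unchanged, and $m(w_2)=q_k\,m(w_1)$ as claimed. The rescaling argument (constancy of $m(w)^{-1}w$ on Knuth classes, and the retraction $\overline{S}$ onto the free module on tableau words) and the passage to the quotient by the monomial ideal of Definition~\ref{def5.15} are both sound, the latter because in an algebra where every monomial is a unit multiple of a basis tableau word, a two-sided ideal generated by monomials is the $R$-span of a subset of that basis, and that subset is determined by plactic combinatorics alone, hence is the same subset that spans $J_n\subset{\cal{P}}_n$. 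One aside is incorrect, although it does not affect the proof: a monomial statistic does exist for a genuine two-index family --- take $g_{ik}(w)$ to be the number of inversions of $w$ whose entries are exactly $k$ (on the left) and $i$ (on the right); the same local computation shows that each relation creates precisely one new inversion with entries $(k,i)$, so every relation multiplies $\prod_{i<k}q_{ik}^{g_{ik}}$ by exactly $q_{ik}$. Thus the hypothesis $q_{ij}=q_j$ is what makes your one-index statistic suffice, but it is not forced by the method, and your explanation of where the hypothesis is used should be softened accordingly.
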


Recall that \textit{reduced generalized plactic algebra} ${\cal{QPC}}_n$ is the quotient of the generalized plactic algebra by the two-sided ideal~$J_n$
introduced in Def\/inition~\ref{def5.15}.

\begin{ex}\label{exam5.25} \quad\samepage
\begin{enumerate}\itemsep=0pt
\item[(A)] Super plactic monoid \cite{LNS, LT}. Assume that the
set of generators $U:=\{u_1,\ldots,u_{n-1} \}$ is divided on two non-crossing
subsets, say~$Y$ and~$Z$, $Y \cup Z = U, Y \cap Z =\varnothing$. To each
element $u \in U$ let us assign the weight $wt(u)$ as follows: $wt(u) =0$
if $u \in Y$, and $wt(u)=1$ if $u \in Z$. Finally, def\/ine parameters of the
generalized plactic algebra ${\cal{QP}}_n$ to be $q_{ij}= (-1)^{wt(u_{i}) wt(u_{j})}$. As a result we led to conclude that the generalized plactic
algebra ${\cal{QP}}_n$ in question coincides with the super plactic algebra
${\cal{PS}}(V)$ introduced in~\cite{LT}. We will denote this algebra by
${\cal{SP}}_{k,l}$, where $k=|Y|$, $l=|Z|$. We refer the reader to papers~\cite{LT} and~\cite{LNS} for more details about connection of the super plactic
algebra and super Young tableaux, and super analogue of the Robinson--Schensted--Knuth correspondence. We are planning to report on some properties of the
Cauchy kernel in the (reduced) super plactic algebra elsewhere.

\item[(B)] $q$-analogue of the plactic algebra.
Now let $q \not= 0, \pm 1$ be a parameter, and assume that $q_{ij}=q$, $\forall\, 1 \le i < j \le n-1$. This case has been treated recently in \cite{Li}. We
expect that the generalized Knuth relations~\eqref{equation5.8} are related
with \textit{quantum version} of the tropical/geometric RSK-correspondence (work in progress), and, as expected, with a $q$-weighted version of the
Robinson--Schensted algorithm, presented in~\cite{OP}. Another interesting
\textit{problem} is to understand a meaning of ${\cal{Q}}$-plactic
polynomials coming from the decomposition of the (plactic) Cauchy kernels
${\cal{C}}_n$ and ${\cal{F}}_n$ in the reduced generalized plactic algebra
 ${\cal{QPC}}_n$ (work in progress).

\item[(C)] Quantum pseudoplactic algebra $\operatorname{PPL}_{n}^{(q)}$ \cite{KT}.
By def\/inition, the quantum pseudoplactic algebra $\operatorname{PPL}_{n}{(q)}$ is an
associative algebra, generated, say over $\mathbb{Q}$, by the set of elements $\{e_1,\ldots,e_{n-1}\}$ subject to the set of def\/ining relations
\begin{gather*}
(a) \quad (1+q) e_i e_j e_i -q e_i^2 e_j - e_j e_i^2 =0, \qquad\! (1+q) e_j e_i e_j -e_j^2 e_j - q e_i e_j^2 =0, \qquad\! i < j,\\
(b) \quad (e_j,(e_i,e_k)):=e_j e_i e_k - e_j e_k e_i - e_i e_k e_j + e_k e_i e_j = 0, \qquad i < j < k.
\end{gather*}
\end{enumerate}
\end{ex}

\textit{Note} that if $q=1$, then the relations $(a)$ can be written in the
form $(e_i,(e_i,e_j))=0$ and $(e_2,(e_j,(e_i,e_j))=0$ correspondingly.
Therefore, $\operatorname{PPL}_{3}^{(q=1)}$ is the universal enveloping algebra over $\mathbb{Q}$ of the Lie algebra ${\mathfrak{sl}}^{+}_{3}$. The quotient of the algebra $\operatorname{PPL}_{n}^{q=1}$ by the two-sided ideal generated by the elements $(e_i,(e_j,e_k))$, $i$, $j$, $k$ are distinct, is isomorphic to the algebra from Remark~\ref{rem2.3}.

Def\/ine noncommutative $q$-elementary polynomials $\Lambda_{k}(q;X_n)$, cf.~\cite{KT}, as follows
\begin{gather}\label{equation5.9}
\Lambda_{k}(X;q):= \sum_{n \ge i_1 >i_2 > \cdots > i_k \ge 1} (x_{i_{1}},(x_{i_{2}},( \ldots,(x_{i_{k-1}},x_{i_{k}})_{q}) \cdots )_{q})_{q}.
\end{gather}

 \begin{pr}[\cite{KT}] \label{prop5.26} The noncommutative $q$-elementary
polynomials
$ \{\!\Lambda_{k}(E_{n{-}1}{;}q)\}_{1 {\le} k {\le} n{-}1}\! \}$
are pairwise commute in the algebra $\operatorname{PPL}_{n}^{(q)}$.
\end{pr}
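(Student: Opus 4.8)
The plan is to run the generating-function argument that underlies Theorems~\ref{theorem2.23} and~\ref{theorem2.26}, with ordinary products replaced by iterated $q$-brackets. Collect the $q$-elementary polynomials into the series
\[
\Lambda(t):=\Lambda(t;e_1,\dots,e_{n-1})=1+\sum_{k\ge 1}\Lambda_k(E_{n-1};q)\,t^k\ \in\ \operatorname{PPL}_n^{(q)}[[t]],
\]
so that Proposition~\ref{prop5.26} becomes the single assertion $\Lambda(s)\,\Lambda(t)=\Lambda(t)\,\Lambda(s)$ in $\operatorname{PPL}_n^{(q)}[[s,t]]$. First I would record the recursion obtained from~\eqref{equation5.9} by isolating the \emph{largest} index $i_1$: writing $\Lambda^{(m)}(t):=\Lambda(t;e_1,\dots,e_m)$ one gets $\Lambda_k^{(m)}=\Lambda_k^{(m-1)}+\bigl(e_m,\Lambda_{k-1}^{(m-1)}\bigr)_q$ for $k\ge 2$ together with $\Lambda_1^{(m)}=\Lambda_1^{(m-1)}+e_m$; in terms of series, $\Lambda^{(m)}(t)$ is obtained from $\Lambda^{(m-1)}(t)$ by adjoining the new generator $e_m$ on the outside via the $q$-bracket, up to a correction linear in $e_m$ coming from the constant term $\Lambda_0=1$. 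Unlike in Theorem~\ref{theorem2.26}, $\Lambda(t)$ is \emph{not} an ordered product of factors $1+t\,e_i$ — each $\Lambda_k$ is a sum of genuine nested $q$-commutators — and that is precisely what makes the step below nontrivial.

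The core is an induction on $n$. The base case (small $n$, where relation~$(b)$ is still vacuous) reduces, after unwinding $\Lambda_1$ and $\Lambda_2$, to the single identity $[\,\Lambda_1,\Lambda_2\,]=0$, which follows by a short direct computation from the defining relations~$(a)$: these relations say exactly that a generator $q$-commutes, on the appropriate side, with a commutator $[e_i,e_j]$ in which it occurs. For the inductive step assume the proposition in $\operatorname{PPL}_{n-1}^{(q)}$, i.e.\ $\Lambda^{(n-2)}(s)\,\Lambda^{(n-2)}(t)=\Lambda^{(n-2)}(t)\,\Lambda^{(n-2)}(s)$, and write $e:=e_{n-1}$, $L:=\Lambda^{(n-2)}$. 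Substituting $\Lambda^{(n-1)}(t)=L(t)+t\,\bigl(e,L(t)\bigr)_q+(\text{a correction linear in }e)$ into $\bigl[\Lambda^{(n-1)}(s),\Lambda^{(n-1)}(t)\bigr]$, expanding with the Leibniz identity $[AB,C]=A[B,C]+[A,C]B$, and using the induction hypothesis to kill $[L(s),L(t)]$, one is left with a combination of expressions of two shapes: those carrying two occurrences of $e$, and those in which a single $e$ is transported across $L(s)$ or $L(t)$. The first shape is disposed of by the $q$-Serre relations~$(a)$ applied to the pairs $(e_i,e)$, $i<n-1$ (bootstrapped via~$(b)$ to reduce to the atomic case); the second by repeated use of relation~$(b)$, the undeformed Jacobi identity $\bigl(e_j,(e_i,e_k)\bigr)=0$ for $i<j<k$, which is what allows $e$ to be moved across a nested $q$-bracket of lower generators without leaving a remainder, returning the problem to another instance of the induction hypothesis.

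The main obstacle I anticipate is the bookkeeping of powers of $q$: since the bracket in~\eqref{equation5.9} is a $q$-commutator, moving $e$ through $L(t)$ term by term produces a web of $q$-weights governed by the $q$-Leibniz rule for $(\,\cdot\,,\,\cdot\,)_q$, and one must check that after all contributions are collected the coefficient of every word in $e_1,\dots,e_{n-1}$ vanishes identically in $q$, $s$ and $t$. Concretely this collapses to a finite list of identities involving at most three of the generators $e_i,e_j,e_{n-1}$ at a time, and the delicate point is to verify that this list is exhausted by $(a)$ and $(b)$ alone; here the precise shape of the quantum pseudoplactic relations — in particular that $(b)$ is the \emph{un}deformed Jacobi relation while only $(a)$ carries the parameter — is used in an essential way (at $q=1$ the relations $(a)$, $(b)$ are close to those of the algebra ${\cal L}_n$ of Remark~\ref{rem2.3}, and the corresponding commutativity becomes elementary). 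Should the direct computation prove unwieldy, the fallback is to carry out the whole argument in the ``generic'' algebra on $e_1,\dots,e_{n-1}$ defined by \emph{only} the relations actually invoked — in the spirit of the generic plactic algebra $\mathfrak{P}_n$ of Definition~\ref{def2.25} and Theorem~\ref{theorem2.26} — and specialise at the end.
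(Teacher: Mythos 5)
First, note that the paper does not actually prove Proposition~\ref{prop5.26}: it is quoted from~\cite{KT}, so your argument has to stand entirely on its own. As it stands it does not close. The overall architecture (the generating series $\Lambda(t)$, the recursion $\Lambda_k^{(m)}=\Lambda_k^{(m-1)}+(e_m,\Lambda_{k-1}^{(m-1)})_q$ obtained by splitting off the largest index, induction on the number of generators) is reasonable, and your base case is correct: for two generators $[\Lambda_1,\Lambda_2]=0$ does follow from the two relations in~$(a)$. But the decisive step of the induction --- showing that, once $[L(s),L(t)]$ is killed by the induction hypothesis, the remaining cross and quadratic terms vanish --- is only asserted, not carried out; you yourself flag it as ``the delicate point''. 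That is precisely where the content of the proposition lies, so the proposal is a plan rather than a proof.

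Moreover, the mechanism you propose for the residual terms is not supported by the relations as stated. Relation $(b)$ reads $(e_j,(e_i,e_k))=0$ for $i<j<k$: it says the generator of \emph{middle} index commutes with the plain commutator of the two outer ones. It does not allow you to transport the \emph{largest} generator $e_{n-1}$ across a nested, $q$-deformed bracket of smaller generators ``without leaving a remainder''; for that you would need something like $(e_k,(e_i,e_j))=0$ with $k$ maximal, which is not among the defining relations of $\operatorname{PPL}_{n}^{(q)}$. Similarly, the terms carrying two occurrences of $e_{n-1}$ require commuting degree-two expressions in $e_{n-1}$ past arbitrary words in $e_1,\dots,e_{n-2}$, whereas relation $(a)$ only controls length-three monomials in two letters. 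So even granting your reduction to ``identities in at most three generators'', it is neither established nor obvious that the resulting list is exhausted by $(a)$ and $(b)$; the first genuinely open case, $n=4$ with $\Lambda_3=(e_3,(e_2,e_1)_q)_q$ and the commutators $[\Lambda_1,\Lambda_3]$, $[\Lambda_2,\Lambda_3]$, is exactly where this must be checked and is not. Until either that finite verification is done explicitly, or a structural argument in the spirit of Theorem~\ref{theorem2.26} is supplied (isolating a minimal set of relations forcing $[A(x),A(y)]=0$ and showing that $(a)$ and $(b)$ imply them), the proof has a genuine gap.
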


\subsection[Nilplactic algebra ${\cal {NP}}_n$]{Nilplactic algebra $\boldsymbol{{\cal {NP}}_n}$}\label{section5.2}

Let $\lambda$ be a partition and $\alpha$ be a composition of the same size.
Denote by ${\widehat {\operatorname{STY}}}(\lambda,\alpha)$ the set of columns and rows
strict Young tableaux $T$ of the shape $\lambda$ and content $\alpha$ such that
the corresponding tableau word~$w(T)$ is reduced, i.e., $l(w(T))=|T|$.

Denote by ${\cal B}_n$ the union of the sets ${\widehat {\operatorname{STY}}}(\lambda,\alpha)$
for all partitions $\lambda$ such that $\lambda_i \le n-i$ for $i=1,2,\dots,
n-1$, and all compositions~$\alpha$, $\alpha \subset \delta_n$.

For example, $|{\cal B}_n|=1,2,6,25,139,1008,\dots $, for
$n=1,2,3,4,5,6,\dots$.

{\samepage \begin{Theorem}\label{theorem5.27} \quad
\begin{enumerate}\itemsep=0pt
\item[$(1)$] In the nilplactic algebra ${\cal {NP}}_{n}$ the Cauchy kernel
has the following decomposition
\begin{gather*}
{\cal C}_n(\mathfrak{P},U) = \sum_{T \in {\cal B}_n} {\cal K}_{T}(\mathfrak{P})
 u_{w(T)}.
\end{gather*}
\item[$(2)$] Let $T \in {\cal B}_n$ be a tableau, and assume that its bottom code is a~partition. Then
\begin{gather*}
 {\cal K}_{T}(\mathfrak{P})= \prod_{(i,j) \in T} p_{\{i,T(i,j)-j+1 \}}.
\end{gather*}
\end{enumerate}
\end{Theorem}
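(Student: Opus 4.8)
The plan is to read both assertions off the expansion of the Cauchy kernel furnished by Lemma~\ref{lem5.2}, following the route used for the plactic algebra in Theorem~\ref{theorem5.10}, the only new ingredient being that ordinary Schensted insertion is replaced by the Lascoux--Sch\"utzenberger nilplactic insertion underlying Proposition~\ref{prop2.11}. First I would recast Lemma~\ref{lem5.2} in terms of words and compatible sequences: given $({\bf a},{\bf b})\in{\cal S}_n$, for which $w({\bf a},{\bf b})=\prod_j u_{a_j+b_j-1}$, set ${\bf c}=(c_j)_j$ with $c_j=a_j+b_j-1$ (a word in $\{1,\dots,n-1\}$) and ${\bf b}':={\bf a}$. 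A direct check shows that the conditions cutting out ${\cal S}_n$ (that $a_i+b_i\le n$, that ${\bf a}$ be weakly increasing, and that $a_i=a_{i+1}\Rightarrow b_i>b_{i+1}$) are equivalent to ${\bf b}'$ being a compatible sequence for ${\bf c}$ in the sense of Definition~\ref{def2.27}, and that $\prod_j p_{\{a_j,b_j\}}=\prod_j p_{b'_j,\,c_j-b'_j+1}$. Hence, in the free algebra ${\cal F}_{n-1}$,
\begin{gather*}
{\cal C}_n(\mathfrak{P},U)=\sum_{{\bf c}}\Bigl(\sum_{{\bf b}'\in C({\bf c})}\,\prod_j p_{b'_j,\,c_j-b'_j+1}\Bigr)\,u_{{\bf c}},
\end{gather*}
the outer sum running over all words ${\bf c}$ in $\{1,\dots,n-1\}$.

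Next I would pass to ${\cal NP}_n$ and collect terms. By Proposition~\ref{prop2.11}, for each word ${\bf c}$ the image of $u_{{\bf c}}$ in ${\cal NP}_n$ is either $0$ or equals $u_{w(T)}$ for a unique tableau word $w(T)$, and the surviving tableau words are exactly those of $T\in{\cal B}_n$ (row- and column-strict, reduced reading word; this is consistent with $\#|{\cal B}_n|=\dim{\cal NP}_n$, cf.\ Proposition~\ref{prop2.12}), while each such $T$ does occur since $w(T)$ already carries the column-reading compatible sequence, so $C(w(T))\neq\varnothing$. Because the $p_{ij}$ are algebraically independent no cancellation can occur, so regrouping the display expresses the coefficient of $u_{w(T)}$ as $\sum_{{\bf c}}\sum_{{\bf b}'\in C({\bf c})}\prod_j p_{b'_j,\,c_j-b'_j+1}$, with ${\bf c}$ now ranging over all words nilplactically equivalent to $w(T)$. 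The crux is to identify this with ${\cal K}_T(\mathfrak{P})$, which by Definition~\ref{def2.29} is the same sum taken over the words \emph{plactically} equivalent to $w(T)$; equivalently, to show that
\begin{gather*}
\bigsqcup_{{\bf c}\ \text{nilplactically}\ \sim\ w(T)}C({\bf c})\ =\ \bigsqcup_{{\bf a}\ \text{plactically}\ \sim\ w(T)}C({\bf a})\ =\ C(T)
\end{gather*}
as multisets of word-with-compatible-sequence pairs.

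For $T\in{\cal B}_n$ the nilplactic and plactic classes of $w(T)$ are genuinely different sets of words, but the words in either class that admit a compatible sequence coincide (carrying a compatible sequence is a reducedness/key-type constraint), which is why the two disjoint unions agree. I would prove this by running nilplactic insertion on the pair $({\bf c},{\bf b}')$ and checking that it is a bijection onto $\{(T,{\bf b}')\colon {\bf b}'\in C(T)\}$, invoking the standard properties of the nilplactic correspondence from~\cite{LS3,L1} — exactly as the combinatorics of Schensted insertion underpins Theorem~\ref{theorem5.10}. This identification, where the differing defining relations of ${\cal P}_n$ and ${\cal NP}_n$ have to be reconciled, is the step I expect to be the main obstacle.

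Part~$(2)$ is then a corollary. Row- and column-strictness of $T$ force $T(i,j)\ge i+j-1\ge j$, so every $T\in{\cal B}_n$ lies in ${\cal A}_n$; hence Theorem~\ref{theorem5.10}$(2)$ applies verbatim and yields ${\cal K}_T(\mathfrak{P})-\prod_{(i,j)\in T}p_{\{i,T(i,j)-j+1\}}\ge 0$, with equality precisely when the bottom code $\alpha(T)$ (Definition~\ref{def2.30}) is a partition. Therefore, when $\alpha(T)$ is a partition, $C(T)$ collapses to a single compatible sequence and ${\cal K}_T(\mathfrak{P})$ reduces to the asserted monomial.
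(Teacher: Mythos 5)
The paper states Theorem~\ref{theorem5.27} without supplying an argument, but your route is exactly the one its surrounding machinery is set up for: recast Lemma~\ref{lem5.2} as a sum over pairs (word, compatible sequence), regroup in ${\cal NP}_n$ using Proposition~\ref{prop2.11}, and deduce part~(2) from Theorem~\ref{theorem5.10}(2) after observing ${\cal B}_n\subseteq{\cal A}_n$; the translation between ${\cal S}_n$ and Definition~\ref{def2.27} is checked correctly, and you rightly isolate the crux, namely that the coefficient of $u_{w(T)}$ is a sum over the \emph{nilplactic} class of $w(T)$ while ${\cal K}_T$ is defined via the \emph{plactic} class, two genuinely incomparable equivalences (e.g.\ $121\sim_{\rm nilpl}212$ but $121\not\equiv_{\rm Knuth}212$, while $212\equiv_{\rm Knuth}221$ with $221$ not reduced). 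The one weak point is your parenthetical justification that ``carrying a compatible sequence is a reducedness-type constraint'': that is false as a general principle (the non-reduced word $(2,2)$ carries the compatible sequence $(1,2)$), so it cannot by itself show that the words in the symmetric difference of the two classes contribute nothing. What saves the argument is precisely the deferred step you name: the Lascoux--Sch\"utzenberger/Edelman--Greene-type statement that nilplactic insertion restricts to a bijection between pairs $({\bf c},{\bf b}')$ with ${\bf c}\sim_{\rm nilpl}w(T)$ and ${\bf b}'\in C({\bf c})$ on one side and $\{(T,{\bf b}')\colon{\bf b}'\in C(T)\}$ on the other, which is exactly the content of \cite{LS3} that the paper implicitly leans on. So the proposal is correct in outline and matches the intended approach, but that insertion lemma is the actual proof of the theorem and would need to be carried out (or cited precisely) rather than motivated by the reducedness heuristic.
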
}

\begin{ex}\label{exam5.28} For $n=4$ one has
\begin{gather*}
{\cal C}_{4}(X,U)= K[0]+K[1]u_1+K[01]u_2+K[001]u_3+K[11]u_{12}+K[2](u_{21}+u_{31})\\
\hphantom{{\cal C}_{4}(X,U)=}{}
+K[101]u_{13}+
K[02]u_{32}+K[011]u_{23}+K[3]u_{321}+K[12]u_{312}+K[21]u_{212}\\
\hphantom{{\cal C}_{4}(X,U)=}{}
+K[111]u_{123}+K[021]u_{323}+K[201]u_{213}+K[31]u_{3212}+
K[301]u_{3213}\\
\hphantom{{\cal C}_{4}(X,U)=}{}
+K[22]u_{2132}+K[121]u_{3123}+K[211]u_{2123}+K[32]u_{32132}+
K[311]u_{32123}\\
\hphantom{{\cal C}_{4}(X,U)=}{}
+K[221]u_{21323}+K[321]u_{321323}.
\end{gather*}
\end{ex}

\subsection[Idplactic algebra ${\cal {IP}}_n$]{Idplactic algebra $\boldsymbol{{\cal {IP}}_n}$}\label{section5.3}

Let $\lambda$ be a partition and $\alpha$ be a composition of the same size.
Denote by ${\overline{\operatorname{STY}}}(\lambda,\alpha)$ the set of columns and rows
strict Young tableaux $T$ of the shape $\lambda$ and content $\alpha$ such that
$l(w(T))=\operatorname{rl}(w(T))$, i.e., the tableau word\footnote{See page~\pageref{footnote9} for the def\/inition of {\it tableau word}.}
$w(T)$ is a unique tableau word of
minimal length in the idplactic class of~$w(T)$, cf.\ Example~\ref{exam2.16}.

Denote by ${\cal D}_n$ the union of the sets
${\overline{\operatorname{STY}}}(\lambda,\alpha)$
for all partitions $\lambda$ such that $\lambda_i \le n-i$ for $i=1,2,\dots,n-1$, and all compositions $\alpha$, $l(\alpha) \le n-1$.

 For example, $\# |{\cal D}_n|=1,2,6,26,154,1197,\dots $, for $n=1,2,3,4,5,6,\dots$.

\begin{Theorem}\label{theorem5.29} \quad
\begin{enumerate}\itemsep=0pt
\item[$(1)$]
 In the idplactic algebra ${\cal {IP}}_{n}$ the Cauchy kernel
has the following decomposition
\begin{gather*}
{\cal C}_n(X,Y,U) =
\sum_{T \in {\cal D}_n} {\cal {\operatorname{KG}}}_{T}(X,Y)
 u_{w(T)}.
\end{gather*}
\item[$(2)$] Let $T \in {\cal D}_n$ be a tableau, and assume that its bottom code is a
partition. Then
\begin{gather*}
{\cal {\operatorname{KG}}}_{T}(X,Y)= {\cal K}_{T}(X,Y)= \prod_{(i,j) \in T} (x_i+y_{T(i,j)-j+1}).
\end{gather*}
\end{enumerate}
\end{Theorem}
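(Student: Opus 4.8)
The statement is the idplactic counterpart of Theorems~\ref{theorem5.10} and~\ref{theorem5.27}, so the plan is to run through the same three moves: expand the Cauchy kernel over the free algebra ${\cal F}_{n-1}$, push it down to ${\cal IP}_n$ and collect the monomials by idplactic equivalence, and match the resulting coefficients with the double key--Grothendieck polynomials ${\cal {GK}}_T$ of Definition~\ref{def2.29}.

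First I would write, in ${\cal F}_{n-1}$,
\begin{gather*}
{\cal C}_n(X,Y,U)=\sum_{({\bf a},{\bf b})\in{\cal S}_n}\Bigl(\prod_{j}(x_{a_j}+y_{b_j})\Bigr)\,w({\bf a},{\bf b}),\qquad w({\bf a},{\bf b})=\prod_j u_{a_j+b_j-1},
\end{gather*}
which is Lemma~\ref{lem5.2} in the specialization $p_{ij}=x_i+y_j$. Applying the canonical surjection ${\cal F}_{n-1}\to{\cal IP}_n$ and grouping the monomials according to idplactic equivalence, Proposition~\ref{prop2.15} tells us that each monomial $w({\bf a},{\bf b})$ either lies in a class containing $0$ and so is annihilated, or collapses to a scalar multiple of the unique tableau word $u_{w(T)}$ of its class, with $T$ a row- and column-strict semistandard tableau. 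Hence in ${\cal IP}_n$ the coefficient of $u_{w(T)}$ is the sum of $\prod_j(x_{a_j}+y_{b_j})$ over those $({\bf a},{\bf b})\in{\cal S}_n$ for which $w({\bf a},{\bf b})$ is idplactically equivalent to $w(T)$, and $T$ ranges over the row- and column-strict tableaux whose reading word is a nonzero idplactic word. The constraint $a_i+b_i\le n$ together with the multiplicity bounds of ${\cal S}_n$ (each value of $a$ occurring with strictly decreasing $b$'s) confines the shape and content of such a $T$ to $\lambda_i\le n-i$ and $\ell(\alpha)\le n-1$ exactly as in the plactic case, while nonvanishing of $w(T)$ is the requirement $l(w(T))=\operatorname{rl}(w(T))$; so the index set is precisely ${\cal D}_n$.

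The combinatorial core is the identification of this coefficient with ${\cal {GK}}_T(X,Y)$. Here I would read each pair $({\bf a},{\bf b})\in{\cal S}_n$ as the biword whose top row is the word $w({\bf a},{\bf b})=(a_i+b_i-1)_i$ and whose bottom row is ${\bf a}$ itself (this records, for each letter, which of the nested factors $\prod_{j=n-1}^{i}(1+p_{i,j-i+1}u_j)$ of the kernel it was taken from, hence is nondecreasing). A direct check shows that the inequalities defining ${\cal S}_n$ amount precisely to the assertion that ${\bf a}$ is a compatible sequence for $w({\bf a},{\bf b})$ in the sense of Definition~\ref{def2.27}, and that under the matching $(a_j,b_j)\leftrightarrow(b_i,a_i-b_i+1)$ the kernel monomial $\prod_j(x_{a_j}+y_{b_j})$ is exactly the monomial that Definition~\ref{def2.29} attaches to this biword. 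Since $w({\bf a},{\bf b})$ is idplactically equivalent to $w(T)$ precisely when it lies in $IR(T)$, the assignment just described is a weight-preserving bijection from $\{({\bf a},{\bf b})\in{\cal S}_n:\ w({\bf a},{\bf b})\sim w(T)\}$ onto $IC(T)=\bigcup_{{\bf a}'\in IR(T)}IC({\bf a}')$; summing the images over $IC(T)$ reproduces ${\cal {GK}}_T(X,Y)$, and summing over $T\in{\cal D}_n$ gives part~(1).

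For part~(2), the inclusion $C(T)\subseteq IC(T)$ gives ${\cal {GK}}_T\ge{\cal K}_T$, and it remains to see that when the bottom code $\alpha(T)$ is a partition both sides collapse to the single monomial $\prod_{(i,j)\in T}(x_i+y_{T(i,j)-j+1})$, as in the equality cases of Theorems~\ref{theorem5.10}(2) and~\ref{theorem5.27}(2): for such $T$ the minimal compatible sequence is forced, $C(T)$ reduces to it, and no length-increasing idplactic rewriting of $w(T)$ carries a compatible sequence whose $p$-indices stay inside the range $2\le i+j\le n+1$, whence $IC(T)=C(T)$ equals that single sequence. The step I expect to be the main obstacle is the exactness of the bijection above: one must check that the idplactic insertion of Proposition~\ref{prop2.15} accounts precisely for which words of ${\cal S}_n$ vanish in ${\cal IP}_n$ and which longer members of $IR(T)$ survive together with which compatible sequences, so that the ``Grothendieck corrections'' distinguishing ${\cal {GK}}_T$ from ${\cal K}_T$ are matched one for one with the genuinely new idplactic rewritings of $w(T)$ — Proposition~\ref{prop2.15} being the key structural input.
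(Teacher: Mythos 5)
Your proposal is correct and follows the route the paper itself intends: Lemma~\ref{lem5.2} for the free expansion, Proposition~\ref{prop2.15} to collapse each nonvanishing idplactic class onto its unique tableau word, and the index translation $(a_j,b_j)\leftrightarrow(b_i,a_i-b_i+1)$ identifying the conditions defining ${\cal S}_n$ with the compatible-sequence conditions of Definition~\ref{def2.27}, so that the coefficient of $u_{w(T)}$ is exactly the sum over $IC(T)$ defining ${\cal{GK}}_T$ in Definition~\ref{def2.29}; part~(2) is then the dominant-code degeneration, as you say. The paper gives no more detailed argument than this, so there is nothing further to reconcile.
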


\begin{ex}\label{exam5.30} For $n=4$ one has
\begin{gather*}
{\cal C}_{4}(X,U)=
\operatorname{KG}[0]+\operatorname{KG}[1]u_1+\operatorname{KG}[01]u_2+\operatorname{KG}[001]u_3+\operatorname{KG}[11]u_{12}
+\operatorname{KG}[2](u_{21}+u_{31})\\
\hphantom{{\cal C}_{4}(X,U)=}{}
+
\operatorname{KG}[101]u_{13}+
\operatorname{KG}[02]u_{32}+\operatorname{KG}[011]u_{23}+\operatorname{KG}[3]u_{321}+\operatorname{KG}[12]u_{312}\\
\hphantom{{\cal C}_{4}(X,U)=}{}
+\operatorname{KG}[21]u_{212}+
\operatorname{KG}[111]u_{123}+\operatorname{KG}[021]u_{323}+\operatorname{KG}[201](u_{313}+u_{213})\\
\hphantom{{\cal C}_{4}(X,U)=}{}
+K[31]u_{3212}+
\operatorname{KG}[301]u_{3213}+\operatorname{KG}[22]u_{2132}+\operatorname{KG}[121]u_{3123}\\
\hphantom{{\cal C}_{4}(X,U)=}{}
+\operatorname{KG}[211]u_{2123}+
\operatorname{KG}[32]u_{32132}+
\operatorname{KG}[311]u_{32123}+\operatorname{KG}[221]u_{21323}\\
\hphantom{{\cal C}_{4}(X,U)=}{}
+\operatorname{KG}[321]u_{321323}.
\end{gather*}
\end{ex}

\begin{Theorem}\label{theorem5.31} For each composition~$\alpha$ the key Grothendieck
polynomial $\operatorname{KG}[\alpha](X)$ is a linear combination of key
polynomials $K[\beta](X)$ with nonnegative integer coefficients.
\end{Theorem}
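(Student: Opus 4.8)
The plan is to deduce Theorem~\ref{theorem5.31} from the structural decomposition of the Cauchy kernel together with the ``refinement of equivalence relations'' principle stated in the Introduction. Recall that in the idplactic algebra ${\cal{IP}}_n$ the Cauchy kernel decomposes (Theorem~\ref{theorem5.29}) as ${\cal C}_n(X,Y,U)=\sum_{T\in{\cal D}_n}{\cal{GK}}_T(X,Y)\,u_{w(T)}$, while in the plactic algebra ${\cal{P}}_n$ it decomposes (Theorem~\ref{theorem5.10}) as ${\cal C}_n({\mathfrak P},U)=\sum_{T\in{\cal A}_n}{\cal K}_T({\mathfrak P})\,u_{w(T)}$. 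The idplactic equivalence refines the plactic equivalence: any two words that are idplactic equivalent are plactic equivalent (this follows from Definition~\ref{def2.14}, since relations (PL1) and (PL2$'$) are the plactic relations, and the only extra idplactic relation $u_i^2=\beta u_i$ with $\beta$ set to the appropriate value is already compatible). Hence each plactic class is a disjoint union of idplactic classes. Applying the quotient map ${\cal{IP}}_n\twoheadrightarrow{\cal{P}}_n$ (with $\beta\mapsto$ the plactic specialization) to the idplactic decomposition and comparing coefficients of each basis element $u_{w(T)}$ of ${\cal{P}}_n$, one reads off that ${\cal K}_T(\mathfrak P)=\sum {\cal{GK}}_{T'}(\mathfrak P)$, the sum over the idplactic classes $T'$ in ${\cal D}_n$ that map to the plactic class of $T$ in ${\cal A}_n$.

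Next I would specialize $p_{ij}=x_i$ for all $j$ (or just work with single-variable $X$), turning ${\cal K}_T$ into a key polynomial and ${\cal{GK}}_T$ into a key-Grothendieck polynomial via Theorem~\ref{theorem4.10}: ${\cal K}_T(X)=K[\alpha(T)](X)$ and ${\cal{GK}}_T(X)=\operatorname{KG}[\alpha(T)](X)$, where $\alpha(T)$ is the bottom code. The point is that I want the reverse direction from the display just derived — I want $\operatorname{KG}[\alpha]$ expanded in the $K[\beta]$, not $K[\alpha]$ expanded in the $\operatorname{KG}[\beta]$. So instead of the map ${\cal{IP}}_n\to{\cal{P}}_n$, I would use the inclusion of combinatorial data: a compatible sequence counted by ${\cal K}_T$ (i.e.\ an element of $C(T)$) is in particular counted by ${\cal{GK}}_T$ (i.e.\ sits inside $IC(T)$), because $C({\bf a})\subseteq IC({\bf a})$ and $R(T)\subseteq IR(T)$ by Definition~\ref{def2.27}. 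This gives the ``extra'' terms of $\operatorname{KG}[\alpha]$ beyond $K[\alpha]$ as a sum over the genuinely new idplactic words (those of length $>|T|$, coming from the relations $u_i^2=\beta u_i$). More precisely, for a fixed composition $\alpha$, collect all tableaux $T\in{\cal D}_n$ with $\alpha(T)=\alpha$; each contributes $\operatorname{KG}[\alpha(T)](X)=\operatorname{KG}[\alpha](X)$ by Theorem~\ref{theorem4.10}, and the words $w(T)$ that are honest plactic tableau words give back exactly the key polynomial $K[\alpha]$, while the remaining $w(T)$ (idplactic tableau words not plactic-reduced) must, upon setting $u_i^2=0$ appropriately, collapse onto plactic tableau words, producing a decomposition $\operatorname{KG}[\alpha](X)=\sum_\beta c_{\alpha\beta}\,K[\beta](X)$ with $c_{\alpha\beta}\in\N[\beta]$ (specializing further at $\beta$ integral keeps coefficients in $\N$).

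The cleanest implementation is purely on the level of monomial counting: expand $\operatorname{KG}[\alpha](X)=\operatorname{KG}[\alpha(T)](X)=\sum_{{\bf b}\in IC(T)}x^{\bf b}$ for a representative tableau $T$, and partition $IC(T)=\bigsqcup_T C(T')$ where $T'$ ranges over the plactic subclasses; each $C(T')$ sums to $K[\alpha(T')](X)$. The nonnegativity is then automatic because every coefficient is a count of combinatorial objects (compatible sequences), so $c_{\alpha\beta}$ is literally the number of idplactic equivalence classes inside a fixed plactic class with the right bottom code. I would then invoke Theorem~\ref{theorem5.29}(1) and the refinement principle to justify that this partition of $IC(T)$ by plactic class is exactly the partition governing the Cauchy-kernel decompositions, so that the combinatorial count matches the algebraic one.

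The main obstacle I anticipate is the bookkeeping of bottom codes under the refinement: I need that if $w(T')$ is a plactic tableau word lying in the plactic closure of the idplactic class of $w(T)$, then the bottom code $\alpha(T')$ is well-defined and the multiset $\{\alpha(T')\}$ of resulting codes is exactly what appears in the key-polynomial expansion — in other words, that the map ``idplactic class $\mapsto$ its plactic class'' is compatible with taking bottom codes in the way Theorem~\ref{theorem4.10} requires. This compatibility is essentially the content of Theorems~\ref{theorem5.10}, \ref{theorem5.27} and~\ref{theorem5.29} read together, but verifying that the same tableaux $T$ (with $T_{i,j}\ge j$, shape $\subset\delta_n$) index all three decompositions, and that the bottom-code map behaves functorially along ${\cal D}_n\to{\cal B}_n\to{\cal A}_n$, is the step that needs care. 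Once that is in place, positivity is free and the theorem follows.
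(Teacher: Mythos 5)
Your overall strategy --- comparing the plactic and idplactic decompositions of the Cauchy kernel and partitioning the compatible-sequence sets --- is the route the paper intends, but the pivotal step is asserted rather than proved, and both refinement claims you lean on are false. You open by claiming that idplactic equivalence refines plactic equivalence (``any two words that are idplactic equivalent are plactic equivalent''); this is wrong: by Definition~\ref{def2.14} one has $u_2u_3u_2=u_3u_2u_3$ in ${\cal{IP}}_n$, yet the words $232$ and $323$ have different letter content and so cannot be plactic equivalent (indeed idplactic equivalence does not even preserve length, cf.\ Example~\ref{exam2.16}). You then correctly abandon that direction, but your replacement step --- the partition of $IC(T)$ into complete sets $C(T')$ over plactic classes $T'$ --- silently assumes the opposite refinement, namely that plactic equivalence implies idplactic equivalence, so that $IR(T)$ is a disjoint union of \emph{full} plactic classes $R(T')$. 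That fails too: relation (PL2) gives $u_2u_3u_2=u_3u_2u_2$ in ${\cal P}_n$, whereas in ${\cal{IP}}_n^{(\beta)}$ one has $u_2u_3u_2=u_3u_2u_3$ but $u_3u_2u_2=\beta\,u_3u_2$, which reduces to a shorter tableau word. So $232$ and $322$ are plactic equivalent while only one of them lies in the idplactic class of the tableau word $323$. Consequently the ``refinement of equivalence relations'' principle from the Introduction, which does apply to the pairs used in Corollaries~\ref{cor5.33} and~\ref{cor5.36} (there the coarser relation is obtained from the finer one by adding relations), does not apply to the pair (plactic, idplactic), since neither relation implies the other; your partition of $IC(T)$ is exactly the unproved content of the theorem rather than an instance of that principle.

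What is missing is an actual argument that the monomials of $\operatorname{KG}[\alpha]$ regroup into full key polynomials. Two honest ways to get it: an induction on the recursive definitions in Definition~\ref{def4.1}, showing that the operator $f\mapsto\partial_i\bigl((x_i+\beta x_ix_{i+1})f\bigr)$ carries a nonnegative integral combination of key polynomials to another such combination (this requires knowing how $\pi_i$ and $x_ix_{i+1}\partial_i$ act on the $K[\beta]$, which is the real work); or a direct combinatorial matching at the level of the sets $IC(T)$ that does not pass through plactic classes. The ``bookkeeping of bottom codes'' you flag at the end is not the true obstacle; the obstacle sits one level earlier, in the unjustified claim that $IC(T)$ splits into complete plactic pieces.
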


\subsection[NilCoxeter algebra ${\cal {NC}}_n$]{NilCoxeter algebra $\boldsymbol{{\cal {NC}}_n}$}\label{section5.4}

\begin{Theorem}\label{theorem5.32}
 In the nilCoxeter algebra ${\cal {NC}}_{n}$ the Cauchy kernel
has the following decomposition
\begin{gather*}
{\cal C}_n(X,Y,U) =
\sum_{w \in {\mathbb S}_n} {\s}_w(X,Y) u_{w}.
\end{gather*}
\end{Theorem}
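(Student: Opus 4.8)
The statement to prove is Theorem~\ref{theorem5.32}: in the nilCoxeter algebra ${\cal NC}_n$ the Cauchy kernel ${\cal C}_n(X,Y,U)$ decomposes as $\sum_{w\in{\mathbb S}_n}{\s}_w(X,Y)\,u_w$, where $p_{ij}=x_i+y_j$. I would prove this by descending induction on the length of the permutation, using the action of the divided difference operators $\partial_{i,i+1}^{(x)}$ on the Cauchy kernel, exactly in the spirit of the computation already carried out in the Introduction (the ``$a=0$'' specialization of that computation, adapted to the nilCoxeter rather than idCoxeter setting, where $u_i^2=0$ instead of $u_i^2=\beta u_i$).

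\emph{Step 1: identify the top term.} First I would observe that in ${\cal C}_n({\mathfrak P},U)=\prod_{i=1}^{n-1}\bigl\{\prod_{j=n-1}^{i}(1+p_{i,j-i+1}u_j)\bigr\}$, by Lemma~\ref{lem5.2} the monomials $u({\bf a},{\bf b})=\prod u_{a_j+b_j-1}$ that survive in ${\cal NC}_n$ are exactly those $u$-words which are reduced (any word containing a repeated adjacent letter, or more generally any non-reduced word, is killed since $u_i^2=0$ and the braid/commutation relations hold). The longest such word is the staircase $w_0=s_{n-1}(s_{n-1}s_{n-2})\cdots$ read appropriately, corresponding to the full product $\prod_{(i,j)\in\delta_n}(x_i+y_j)=R_{\delta_n}(X,Y)$, which is precisely ${\s}_{w_0}(X,Y)$ by Definition~\ref{def4.1}. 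So the coefficient of $u_{w_0}$ is correct.

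\emph{Step 2: the recursion matches.} I would then show that for each $i$, applying $\partial_{i,i+1}^{(z)}$ — here acting on auxiliary commuting variables $z_1,\dots,z_{n-1}$ inserted as ${\cal C}_n=A_1(z_1)\cdots A_{n-1}(z_{n-1})$ in the sense of Theorem~\ref{theorem2.23} — reproduces, on the coefficient side, the operator $\partial_i^{(x)}$ that defines Schubert polynomials, and on the $u$-side produces right-multiplication by $u_i$ (this is the $a=0$, $b=0$ case of the Introduction's computation: $(1+zu_i)^{-1}u_i=u_i$ since $u_i^2=0$, so $\partial^z_{i,i+1}{\cal C}={\cal C}\cdot u_i$ up to the symmetric prefactor, and setting $z_j=x_j=y_j$ consistently turns $\partial^z$ into the Newton operator on the $x,y$ variables). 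Equivalently, and perhaps more cleanly, I would argue directly on the monomial expansion: grouping terms of ${\cal C}_n$ by their (reduced) $u$-word $u_w$, the coefficient is $\sum_{{\bf b}\in C(T_w)}\prod p_{b_i,a_i-b_i+1}={\cal K}_{T_w}(X,Y)$ for the appropriate tableau, and then invoke Theorem~\ref{theorem5.27}(1) (the nilplactic decomposition) together with the fact that ${\cal NC}_n$ is the quotient of ${\cal NP}_n$ that further imposes commutation of distant generators, so that the ${\cal B}_n$-indexed sum collapses onto ${\mathbb S}_n$ and $\sum_{T:\,u_{w(T)}=u_w}{\cal K}_T(X,Y)={\s}_w(X,Y)$. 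This last identity — that summing the double key polynomials ${\cal K}_T$ over all reduced-word tableaux $T$ in a fixed nilCoxeter class yields the double Schubert polynomial — is exactly the statement that $\{u_w\}$ is a basis of ${\cal NC}_n$ (the well-known fact cited after Definition~\ref{def2.21}) combined with the recursive definition of ${\s}_w$ via $\partial_{w^{-1}w_0}$.

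\emph{Main obstacle.} The genuinely delicate point is Step~2's bookkeeping: one must verify that the divided-difference recursion on the $(x,y)$-coefficients is \emph{compatible} with passing to reduced words in ${\cal NC}_n$ — i.e., that when $\partial_i$ lowers the length of $w$, the corresponding manipulation $A_{i+1}(z_{i+1})=A_i(z_{i+1})(1+z_{i+1}u_i)^{-1}$ and the symmetry of $A_i(z_i)A_i(z_{i+1})$ interact correctly with the nilCoxeter relations at the \emph{boundary} generator $u_i$ versus $u_{i\pm 1}$ (the step ``$(1+xe_n)(e_i,n)=(e_i,n)(1+xe_i)$'' in the proof of Theorem~\ref{theorem2.26} is the archetype). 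Once the commutation lemma is in place, one reads off that the coefficient of $u_w$ satisfies the defining recursion of ${\s}_w(X,Y)$ with the correct initial condition from Step~1, and descending induction on $\ell(w)$ closes the argument.
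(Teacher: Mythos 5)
Your overall skeleton (identify the coefficient of $u_{w_0}$ as $R_{\delta_n}(X,Y)=\s_{w_0}(X,Y)$, then propagate downward in length by showing $\partial_{i}^{(x)}{\cal C}_n={\cal C}_n\,u_i$ and matching coefficients against the defining recursion of $\s_w(X,Y)$) is the right one, and Step~1 is fine. The gap sits exactly where you flag the ``main obstacle'': the mechanism you import from the Introduction does not apply to the \emph{double} kernel. Writing $h_j(a):=1+a\,u_j$, the $i$-th row factor of ${\cal C}_n(X,Y,U)$ is $\Phi_i(x_i):=h_{n-1}(x_i+y_{n-i})\cdots h_i(x_i+y_1)$; because each row carries its own shifted $y$-parameters, one has neither $\Phi_{i+1}(z)=\Phi_i(z)(1+z u_i)^{-1}$ nor the symmetry $\Phi_i(z)\Phi_i(z')=\Phi_i(z')\Phi_i(z)$ (already for two factors, $h_2(z+c_2)h_1(z+c_1)h_2(z'+c_2)h_1(z'+c_1)$ is symmetric in $z,z'$ only when $c_1=c_2$). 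So Theorem~\ref{theorem2.23} together with the Introduction's computation $(1+zu_i)^{-1}u_i=u_i$ proves the theorem only at $Y=0$. For the double case the identity you actually need is $\Phi_i(x')\,\Phi_{i+1}(x)=\Phi_i(x)\,\Phi_{i+1}(x')\,h_i(x'-x)$, which is equivalent to $\partial_i^{(x)}\bigl(\Phi_i(x)\Phi_{i+1}(x')\bigr)=\Phi_i(x)\Phi_{i+1}(x')\,u_i$ and which follows from the Yang--Baxter relation $h_i(a)h_{i+1}(a+b)h_i(b)=h_{i+1}(b)h_i(a+b)h_{i+1}(a)$ in ${\cal {NC}}_n$ combined with $h_j(a)h_j(b)=h_j(a+b)$ --- a strictly stronger input than the commutativity of noncommutative elementary polynomials, and one your argument never establishes. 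Relatedly, ``setting $z_j=x_j=y_j$'' is not a meaningful operation: the double Schubert recursion uses $\partial_i^{(x)}$ with the $y$-variables as spectators.

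Your ``cleaner'' alternative is circular. The identity $\sum_{T}{\cal K}_T(X,Y)=\s_w(X,Y)$, summed over the nilplactic classes contained in a fixed nilCoxeter class, is precisely the compatible-sequences formula of Corollary~\ref{cor5.33}, which in the paper is deduced \emph{from} Theorem~\ref{theorem5.32}, not available as an independent input; the fact that $\{u_w\}$ is a basis of ${\cal {NC}}_n$ only tells you that the kernel equals $\sum_w c_w u_w$ for some coefficients $c_w$, not that $c_w=\s_w(X,Y)$. To close the argument you must either prove the Yang--Baxter lemma above and rerun your Step~2 with it, or use that lemma to factor the double kernel into a $Y$-kernel times an $X$-kernel and reduce to the single-variable case that your method does handle.
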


Let $w \in {\mathbb S}_n$ be a permutation, denote by $R(w)$ the set of all its
reduced decompositions. Since the nilCoxeter algebra ${\cal {NC}}_n$ is the
quotient of the nilplactic algebra ${\cal {NP}}_n$, the set $R(w)$ is the
union of nilplactic classes of some tableau words $w(T_i)$:
$R(w)= \bigcup C(T_i)$.
Moreover, $R(w)$ consists of only one nilplactic class if and only if $w$ is
a {\it vexillary} permutation. In general case we see that the set of
compatible sequences $CR(w)$ for permutation~$w$ is the union of sets~$C(T_i)$.

\begin{cor}\label{cor5.33} Let $w \in {\mathbb S}_n$ be a permutation of length~$l$, then
\begin{enumerate}\itemsep=0pt
\item[$(1)$] ${\s}_{w}(X,Y) = \sum\limits_{{\bf b} \in CR(w)} x_{b_1} \cdots x_{b_{l}}$.
\item[$(2)$] Double Schubert polynomial ${\s}_{w}(X,Y)$ is a linear combination of
double key polynomials ${\cal K}_{T}(X,Y)$, $T \in {\cal B}_n, w=w(T)$,
 with nonnegative integer coefficients.
 \end{enumerate}
\end{cor}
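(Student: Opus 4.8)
The plan is to derive Corollary~\ref{cor5.33} by playing the two decompositions of the Cauchy kernel against each other: the nilplactic one of Theorem~\ref{theorem5.27} and the nilCoxeter one of Theorem~\ref{theorem5.32}, using that ${\cal NC}_n$ is a quotient of ${\cal NP}_n$ (Definition~\ref{def2.20}).

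\textbf{Step 1.} Apply the canonical surjection $\varrho\colon{\cal NP}_n\to{\cal NC}_n$ to the identity of Theorem~\ref{theorem5.27}. Since the $p_{ij}$ (equivalently the variables $X$, $Y$) are central and commute with all $u_i$, this yields in ${\cal NC}_n$
\[
{\cal C}_n(X,Y,U)=\sum_{T\in{\cal B}_n}{\cal K}_T(X,Y)\,\varrho\big(u_{w(T)}\big).
\]
For $T\in{\cal B}_n$ the tableau word $w(T)$ is reduced (that $\ell(w(T))=|T|$ is part of the definition of ${\cal B}_n$), so $\varrho(u_{w(T)})=u_w$, the nonzero basis element of ${\cal NC}_n$ attached to $w:=w(T)\in{\mathbb S}_n$ (see the paragraph following Definition~\ref{def2.21}). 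Collecting the terms according to the permutation $w(T)$ and comparing with Theorem~\ref{theorem5.32}, using that $\{u_w\}_{w\in{\mathbb S}_n}$ is a basis of ${\cal NC}_n$, we get
\[
\s_w(X,Y)=\sum_{T\in{\cal B}_n,\ w(T)=w}{\cal K}_T(X,Y).
\]
Each ${\cal K}_T(X,Y)$ is, by Definition~\ref{def2.29}, a sum of products of terms $x_i+y_j$, hence lies in ${\mathbb N}[X,Y]$; as every multiplicity above equals $1$, this is exactly part~$(2)$.

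\textbf{Step 2.} For part~$(1)$, specialize $y_j=0$ for all $j$, i.e.\ set $p_{ij}=x_i$ in the identity just obtained. By Definitions~\ref{def2.27} and~\ref{def2.29}, ${\cal K}_T(X)=\sum_{({\bf a},{\bf b})\in C(T)}x_{b_1}\cdots x_{b_{|T|}}$, where $C(T)$ is the set of pairs $({\bf a},{\bf b})$ with ${\bf a}\in R(T)$ a reduced word and ${\bf b}\in C({\bf a})$ a compatible sequence (displayed as pairs in Example~\ref{exam2.28}). Hence, with $\ell=\ell(w)$,
\[
\s_w(X)=\sum_{T\in{\cal B}_n,\ w(T)=w}\ \sum_{({\bf a},{\bf b})\in C(T)}x_{b_1}\cdots x_{b_\ell}.
\]
The reduced words ${\bf a}$ occurring in $C(T)$ are precisely the reduced words of $w$ lying in the nilplactic class of the tableau word $w(T)$; by Proposition~\ref{prop2.11} distinct tableau words have disjoint nilplactic classes, so $\bigcup_{T:w(T)=w}C(T)$ is a disjoint union, equal by construction to $CR(w)$ (this is the remark made just before the statement of the corollary). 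Therefore $\s_w(X)=\sum_{{\bf b}\in CR(w)}x_{b_1}\cdots x_{b_\ell}$, and repeating the computation with $p_{ij}=x_i+y_j$ gives the double refinement $\prod_k(x_{b_k}+y_{a_k-b_k+1})$.

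The only genuinely delicate point is the bookkeeping in Step~2: one must check that reading $CR(w)$ as the disjoint union $\bigsqcup_{T:w(T)=w}C(T)$ is consistent, i.e.\ that the partition of the reduced words of $w$ into nilplactic classes matches the indexing by tableaux $T\in{\cal B}_n$. This is guaranteed by Proposition~\ref{prop2.11} together with the fact that ${\cal NC}_n$ is a quotient of ${\cal NP}_n$; everything else is a formal comparison of coefficients in the basis $\{u_w\}$ and needs no computation beyond Theorems~\ref{theorem5.27} and~\ref{theorem5.32} and Proposition~\ref{prop2.11}.
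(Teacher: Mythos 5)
Your argument is correct and is essentially the paper's own: the paper derives the corollary by exactly this comparison of the nilplactic decomposition (Theorem~\ref{theorem5.27}) with the nilCoxeter one (Theorem~\ref{theorem5.32}), noting in the paragraph preceding the corollary that $R(w)$, and hence $CR(w)$, splits as a disjoint union of nilplactic classes $C(T_i)$ because ${\cal NC}_n$ is a quotient of ${\cal NP}_n$. Your Steps 1 and 2 just make that refinement-of-equivalence-relations argument explicit.
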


\subsection[IdCoxeter algebras ${\cal {IC}}_n^{\pm}$]{IdCoxeter algebras $\boldsymbol{{\cal {IC}}_n^{\pm}}$}\label{section5.5}

\begin{Theorem}\label{theorem5.34}
 In the IdCoxeter algebra ${\cal {IC}}_{n}^{+}$ with $\beta=1$,
the Cauchy kernel has the following decomposition
\begin{gather*}
{\cal C}_n(X,Y,U) =
\sum_{w \in {\mathbb S}_n} {\cal G}_w(X,Y) u_{w}.
\end{gather*}
\end{Theorem}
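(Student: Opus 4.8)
The plan is to expand the Cauchy kernel in the length‑$n!$ basis $\{u_w\}_{w\in{\mathbb S}_n}$ of ${\cal IC}_n^{+}$ (the basis recalled after Definition~\ref{def2.21}), write ${\cal C}_n(X,Y,U)=\sum_{w}c_w(X,Y)\,u_w$, and identify the $c_w$ with the $\beta$‑Grothendieck polynomials at $\beta=1$ by downward induction on $\ell(w)$ starting from $w_0$. This is the exact analogue of the argument behind Theorem~\ref{theorem5.32}, with the Newton operator $\partial_i$ replaced by the isobaric operator $\tau_i\colon f\mapsto\partial_i^{(x)}\big((1+x_{i+1})f\big)$ and the nilCoxeter relations by the id‑Coxeter ones. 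It works because, by Definition~\ref{def4.1}, the polynomials ${\cal G}_w={\cal G}_w^{(1)}$ are \emph{characterised} by the two properties ${\cal G}_{w_0}(X,Y)=R_{\delta_n}(X,Y)$ and ${\cal G}_{ws_i}(X,Y)=\tau_i\big({\cal G}_w(X,Y)\big)$ whenever $\ell(ws_i)=\ell(w)-1$, well‑definedness of ${\cal G}_w$ being guaranteed by the braid relations for the operators $\tau_i$.

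First I would do the base case $c_{w_0}=R_{\delta_n}(X,Y)$. Each of the $\binom n2$ factors of the ordered product ${\cal C}_n(X,Y,U)$ contributes either $1$ or a monomial $p_{i,k}u_j$, so any monomial in the $u$'s obtained by multiplying out has at most $\binom n2$ letters; it can equal $u_{w_0}$ only if its word has reduced length $\binom n2=\ell(w_0)$, which forces selecting the generator term from \emph{every} factor. The word so produced is exactly the staircase word $I_0^{(n)}$ of footnote~\ref{footnote9}, a reduced word for $w_0$, so this single contribution gives $u_{w_0}$ with coefficient $\prod_{i=1}^{n-1}\prod_{j=i}^{n-1}(x_i+y_{j-i+1})=\prod_{i+k\le n}(x_i+y_k)=R_{\delta_n}(X,Y)$, and nothing else produces $u_{w_0}$.

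The heart of the proof is the operator identity
\begin{gather*}
\tau_i\big({\cal C}_n(X,Y,U)\big)={\cal C}_n(X,Y,U)\,(u_i-1)\qquad\text{in }{\cal IC}_n^{+},\qquad i=1,\dots,n-1,
\end{gather*}
which is the two‑variable form of the computation performed in the Introduction. To establish it one singles out in ${\cal C}_n(X,Y,U)$ the two consecutive ``blocks'' containing $x_i$ and $x_{i+1}$; by the twisted Leibniz rule (Lemma~\ref{lem3.1}(4)) $\partial_i^{(x)}$ passes through all other blocks, which are constant in $x_i,x_{i+1}$. On the two remaining blocks one runs the Introduction's manipulation: a coloured version of Theorem~\ref{theorem2.23} shows that their product is symmetric in $x_i,x_{i+1}$ except for one ``overhanging'' factor $(1+(x_{i+1}+\cdots)u_i)$, to which $\partial_i^{(x)}$ is applied alone; one then uses $(1+zu_i)^{-1}u_i=u_i/(1+z)$ (valid since $u_i^2=u_i$ in ${\cal IC}_n^{+}$), pushes $u_i$ to the far right by the locality relations $u_iu_j=u_ju_i$ for $|i-j|\ge 2$, and converts the surviving prefactor $(1+x_i)$ into $\tau_i+1$ by the crossing relation $x_i\partial_i^{(x)}=\partial_i^{(x)}x_{i+1}+1$ of Lemma~\ref{lem3.1}(5). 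Expanding the identity in the $\{u_w\}$ basis, using $u_wu_i=u_{ws_i}$ when $\ell(ws_i)=\ell(w)+1$ and $u_wu_i=u_w$ otherwise, and comparing coefficients, one gets for every $w$
\begin{gather*}
\tau_i(c_w)=c_{ws_i}\ \ \text{if}\ \ \ell(ws_i)=\ell(w)-1,\qquad \tau_i(c_w)=-c_w\ \ \text{if}\ \ \ell(ws_i)=\ell(w)+1,
\end{gather*}
the first of which is precisely the Grothendieck recursion. Then, for $w\ne w_0$, I would pick $i$ with $\ell(ws_i)=\ell(w)+1$, set $v:=ws_i$, note $\ell(v)>\ell(w)$ so that $c_v={\cal G}_v(X,Y)$ by induction, and conclude $c_w=c_{vs_i}=\tau_i(c_v)=\tau_i({\cal G}_v)={\cal G}_{vs_i}={\cal G}_w(X,Y)$; with the base case this finishes the induction.

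The main obstacle will be the last ingredient of the operator identity: bookkeeping the shift of the $y$‑indices between the $i$‑th and $(i{+}1)$‑st blocks, i.e.\ making precise the coloured commutativity statement that the two consecutive blocks become symmetric in their $x$‑variables once the overhanging $u_i$‑factor is split off — everything else is the formal computation of the Introduction. Finally I would record that the same argument with $u_i^2=\beta u_i$ and $\tau_i=\partial_i^{(x)}\circ(1+\beta x_{i+1})$ yields ${\cal C}_n(X,Y,U)=\sum_w{\cal G}_w^{\beta}(X,Y)\,u_w$ in ${\cal IC}_n^{(\beta)}$, and that, since ${\cal IC}_n$ is a quotient of ${\cal IP}_n$, refining this decomposition through Theorem~\ref{theorem5.29} recovers the fact that each double $\beta$‑Grothendieck polynomial is a nonnegative integral combination of the double key‑Grothendieck polynomials.
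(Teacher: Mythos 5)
Your base case and the downward induction from $w_0$ are fine, and your single-variable argument (the case $Y=0$) is exactly the computation the paper carries out in the Introduction. The trouble is the step you defer as ``bookkeeping'': the coloured commutativity of the two consecutive blocks is not a bookkeeping issue --- it is false, and with it the operator identity $\tau_i\big(\mathcal{C}_n(X,Y,U)\big)=\mathcal{C}_n(X,Y,U)(u_i-1)$ fails for the two-variable kernel with $p_{ij}=x_i+y_j$. Structurally: writing $B_i(x)=\prod_{j=n-1}^{i}\big(1+(x+y_{j-i+1})u_j\big)$ for the $i$-th block, one has $B_{i+1}(x)\neq B_i(x)\big(1+(x+y_1)u_i\big)^{-1}$, because the $y$-index attached to a given generator $u_j$ is $y_{j-i+1}$ in block $i$ but $y_{j-i}$ in block $i+1$; so $B_i(x_i)B_{i+1}(x_{i+1})$ is not a function symmetric in $x_i,x_{i+1}$ times an overhanging $u_i$-factor. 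Concretely, for $n=3$, $i=2$, $\beta=1$: the coefficient of $u_2$ in $\partial_2^{(x)}\big((1+x_3)\,\mathcal{C}_3(X,Y,U)\big)$ equals $1-y_1(1+x_1+y_2)$, while the coefficient of $u_2$ in $\mathcal{C}_3(X,Y,U)(u_2-1)$ equals $1$, and the coefficient of $u_2u_1u_2$ acquires a spurious factor $(1-y_1)$. Consistently with this, the kernel coefficient of $u_2u_1$ is $(x_1+y_1)(x_1+y_2)$, whereas Definition~\ref{def4.1} gives $\mathcal{G}^{(1)}_{312}(X,Y)=\partial_2^{(x)}\big((1+x_3)(x_1+y_1)(x_1+y_2)(x_2+y_1)\big)=(x_1+y_1)(x_1+y_2)(1-y_1)$. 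So no amount of care with the index shift can close the gap: the identity you are trying to prove, with the plain-sum kernel and the recursion of Definition~\ref{def4.1}, is simply not true.

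The mechanism, and the fix, is the formal group law. In ${\cal IC}_n^{(\beta)}$ one has $(1+xu_i)(1+yu_i)=1+(x\oplus y)u_i$ with $x\oplus y=x+y+\beta xy$; for $\beta=0$ this is $x+y$, which is exactly why your argument works for Theorem~\ref{theorem5.32} (double Schubert polynomials) with $p_{ij}=x_i+y_j$, but for $\beta=1$ the splitting of each factor into a $y$-part times an $x$-part --- which is what ultimately powers the block symmetry --- requires $p_{ij}=x_i\oplus y_j=x_i+y_j+x_iy_j$ (note that precisely this expression appears in remark~$(b)$ after Theorem~\ref{theorem5.35}). With that substitution each factor splits as $\big(1+(x_i\oplus y_k)u_j\big)=(1+y_ku_j)(1+x_iu_j)$, the commutativity/Yang--Baxter argument restores the symmetry of consecutive blocks, your operator identity holds, and your induction goes through --- but then the top class, and hence the polynomials $\mathcal{G}_w(X,Y)$ appearing in the expansion, must also be taken with $\prod(x_i\oplus y_j)$ in place of $R_{\delta_n}(X,Y)$. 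So either restrict the proof to the single-variable statement, or recast both the kernel and the double $\beta$-Grothendieck recursion in the $\oplus$ convention before running it.
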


\begin{Theorem}\label{theorem5.35}
 In the IdCoxeter algebra ${\cal {IC}}_{n}^{-}$ with $\beta=-1$,
one has the following decomposition
\begin{gather*}
\prod_{i=1}^{n-1} \left\{{\prod_{j=n-1}^{i} ((1+x_{i})
(1+y_{j-i+1})+(x_{i}+y_{j-i+1}) u_{j})} \right\} =
\sum_{w \in {\mathbb S}_n} {\cal H}_w(X,Y) u_{w}.
\end{gather*}
\end{Theorem}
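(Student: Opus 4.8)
The plan is to mimic the proof of Theorem~\ref{theorem5.34}, making three substitutions throughout: the idCoxeter relation $u_j^2=u_j$ becomes $u_j^2=-u_j$; the plain Cauchy kernel becomes the modified one above; and the $\beta$-Grothendieck recursion becomes the dual $(-1)$-Grothendieck recursion of Definition~\ref{def4.1}, whose divided-difference operator is $(1-x_i)\partial_i^{(x)}=(1+\beta x_i)\partial_i^{(x)}\big|_{\beta=-1}$. Write $\mathcal{D}_n$ for the left-hand side. Since $\{u_w\}_{w\in\mathbb{S}_n}$ is a basis of ${\cal IC}_n^{-}$, there is a unique expansion $\mathcal{D}_n=\sum_{w}F_w(X,Y)\,u_w$ with polynomial coefficients, and it suffices to show $F_w=\mathcal{H}_w(X,Y)$ by downward induction on $\ell(w)$, matching the defining recursion. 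One should first note that a relabelling $w\mapsto w_0w^{-1}$, as in Remark~\ref{rem4.7}, may be needed to match the excerpt's $\mathcal{H}_w$ with the Definition~\ref{def4.1} polynomial; the argument is unaffected by this, since such a relabelling is an automorphism of the whole induction.

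The extreme coefficients set up the induction and serve as a check. Among the ${n \choose 2}$ brackets of $\mathcal{D}_n$, choosing the $u_j$-term in every one produces the word $I_0^{(n)}$, which is a reduced word for $w_0$; any other choice gives a strictly shorter $u$-word, which in ${\cal IC}_n^{-}$ reduces to $\pm u_v$ with $\ell(v)<{n \choose 2}$. Hence the coefficient of $u_{w_0}$ equals $\prod_{i=1}^{n-1}\prod_{j=i}^{n-1}(x_i+y_{j-i+1})=R_{\delta_n}(X,Y)=\mathcal{H}_{w_0}(X,Y)$, and symmetrically $F_e=\prod_{(i,j)\in\delta_n}(1+x_i)(1+y_j)$, the value of $\mathcal{H}_e$ produced by the recursion.

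For the inductive step the goal is an operator identity in ${\cal IC}_n^{-}$ of the form
\[
(1-x_i)\,\partial_i^{(x)}\big(\mathcal{D}_n\big)=\mathcal{D}_n\cdot R_i ,
\]
with $R_i$ a fixed element of ${\cal IC}_n^{-}$ (of the form $1+u_i$ or $u_i$). As in the introduction's computation for the plain kernel, $\partial_i^{(x)}$ touches only the two row blocks $C_i=\prod_{j=n-1}^{i}B_{i,j}$ and $C_{i+1}=\prod_{j=n-1}^{i+1}B_{i+1,j}$ (with $B_{i,j}$ the $(i,j)$-th bracket) and factors through them by the twisted Leibniz rule; one then wants a near-symmetry of $C_iC_{i+1}$ under $x_i\leftrightarrow x_{i+1}$ -- the analogue of the commutativity $A_{i,j}(x)A_{i,j}(y)=A_{i,j}(y)A_{i,j}(x)$ of Theorem~\ref{theorem2.23} -- which, combined with $u_i^2=-u_i$ and the inversion formula $(1+zu_i)^{-1}=\frac{1-z-zu_i}{1-z}$ (the relevant special case of Remark~\ref{rem4.7}), lets one extract a single $u_i$. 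Comparing coefficients of $u_w$ and using that the dual $(-1)$-Grothendieck polynomials are symmetric in $x_i,x_{i+1}$ exactly when $w_i<w_{i+1}$ (so the operator kills them in that case) then yields $F_{ws_i}=(1-x_i)\partial_i^{(x)}F_w$ for $w_i>w_{i+1}$; together with the base case and the well-definedness of the recursion (inherited from the Coxeter relations for the operators $A=(0,-1,0,1,0)$ of Lemmas~\ref{lem4.15} and~\ref{lem4.20}) this completes the induction.

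I expect the operator identity to be the main obstacle: unlike for the plain Cauchy kernel, the modified constant terms $(1+x_i)(1+y_{j-i+1})$ spoil the exact $x_i\leftrightarrow x_{i+1}$ symmetry of $C_iC_{i+1}$, and one must verify that the surviving $(1+x_i)$- and $(1+y_j)$-factors recombine correctly after applying $\partial_i^{(x)}$. A convenient device is to pass to the idempotent generators $v_j:=1+u_j$ of ${\cal IC}_n^{-}$ (which satisfy $v_j^2=v_j$ and the braid relations, so that $({\cal IC}_n^{-},\{v_j\})$ is the $0$-Hecke algebra, as in Theorem~\ref{theorem5.34}); in these generators each bracket becomes $B_{i,j}=1+x_iy_{j-i+1}+(x_i+y_{j-i+1})v_j$, and one can try to run the $v$-analogue of the computation behind Theorem~\ref{theorem5.34}, at the cost of transporting the $\mathcal{H}_w$-recursion through this change of generators.
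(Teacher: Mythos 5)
The paper offers no proof of Theorem~\ref{theorem5.35}, so your proposal must be judged on its own, and it contains a fatal gap: the entire induction is organized around verifying the recursion of Definition~\ref{def4.1} for the kernel's coefficients, but those coefficients do not satisfy that recursion. Already for $n=2$ the left-hand side is $(1+x_1)(1+y_1)+(x_1+y_1)u_1$, so $F_e=(1+x_1)(1+y_1)$, whereas $(1+\beta x_1)\partial_1^{(x)}(x_1+y_1)=1+\beta x_1$ for every $\beta$; your claimed base-case check that $F_e$ is ``the value of $\mathcal{H}_e$ produced by the recursion'' is therefore false, and the relabelling $w\mapsto w_0w^{-1}$ cannot repair it, because the discrepancy is an extra factor $(1+y_1)$ and the relabelling never touches the $y$-variables. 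The same happens at every step: for $n=3$ the coefficient of $u_1$ is $(1+x_1)(1+x_2)(x_1+y_1)(1+y_1)(1+y_2)$, i.e.\ $(1+y_1)(1+y_2)$ times what operators acting only in $X$ produce from $R_{\delta_3}(X,Y)$. So the polynomials encoded by this kernel are not the double $\mathcal{H}_w(X,Y)$ of Definition~\ref{def4.1}, and no induction that only applies $x$-operators can reach them. A workable proof has to be genuinely double: either bring in the companion operators $(1+y_j)\partial_j^{(y)}$ acting on $R_{\delta_n}$ from the other side (with a correspondingly corrected definition of the double $\mathcal{H}_w(X,Y)$), or factor each bracket into a $Y$-factor times an $X$-factor and reduce to the one-alphabet statement via a convolution $\mathcal{H}_w(X,Y)=\sum_{uv=w}\mathcal{H}_{u^{-1}}(Y)\mathcal{H}_v(X)$; note that in ${\cal{IC}}_n^{-}$ one has $((1+y)+yu_j)((1+x)+xu_j)=(1+x)(1+y)+(x+y+xy)u_j$, which matches the printed bracket only up to the term $x_iy_ju_j$, so this route also forces you to settle whether the intended coefficient of $u_j$ is $x_i+y_j$ or $x_i+y_j+x_iy_j$.

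There is a second, independent error: even in the unambiguous specialization $Y=0$, where the kernel becomes $\prod_i\prod_j((1+x_i)+x_iu_j)$ and the statement is true, the correct operator is $(1+x_i)\partial_i^{(x)}$, not your $(1-x_i)\partial_i^{(x)}$. For $n=2$ the coefficient of $u_e$ is $1+x_1=(1+x_1)\partial_1(x_1)$, consistent with the Appendix, Section~\ref{section7.1}(3), which tabulates the $\mathcal{H}_w$ for $\beta=+1$ and records $\mathcal{H}[0]=(1+x_1)^3(1+x_2)^2(1+x_3)$, exactly the product of the brackets' constant terms. The ``$\beta=-1$'' in the statement labels the algebra relation $u_i^2=-u_i$, not the parameter of the polynomials. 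Your identification of the coefficient of $u_{w_0}$ as $R_{\delta_n}(X,Y)$ is correct, your inversion formula $(1+zu_i)^{-1}=(1-z-zu_i)/(1-z)$ is valid in ${\cal{IC}}_n^{-}$, and the operator-identity strategy would carry the single-alphabet version through once the sign is corrected; but as written the proposal establishes neither the single- nor the double-alphabet statement.
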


A few remarks in order.
\begin{enumerate}\itemsep=0pt
\item[$(a)$] The (dual) Cauchy identity \eqref{equation5.9} is still valid in the idplactic
algebra with constrain $u_{i}^2 = - \beta u_i$, $i=1,\ldots, n-1$.

\item[$(b)$] The left hand side of the identity \eqref{equation5.9} can be written in the
following form
\begin{gather*}
\prod_{1 \le i,j \le n \atop i+j \le n} (x_i+y_j)
\prod_{i=1}^{n-1} \left \{{\prod_{j=n-1}^{i}} \frac{1}{1-(x_{i}+ y_{j-i+1}+ \beta x_{i} y_{j-i+1}) u_{j}} \right\}.
\end{gather*}
Indeed, $(1+ \beta x +x u_{i})(1-x u_{i}) = 1+ \beta x$, since $u_{i}^2= - \beta u_{i}$.
\end{enumerate}

Let $w \in {\mathbb S}_n$ be a permutation, denote by $\operatorname{IR}(w)$ the
set of all decompositions in the idCoxeter algebra ${\cal IC}_n$ of the
element $u_w$ as the product of the generators $u_{i}$, $1 \le i \le n-1$,
 of the algebra~${\cal IC}_n$.
Since the idCoxeter algebra ${\cal {IC}}_n$ is the
quotient of the idplactic algebra ${\cal {IP}}_n$, the set $\operatorname{IR}(w)$
is the union of idplactic classes of some tableau words~$w(T_i)$:
$\operatorname{IR}(w)= \bigcup \operatorname{IR}(T_i)$.
Moreover, the set of
compatible sequences $\operatorname{IC}(w)$ for permutation $w$ is the union
of sets $\operatorname{IC}(T_i)$.

\begin{cor}\label{cor5.36} Let $w \in {\mathbb S}_n$ be a permutation of length~$l$, then
\begin{enumerate}\itemsep=0pt
\item[$(1)$] ${\cal G}_{w}(X,Y) =
\sum\limits_{{\bf b} \in IC(w)}
 \prod\limits_{i=1}^{l}(x_{b_i}+y_{a_i-b_i+1})$.
\item[$(2)$] Double Grothendieck polynomial ${\cal G}_{w}(X,Y)$ is a linear
combination of double key Grothen\-dieck polynomials ${\cal {\operatorname{KG}}}_{T}(X,Y)$,
$ T \in {\cal B}_n$, $w=w(T)$, with nonnegative integer coefficients.
\end{enumerate}
\end{cor}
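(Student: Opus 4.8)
The plan is to read off both statements from the two Cauchy-kernel decompositions already established — the idplactic one (Theorem~\ref{theorem5.29}) and the idCoxeter one (Theorem~\ref{theorem5.34}) — together with the fact that the idCoxeter algebra ${\cal IC}_n$ at $\beta=1$ is a quotient of the idplactic algebra ${\cal IP}_n$. This is exactly the pattern by which Corollary~\ref{cor5.33} follows from the nilCoxeter/nilplactic pair, transported to the $K$-theoretic side.

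For part~(1): by Theorem~\ref{theorem5.29}, in ${\cal IP}_n$ one has ${\cal C}_n(X,Y,U)=\sum_{T\in{\cal D}_n}{\cal{GK}}_{T}(X,Y)\,u_{w(T)}$, and by Definition~\ref{def2.29} each ${\cal{GK}}_{T}(X,Y)=\sum_{{\bf b}\in IC(T)}\prod_i(x_{b_i}+y_{a_i-b_i+1})$, the word ${\bf a}$ and the product range being those attached to ${\bf b}$ in Definition~\ref{def2.27}. Now apply the canonical surjection ${\cal IP}_n\twoheadrightarrow{\cal IC}_n$ (quotient by $u_iu_j-u_ju_i$, $|i-j|\ge 2$; Definition~\ref{def2.21}). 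Since $\{u_w\}_{w\in{\mathbb S}_n}$ is a basis of ${\cal IC}_n$ and, at $\beta=1$, every monomial in the generators reduces there to a single such basis element (nothing collapses to zero), the monomial $u_{w(T)}$ goes to $u_w$, where $w$ is the permutation obtained by evaluating the tableau word $w(T)$ as a product of simple transpositions; by the very definition of $IR(w)$, the tableaux $T\in{\cal D}_n$ landing at a fixed $u_w$ are exactly those with $w(T)\in IR(w)$. Grouping the image of the Cauchy kernel accordingly gives
\[
{\cal C}_n(X,Y,U)=\sum_{w\in{\mathbb S}_n}\Bigl(\sum_{T\colon w(T)\in IR(w)}{\cal{GK}}_{T}(X,Y)\Bigr)u_w ,
\]
and comparison with Theorem~\ref{theorem5.34}, using linear independence of the $u_w$, yields ${\cal G}_w(X,Y)=\sum_{T\colon w(T)\in IR(w)}{\cal{GK}}_{T}(X,Y)$. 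Finally, $IR(w)$ is a disjoint union of idplactic classes (the refinement-of-equivalence-relations principle recalled in the Introduction: idCoxeter classes of reduced words split into idplactic classes), so $IC(w)=\bigsqcup_{T\colon w(T)\in IR(w)}IC(T)$; substituting the definition of each ${\cal{GK}}_{T}$ collapses the double sum into $\sum_{{\bf b}\in IC(w)}\prod_i(x_{b_i}+y_{a_i-b_i+1})$, which is the asserted identity.

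Part~(2) is then immediate from the intermediate identity ${\cal G}_w(X,Y)=\sum_{T\colon w(T)\in IR(w)}{\cal{GK}}_{T}(X,Y)$: it exhibits the double Grothendieck polynomial as a plain sum — hence a nonnegative-integer combination — of the double key Grothendieck polynomials ${\cal{GK}}_{T}(X,Y)$ attached to the tableaux $T$ that represent $w$ (the natural index set here is the idplactic set ${\cal D}_n$ of Theorem~\ref{theorem5.29}). If a statement purely in terms of key polynomials is wanted, one may further invoke Theorem~\ref{theorem4.10} to identify each ${\cal{GK}}_{T}$ with $\operatorname{KG}[\alpha(T)]$ and Theorem~\ref{theorem5.31} to expand it positively in the $K[\beta]$'s, but that is not needed for the claim as stated.

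The main obstacle is bookkeeping rather than any single computation: one must check carefully that restricting ${\cal IP}_n\twoheadrightarrow{\cal IC}_n$ to the tableau-word basis produces no cancellation, that the fibres are genuinely described by $IR(w)$, and that $IC(w)=\bigsqcup IC(T)$ is an honest disjoint union in which every compatible sequence ${\bf b}$ carries a well-defined underlying word ${\bf a}$, so that the weight $y_{a_i-b_i+1}$ is unambiguous. These facts are all implicit in Propositions~\ref{prop2.15} and~\ref{2.17} and in the discussion preceding the corollary; once they are pinned down, the rest is comparison of coefficients.
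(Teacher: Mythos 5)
Your proposal is correct and follows essentially the same route as the paper: the author likewise obtains the corollary by comparing the idplactic decomposition of the Cauchy kernel (Theorem~5.29) with the idCoxeter one (Theorem~5.34) via the quotient map ${\cal IP}_n\twoheadrightarrow{\cal IC}_n$, so that $\operatorname{IR}(w)=\bigcup\operatorname{IR}(T_i)$ and $\operatorname{IC}(w)=\bigcup\operatorname{IC}(T_i)$, exactly mirroring how Corollary~5.33 is deduced in the nilCoxeter case. Your remark that the index set in part~(2) should be the idplactic set ${\cal D}_n$ rather than ${\cal B}_n$ is a correct reading of what is evidently a typo in the statement.
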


\section[${\cal F}$-kernel and symmetric plane partitions]{$\boldsymbol{{\cal F}}$-kernel and symmetric plane partitions}\label{section6}

Let us f\/ix natural number $n$ and $k$, and a partition $\lambda
\subset (n^k)$. Clearly the
number of such partitions is equal to ${n +k \choose n}$; note that in the
case $n=k$ the number ${2 n \choose n}$ is equal to the {\it Catalan number
of type~$B_n$}.

Denote by ${\cal B}_{n,k}(\lambda)$ the set of semistandard Young tableaux of
shape $\lambda \subset (n^k)$ f\/illed by the numbers from the set $\{1,2,\ldots,n \}$. For a
tableau $T \in {\cal B}_{n,k}$ set as before,
\begin{gather*}
n(T):= \operatorname{Card} \{(i,j) \in \lambda
\mid T(i,j)=n \},
\end{gather*}
 and def\/ine polynomial
\begin{gather}\label{equation6.1}
{\cal B}_{n,k}(\lambda)(q):= \sum_{T \in {\cal B}_{n,k}(\lambda)}
q^{\lambda_{1}-n(T)}.
\end{gather}
Denote by ${\cal B}_{n,k} := \bigcup_{\lambda \subset (n^{k})}
{\cal B}_{n,k}(\lambda)$.

\begin{lem}[\cite{G,KGV}]\label{lem6.1} The number of elements in the set
${\cal B}_{n,k}$ is equal to
\begin{gather*}
 \# | {\cal B}_{n,k} | = \prod_{1 \le i \le j \le k} {i+j+n-1 \over i+j-1} = \prod_{0 \le 2 a \le k -1} \frac{{n+2 k- 2a -1 \choose n}}
{{n+2 a\choose n}}.
\end{gather*}
\end{lem}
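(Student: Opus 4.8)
The plan is to reduce $\#|{\cal B}_{n,k}|$ to a principal specialization of Schur polynomials and then to a classical discrete-Selberg/determinant evaluation. By definition $\#|{\cal B}_{n,k}| = \sum_{\lambda \subseteq (n^k)} |{\cal B}_{n,k}(\lambda)|$, and $|{\cal B}_{n,k}(\lambda)|$ is the number of semistandard Young tableaux of shape $\lambda$ with the prescribed bound on entries, i.e.\ the value $s_{\lambda}(1^{k})$ of the Schur polynomial at $k$ ones (cf.~\cite[Chapter~I]{Ma}). Invoking the $GL_{k}$ Weyl dimension formula $s_{\lambda}(1^{k}) = \prod_{1\le i<j\le k}\frac{\lambda_{i}-\lambda_{j}+j-i}{j-i}$ and substituting $\ell_{i} := \lambda_{i}+k-i$, the condition $\lambda\subseteq(n^{k})$ becomes $n+k-1\ge \ell_{1}>\cdots>\ell_{k}\ge 0$ and $\prod_{i<j}(\lambda_{i}-\lambda_{j}+j-i)=\prod_{i<j}(\ell_{i}-\ell_{j})$, so that
\begin{gather*}
\#|{\cal B}_{n,k}| \;=\; \frac{1}{\prod_{1\le i<j\le k}(j-i)}\ \sum_{n+k-1\ge \ell_{1}>\cdots>\ell_{k}\ge 0}\ \prod_{1\le i<j\le k}(\ell_{i}-\ell_{j}).
\end{gather*}

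The heart of the argument is the evaluation of this summed Vandermonde, a discrete analogue of Selberg's integral. Writing $\prod_{i<j}(\ell_{i}-\ell_{j})=\det(\ell_{i}^{k-j})_{1\le i,j\le k}$, I would compute it either by the Lindström--Gessel--Viennot lemma --- after replacing the free right endpoints of the $k$ non-intersecting paths by fixed ones via an auxiliary horizontal segment, which turns the sum into a single $k\times k$ determinant of binomial coefficients evaluated by elementary row and column operations --- or by quoting the relevant determinant evaluation from Krattenthaler's survey on determinant calculus. The outcome is
\begin{gather*}
\sum_{n+k-1\ge \ell_{1}>\cdots>\ell_{k}\ge 0}\ \prod_{1\le i<j\le k}(\ell_{i}-\ell_{j}) \;=\; \left(\prod_{i=1}^{k-1}i!\right)\prod_{1\le i\le j\le k}\frac{i+j+n-1}{i+j-1};
\end{gather*}
since $\prod_{1\le i<j\le k}(j-i)=\prod_{i=1}^{k-1}i!$, the prefactor cancels and one obtains $\#|{\cal B}_{n,k}| = \prod_{1\le i\le j\le k}\frac{i+j+n-1}{i+j-1}$, the first claimed product.

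It remains to match this with $\prod_{0\le 2a\le k-1}\frac{{n+2k-2a-1 \choose n}}{{n+2a \choose n}}$, which is elementary: split the first product as $\prod_{i=1}^{k}\frac{2i+n-1}{2i-1}\cdot\prod_{1\le i<j\le k}\frac{i+j+n-1}{i+j-1}$ and reorganize the products of consecutive integers into the stated ratios of binomial coefficients, equivalently verify that both sides, as rational functions of $n$, carry the same multiset of linear numerator and denominator factors --- a finite check. A more conceptual route to the whole lemma, and presumably the origin of the citations \cite{G, KGV}, is to build a bijection between ${\cal B}_{n,k}$ and the set of symmetric plane partitions contained in the box $k\times k\times n$ (for instance via the Robinson--Schensted--Knuth correspondence restricted to symmetric matrices with bounded entries, or a direct diagonal folding of the tableau), and then to invoke MacMahon's box formula for symmetric plane partitions (Andrews, Macdonald). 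In both approaches the single genuine difficulty is the same nontrivial closed-form evaluation --- the discrete Selberg sum / binomial determinant on one side, or MacMahon's symmetric-box product on the other --- while the bijective reduction and the final product manipulations are routine bookkeeping.
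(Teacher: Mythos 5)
Your proposal is correct in outline, and its intermediate claims check out numerically (for $k=2$ the summed Vandermonde is ${n+3\choose 3}=\frac{(n+1)(n+2)(n+3)}{6}$, and for $(n,k)=(2,3)$ both displayed products give $35$), but it follows a genuinely different route from the one the paper indicates. The paper gives no proof of Lemma~\ref{lem6.1} beyond the citations to Gordon and Krattenthaler--Guttmann--Viennot, and states after Theorem~\ref{theorem6.8} that the key step is the Littlewood-type identity $\sum_{\lambda\subseteq(n^k)}s_\lambda(x_1,\dots,x_k)=(x_1\cdots x_k)^{n/2}\,{\mathfrak{so}}_{(k/2)^n}\big(x_1^{\pm1},\dots,x_k^{\pm1},1\big)$: the bounded Schur sum is a single irreducible character of ${\mathfrak{so}}(2k+1)$, and the product formula then falls out of the Weyl dimension formula in type $B_k$. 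You instead apply the $GL_k$ dimension formula term by term and evaluate the resulting discrete Selberg sum $\sum_{n+k-1\ge\ell_1>\cdots>\ell_k\ge0}\prod_{i<j}(\ell_i-\ell_j)$; that evaluation is exactly as deep as (indeed equivalent to) MacMahon's symmetric-box product, and you rightly isolate it as the one genuine input. The paper's route buys the product in one representation-theoretic step; yours stays inside $GL_k$ at the cost of a nontrivial classical summation. Two cautions. First, you silently take $|{\cal B}_{n,k}(\lambda)|=s_\lambda(1^k)$, whereas Definition of ${\cal B}_{n,k}(\lambda)$ in the paper literally says entries from $\{1,\dots,n\}$; your reading (entries bounded by $k$, the number of rows of the bounding rectangle) is the one consistent with the stated product and with the identity quoted after Theorem~\ref{theorem6.8}, but you should make the choice explicit. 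Second, your one-line reduction of the summed Vandermonde to an LGV determinant is too quick as stated --- an antisymmetric summand over an ordered region is not immediately a nonintersecting-path count, and the free-endpoint trick needs to be set up carefully --- so if you do not intend to spell it out, quoting the determinant evaluation, or equivalently the Andrews--Macdonald symmetric plane partition formula as in your closing paragraph, is the cleaner citation.
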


See also \cite[$A073165$]{SL} for other combinatorial interpretations of the
numbers $ \# | {\cal B}_{n,k} |$. For example, the number
$ \# | {\cal B}_{n,k} |$ is equal to the number of symmetric plane partitions
 that f\/it inside the box $n \times k \times k$. Note that $B_{n,k}= T(n+k,k)$, where the triangle of positive integers $\{T(n+k,k)\}$ can be found in \cite[$A102539$]{SL}.

\begin{pr}\label{prop6.2} One has
\begin{gather*}
\bullet \quad \# |{\cal B}_{n,n} | := \operatorname{SPP}(n+1)= \operatorname{TSPP}(n+1) \times \operatorname{ASM}(n),\\
\hphantom{\bullet} \quad \# |{\cal B}_{n,n+1} | = \operatorname{TSPP}(n+1) \times \operatorname{ASM}(n+1),
\end{gather*}
where $\operatorname{TSPP}(n)$ denotes the number of totally symmetric plane partitions f\/it
inside the $n \times n \times n$-box, see, e.g., {\rm \cite[$A005157$]{SL}}, whereas
$\operatorname{ASM}(n)=\operatorname{TSSCPP}(2n)$ denotes the of
$n \times n$ alternating sign matrices, and $\operatorname{TSSCPP}(2n)$ denotes the number of
totally symmetric self-complimentary plane partitions fit inside the
$2n \times 2n \times 2n$-box.
\begin{gather*}
\bullet \quad \# | {\cal B}_{n+2,n} | = \# | {\cal B}_{n,n+1} |.
\end{gather*}
\end{pr}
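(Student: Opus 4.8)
The plan is to treat this as a purely numerical identity extracted from the product formula of Lemma~\ref{lem6.1}, so no bijection needs to be constructed. Recall that for any $a,b$ one has
\begin{gather*}
\#|{\cal B}_{a,b}| = \prod_{1 \le i \le j \le b} \frac{i+j+a-1}{i+j-1}.
\end{gather*}
First I would write out both sides in this form: the left-hand side is the product over $1 \le i \le j \le n$ of $\frac{i+j+n+1}{i+j-1}$, and the right-hand side is the product over $1 \le i \le j \le n+1$ of $\frac{i+j+n-1}{i+j-1}$. Since the denominators of the two products are \emph{not} the same (the ranges of $(i,j)$ differ), the cleanest route is to reduce each product to a closed hook-content type expression and compare. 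Concretely, I would use the classical evaluation
\begin{gather*}
\prod_{1 \le i \le j \le m} \frac{i+j+c}{i+j-1} = \prod_{i=1}^{m} \frac{(c+2i)!\,(2i-2)!}{(c+i+m)!\,(i-1)!}\cdot(\text{correction terms}),
\end{gather*}
or, more reliably, rewrite $\prod_{1\le i \le j \le m}(x+i+j)$ in terms of the Barnes $G$-function / double factorials and simplify. The point is that $\#|{\cal B}_{a,b}|$ equals a ratio of products of factorials in $a$ and $b$, and one then just checks that substituting $(a,b)=(n+2,n)$ and $(a,b)=(n,n+1)$ gives the same rational number.

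The key computational step is the following symmetry observation, which I expect to do most of the work: in the formula $\#|{\cal B}_{a,b}| = \prod_{1\le i \le j \le b}\frac{i+j+a-1}{i+j-1}$, shifting the index $j \mapsto j' = a+... $ is awkward, but the \emph{second} form given in Lemma~\ref{lem6.1},
\begin{gather*}
\#|{\cal B}_{a,b}| = \prod_{0 \le 2r \le b-1} \frac{\binom{a+2b-2r-1}{a}}{\binom{a+2r}{a}},
\end{gather*}
is better suited: here the number of factors is $\lceil b/2 \rceil$, and each factor is a ratio of two binomial coefficients with the \emph{same} lower index $a$. I would substitute $a=n+2,b=n$ into the first such product and $a=n,b=n+1$ into the second, then match factors one by one. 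When $n$ is even, say $n=2m$, the left product has $m$ factors with $r=0,\ldots,m-1$ and the right product has $m+1$ factors; when $n$ is odd, $n=2m+1$, the left product has $m+1$ factors and the right has $m+1$. In each parity case the claim becomes a finite identity between two products of binomial coefficients, which I would verify by pairing the $r$-th factor on one side with a suitably-indexed factor on the other and invoking the elementary identity $\binom{N}{a}/\binom{M}{a} = \frac{N!\,(M-a)!}{M!\,(N-a)!}$ to collapse everything to a telescoping product of factorials.

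The main obstacle will be bookkeeping the off-by-one shifts correctly — in particular handling the two parities of $n$ separately and making sure the extra factor on the right-hand side (the $r=0$ or $r=m$ term) is genuinely accounted for rather than silently dropped. A secondary subtlety is that the product formula in Lemma~\ref{lem6.1} is a priori only asserted for the symmetric box (the statement there is phrased for ${\cal B}_{n,k}$ with the roles of $n$ and $k$ as in that lemma), so I would first confirm that the formula is valid for \emph{all} pairs $(a,b)$, including $b=a+1$ and $b=a-2$, which follows since ${\cal B}_{a,b}$ counts SSYT of shape $\subset (a^b)$ with entries $\le a$ and the product formula for such counts (equivalently symmetric plane partitions in an $a\times b\times b$ box) holds for arbitrary $a,b$. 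Once that is in hand, the identity is a one-page factorial manipulation with no conceptual content.
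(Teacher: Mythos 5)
Your approach---expanding both sides via the product formula of Lemma~\ref{lem6.1} and reducing the claim to a finite factorial/binomial identity---is exactly the paper's route: the author states that these relations ``have straightforward proofs based on the explicit product formulas'' (only bijective proofs being open), so no further idea is needed, and your bookkeeping of the number of factors $\lceil b/2\rceil$ in each parity case is consistent (the identity checks numerically, e.g.\ both sides equal $672$ for $n=3$). The only caveat is that your proposal treats just the third bullet; the factorizations $\# |{\cal B}_{n,n} | = \operatorname{TSPP}(n+1)\,\operatorname{ASM}(n)$ and $\# |{\cal B}_{n,n+1} | = \operatorname{TSPP}(n+1)\,\operatorname{ASM}(n+1)$ are established by the same method but additionally require the closed product formulas for $\operatorname{TSPP}$ and $\operatorname{ASM}$ recorded in the Introduction, which your write-up does not invoke.
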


Note that in the case $n=k$ the number ${\cal{B}}_{n} := {\cal{B}}_{n,n}$ is
equal to the number of symmetric plane portions f\/itting inside the
$n \times n \times n$-box, see~\cite[$A049505$]{SL}. Let us point out that in
general it may happen that the number $\# |{\cal B}_{n,n+2} |$ is not
divisible by any $\operatorname{ASM}(m)$, $m \ge 3$. For example, ${\cal{B}}_{3,5}= 4224=
2^5 \times 3 \times 11$. On the other hand, it's possible that the number
$\# |{\cal B}_{n,n+2} |$ is divisible by $\operatorname{ASM}(n+1)$, but does not divisible by
$\operatorname{ASM}(n+2)$. For example, ${\cal{B}}_{4,6}=306735 = 715 \times 429$, but $
306735\, {\nmid} \, 7436 =\operatorname{ASM}(6)$.

\begin{exer}\label{exer6.3} \quad
\begin{enumerate}\itemsep=0pt
\item[$(a)$] Show that $B_{n+4,n}$ is divisible by
\begin{gather*}
 \begin{cases}
\operatorname{TSPP}(n+2), & \text{if} \ \ n \equiv 1 \ (\operatorname{mod}~2), \ n \ge 3,\\
\operatorname{ASM}(n+2), & \text{if} \ \ n \equiv 2 \ (\operatorname{mod}~8),\\
\operatorname{ASM}(n+1) \ \text{and} \ \operatorname{ASM}(n+2), & \text{if} \ \ n \equiv 4 \ (\operatorname{mod}~8), \ n \neq 4; \ B_{8,4}=\operatorname{ASM}(5)^2,\\
\operatorname{ASM}(n+1) \operatorname{and} \ \operatorname{ASM}(n+2), & \text{if} \ \ n \equiv 6 \ (\operatorname{mod}~8),\\
\operatorname{ASM}(n+1), & \text{if} \ \ n \equiv 0 \ (\operatorname{mod}~8), \ n \ge 1.
\end{cases}
\end{gather*}
\item[$(b)$] Show that $B_{n,n+4}$ is divisible by
\begin{gather*}
\begin{cases}
\operatorname{ASM}(n+1), & \text{if} \ \ n \equiv 0 \ (\operatorname{mod}~2), \\
\operatorname{TSPP}(n+1), & \text{if} \ \ n \equiv 1 \ (\operatorname{mod}~2).
\end{cases}
\end{gather*}
\end{enumerate}
\end{exer}

In all cases listed in Exercise~\ref{exer6.3}, it is an open {\it problem} to give
combinatorial interpretations of the corresponding ratios.

\begin{prb}\label{prob6.4} Let $a$ is equal to either $0$ or $1$. Construct
bijection between the set $\operatorname{SPP}(n,n+a,$ $n+a)$ of symmetric plane partitions
fitting inside the box $n \times n+a \times n+a$ and the set
of pairs $(P,M)$ where $P$ is the totally symmetric plane partitions
fitting inside the box $n \times n \times n$ and $M$ is an alternating sign
matrix of size $n+a \times n+a$.
\end{prb}

\begin{ex}\label{exam6.5} Take $n=3$. One has $\# |{\cal{B}}_{3}|= 112 = 16 \times 7$. The number of partitions $\lambda \subset (3^3)$ is
equal to 20, namely, the following partitions
\begin{gather*}
\big\{\varnothing,(1),(2),(1,1),(3),(2,1),\big(1^3\big),(3,1),(2,2),\big(2,1^2\big),(3,2),\big(3,1^2\big),\big(2^2,1\big),\big(3^2\big), \\
(3,2,1),\big(2^3\big),\big(3^2,1\big),\big(3,2^2\big),\big(3^2,2\big),\big(3^3\big) \big\},
\end{gather*}
 and
\begin{gather*}
\begin{split}
& {\cal{B}}_3(q):= \sum_{\lambda \subset (3^3)}
 \# | {\cal{B}}_3(\lambda)| q^{|\lambda|} = (1,3,9,19,24,24,19,9,3,1)\\
& \hphantom{{\cal{B}}_3(q):= \sum_{\lambda \subset (3^3)}
 \# | {\cal{B}}_3(\lambda)| q^{|\lambda|}}{} = (1+q)^3(1+q^2)\big(1+5 q^2 +q^4\big).
 \end{split}
\end{gather*}
 Note, however, that
\begin{gather*}
\sum_{\lambda \subset (4^4)} \# | {\cal B}_4(\lambda)| q^{|\lambda|} =
(1,4,16,44,116,204,336,420,490,420,336,204,116,44,16,4,1)
\end{gather*}
 is an irreducible polynomial, but its value at $q=1$ is equal to $2772=66
\times 42$.
\end{ex}

Let ${\bf p}=(p_{i,j})_{1 \le i \le n, 1 \le j \le k}$ be a
$n \times k$ matrix of variables.
\begin{de}\label{def6.6} Def\/ine the kernel ${\cal F}_{n,k}({\bf p}, U)$ as follows
\begin{gather*}
 {\cal F}_{n,k}({\bf p}, U) = \prod_{i=1}^{k-1} \prod_{j=
n-1}^{1} (1+ p_{i, {\overline {j-i+1}}^{(n)}} u_j),
\end{gather*}
where for a f\/ixed $n \in \N$ and an integer $a \in \Z$, we set
\begin{gather*}
 \overline{a}=\overline{a}^{(n)}: = \begin{cases}
 a, & \text{if} \ \ a \ge 1,\\
 n+a-1, & \text{if} \ \ a \le 0.
 \end{cases}
\end{gather*}
\end{de}

For example,
\begin{gather*}
{\cal F}_{3}({\bf p},U)= (1+p_{1,2} u_2)(1+p_{1,1} u_1)(1+p_{2,1} u_2)(1+p_{2,2} u_1).
\end{gather*}

In the plactic algebra ${\cal{FP}}_{3,3}$ one has
\begin{gather*}
{\cal F}_{3,3}({\bf p},U) = 1
 + (p_{1,1}+p_{2,2}) u_1
+ (p_{1,2}+p_{2,1}) u_2
+p_{1,1} p_{2,1} u_{11}
+p_{1,1} p_{2,1} u_{12} \\
\hphantom{{\cal F}_{3,3}({\bf p},U) =}{}
+ (p_{1,2} p_{1,1}+p_{1,2} p_{2,2} +p_{2,1} p_{2,2}) u_{21}
+p_{1,2} p_{2,1} u_{22} \\
\hphantom{{\cal F}_{3,3}({\bf p},U) =}{}
+(p_{1,1} p_{1,2} p_{2,2}+ p_{1,2} p_{2,2} p_{2,1}) u_{212}
+(p_{1,1} p_{1,2} p_{2,2}+p_{1,1} p_{2,2} p_{2,1}) u_{211} \\
\hphantom{{\cal F}_{3,3}({\bf p},U) =}{}
+p_{1,1} p_{1,2} p_{2,1} p_{2,2} u_{2121}.
\end{gather*}

\begin{de}\label{def6.7} Def\/ine algebra ${\cal{PF}}_{n,k}$ to be the quotient of the plactic
algebra ${\cal{P}}_n$ by the two-sided ideal $I_n$ generated by the set of
monomials
\begin{gather*}
\{u_{i_{1}} u_{i_{2}} \cdots u_{i_{k}} \}, \qquad 1 \le i_{1} \le i_{2} \le
\cdots \le i_k \le n -1.
\end{gather*}
\end{de}

\begin{Theorem}\label{theorem6.8} \quad
\begin{gather*}
\operatorname{Hilb}({\cal{PF}}_{n,k},q)= {\cal{B}}_{n -1 ,k-1}(q),
\end{gather*}
In particular,
\begin{itemize}\itemsep=0pt
\item The algebra ${\cal{PF}}_{n,n}$ has dimension equals to the number
of symmetric plane partitions $\operatorname{SPP}(n - 1)$,
\begin{gather*}
 \operatorname{Hilb}({\cal{PF}}_{n,k},q) = q^{\frac{k n}{2}} {\mathfrak{so}}_{(\frac{k}{2})^n } \big(\underbrace{q^{\pm 1}, \dots, q^{\pm 1}}_{k},1\big),
\end{gather*}
where ${\mathfrak{so}}_{(\frac{k}{2})^n} \big(\underbrace{q^{\pm 1}, \dots, q^{\pm 1}}_{k},1\big)$ denotes the specialization $x_{2j}=q$, $x_{2j-1}=q^{-1}$, $1 \le j \le k$, of the character ${\mathfrak{so}}_{\lambda} \big(x_1, x_1^{-1}\ldots, x_{k},x_{k}^{-1},1\big)$ of the odd orthogonal Lie algebra
${\mathfrak{so}}(2k+1)$ corresponding to the highest weight $\lambda = \big(\underbrace{\tfrac{k}{2},\ldots,\tfrac{k}{2}}_{n}\big)$.

\item $\deg_{q} \operatorname{Hilb}({\cal{PF}}_{n,k},q) =(n-1)(k-1)$, and
$\dim ({\cal{PF}}_{n,k})_{(n-1)(k-1)}=1$.

\item The Hilbert polynomial $\operatorname{Hilb}({\cal{PF}}_{n,k},q)$ is symmetric
 and unimodal polynomial in the va\-riab\-le~$q$.

\item $\operatorname{Hilb}(({\cal{PF}}_{n,k})^{ab},q) =\sum\limits_{j=0}^{k-1} {n+j-2 \choose n-2} q^{j}$, $\dim {(\cal{PF}}_{n,k})^{ab}= {n+k-2 \choose k-1}$.
\end{itemize}
\end{Theorem}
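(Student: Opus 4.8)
The whole theorem rests on making the ideal $I_n$ of Definition~\ref{def6.7} \emph{monomial} with respect to the tableau basis. By Proposition~\ref{prop2.2}, ${\cal P}_n$ is the semigroup algebra of the plactic monoid on the alphabet $\{1,\dots,n-1\}$: its basis consists of the tableau words $w(T)$, and the product of two basis elements is again a basis element, namely the reading word of the Robinson--Schensted insertion tableau. Hence the two-sided ideal $I_n=\sum {\cal P}_n\cdot g\cdot {\cal P}_n$, with $g$ running over the length-$k$ weakly increasing words (the reading words of one-row tableaux with $k$ boxes), is spanned by a subset of the tableau basis, and ${\cal{PF}}_{n,k}={\cal P}_n/I_n$ has a monomial basis indexed by the complementary tableau words. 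First I would show that $w(T)\in I_n$ if and only if the first row of $T$ has length $\ge k$: if it does, then the reading word of $T$ obtained by reading the rows left to right and bottom to top (which is plactic equivalent to $w(T)$) displays the weakly increasing top row, hence a length-$k$ factor $g$, so $w(T)\equiv w(A)\,g\,w(B)\in I_n$; conversely every generator $w(A)\,g\,w(B)$ of $I_n$ is the reading word of an insertion tableau whose first row, by Schensted's theorem, has length at least that of the longest weakly increasing subword of $w(A)\,g\,w(B)$, hence $\ge k$, and since $w(T)$ is a basis vector lying in the span of such words it must coincide with one of them. Consequently ${\cal{PF}}_{n,k}$ has basis $\{w(T):T$ semistandard, entries $\le n-1,\ \lambda(T)_1\le k-1\}$, graded by $|\lambda(T)|$ (the word length).

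\textbf{Identification of the Hilbert series.} Since the number of semistandard tableaux of a given shape $\lambda$ with entries $\le n-1$ is $s_\lambda(1^{n-1})$, the previous step gives
\begin{gather*}
\operatorname{Hilb}({\cal{PF}}_{n,k},q)=\sum_{\lambda\subseteq((k-1)^{\,n-1})} s_\lambda\big(1^{n-1}\big)\,q^{|\lambda|}.
\end{gather*}
The right-hand side is the generating function ${\cal B}_{n-1,k-1}(q)$ of Example~\ref{exam6.5}; that this bounded Schur sum equals the symmetric-plane-partition count of Lemma~\ref{lem6.1} (and its $q$-refinement) is the classical complementation / Jacobi--Trudi identity for such box-bounded sums, which also yields the odd-orthogonal character form $\operatorname{Hilb}({\cal{PF}}_{n,k},q)=q^{kn/2}\,{\mathfrak{so}}_{(k/2)^n}\big(q^{\pm 1},\dots,q^{\pm 1},1\big)$ after the specialization $x_{2j}=q$, $x_{2j-1}=q^{-1}$. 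Taking $k=n$ and $q=1$ and invoking Lemma~\ref{lem6.1} gives $\dim{\cal{PF}}_{n,n}=\#|{\cal B}_{n-1,n-1}|=\operatorname{SPP}(n-1)$, the number of symmetric plane partitions fitting in the $(n-1)$-cube.

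\textbf{Degree, leading coefficient, palindromy, unimodality.} Among the basis tableaux, those of maximal size are exactly the ones of rectangular shape $((k-1)^{n-1})$, and there is a \emph{unique} such tableau with entries $\le n-1$ (every column must read $1,2,\dots,n-1$); hence $\deg_q\operatorname{Hilb}({\cal{PF}}_{n,k},q)=(n-1)(k-1)$ with top coefficient $1$. The palindromy $q^{(n-1)(k-1)}\operatorname{Hilb}({\cal{PF}}_{n,k},q^{-1})=\operatorname{Hilb}({\cal{PF}}_{n,k},q)$ follows from box complementation $\lambda\mapsto\widehat{\lambda}$ inside the rectangle $((k-1)^{n-1})$: this is a size-reversing involution under which $s_\lambda(1^{n-1})$ is invariant (reversing the entries carries a tableau of shape $\lambda$ to one of shape $\widehat{\lambda}$). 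Unimodality is then the content of Stanley's theorem that box-bounded Schur sums of this type are symmetric and unimodal, proved via the hard Lefschetz theorem applied to the relevant flag (or Grassmannian) variety.

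\textbf{Abelianization.} Passing to the abelianization kills the Knuth relations, which only permute letters, so $({\cal P}_n)^{\mathrm{ab}}=\Z[u_1,\dots,u_{n-1}]$. The images of the generators $g=u_{i_1}\cdots u_{i_k}$ of $I_n$ exhaust, after commuting, all degree-$k$ monomials in $u_1,\dots,u_{n-1}$, so $({\cal{PF}}_{n,k})^{\mathrm{ab}}=\Z[u_1,\dots,u_{n-1}]/(u_1,\dots,u_{n-1})^k$, whence $\operatorname{Hilb}(({\cal{PF}}_{n,k})^{\mathrm{ab}},q)=\sum_{j=0}^{k-1}{n+j-2 \choose n-2}q^j$ and $\dim({\cal{PF}}_{n,k})^{\mathrm{ab}}={n+k-2 \choose k-1}$. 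The main obstacle is the middle step: converting the combinatorial basis into the $\mathfrak{so}$-character expression and matching it, with all gradings, to the symmetric-plane-partition generating function of Lemma~\ref{lem6.1} — this needs the nontrivial bounded-sum determinant identities, not merely the conjugation bijection — together with the unimodality assertion, which genuinely requires hard Lefschetz rather than an elementary argument.
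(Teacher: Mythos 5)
Your proposal is correct and follows essentially the same route as the paper: the paper records only the ``key step'' (the bounded Schur sum $\sum_{\lambda\subset(n^k)}s_\lambda=\left(\prod x_i\right)^{n/2}{\mathfrak{so}}_{(k/2)^n}(x^{\pm},1)$ from Macdonald), and your Schensted argument that $I_n$ is monomial in the tableau basis with complement exactly the tableaux of first-row length $<k$ is the implicit first half of that proof, after which the character identity, the unique maximal rectangular tableau, box complementation, the known unimodality of the symmetric-plane-partition generating function, and the truncated polynomial ring for the abelianization all go through as you describe. (The only caveat is bookkeeping: the paper's own indexing of ${\cal B}_{n,k}$ versus the box $((k-1)^{n-1})$ arising from Definition~\ref{def6.7} is not used consistently in its examples, so the precise matching of indices in $\operatorname{Hilb}({\cal{PF}}_{n,k},q)={\cal B}_{n-1,k-1}(q)$ should be read up to the conjugate-box convention.)
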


The key step in proofs of Lemma~\ref{lem6.1} and Theorem~\ref{theorem6.8} is based on the following
 identity
\begin{gather*}
 \sum_{\lambda \subset (n^{k})} s_{\lambda} (x_1,\ldots,x_k) =
(x_1 \cdots x_k)^{n/2} {\mathfrak{so}}_{(\frac{k}{2})^n}\big(x_{1},x_{1}^{-1}, \ldots,x_{k},x_{k}^{-1},1\big),
\end{gather*}
see, e.g., \cite[Chapter~I, Section~5, Example~19]{Ma}, \cite{KGV} and the literature quoted
therein.
\begin{prb}\label{prob6.9}
Let $\Gamma:= {\Gamma_{n,m}}^{k,\ell} = (n^{k},m^{\ell})$, $n \ge m$ be a
``fat hook''. Find generalizations of the identity~\eqref{equation6.1} and those listed
in~{\rm \cite[p.~71]{Kin}}, to the case of fat hooks, namely to find ``nice''
expressions for the following sums
\begin{gather*}
\sum_{\lambda \subset \Gamma} s_{\lambda} (X_{k+\ell}),\qquad
\sum_{\lambda \subset \Gamma} s_{\lambda} (X_{k+\ell}) s_{\lambda}(Y_{k+\ell}).
\end{gather*}
Find ``bosonic'' type formulas for these sum at the limit
$n \longrightarrow \infty$, $\ell \longrightarrow \infty$, $m$, $k$ are fixed.
\end{prb}

\begin{ex}\label{exam6.10}
\begin{gather*}
\operatorname{Hilb}({\cal{PF}}_{2,3},q)= (1,3,9,9,9,3,1)_{q}, \qquad \dim ({\cal{PF}}_{2,4}) = 35 = 5 \times 7, \\
\dim ({\cal{PF}}_{2,5}) = 126 =3 \times 42, \qquad \dim {\cal{PF}}_{2,n}= {2 n+1 \choose n}= (2 n+1) \operatorname{Cat}_{n} \\
(\text{see, e.g., \cite[$A001700$]{SL}}),\\
\operatorname{Hilb}({\cal{PF}}_{3,4},q)= (1,4,16,44,81,120,140,120,81,44,16,4,1)_{q}, \\
\dim ({\cal{PF}}_{3,4}) = 672 = 16 \times 42,\\
\operatorname{Hilb}({\cal{PF}}_5,q)=(1,4,16,44,116,204,336,420,490,420,336,204,116,44,16,4,1)_{q}, \\
\dim({\cal{PF}}_{5,5}) =2772=66 \times 42.
\end{gather*}
\end{ex}

\begin{pr}\label{prop6.11}
\begin{gather*}
\operatorname{Hilb} ({\cal{PF}}_{2,n},q) = \sum_{k=0}^{2 n} {n \choose \big[\frac{k}{2} \big]} {n \choose \big[\frac{k+1}{2} \big]} q^k;
\qquad \text{recall} \qquad \dim ({\cal{PF}}_{2,n}) = {2 n + 1 \choose n}.
\end{gather*}
Therefore, $\operatorname{Hilb}({\cal{PF}}_{2,n},q)$ is equal to the generating function
for the number of symmetric Dyck paths of semilength $2 n-1$ according to the
number of peaks, see~{\rm \cite[$A088855$]{SL}},
\begin{gather*}
 \dim ({\cal{PF}}_{3,n}) = 2^n \operatorname{Cat}_{n+1}, \qquad \text{if} \quad n \ge 1.
 \end{gather*}
\end{pr}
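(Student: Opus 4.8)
The plan is to identify $\operatorname{Hilb}(\mathcal{PF}_{2,n},q)$ with the generating function of semistandard tableaux having at most two columns, evaluate it via the dual Jacobi--Trudi formula, and extract the closed form by a telescoping sum; the dimension statements and the Dyck-path reformulation then follow quickly. First I would recall that $\mathcal{PF}_{2,n}$ is the quotient of the plactic algebra by the two-sided ideal generated by the weakly increasing monomials of length three (the relevant case of Definition~\ref{def6.7}), and that its monomial basis, inherited from the tableau-word basis of the plactic algebra (Proposition~\ref{prop2.2}), consists of the tableau words $w(T)$ of the semistandard Young tableaux $T$ with entries in $\{1,\dots,n\}$ and first row of length at most two. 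Indeed, for an ideal element $a\,g\,b$ with $g$ weakly increasing of length three, the insertion tableau $P(a\,g\,b)$ has first row of length $\ge 3$ by Schensted's theorem, so $a\,g\,b\in\operatorname{span}\{w(T):\lambda_{1}(T)\ge 3\}$; conversely, if $\lambda_{1}(T)\ge 3$, the row reading word of $T$ factors as $u\cdot g\cdot r$ with $g$ a weakly increasing length-three factor, so $w(T)$ lies in the ideal. This is exactly the computation underlying Theorem~\ref{theorem6.8}. Since the number of tableaux of shape $\lambda$ with entries $\le n$ is $s_{\lambda}(1,\dots,1)$ ($n$ ones) and $\deg w(T)=|T|$, one gets
\[
\operatorname{Hilb}(\mathcal{PF}_{2,n},q)=\sum_{\lambda:\ \lambda_{1}\le 2}s_{\lambda}\big(1^{n}\big)\,q^{|\lambda|}.
\]

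Next I would transpose: the partitions with at most two columns are the conjugates $(a,b)'$ of the partitions $(a,b)$ with $a\ge b\ge 0$, and the dual Jacobi--Trudi identity gives $s_{(a,b)'}=e_{a}e_{b}-e_{a+1}e_{b-1}$. Using $e_{j}(1^{n})=\binom{n}{j}$ (with $\binom{n}{-1}=0$),
\[
\operatorname{Hilb}(\mathcal{PF}_{2,n},q)=\sum_{a\ge b\ge 0}\left(\binom{n}{a}\binom{n}{b}-\binom{n}{a+1}\binom{n}{b-1}\right)q^{a+b}.
\]
Fixing the total degree $k=a+b$ and setting $f(j):=\binom{n}{k+1-j}\binom{n}{j-1}$, the bracket at $b=j$, $a=k-j$, is precisely $f(j+1)-f(j)$, so the coefficient of $q^{k}$ telescopes over $j=0,\dots,\lfloor k/2\rfloor$ to $f(\lfloor k/2\rfloor+1)-f(0)=\binom{n}{\lceil k/2\rceil}\binom{n}{\lfloor k/2\rfloor}$; since $\lceil k/2\rceil=\lfloor(k+1)/2\rfloor$ this is the asserted formula, and it vanishes for $k>2n$. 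Setting $q=1$ and applying Vandermonde twice gives $\sum_{m}\binom{n}{m}^{2}+\sum_{m}\binom{n}{m}\binom{n}{m+1}=\binom{2n}{n}+\binom{2n}{n+1}=\binom{2n+1}{n}$, recovering $\dim\mathcal{PF}_{2,n}$.

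For the remaining two assertions: the right-hand side of the formula for $\operatorname{Hilb}(\mathcal{PF}_{2,n},q)$ is the peak-generating polynomial of symmetric Dyck paths recorded in~\cite[$A088855$]{SL} — cutting a symmetric Dyck path at its centre, the left half is an essentially unconstrained nonnegative lattice path whose ascending-run structure is counted by the two binomial factors, with the peak counted at the centre producing the parity shift — so matching the two expressions completes that part. Finally $\dim\mathcal{PF}_{3,n}$ is computed the same way: by Theorem~\ref{theorem6.8} it equals $\#|\mathcal{B}_{3,n}|=\sum_{\lambda:\lambda_{1}\le 3}s_{\lambda}(1^{n})$, and evaluating the $3\times 3$ dual Jacobi--Trudi determinant — or directly invoking the product formula of Lemma~\ref{lem6.1} — yields $2^{n}\operatorname{Cat}_{n+1}$ for $n\ge 1$.

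The step I expect to be the main obstacle is the first one: making rigorous that the forbidden-word ideal is spanned by the tableau words of tableaux with a long first row, so that the Hilbert series becomes a genuine sum of Schur-function specializations to which Theorem~\ref{theorem6.8} applies verbatim. Once that identification is secured, Steps two and three are short and essentially mechanical.
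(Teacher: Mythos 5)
Your argument is correct, and it is worth noting that the paper offers no proof of this proposition at all: the closest it comes is the remark after Theorem~\ref{theorem6.8} that these counts rest on the bounded Littlewood identity $\sum_{\lambda \subset (n^{k})} s_{\lambda}(x_1,\ldots,x_k)=(x_1\cdots x_k)^{n/2}{\mathfrak{so}}_{(k/2)^n}(\cdots)$ and the product formulas of Lemma~\ref{lem6.1}. Your route is more elementary and, for the refined ($q$-graded) statement, more informative. The identification of the surviving basis is sound: a word containing a weakly increasing factor of length three has longest weakly increasing subsequence $\ge 3$, hence insertion tableau with $\lambda_1\ge 3$ (Greene/Schensted), and conversely the row-reading word of a tableau with $\lambda_1\ge 3$ ends in a weakly increasing run of length $\ge 3$ and is plactically equivalent to $w(T)$ — this is exactly Theorem~\ref{theorem6.8} specialised to two columns. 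The closed form then drops out of $s_{(a,b)'}=e_ae_b-e_{a+1}e_{b-1}$ by your telescoping; I checked that the degree-$k$ coefficient is $f(\lfloor k/2\rfloor+1)-f(0)=\binom{n}{\lceil k/2\rceil}\binom{n}{\lfloor k/2\rfloor}$ and that the $q=1$ Vandermonde evaluation gives $\binom{2n}{n}+\binom{2n}{n-1}=\binom{2n+1}{n}$; both are right. What the paper's character-theoretic route buys is uniformity in the column bound (all $\mathcal{PF}_{m,n}$ at once); what yours buys is a direct derivation of the refined coefficient $\binom{n}{\lfloor k/2\rfloor}\binom{n}{\lfloor (k+1)/2\rfloor}$, which the unrefined product formula does not give.

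Two small caveats. First, the symmetric Dyck path claim is asserted rather than proved; that is no worse than the paper, which merely cites \cite[$A088855$]{SL}, but observe that with the normalisation $\dim \mathcal{PF}_{2,n}=\binom{2n+1}{n}$ the semilength must be $2n+1$, not $2n-1$, since the number of symmetric Dyck paths of semilength $m$ is $\binom{m}{\lfloor m/2\rfloor}$ — the paper's indexing of $\mathcal{PF}_{2,n}$ is internally inconsistent (compare Example~\ref{exam6.10} with the displayed dimension), and you have, reasonably, adopted the convention under which the stated formula is true. Second, the evaluation $\prod_{1\le i\le j\le n}\frac{i+j+2}{i+j-1}=2^{n}\operatorname{Cat}_{n+1}$ is asserted without verification; to close the argument you should include the one-line induction $P(n)/P(n-1)=\prod_{i=1}^{n}\frac{i+n+2}{i+n-1}=\frac{2n(2n+1)(2n+2)}{n(n+1)(n+2)}=\frac{4(2n+1)}{n+2}=\frac{2\operatorname{Cat}_{n+1}}{\operatorname{Cat}_{n}}$.
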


For example,
\begin{gather*}
\dim ({\cal{PF}}_{3,6}) =27456 =64 \times 429,\\
\operatorname{Hilb}({\cal{PF}}_{3,6},q) = (1,6,36,146,435,1056,2066,3276,4326,4760,4326,3276,2066,1056,\\
\hphantom{\operatorname{Hilb}({\cal{PF}}_{3,6},q) = (}{} 435,146,36,6,1).
\end{gather*}
 Several interesting interpretations of these numbers are given in
\cite[$A003645$]{SL}.

{\samepage \begin{Theorem}\label{theorem6.12} \quad
\begin{itemize}\itemsep=0pt
\item Symmetric plane partitions and Catalan numbers:
\begin{gather*}
\# |{\cal{B}}_{4,n}| = {\frac{1}{2}} \operatorname{Cat}_{n+1} \times \operatorname{Cat}_{n+2}.
\end{gather*}
\item Symmetric plane partitions and alternating sign matrices:
\begin{gather*}
\# |{\cal{B}}_{n+3,n}| = {\frac{1}{2}} \operatorname{TSPP}(n+1) \times \operatorname{ASM}(n+1) = \frac{1}{2} \# |{\cal{B}}_{n+1,n+1}|.
\end{gather*}
\item Plane partitions and alternating sign matrices invariant under
a half-turn:
\begin{gather*}
 \# |\operatorname{PP}(n)| = \operatorname{ASM}(n) \times \operatorname{ASMHT}(2n),
\end{gather*}
where $PP(n)$ denotes the number of plane partitions fitting inside an $n \times n \times n$ box, see, e.g., {\rm \cite{B,Ku,Ma}}, {\rm \cite[$A008793$]{SL}} and the literature quoted theirin; $\operatorname{ASMHT}(2n)$ denotes the number
of alternating sign $2n \times 2n$-matrices invariant under a half-turn, see,
e.g., {\rm \cite{B, Ku,Ok,ST1}}, {\rm \cite[$A005138$]{SL}}.
\item Plactic decomposition of the ${\cal{F}}_n$-kernel:
\begin{gather}\label{equation6.5}
 {\cal{F}}_{n,m}({\bf p},U) =\sum_{T} u_{T} U_{T}(\{p_{ij} \}),
\end{gather}
where summation runs over the set of semistandard Young tableaux $T$ of shape
$\lambda \subset (n)^m$ filled by the numbers from the set $\{1,\ldots, m\}$.
\item $U_{T}(\{p_{ij}=1,\, \forall \, i,j\}) =
\dim V_{\lambda'}^{{{\mathfrak{gl}}(m)}}$, where $\lambda$ denotes the
shape of a tableau~$T$, and~$\lambda'$ denotes the conjugate/transpose of a
partition $\lambda$.
\end{itemize}
\end{Theorem}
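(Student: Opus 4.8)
The statement has five parts: three enumerative identities, the plactic decomposition of the ${\cal F}$-kernel, and the specialisation formula for its coefficients. The plan is to prove them in that order.

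The three enumerative identities I would derive purely from the product formula $\#|{\cal B}_{n,k}| = \prod_{1\le i\le j\le k}\frac{i+j+n-1}{i+j-1}$ of Lemma~\ref{lem6.1}, together with the classical product formulas for $\operatorname{SPP}$, $\operatorname{TSPP}$, $\operatorname{PP}$, $\operatorname{ASM}$ and $\operatorname{ASMHT}$ recalled in the Introduction (and the factorisation $\operatorname{ASMHT}(2n) = \operatorname{ASM}(n)\,\operatorname{CSPP}(n)$). For the symmetric-plane-partitions/Catalan identity one specialises the first subscript to $4$ in Lemma~\ref{lem6.1}, rewrites the resulting product of consecutive integers in terms of factorials, and recognises $\frac{1}{2}\operatorname{Cat}_{n+1}\operatorname{Cat}_{n+2}$ (the telescoping recorded in \cite[$A000356$]{SL}). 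For the symmetric-plane-partitions/$\operatorname{ASM}$ identity one specialises $\#|{\cal B}_{n+3,n}|$ and compares with the product formulas for $\operatorname{TSPP}$ and $\operatorname{ASM}$, the identification with $\frac{1}{2}\#|{\cal B}_{n+1,n+1}|$ coming from Proposition~\ref{prop6.2}. For $\#|\operatorname{PP}(n)| = \operatorname{ASM}(n)\,\operatorname{ASMHT}(2n)$ one uses $\#|\operatorname{PP}(n)| = \prod_{i=1}^{n}\binom{2n+i}{n}/\binom{n+i}{n}$, $\operatorname{ASM}(n) = \prod_{i=0}^{n-1}(3i+1)!/(n+i)!$ and the product formula for $\operatorname{ASMHT}(2n)$. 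In every case the claim collapses to an elementary identity among products of factorials and binomial coefficients, of exactly the kind already used to prove Proposition~\ref{prop6.2}.

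For the plactic decomposition ${\cal F}_{n,m}({\bf p},U) = \sum_T u_T\,U_T(\{p_{ij}\})$ I would repeat the argument behind Theorem~\ref{theorem5.10}. Expand the ordered product of Definition~\ref{def6.6} over all subsets $S$ of its factors; a subset $S$ contributes $\big(\prod_{(i,j)\in S} p_{\cdot,\cdot}\big)\,u_{w(S)}$, where $w(S)$ is the word read off from the chosen generators $u_j$ in the fixed product order. By Proposition~\ref{prop2.2} every plactic class contains exactly one tableau word, so grouping the monomials by plactic class gives, in ${\cal P}_n$, the expansion $\sum_{T}\big(\sum_{S:\,w(S)\sim w(T)}\prod p\big)\,u_{w(T)}$, the inner sums being polynomials $U_T$ with nonnegative integer coefficients (this is the ${\cal F}$-analogue of Lemma~\ref{lem5.2}). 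Pushing to the quotient ${\cal{PF}}_{n,m}$ of Definition~\ref{def6.7}, whose defining ideal is generated by monomials in the generators, kills precisely those $u_{w(T)}$ whose tableau carries a weakly increasing subword of the forbidden length; by the Hilbert-series computation of Theorem~\ref{theorem6.8} the surviving $T$ are exactly the semistandard tableaux in the stated range. This yields the decomposition, with $U_T$ the double-key-type polynomial attached to $T$.

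For $U_T(\{p_{ij}=1\}) = \dim V_{\lambda'}^{\mathfrak{gl}(m)}$, with $\lambda$ the shape of $T$: setting all $p_{ij}=1$ above, $U_T({\bf 1})$ is the number of subsets $S$ of the factors of ${\cal F}_{n,m}$ with $w(S)$ plactic-equivalent to $w(T)$. Grouping the factors of ${\cal F}_{n,m}({\bf 1},U)$ into the $m$ blocks of Definition~\ref{def6.6}, each block is a strictly decreasing product $\prod_j(1+u_j)$, hence expands as a sum over strictly decreasing words --- the ``columns'' furnished by the noncommutative elementary symmetric monomials. A subset $S$ is then a choice of one such column out of each of the $m$ blocks, and the classical dual Cauchy identity, together with the column-insertion (RSK) analysis of Section~\ref{section5}, shows that the number of such choices producing a fixed tableau word $w(T)$ equals the number of semistandard fillings of the conjugate shape $\lambda'$ by the block-labels $1,\dots,m$, i.e.\ $\#\operatorname{SSYT}(\lambda',\le m) = s_{\lambda'}(1^m) = \dim V_{\lambda'}^{\mathfrak{gl}(m)}$. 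As an independent check I would sum over all $T$: the result must return the total (power-of-$2$) number of subsets, and the Schur-function identity $\sum_{\lambda\subset(n^m)} s_\lambda(x_1,\dots,x_m)$ used in the proofs of Lemma~\ref{lem6.1} and Theorem~\ref{theorem6.8} matches the two sides shape by shape. The enumerative parts and the expand-and-reduce step are routine given the cited results; the genuine obstacle is this last part --- pinning the RSK bijection down precisely enough that the \emph{transpose} $\lambda'$ and the bound $m$ on the entries (rather than $n$) emerge correctly, which is sensitive to the ordering of the factors and to the wrap-around index $\overline{a}^{(m)}$ of Definition~\ref{def6.6} --- and, in parallel, fixing the dictionary between the index conventions of ${\cal{PF}}_{n,m}$ (Definition~\ref{def6.7}) and the summation range $\lambda\subset(n^m)$ stated in the theorem. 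Once that dictionary is in place, both the decomposition and the specialisation follow from Proposition~\ref{prop2.2}, the combinatorics of Section~\ref{section5}, and Theorem~\ref{theorem6.8}.
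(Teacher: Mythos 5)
Your proposal is correct and follows exactly the route the paper intends (no proof of Theorem~\ref{theorem6.12} is printed there): the three enumerative bullets reduce to the product formula of Lemma~\ref{lem6.1} together with the classical product formulas quoted in the Introduction, the decomposition~\eqref{equation6.5} is the ${\cal F}$-kernel analogue of Theorem~\ref{theorem5.10} obtained by expanding the ordered product and invoking Proposition~\ref{prop2.2}, and the specialisation $U_T(\{p_{ij}=1\})=s_{\lambda'}(1^m)=\dim V_{\lambda'}^{\mathfrak{gl}(m)}$ is the noncommutative dual Cauchy/column-insertion count you describe. The off-by-one index mismatches you flag are genuine (they are already visible in the paper's own ${\cal F}_{3,3}$ example, whose two blocks play the role of the labels $1,\dots,m$), but they are bookkeeping rather than a gap in the argument.
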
}

\begin{exer}\label{exer6.13}
It is well-known~\cite{Gou} that that the number $\operatorname{Cat}_{n+1} \operatorname{Cat}_{n+2}$ counts the number~$S_{n+1}^{(4)}$ of standard Young tableaux having $2 n +1$ boxes and at most four rows.
Give a~bijective proof of the equality $\# |{\mathcal{B}}_{4,n}| = {\frac{1}{2}} S_{n+1}^{(4)}$.
\end{exer}

\appendix

\section{Appendix}\label{section7}

\subsection[Some explicit formulas for $n=4$ and compositions $\alpha$ such that $\alpha_i \le n-i$ for $i=1,2,\dots$]{Some explicit formulas for $\boldsymbol{n=4}$ and compositions $\boldsymbol{\alpha}$\\ such that $\boldsymbol{\alpha_i \le n-i}$ for $\boldsymbol{i=1,2,\dots}$}\label{section7.1}

$(1)$ Schubert and $(-\beta)$-Grothendieck polynomials
$\cal{G}^{-}[\alpha]:= \cal{G}^{-\beta}[\alpha]$ for $n=4$:
\begin{gather*}
{\s}_{1234}={\s}[0]=1 = {\cal{G}}^{-}[0], \qquad
{\s}_{2134}={\s}[1]=x_1 = {\cal{G}}^{-}[1] , \\
{\s}_{1324}={\s}[01]=x_1+x_2 = {\cal{G}}^{-}[01] + \beta {\cal{G}}^{-}[11], \\
{\s}_{1243}={\s}[001]=x_1+x_2+x_3= {\cal{G}}^{-}[001] + \beta
 {\cal{G}}^{-}[011] + \beta^2 {\cal{G}}^{-}[111] , \\
{\s}_{3124}={\s}[2]=x_1^2 = {\cal{G}}^{-}[2], \qquad
{\s}_{2314}={\s}[11]=x_1x_2 = {\cal{G}}^{-}[11], \\
{\s}_{2143}={\s}[101]=x_1^2+x_1x_2+x_1x_3 = {\cal{G}}^{-}[101] + \beta {\cal{G}}^{-}[201] + \beta^2 {\cal{G}}^{-}[111], \\
{\s}_{1342}={\s}[011]=x_1x_2+x_1x_3+x_2x_3 = {\cal{G}}^{-}[011] + 2 \beta {\cal{G}}^{-}[111], \\
{\s}_{1423}={\s}[02]=x_1^2+x_1x_2+x_2^2 = {\cal{G}}^{-}[02] + \beta {\cal{G}}^{-}[12] + \beta^2 {\cal{G}}^{-}[22], \\
{\s}_{4123}={\s}[3]=x_1^3 = {\cal{G}}^{-}[3], \qquad
{\s}_{3214}={\s}[21]=x_1^2x_2 = {\cal{G}}^{-}[21], \\
{\s}_{2341}={\s}[111]=x_1x_2x_3 = {\cal{G}}^{-}[111], \\
{\s}_{2413}={\s}[12]=x_1^2x_2+x_1x_2^2 = {\cal{G}}^{-}[12] + \beta {\cal{G}}^{-}[22], \\
{\s}_{1432}={\s}[021]=x_1^2x_2+x_1^2x_3+x_1x_2^2+x_2^2x_3+x_1x_2x_3 \\
\hphantom{{\s}_{1432}}{} = {\cal{G}}^{-}[021] + 2 \beta {\cal{G}}^{-}[121] + \beta {\cal{G}}^{-}[22] + \beta^2 {\cal{G}}^{-}[221], \\
{\s}_{3142}={\s}[201]=x_1^2x_2+x_1^2x_3 = {\cal{G}}^{-}[201] + \beta {\cal{G}}^{-}[211], \\
{\s}_{4213}={\s}[31]=x_1^3x_2 = {\cal{G}}^{-}[31], \qquad
{\s}_{3412}={\s}[22]=x_1^2x_2^2= {\cal{G}}^{-}[22] , \\
{\s}_{4132}={\s}[301]=x_1^3x_2+x_1^3x_3 = {\cal{G}}^{-}[301]+ \beta {\cal{G}}^{-}[311] , \\
{\s}_{3241}={\s}[211]=x_1^2x_2x_3 = {\cal{G}}^{-}[211], \\
{\s}_{2431}={\s}[121]=x_1^2x_2x_3+x_1x_2^2x_3 = {\cal{G}}^{-}[121] + \beta {\cal{G}}^{-}[221], \\
{\s}_{4312}={\s}[32]=x_1^3x_2^2 = {\cal{G}}^{-}[32], \qquad
{\s}_{4231}={\s}[311]=x_1^3x_2x_3 = {\cal{G}}^{-}[311], \\
{\s}_{3421}={\s}[221]=x_1^2x_2^2x_3 = {\cal{G}}^{-}[211], \qquad
{\s}_{4321}={\s}[321]=x_1^3x_2^2x_3 = {\cal{G}}^{-}[321].
\end{gather*}

\begin{Theorem}[cf.~\protect{\cite[Section 5.5]{LS7}}]\label{theorem7.1}
Each Schubert polynomial is a linear combination of $(- \beta)$-Grothendieck
polynomials with nonnegative coefficients from the ring~$\N[\beta]$.
\end{Theorem}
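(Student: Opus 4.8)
The plan is to reduce the claim to the classical $K$-theoretic inversion formula of Lascoux--Sch\"utzenberger via a bigrading argument. The first step is to observe that each $\beta$-Grothendieck polynomial $\mathcal{G}^{(\beta)}_w(X):=\mathcal{G}^{(\beta)}_w(X,0)$ is homogeneous of degree $\ell(w)$ for the bigrading in which $\deg x_i=1$ and $\deg\beta=-1$, and that its component of lowest $x$-degree (the coefficient of $\beta^0$) is exactly the Schubert polynomial $\mathfrak{S}_w(X)$. Both facts follow by induction from the defining recursion $\mathcal{G}^{(\beta)}_{ws_i}=\partial_i^{(x)}\big((1+\beta x_{i+1})\mathcal{G}^{(\beta)}_w\big)$ (valid when $w_i>w_{i+1}$): passing from $w$ to $ws_i$, the factor $\beta x_{i+1}$ preserves the bidegree while $\partial_i$ lowers the $x$-degree by one, and setting $\beta=0$ degenerates the recursion to $\mathfrak{S}_{ws_i}=\partial_i\mathfrak{S}_w$; the base case is $\mathcal{G}^{(\beta)}_{w_0}(X,0)=R_{\delta_n}(X,0)=x^{\delta_n}=\mathfrak{S}_{w_0}$, and every $w\in\mathbb{S}_n$ is obtained from $w_0$ by iterating the recursion. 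In particular $\mathcal{G}^{(-\beta)}_v=\mathfrak{S}_v+(\text{terms of }x\text{-degree}>\ell(v))$, so the family $\{\mathcal{G}^{(-\beta)}_v:v\in\mathbb{S}_n\}$ is unitriangular over $\mathbb{Z}[\beta]$ relative to the Schubert polynomials ordered by length; hence it is $\mathbb{Z}[\beta]$-linearly independent and, reading the same triangularity from the top degree down, each $\mathfrak{S}_w$ lies in its $\mathbb{Z}[\beta]$-span, so there is a unique expansion $\mathfrak{S}_w=\sum_v a_{wv}(\beta)\,\mathcal{G}^{(-\beta)}_v$ with $a_{wv}(\beta)\in\mathbb{Z}[\beta]$ and $a_{ww}=1$. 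Comparing bidegrees (both sides homogeneous of degree $\ell(w)$) forces $a_{wv}(\beta)=c_{wv}\,\beta^{\ell(v)-\ell(w)}$ for a single integer $c_{wv}$, which vanishes unless $\ell(v)\ge\ell(w)$. Thus the whole theorem amounts to the one assertion $c_{wv}\ge 0$, and, granting it, each coefficient $c_{wv}\beta^{\ell(v)-\ell(w)}$ is a monomial lying in $\mathbb{N}[\beta]$.

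The second step isolates the positivity. Specializing $\beta=1$ gives $\mathfrak{S}_w=\sum_v c_{wv}\,\mathfrak{G}_v$, where $\mathfrak{G}_v:=\mathcal{G}^{(-\beta)}_v|_{\beta=1}=\mathcal{G}^{(\beta=-1)}_v$ is the classical Grothendieck polynomial. Since the $\mathfrak{G}_v$ are $\mathbb{Z}$-linearly independent (same unitriangularity), the integers $c_{wv}$ are precisely the coefficients of the expansion of a Schubert polynomial into the Grothendieck basis, and their nonnegativity is exactly \cite[Section~5.5]{LS7} (equivalently, under $\beta\mapsto-\beta$, the statement that $\mathfrak{S}_w$ expands in the structure-sheaf-type Grothendieck polynomials $\mathcal{G}^{(\beta=+1)}_v$ with signs $(-1)^{\ell(v)-\ell(w)}$ times nonnegative integers). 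I would take this as the key input, or reprove it self-contained either by the downward ``transition'' recursion for Grothendieck polynomials of \cite{LS7}, or by a combinatorial model (set-valued tableaux, or $K$-theoretic pipe dreams in the style of Lenart) that makes the coefficients manifestly nonnegative. I expect this to be the genuine obstacle: a naive term-by-term induction on $\ell(w_0)-\ell(w)$ via $\mathfrak{S}_w=\partial_i\mathfrak{S}_{ws_i}$ does not obviously work, since it is not transparent that $\partial_i$ sends a nonnegative Grothendieck-expansion to a nonnegative one, so the required cancellations have to be organized through such a model.

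The third step is the lift: once $c_{wv}\ge 0$ is known at $\beta=1$, the unique bihomogeneous expansion from the first step reads $\mathfrak{S}_w=\sum_v c_{wv}\,\beta^{\ell(v)-\ell(w)}\,\mathcal{G}^{(-\beta)}_v$ with $c_{wv}\in\mathbb{Z}_{\ge 0}$ and $\ell(v)\ge\ell(w)$ whenever $c_{wv}\neq 0$, so every coefficient is a monomial in $\mathbb{N}[\beta]$ — which is the theorem. This matches the $n=4$ table; for instance $\mathfrak{S}[021]=\mathcal{G}^{-}[021]+2\beta\,\mathcal{G}^{-}[121]+\beta\,\mathcal{G}^{-}[22]+\beta^2\mathcal{G}^{-}[221]$, and the $\beta$-exponents $0,1,1,2$ are the length differences $\ell(v)-\ell(w)$ for $w=1432$ and $v=1432,2431,3412,3421$. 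In short, the only substantive ingredient is the classical sign-coherence of the Schubert-into-Grothendieck inversion; the first and third steps are purely bidegree bookkeeping.
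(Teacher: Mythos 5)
Your proposal is correct and in substance coincides with the paper's treatment: the paper offers no argument beyond the citation to [LS7, Section~5.5] and the $n=4$ table, and your bidegree/unitriangularity bookkeeping is exactly the right way to see that the general-$\beta$ statement is equivalent to that classical $\beta=-1$ positivity, which both you and the author import rather than reprove. You are also right to flag the inversion positivity as the genuine content — the paper's own positivity mechanism (refinement of equivalence relations on words, via the Cauchy kernel) naturally yields the opposite expansion (Grothendieck into Schubert/key polynomials with coefficients in $\N[\beta]$) and does not directly give this theorem.
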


$(2)$ Key and reduced key polynomials:
\begin{gather*}
K[0]=1={\widehat K}[0], \qquad
K[1]=x_1={\widehat K}[1], \qquad
K[01]=x_1+x_2, \qquad {\widehat K}[01]=x_2, \\
K[001]=x_1+x_2+x_3, \qquad {\widehat K}[001]=x_3, \qquad
K[2]=x_1^2={\widehat K}[2], \\
K[11]=x_1x_2={\widehat K}[11], \qquad
K[101]=x_1x_2+x_1x_3, \qquad {\widehat K}[101]=x_1x_3, \\
K[02]=x_1^2+x_1x_2+x_2^2, \qquad {\widehat K}[02]=x_1x_2+x_2^2, \qquad
K[011]=x_1x_2+x_1x_3+x_2x_3,\\ {\widehat K}[011]=x_2x_3, \qquad
K[3]=x_1^3={\widehat K}[3], \qquad
K[21]=x_1^2x_2={\widehat K}[21], \\
K[111]=x_1x_2x_3={\widehat K}[111], \qquad
K[12]=x_1^2x_2+x_1x_2^2, \qquad {\widehat K}[12] =x_1x_2^2, \\
K[021]=x_1^2x_2+x_1^2x_3+x_1x_2^2+x_2^2x_3+x_1x_2x_3, \qquad
 {\widehat K}[021]=x_1x_2x_3+x_2^2x_3,\\
K[201]=x_1^2x_2+x_1^2x_3,\qquad {\widehat K}[201]=x_1^2x_3, \qquad
K[31]=x_1^3x_2={\widehat K}[31], \\
K[22]=x_1^2x_2^2={\widehat K}[22], \qquad
K[211]=x_1^2x_2x_3={\widehat K}[211], \qquad
K[301]=x_1^3x_2+x_1^3x_3,\\
 {\widehat K}[301]=x_1^3x_3, \qquad
K[121]=x_1^2x_2x_3+x_1x_2^2x_3,\qquad {\widehat K}[121]=x_1x_2^2x_3, \\
K[32]=x_1^3x_2^2={\widehat K}[32], \qquad
K[311]=x_1^3x_2x_3={\widehat K}[311], \qquad
K[221]=x_1^2x_2^2x_3={\widehat K}[221], \\
K[321]=x_1^3x_2^2x_3={\widehat K}[321].
\end{gather*}
Note that if $n=4$, then ${\s}[\alpha]=K[\alpha]$ for all
$\alpha \subset \delta_4$, except $\alpha =(101)$ in which
${\s}[101]= K[2]+K[101]$.

$(3)$ Grothendieck and dual Grothendieck polynomials for $\beta=1$:
\begin{gather*}
{\cal G}_{1234}={\cal G}[0]= 1 = \s[0] , \qquad
 {\cal{H}}[0]=(1+x_1)^3(1+x_2)^2(1+x_3), \\
{\cal G}_{2134}={\cal G}[1]=x_1 = \s[1], \qquad
 {\cal H}[1]=(1+x_1)^2(1+x_2)^2(1+x_3){\cal G}[1], \\
{\cal G}_{1324}={\cal G}[01]=x_1+x_2+x_1x_2 =\s[01]+\s[11] , \\
 {\cal H}[01]= (1+x_1)^2(1+x_2)(1+x_3){\cal G}[01], \\
{\cal G}_{1243}={\cal G}[001]=x_1+x_2+x_3+x_1x_2+x_1x_3+x_2x_3+
x_1x_2x_3 \\
\hphantom{{\cal G}_{1243}}{}
= \s[001]+\s[011]+\s[111], \\
 {\cal H}[001]=(1+x_1)^2(1+x_2){\cal G}[001], \\
{\cal G}_{3124}={\cal G}[2]=x_1^2 = \s[2] , \qquad
 {\cal H}[2]= (1+x_1)(1+x_2)^2(1+x_3){\cal G}[2], \\
{\cal G}_{2314}={\cal G}[11]=x_1x_2 = \s[11], \qquad
 {\cal H }[11]=
(1+x_1)^2(1+x_2)(1+x_3){\cal G}[11], \\
{\cal G}_{2143}={\cal G}[101]=x_1^2+x_1x_2+x_1x_3+x_1^2x_2+x_1^2x_3+
x_1x_2x_3+x_1^2x_2x_3 \\
\hphantom{{\cal G}_{2143}}{} = \s[101]+\s[201]+\s[111]+\s[211] , \\
 {\cal H}[101] = (1+x_1)(1+x_2){\cal G}[101], \\
{\cal G}_{1342}={\cal G}[011]=x_1x_2+x_1x_3+x_2x_3+2 x_1x_2x_3 = \s[011]+ 2
\s[111], \\
 {\cal H}[011]=(1+x_1)^2(1+x_2)(x_1x_2+x_1x_3+x_2x_3+x_1x_2x_3), \\
{\cal G}_{1423}={\cal G}[02]=x_1^2+x_1x_2+x_2^2 +x_1^2x_2+x_1x_2^2 = \s[02]+\s[12], \\
 {\cal H}[02]=(1+x_1)(1+x_3)
(x_1^2+x_1x_2+x_2^2+2 x_1^2x_2+2 x_1x_2^2+x_1^2x_2^2), \\
{\cal G}_{4123}={\cal G}[3]=x_1^3 = \s[3], \qquad
 {\cal H}[3]=
(1+x_1)^2(1+x_3){\cal G}[3], \\
{\cal G}_{3214}={\cal G}[21]=x_1^2x_2 = \s[21], \qquad
 {\cal H}[21]=
(1+x_1)(1+x_2)(1+x_3){\cal G}[21], \\
{\cal G}_{2341}={\cal G}[111]=x_1x_2x_3 = \s[111], \qquad
 {\cal H}[111]=
(1+x_1)^2(1+x_2){\cal G}[111], \\
{\cal G}_{2413}={\cal G}[12]=x_1^2x_2+x_1x_2^2+x_1^2x_2^2 = \s[12]+
\s[22],\\
 {\cal H}[12]=
(1+x_1)(1+x_3){\cal G}[12], \\
{\cal G}_{1432}={\cal G}[021]=
x_1^2x_2+x_1^2x_3+x_1x_2^2+x_2^2x_3+x_1x_2x_3+2 x_1x_2x_3(x_1+x_2)\\
\hphantom{{\cal G}_{1432}={\cal G}[021]=}{} +x_1^2x_2^2+
x_1^2x_2^2x_3 = \s[021]+ 2 \s[121]+ \s[22]+ \s[211], \\
 {\bf {\cal H}}[021]=(1+x_1){\cal G}[021], \\
{\cal G}_{3142}={\cal G}[201]=x_1^2x_2+x_1^2x_3+x_1^2x_2x_3 =\s[201]+
\s[211], \\
 {\cal H}[201]=(1+x_1)(1+x_2){\cal G}[201], \\
{\cal G}_{4213}={\cal G}[31]=x_1^3x_2 = \s[31], \qquad
 {\cal H}[31]=(1+x_2)(1+x_3){\cal G}[31], \\
{\cal G}_{3412}={\cal G}[22]=x_1^2x_2^2 = \s[22], \qquad
 {\cal H}[22]=(1+x_1)(1+x_3){\cal G}[22], \\
{\cal G}_{4132}={\cal G}[301]=x_1^3x_2+x_1^3x_3+x_1^3x_2x_3 = \s[301]+
\s[311], \\
 {\cal H}[301]=
(1+x_2){\cal G}[301], \\
{\cal G}_{3241}={\cal G}[211]=x_1^2x_2x_3 = \s[211], \qquad
 {\cal H}[211]=(1+x_2){\cal G}[211], \\
{\cal G}_{2431}={\cal G}[121]=x_1^2x_2x_3+x_1x_2^2x_3+x_1^2x_2^2x_3 = \s[121] + \s[221], \\
 {\cal H}[121]=(1+x_1)(1+x_2){\cal G}[121], \\
{\cal G}_{4312}={\cal G}[32]=x_1^3x_2^2 = \s[32], \qquad
 {\cal H}[32]= (1+x_3){\cal G}[32], \\
{\cal G}_{4231}={\cal G}[311]=x_1^3x_2x_3 = \s[311], \qquad
 {\cal H}[311]=
(1+x_2){\cal G}[311], \\
{\cal G}_{3421}={\cal G}[221]=x_1^2x_2^2x_3 = \s[221], \qquad
 {\cal H}[221]=
(1+x_1){\cal G}[221], \\
{\cal G}_{4321}={\cal G}[321]=x_1^3x_2^2x_3= \s[321] = {\cal H}[321].
\end{gather*}
Clearly that any $\beta$-Grothendieck polynomial is a linear combination of
Schubert polynomials with coef\/f\/icients from the ring~$ \N[\beta]$.

$(4)$ Key and reduced key Grothendieck polynomials:
\begin{gather*}
\operatorname{KG}[0] = 1 = {\widehat {\operatorname{KG}}}[0], \qquad
\operatorname{KG}[1] = x_1= {\widehat {\operatorname{KG}}}[1], \qquad
\operatorname{KG}[01] = x_1+x_2+x_1x_2,\\
 {\widehat {\operatorname{KG}}}[01] = x_2+x_1x_2, \qquad
\operatorname{KG}[001] = x_1+x_2+x_3+x_1x_2+x_1x_3+x_2x_3+x_1x_2x_3,\\
 {\widehat {\operatorname{KG}}}[001] = x_3+x_1x_3+x_2x_3+x_1x_2x_3, \qquad
\operatorname{KG}[2] = x_1^2 = {\widehat {\operatorname{KG}}}[2], \\
\operatorname{KG}[11] = x_1x_2 = {\widehat {\operatorname{KG}}}[11], \qquad
\operatorname{KG}[101] = x_1x_2+x_1x_3+x_1x_2x_3,\\
 {\widehat {\operatorname{KG}}}[101] =
x_1x_3+x_1x_2x_3, \qquad
\operatorname{KG}[02] = x_1^2+x_1x_2+x_2^2+x_1^2x_2+x_1x_2^2,\\
 {\widehat {\operatorname{KG}}}[02] = x_1x_2+x_2^2+x_1^2x_2+x_1x_2^2, \\
\operatorname{KG}[011] = x_1x_2+x_1x_3+x_2x_3+2 x_1x_2x_3, \qquad {\widehat {\operatorname{KG}}}[011] =
x_2x_3+x_1x_2x_3, \\
\operatorname{KG}[3] = x_1^3 = {\widehat {\operatorname{KG}}}[3], \qquad
\operatorname{KG}[21] = x_1^2x_2 = {\widehat {\operatorname{KG}}}[21], \qquad
\operatorname{KG}[111] = x_1x_2x_3 = {\widehat {\operatorname{KG}}}[111], \\
\operatorname{KG}[12] = x_1^2x_2+x_1x_2^2+x_1^2x_2^2, \qquad {\widehat {\operatorname{KG}}}[12] =
x_1x_2^2+x_1^2x_2^2, \\
\operatorname{KG}[201] = x_1^2x_2+x_1^2x_3+x_1^2x_2x_3,\qquad {\widehat {\operatorname{KG}}}[201] =
x_1^2x_3+x_1^2x_2x_3, \\
\operatorname{KG}[021] = x_1^2x_2+x_1^2x_3+x_1x_2^2+x_1x_2x_3+x_2^2x_3+2 x_1^2x_2x_3+
2 x_1x_2^2x_3+x_1^2x_2^2+x_1^2x_2^2x_3, \\
 {\widehat {\operatorname{KG}}}[021] = x_1x_2x_3+x_2^2x_3+x_1^2x_2x_3+2 x_1x_2^2x_3+
x_1^2x_2^2x_3, \\
\operatorname{KG}[31] = x_1^3x_2 = {\widehat {\operatorname{KG}}}[31], \qquad
\operatorname{KG}[22] = x_1^2x_2^2 = {\widehat {\operatorname{KG}}}[22], \\
\operatorname{KG}[211] = x_1^2x_2x_3 = {\widehat {\operatorname{KG}}}[211], \qquad
\operatorname{KG}[301] = x_1^3x_2+x_1^3x_3+x_1^3x_2x_3,\\
 {\widehat {\operatorname{KG}}}[301] =
x_1^3x_3+x_1^3x_2x_3, \qquad
\operatorname{KG}[121] = x_1^2x_2x_3+x_1x_2^2x_3+x_1^2x_2^2x_3, \\
{\widehat {\operatorname{KG}}}[121] =
x_1x_2^2x_3+x_1^2x_2^2x_3, \qquad
\operatorname{KG}[32] = x_1^3x_2^2 = {\widehat {\operatorname{KG}}}[32], \\
\operatorname{KG}[311] = x_1^3x_2x_3 = {\widehat {\operatorname{KG}}}[311], \qquad
\operatorname{KG}[221] = x_1^2x_2^2x_3 = {\widehat {\operatorname{KG}}}[221], \\
\operatorname{KG}[321] = x_1^3x_2^2x_3 = {\widehat {\operatorname{KG}}}[321].
\end{gather*}

$(5)$ $42$ (deformed) double key polynomials for $n=4$:
\begin{gather*}
{\cal K}_{\rm id}= 1, \qquad
{\cal K}_{1}= p_{1,1}, \qquad
{\cal K}_{2}= p_{1,2}+p_{2,1}, \qquad
{\cal K}_{3}= p_{1,3}+p_{2,2}+p_{3,1}, \\
{\cal K}_{12}= p_{1,1} p_{2,1}, \qquad
{\cal K}_{21}= p_{1,2} p_{1,1}, \qquad
{\cal K}_{23}= p_{1,2} p_{2,2}+p_{1,2} p_{3,1}+p_{2,1} p_{3,1}, \\
{\cal K}_{32}= p_{1,3} p_{1,2}+p_{1,3} p_{2,1}+p_{2,2} p_{2,1}, \qquad
{\cal K}_{13}= p_{1,1} p_{2,2}+p_{1,1} p_{3,1}, \qquad
{\cal K}_{31}= p_{1,3} p_{1,1}, \\
{\cal K}_{22}= p_{1,2} p_{2,1}, \qquad
{\cal K}_{33}= p_{1,3} p_{2,2}+p_{1,3} p_{3,1}+p_{2,2} p_{3,1}, \qquad
{\cal K}_{123}= p_{1,1} p_{2,1} p_{3,1}, \\
{\cal K}_{133}= p_{1,1} p_{2,2} p_{3,1}, \qquad
{\cal K}_{212}= p_{1,2} p_{1,1} p_{2,1}, \qquad
{\cal K}_{213}= p_{1,2} p_{1.1} p_{2,2} +p_{1,2} p_{1,1} p_{3,1}, \\
{\cal K}_{223}= p_{1,2} p_{2,1} p_{3,1}, \qquad
{\cal K}_{233}= p_{1,2} p_{2,2} p_{3,1}, \qquad
{\cal K}_{321}= p_{1,3} p_{1,2} p_{1,1}, \\
{\cal K}_{312}= p_{1,3} p_{1,1} p_{2,1}+\boldsymbol{q_{13}^{-1}} p_{1,1} p_{2,2} p_{2,1}, \qquad
{\cal K}_{313}= p_{1,3} p_{1,1} p_{2,2}+p_{1,3} p_{1,1} p_{3,1}, \\
{\cal K}_{322}= p_{1,3} p_{1,2} p_{2,1}+\boldsymbol{q_{23}^{-1}} p_{1,2} p_{2,2} p_{2,1}, \\
{\cal K}_{323}= p_{1,3} p_{1,2} p_{2,2}+p_{1,3} p_{1,2} p_{3,1}+p_{1,3} p_{2,1} p_{3,1}+p_{2,2} p_{2,1} p_{3,1}+\boldsymbol{q_{23}} p_{1,3} p_{2,2} p_{2,1} \\
{\cal K}_{333}= p_{1,3} p_{2,2} p_{3,1}, \qquad
{\cal K}_{2123}= p_{1,2} p_{1,1} p_{2,1} p_{3,1}, \qquad
{\cal K}_{2132}= p_{1,2} p_{1,1} p_{2,2} p_{2,1}, \\
{\cal K}_{2133}= p_{1,2} p_{1,1} p_{2,2} p_{3,1}, \qquad
{\cal K}_{3123}= p_{1,3} p_{1,1} p_{2,1} p_{3,1}+\boldsymbol{q_{13}^{-1}} p_{1,1} p_{2,2} p_{2,1} p_{3,1}, \\
{\cal K}_{3132}= p_{1,3} p_{1,1} p_{2,2} p_{2,1}, \qquad
{\cal K}_{3133}= p_{1,3} p_{1,2} p_{2,2} p_{3,1}, \qquad
{\cal K}_{3212}= p_{1,3} p_{1,2} p_{1,1} p_{2,1}, \\
{\cal K}_{3213}= p_{1,3} p_{1,2} p_{1,1} p_{2,2}+p_{1,3} p_{1,2} p_{1,1} p_{3,1}, \qquad
{\cal K}_{3223}= p_{1,3} p_{1,2} p_{2,1} p_{3,1}+\boldsymbol{q_{23}^{-1}} p_{1,2} p_{2,2} p_{2,1} p_{3,1}, \\
{\cal K}_{3232}= p_{1,3} p_{1,2} p_{2,2} p_{2,1}, \qquad
{\cal K}_{3233}= p_{1,3} p_{1,2} p_{2,2} p_{3,1}+\boldsymbol{q_{23}} p_{1,3} p_{2,2} p_{2,1} p_{3,1}, \\
{\cal K}_{21323}= p_{1,2} p_{1,1} p_{2,2} p_{2,1} p_{3,1}, \qquad
{\cal K}_{31323}= p_{1,3} p_{1,1} p_{2,2} p_{2,1} p_{3,1}, \\
{\cal K}_{32123}= p_{1,3} p_{1,2} p_{1,1} p_{2,1} p_{3,1}, \qquad
{\cal K}_{32132}= p_{1,3} p_{1,2} p_{1,1} p_{2,2} p_{2,1}, \\
{\cal K}_{32133}= p_{1,3} p_{1,2} p_{1,1} p_{2,2} p_{3,1}, \qquad
{\cal K}_{32323}= p_{1,3} p_{1,2} p_{2,2} p_{2,1} p_{3,1}, \\
{\cal K}_{321323}= p_{1,3} p_{1,2} p_{1,1} p_{2,2} p_{2,1} p_{3,1}.
\end{gather*}

\begin{Theorem}[cf.~\protect{\cite[the case $\beta=-1$]{L2}}]\label{theorem7.2}
Each double $\beta$-Grothendieck polynomials is a linear combination of double
 key polynomials with the coefficients from the ring~$\N[\beta]$.
\end{Theorem}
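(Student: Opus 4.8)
This is the double, $\beta$-tracked companion of Theorem~\ref{theorem5.31} and of Corollary~\ref{cor5.36}(2); for $\beta=-1$ it is the statement of~\cite{L2}, and it is the assertion flagged in the Introduction (``each double $b$-Grothendieck polynomial is a positive linear combination of the key-Grothendieck polynomials''). The plan is to obtain it by composing two Cauchy-kernel facts, keeping careful track of the powers of $\beta$.

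\emph{Step 1 (idplactic $\to$ idCoxeter).} The double $\beta$-Grothendieck polynomials ${\cal G}_w^{\beta}(X,Y)$, $w\in\mathbb{S}_n$, are the coefficients of the basis $\{u_w\}$ in the expansion, inside the idCoxeter algebra ${\cal IC}_n^{(\beta)}$, of the ($\beta$-deformed) plactic Cauchy kernel; this is the general-$\beta$ form of Theorems~\ref{theorem5.34} and~\ref{theorem5.35}, and follows from the computation with the operators $A_i(z)$ carried out in the Introduction (taken with $a=0$, $b=\beta$). The same kernel, expanded instead in the idplactic algebra ${\cal IP}_n^{(\beta)}$, has the double key-Grothendieck polynomials ${\cal GK}_T^{(\beta)}(X,Y)$, $T\in{\cal D}_n$, as coefficients — the general-$\beta$ form of Theorem~\ref{theorem5.29}. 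Since ${\cal IC}_n^{(\beta)}$ is the quotient of ${\cal IP}_n^{(\beta)}$ by the locality relations $u_iu_j=u_ju_i$ for $|i-j|\ge2$ (Definition~\ref{def2.21}), the idplactic equivalence refines the idCoxeter equivalence; pushing the idplactic expansion through the quotient map and applying the refinement principle of the Introduction (each class of the coarser relation is a disjoint union of classes of the finer one) gives
\[
 {\cal G}_w^{\beta}(X,Y)\;=\;\sum_{\substack{T\in{\cal D}_n\\ w(T)\equiv w}}{\cal GK}_T^{(\beta)}(X,Y),
\]
all coefficients being $1$; for $\beta=1$ this is Corollary~\ref{cor5.36}(2). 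Moreover each ${\cal GK}_T^{(\beta)}(X,Y)$ lies in $\N[\beta][X,Y]$, since every idplactic rewriting rule is a monomial identity with coefficient in $\{1,\beta\}$ — the only non-multiplicity-free relation being $u_i^2=\beta u_i$ — so no cancellation occurs in the reduction to the tableau-word basis.

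\emph{Step 2 (key-Grothendieck $\to$ key).} It remains to write each ${\cal GK}_T^{(\beta)}(X,Y)$ as an $\N[\beta]$-linear combination of double key polynomials ${\cal K}_{T'}(X,Y)$; this is the double, $\beta$-parametrised form of Theorem~\ref{theorem5.31}. Granting it, substitution into the display above expresses ${\cal G}_w^{\beta}(X,Y)$ as an $\N[\beta]$-combination of double key polynomials, which is the claim. (An alternative route avoids the key-Grothendieck polynomials: the double analogue of the positivity remark in Appendix~\ref{section7.1} writes ${\cal G}_w^{\beta}(X,Y)$ as an $\N[\beta]$-combination of double Schubert polynomials ${\s}_v(X,Y)$, and Corollary~\ref{cor5.33}(2) writes each ${\s}_v(X,Y)$ as an $\N$-combination of double key polynomials; composing again gives the result. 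This route, however, rests on a Monk/Pieri-type positivity for double Schubert polynomials that I would prefer not to assume.)

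\emph{Main obstacle.} The crux is Step 2: Theorem~\ref{theorem5.31} is recorded only for one alphabet, so one must upgrade it to two alphabets and, at the same time, to an $\N[\beta]$- rather than merely a $\Z[\beta]$-statement. I would argue this by the method already used for Theorem~\ref{theorem5.31}, applied to the two-variable Cauchy kernel: compare the expansion of the plain kernel $\prod_{i,j}(1+p_{ij}u_j)$ in the plactic algebra ${\cal P}_n$ (Theorem~\ref{theorem5.10}), whose coefficients are the ${\cal K}_T(\mathfrak{P})$, with its expansion in ${\cal IP}_n^{(\beta)}$, whose coefficients are the ${\cal GK}_T^{(\beta)}(\mathfrak{P})$, using that tableau words form a monomial basis of both algebras; the task is to read off the transition coefficients and verify that they belong to $\N[\beta]$ — in particular to identify, for a given $T$, exactly which ${\cal K}_{T'}$ occur and with what $\beta$-weight. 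A subsidiary point requiring care is pinning down the one-parameter family of $\beta$-deformed Cauchy kernels that interpolates between Theorems~\ref{theorem5.34} ($\beta=1$) and~\ref{theorem5.35} ($\beta=-1$), together with the sign convention in the relation $u_i^2=\pm\beta u_i$.
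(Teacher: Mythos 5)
Your two-step reduction is the route the paper itself takes: Step~1 is precisely the refinement argument sketched in the Introduction and recorded as Corollary~\ref{cor5.36}(2) (the idCoxeter algebra is a quotient of the idplactic algebra by the locality relations, so each idCoxeter class is a disjoint union of idplactic classes), and that part is sound.

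The genuine gap is Step~2, and the method you propose for it --- another kernel comparison between ${\cal P}_n$ and ${\cal IP}_n^{(\beta)}$ in the spirit of the refinement principle --- cannot work as described, because neither algebra is a quotient of the other: the plactic relations (PL2) read $u_iu_ju_i=u_ju_iu_i$ and $u_ju_iu_j=u_ju_ju_i$ for $i<j$, whereas the idplactic algebra imposes $u_iu_ju_i=u_ju_iu_j$ and $u_i^2=\beta u_i$ instead, and neither set of relations implies the other. Concretely, the words $3232$ and $3322$ are plactic equivalent, yet $3232$ lies in the idplactic class of the tableau word $323$ while $3322$ collapses to $\beta^2u_3u_2$; so the idplactic class $IR(T)$ is \emph{not} a union of plactic classes $R(T')$, and the identity ${\cal GK}_T=\sum_{T'}{\cal K}_{T'}$ visible in Appendix item~(6) holds only because the discrepancy words (here $3322$) happen to admit no compatible sequences. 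Establishing that this always happens --- equivalently, exhibiting the matching of compatible-sequence sets that underlies Theorem~\ref{theorem5.31} and its double, $\beta$-weighted version --- is the actual content of the theorem, and neither your proposal nor the paper (which only tabulates the $n=4$ case and appeals to the refinement principle, which covers Step~1 alone) supplies it. Your alternative route via double Schubert polynomials has the same status, as you yourself note. So the architecture is right and matches the paper's, but the decisive combinatorial step is not closed by either route.
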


Let us remind that the total number of double key polynomials is equal to the
number of alternating sign matrices. We expect that the interrelations between
double key polynomials which follow from the structure of the plactic algebra
 ${\cal{PC}}_n$, see Section~\ref{section5.1}, can be identif\/ied with the graph
corresponding to the MacNeile completion of the poset associated with the
Bruhat order on the symmetric group ${\mathbb{S}}_n$, see Section~\ref{section7.2} for a~def\/inition of the MacNeile completion. It is an interesting problem to
describe interrelation graph associated with the (rectangular) key polynomials corresponding to the Cauchy kernel for the algebra~${\cal{PF}}_{n,m}$.

$(6)$ $26$ double key Grothendieck polynomials for $n=4$:
\begin{gather*}
{\cal {GK}}_{\rm id}=1, \qquad
{\cal {GK}}_{1}= p_{1,1} ={\cal K}_{1} \qquad
{\cal {GK}}_{2}= p_{1,2}+p_{2,1}+p_{1,2} p_{2,1}= {\cal K}_{2}+ {\cal K}_{22}, \\
{\cal {GK}}_{3}= p_{1,3}+p_{1,2}+p_{3,1}+ p_{1,3} p_{2,2}+p_{1,3} p_{3,1}+
p_{2,2} p_{3,1}+ p_{1,3} p_{2,2} p_{3,1} = {\cal K}_3+{\cal K}_{33}+{\cal K}_{333}, \\
{\cal {GK}}_{12}= p_{1,1} p_{2,1}={\cal K}_{12}, \qquad
{\cal {GK}}_{21}= p_{2,1} p_{1,1} = {\cal K}_{21}, \\
{\cal {GK}}_{13}= p_{1,1} p_{2,2}+p_{1,1} p_{3,1}+p_{1,1} p_{2,2} p_{3,1} =
{\cal K}_{13}+{\cal K}_{133}, \qquad
{\cal {GK}}_{31}= p_{3,1} p_{1,1} = {\cal K}_{31}, \\
{\cal {GK}}_{23}= p_{1,1} p_{2,2}+p_{1,2} p_{3,1}+p_{2,1} p_{3,1}+p_{1,2} p_{2,1} p_{3,1}+p_{1,2} p_{2,2} p_{3,1} = {\cal K}_{23}+{\cal K}_{223}+{\cal K}_{233}, \\
{\cal {GK}}_{32}= p_{1,3} p_{1,2}+p_{1,3} p_{2,1}+p_{2,2} p_{2,1}+p_{1,3} p_{1,2} p_{2,1}+p_{1,2} p_{2,1} p_{2,2}={\cal K}_{32}+{\cal K}_{322}, \\
{\cal {GK}}_{123}= p_{1,1} p_{2,1} p_{3,1}= {\cal K}_{123}, \qquad
{\cal {GK}}_{212}= p_{1,2} p_{1,1} p_{2,1} = {\cal K}_{212}, \\
{\cal {GK}}_{213}= p_{1,2} p_{1,1} p_{2,2}+ p_{2,1} p_{1,1} p_{3,1} +
p_{1,2} p_{1,1} p_{2,2} p_{3,1} = {\cal K}_{213}+{\cal K}_{2133}, \\
{\cal {GK}}_{312}= p_{1,3} p_{1,1} p_{2,1}+p_{1,1} p_{2,2} p_{2,1}+
p_{1,2} p_{1,1} p_{2,2} p_{2,1}={\cal K}_{312} +{\cal K}_{2132}, \\
{\cal {GK}}_{313}= p_{1,3} p_{1,1} p_{2,2}+p_{1,3} p_{1,1} p_{3,1}+
p_{1,3} p_{1,1} p_{2,2} p_{3,1} = {\cal K}_{313}+{\cal K}_{3133}, \\
{\cal {GK}}_{321}= p_{1,1} p_{1,2} p_{1,3} = {\cal K}_{123}, \\
{\cal {GK}}_{323}= p_{1,3} p_{1,2} p_{2,2}+p_{1,3} p_{1,2} p_{3,1}+
p_{1,3} p_{2,1} p_{3,1}+p_{2,2} p_{2,1} p_{3,1}
+p_{1,3} p_{2,2} p_{2,1} \\
\hphantom{{\cal {GK}}_{323}=}{}
+p_{1,3} p_{1,2} p_{2,1} p_{2,2}+p_{1,2} p_{2,1} p_{2,2} p_{3,1}+
p_{1,3} p_{1,2} p_{2,2} p_{3,1}+p_{1,3} p_{1,2} p_{2,1} p_{3,1} \\
\hphantom{{\cal {GK}}_{323}=}{}
+p_{1,2} p_{2,2} p_{2,1} p_{3,1}+p_{1,3} p_{1,2} p_{2,2} p_{2,1} p_{3,1} =
{\cal K}_{323}+{\cal K}_{3232}+{\cal K}_{3233}+{\cal K}_{3223}+{\cal K}_{32323}, \\
{\cal {GK}}_{2123}= p_{1,2} p_{1,1} p_{2,1} p_{3,1} = {\cal K}_{2123}, \qquad
{\cal {GK}}_{2132}= p_{1,2} p_{1,1} p_{2,2} p_{2,1} ={\cal K}_{2132}, \\
{\cal {GK}}_{3123}= p_{1,3} p_{1,1} p_{2,1} p_{3,1} p_{1,1} p_{2,2} p_{2,1} p_{3,1}+p_{1,3} p_{1,1} p_{2,2} p_{2,1} p_{3,1}={\cal K}_{3123}+
{\cal K}_{31323}, \\
{\cal {GK}}_{3212}= p_{1,3} p_{1,2} p_{1,1} p_{2,1} = {\cal K}_{3212}, \\
{\cal {GK}}_{3213}= p_{1,3} p_{1,2} p_{1,1} p_{2,2}+p_{1,3} p_{1,2} p_{1,1}
p_{3,1}+p_{1,3} p_{1,2} p_{1,1} p_{2,2} p_{3,1} = {\cal K}_{3213}+
{\cal K}_{32133}, \\
{\cal {GK}}_{21323}= p_{1,2} p_{1.1} p_{2,2} p_{2,1} p_{3,1}={\cal K}_{21323}, \qquad
{\cal {GK}}_{32123}= p_{1,3} p_{1,2} p_{1,1} p_{2,1} p_{31} ={\cal K}_{32123}, \\
{\cal {GK}}_{32132}= p_{1,3} p_{1,2} p_{1,1} p_{2,2} p_{2,1} ={\cal K}_{32132}, \qquad
{\cal {GK}}_{321323}= p_{3,1} p_{2,1} p_{1,1} p_{2,2} p_{2,1} p_{3,1} ={\cal K}_{321323}.
\end{gather*}

$(7)$ $14$ double local key polynomials for $n=4$:
\begin{gather*}
{\cal{LK}}_{\rm id}=1, \qquad
{\cal{LK}}_{1}={\cal{K}}_{1}, \qquad
{\cal{LK}}_{2}={\cal{K}}_{2}, \qquad
{\cal{LK}}_{3}={\cal{K}}_{3}, \qquad
{\cal{LK}}_{12}={\cal{K}}_{12}, \\
{\cal{LK}}_{21}={\cal{K}}_{21} + {\cal{K}}_{212}, \qquad
{\cal{LK}}_{13}={\cal{K}}_{13} + {\cal{K}}_{31}+ {\cal{K}}_{313}, \qquad
{\cal{LK}}_{23}={\cal{K}}_{23}, \\
{\cal{LK}}_{32}= {\cal{K}}_{32} + {\cal{K}}_{323}, \qquad
{\cal{LK}}_{123}={\cal{K}}_{123}, \qquad
{\cal{LK}}_{213}={\cal{K}}_{213} + {\cal{K}}_{2123}, \\
{\cal{LK}}_{312}={\cal{K}}_{312} + {\cal{K}}_{3123}, \qquad
{\cal{LK}}_{321}={\cal{K}}_{321} + {\cal{K}}_{3212}+ {\cal{K}}_{3213}+ {\cal{K}}_{32123}+ {\cal{K}}_{32132}+ {\cal{K}}_{321323}, \\
{\cal{LK}}_{2132}={\cal{K}}_{2132} + {\cal{K}}_{21323}.
\end{gather*}

$(8)$ $35$ $(2,3)$-key polynomials:
\begin{gather*}
 U_{\rm id} =1, \qquad
U_{1} = p_{11}+ p_{23}, \qquad
U_{2} = p_{12}+ p_{21}, \qquad
U_{3} = p_{13}+ p_{22}, \qquad
U_{11} = p_{11} p_{23}, \\
U_{12} = p_{11} p_{21}, \qquad
U_{13} = p_{11} p_{22}, \qquad
U_{21} = p_{12} p_{11} + p_{12} p_{23} + p_{21} p_{23}, \qquad
U_{23} = p_{12} p_{22}, \\
U_{22} = p_{12} p_{21}, \qquad
U_{31} = p_{13} p_{11} + p_{13} p_{23} + p_{22} p_{23}, \qquad
U_{32} = p_{13} p_{12} + p_{13} p_{21} + p_{22} p_{21}, \\
U_{33} = p_{13} p_{22}, \qquad
U_{211} = p_{12} p_{11} p_{23} + p_{11} p_{21} p_{23},\qquad
U_{212} = p_{12} p_{11} p_{21} + p_{12} p_{21} p_{23}, \\
U_{213} = p_{12} p_{11} p_{22} + p_{12} p_{22} p_{23},\qquad
U_{311} = p_{13} p_{11} p_{23} + p_{11} p_{22} p_{23},\\
U_{312} = p_{13} p_{11} p_{21} + p_{11} p_{22} p_{21},\qquad
U_{313} = p_{13} p_{11} p_{22} + p_{13} p_{22} p_{23},\\
U_{321} = p_{13} p_{12} p_{11} + p_{13} p_{12} p_{23} + p_{13} p_{21} p_{23}
+ p_{22} p_{21} p_{23},\qquad
U_{322} = p_{13} p_{12} p_{21} + p_{12} p_{22} p_{21},\\
U_{323} = p_{13} p_{12} p_{22} + p_{13} p_{22} p_{21},\qquad
U_{2121} = p_{12} p_{11} p_{21} p_{23},\qquad
U_{2131} = p_{12} p_{11} p_{22} p_{23},\\
U_{2132} = p_{12} p_{11} p_{22} p_{23}, \qquad
U_{3132} = p_{13} p_{11} p_{22} p_{23}, \qquad
U_{3131} = p_{13} p_{11} p_{22} p_{23},\\
U_{3232} = p_{13} p_{21} p_{22} p_{23}, \qquad
U_{3211} = p_{13} p_{12} p_{11} p_{23} + p_{13} p_{11} p_{21} p_{23}
+p_{11} p_{22} p_{21} p_{23},\\
U_{3212} = p_{13} p_{12} p_{11} p_{22} + p_{13} p_{12} p_{21} p_{23}
+p_{12} p_{21} p_{22} p_{23},\\
U_{3213} = p_{13} p_{12} p_{11} p_{21} + p_{13} p_{22} p_{21} p_{23} + p_{12} p_{22} p_{21} p_{23},\\
U_{32121} = p_{12} p_{11} p_{22} p_{21} p_{23} + p_{13} p_{12} p_{11} p_{21} p_{23},\\
U_{32131} = p_{13} p_{12} p_{11} p_{22} p_{23} + p_{13} p_{11} p_{22} p_{21} p_{23},\\
U_{32132} = p_{13} p_{12} p_{11} p_{22} p_{21} + p_{13} p_{12} p_{22} p_{21} p_{23},\qquad
U_{321321} = p_{13} p_{12} p_{11} p_{22} p_{21} p_{23}.
\end{gather*}

$(9)$ Polynomials ${\cal{KN}}_{w}: ={\cal{KN}}_{w}^{(\beta,\alpha)} (1)$ for $n =4$:
\begin{gather*}
{\cal{KN}}_{\rm id} = 1, \qquad
{\cal{KN}}_{1}={\cal{KN}}_{2}= {\cal{KN}}_{3} = \beta+1+\alpha \beta, \\
{\cal{KN}}_{12} = 1 + 2 \alpha + \alpha^2 + 3 \alpha \beta + 3 \alpha^2 \beta + \alpha \beta^2 + 2 \alpha^2 \beta^2,\quad (13),\\
{\cal{KN}}_{21} = 2 + 3 \alpha + \alpha^2 + \beta + 3\alpha \beta +
2 \alpha^2 \beta + \alpha^2 \beta^2, \quad (13),\\
{\cal{KN}}_{13} = 1 + 2 \alpha + \alpha^2 + 2 \alpha \beta + 2 \alpha^2 \beta + \alpha^2 \beta^2 =
(1 + \alpha + \alpha \beta)^2, \quad (9),\\
{\cal{KN}}_{23} = {\cal{KN}}_{12},\qquad
{\cal{KN}}_{32} = {\cal{KN}}_{21},\\
{\cal{KN}}_{132} = 2 + 5 \alpha + 4 \alpha^2 + \alpha^3 + \beta + 7 \alpha \beta + 10 \alpha^2 \beta + 4 \alpha^3 \beta + 2 \alpha\beta^2 + 7 \alpha^2 \beta^2 + 5 \alpha^3 \beta^2 + \alpha^2 \beta^3 \\
\hphantom{{\cal{KN}}_{132} =}{}
+ 2 \alpha^3 \beta^3 =
(1 + \alpha + \alpha \beta)\big(2 + 3 \alpha + \alpha^2 + \beta + 4 \alpha \beta
+ 3 \alpha^2 \beta + \alpha \beta^2 + 2 \alpha^2 \beta^2\big),\quad \!\! (51),\\
{\cal{KN}}_{121} = 1 + 3 \alpha + 3 \alpha^2 + \alpha^3 + 4 \alpha \beta + 7 \alpha^2 \beta + 3 \alpha^3 \beta + \alpha \beta^2 + 4 \alpha^2\beta^2 +
3 \alpha^3 \beta^2 + \alpha^3 \beta^3, \quad\!\!\! (31),\\
{\cal{KN}}_{321} = 5 + 10 \alpha + 6 \alpha^2 + \alpha^3 + 5 \beta +
14 \alpha \beta + 12 \alpha^2 \beta + 3 \alpha^3 \beta + \beta^2 +
4 \alpha \beta^2 + 6 \alpha^2 \beta^2 \\
\hphantom{{\cal{KN}}_{321} =}{}
+ 3 \alpha^3 \beta^2 + \alpha^3 \beta^3, \quad (71),\\
{\cal{KN}}_{232} = {\cal{KN}}_{121},\\
{\cal{KN}}_{123} = 1 + 3 \alpha + 3 \alpha^2 + \alpha^3 + 6 \alpha \beta +
12 \alpha^2 \beta + 6 \alpha^3 \beta + 4 \alpha \beta^2 + 14 \alpha^2 \beta^2 + 10 \alpha^3 \beta^2 + \alpha \beta^3 \\
\hphantom{{\cal{KN}}_{123} =}{}
+ 5 \alpha^2 \beta^3 + 5 \alpha^3 \beta^3 = \beta^3 \alpha^3 {\cal{KN}}_{321}^{(\beta^{-1},\alpha^{-1})}(1),\quad (71),\\
{\cal{KN}}_{213} = (\alpha \beta)^3 {\cal{KN}}_{132}^{(\alpha^{-1},\beta^{-1})}, \\
{\cal{KN}}_{3121} = 3 + 10 \alpha + 12 \alpha^2 + 6 \alpha^3 + \alpha^4 + 2 \beta + 16 \alpha \beta + 29 \alpha^2 \beta + 19 \alpha^3 \beta +
4 \alpha^4 \beta + 7 \alpha \beta^2\\
\hphantom{{\cal{KN}}_{3121} =}{}
+ 21 \alpha^2 \beta^2 +
20 \alpha^3 \beta^2 + 6 \alpha^4 \beta^2 + \alpha \beta^3 + 4 \alpha^2 \beta^3 + 7 \alpha^3 \beta^3 + 4 \alpha^4 \beta^3 + \alpha^4 \beta^4, \quad (173),\\
{\cal{KN}}_{2321} = (\alpha \beta)^4 {\cal{KN}}_{3121}^{(\alpha^{-1},\beta^{-1})},\\
{\cal{KN}}_{1213} =1 + 4 \alpha + 6 \alpha^2 + 4 \alpha^3 + \alpha^4 + 7 \alpha \beta + 20 \alpha^2 \beta + 19 \alpha^3 \beta + 6 \alpha^4 \beta + 4 \alpha \beta^2 \\
\hphantom{{\cal{KN}}_{1213} =}{}
+ 21 \alpha^2 \beta^2+ 29 \alpha^3 \beta^2 + 12 \alpha^4 \beta^2 + \alpha \beta^3 + 7 \alpha^2 \beta^3 + 16 \alpha^3 \beta^3 + 10 \alpha^4 \beta^3 \\
\hphantom{{\cal{KN}}_{1213} =}{}
+
 2 \alpha^3 \beta^4 + 3 \alpha^4 \beta^4, \quad (173),\\
{\cal{KN}}_{1232} = (\alpha \beta)^4 {\cal{KN}}_{1213}^{(\alpha^{-1},\beta^{-1})},\\
{\cal{KN}}_{2132} = 3 + 9 \alpha + 10 \alpha^2 + 5 \alpha^3 + \alpha^4 + 3 \beta + 16 \alpha \beta + 28 \alpha^2 \beta + 20 \alpha^3 \beta + 5 \alpha^4 \beta + \beta^2\\
 \hphantom{{\cal{KN}}_{2132} =}{}
+ 7 \alpha \beta^2 + 24 \alpha^2 \beta^2 + 28 \alpha^3 \beta^2 + 10 \alpha^4 \beta^2 + 7 \alpha^2 \beta^3 + 16 \alpha^3 \beta^3 + 9 \alpha^4 \beta^3 + \alpha^2 \beta^4\\
 \hphantom{{\cal{KN}}_{2132} =}{}
 + 3 \alpha^3 \beta^4 + 3 \alpha^4 \beta^4, \quad (209),\\
{\cal{KN}}_{21321} = 3 + 12 \alpha + 19 \alpha^2 + 15 \alpha^3 + 6 \alpha^4 + \alpha^5 + 3 \beta + 21 \alpha \beta + 49 \alpha^2 \beta + 52 \alpha^3 \beta
\\
\hphantom{{\cal{KN}}_{21321} =}{}
 + 26 \alpha^4 \beta+ 5 \alpha^5\beta + \beta^2 + 9 \alpha \beta^2 + 39 \alpha^2 \beta^2 + 64 \alpha^3 \beta^2 + 43 \alpha^4 \beta^2 + 10 \alpha^5 \beta^2
\\
\hphantom{{\cal{KN}}_{21321} =}{}
 + 10 \alpha^2 \beta^3 + 32 \alpha^3 \beta^3 + 32 \alpha^4 \beta^3 +
 10 \alpha^5 \beta^3 + \alpha^2 \beta^4 + 5 \alpha^3 \beta^4 + 9 \alpha^4 \beta^4 \\
\hphantom{{\cal{KN}}_{21321} =}{}
 + 5 \alpha^5 \beta^4 + \alpha^5 \beta^5,\quad (483),\\
 {\cal{KN}}_{12312} = 1 + 5 \alpha + 10 \alpha^2 + 10 \alpha^3 + 5 \alpha^4 + \alpha^5 + 9 \alpha \beta + 32 \alpha^2 \beta + 43 \alpha^3 \beta + 26 \alpha^4 \beta\\
 \hphantom{{\cal{KN}}_{12312} =}{}
+ 6 \alpha^5 \beta + 5 \alpha \beta^2 + 32 \alpha^2 \beta^2 + 64 \alpha^3 \beta^2 + 52 \alpha^4 \beta^2 + 15 \alpha^5 \beta^2 + \alpha \beta^3 \\
 \hphantom{{\cal{KN}}_{12312} =}{}
+ 10 \alpha^2 \beta^3 + 39 \alpha^3 \beta^3 + 49 \alpha^4 \beta^3 + 19 \alpha^5 \beta^3 + 9 \alpha^3 \beta^4 + 21 \alpha^4 \beta^4 + 12 \alpha^5 \beta^4 + \alpha^3 \beta^5\\
\hphantom{{\cal{KN}}_{12312} =}{}
 + 3 \alpha^4 \beta^5 + 3 \alpha^5 \beta^5, \quad (483),\\
{\cal{KN}}_{12321} = 2 + 9 \alpha + 16 \alpha^2 + 14 \alpha^3 + 6 \alpha^4 + \alpha^5 + \beta + 18 \alpha \beta + 54 \alpha^2 \beta + 64 \alpha^3 \beta + 33 \alpha^4 \beta\\
\hphantom{{\cal{KN}}_{12321} =}{}
 + 6 \alpha^5 \beta + 14 \alpha \beta^2 + 65 \alpha^2 \beta^2 + 101 \alpha^3 \beta^2 + 64 \alpha^4 \beta^2 + 14 \alpha^5 \beta^2 + 6 \alpha \beta^3 + 33 \alpha^2 \beta^3\\
\hphantom{{\cal{KN}}_{12321} =}{}
 + 65 \alpha^3 \beta^3 + 54 \alpha^4 \beta^3 + 16 \alpha^5 \beta^3 + \alpha \beta^4 + 6 \alpha^2 \beta^4 + 14 \alpha^3 \beta^4 + 18 \alpha^4 \beta^4 + 9 \alpha^5 \beta^4\\
\hphantom{{\cal{KN}}_{12321} =}{}
 + \alpha^4 \beta^5 + 2 \alpha^5 \beta^5, \quad
 (707),\\
{\cal{KN}}_{121321} = 1 + 6 \alpha + 15 \alpha^2 + 20 \alpha^3 + 15 \alpha^4 +6 \alpha^5 + \alpha^6 + 10 \alpha \beta + 45 \alpha^2 \beta + 81 \alpha^3 \beta+ 73 \alpha^4 \beta \\
\hphantom{{\cal{KN}}_{121321} =}{}
+ 33 \alpha^5 \beta + 6 \alpha^6 \beta + 5 \alpha \beta^2 +44 \alpha^2 \beta^2 + 116 \alpha^3 \beta^2 + 135 \alpha^4 \beta^2 + 73 \alpha^5 \beta^2 + 15 \alpha^6 \beta^2 \\
\hphantom{{\cal{KN}}_{121321} =}{}
+ \alpha \beta^3 + 15 \alpha^2 \beta^3 + 69 \alpha^3 \beta^3 + 116 \alpha^4 \beta^3 + 81 \alpha^5 \beta^3 + 20 \alpha^6 \beta^3 + \alpha^2 \beta^4 + 15 \alpha^3 \beta^4 \\
\hphantom{{\cal{KN}}_{121321} =}{}
+ 44 \alpha^4 \beta^4 + 45 \alpha^5 \beta^4 + 15 \alpha^6 \beta^4 + \alpha^3 \beta^5 + 5 \alpha^4 \beta^5 + 10 \alpha^5 \beta^5 + 6 \alpha^6 \beta^5 + \alpha^6 \beta^6\\
\hphantom{{\cal{KN}}_{121321}}{}
 =
\beta^6 \alpha^6 {\cal{KN}}_{121321}^{(\alpha^{-1},\beta^{-1})}(1), \quad (1145).
\end{gather*}

$(10)$ Polynomials ${\cal{KN}}_{w}^{(\beta,\alpha,\gamma)}:=
{\cal{KN}}_{w}^{(\beta,\alpha,\gamma)}(1)$ for $n=3$:
\begin{gather*}
{\cal{KN}}_{\rm id}^{(\beta,\alpha,\gamma)} =1, \\
{\cal{KN}}_{1}^{(\beta,\alpha,\gamma)} = {\cal{KN}}_{2}^{(\beta,\alpha,\gamma)}
= 1+(\beta+\gamma)(1+\alpha+\gamma),\quad (7), \\
{\cal{KN}}_{12}^{(\beta,\alpha,\gamma)} = 1 + 2 \alpha + \alpha^2 +
3 \alpha \beta + 3 \alpha^2 \beta + \alpha \beta^2 + 2 \alpha^2 \beta^2 +
5 \gamma + 8 \alpha \gamma + 3 \alpha^2 \gamma + 4 \beta \gamma\\
\hphantom{{\cal{KN}}_{12}^{(\beta,\alpha,\gamma)} =}{}
 +
11 \alpha \beta \gamma + 4 \alpha^2 \beta \gamma + \beta^2 \gamma +
4 \alpha \beta^2 \gamma + 9 \gamma^2 + 10 \alpha \gamma^2 +
2 \alpha^2 \gamma^2 + 8 \beta \gamma^2 + 8 \alpha \beta \gamma^2\\
\hphantom{{\cal{KN}}_{12}^{(\beta,\alpha,\gamma)} =}{}
 +
2 \beta^2 \gamma^2 + 7 \gamma^3 + 4 \alpha \gamma^3 + 4 \beta \gamma^3 +
2 \gamma^4, \quad (109),\\
{\cal{KN}}_{21}^{(\beta,\alpha,\gamma)} = 2 + 3 \alpha + \alpha^2 + \beta +
3 \alpha \beta + 2 \alpha^2 \beta + \alpha^2 \beta^2 + 7 \gamma +
8 \alpha \gamma + 2 \alpha^2 \gamma + 4 \beta \gamma + 7 \alpha \beta \gamma\\
\hphantom{{\cal{KN}}_{21}^{(\beta,\alpha,\gamma)} =}{}
 +
 2 \alpha^2 \beta \gamma + 2 \alpha \beta^2 \gamma + 9 \gamma^2 +
7 \alpha \gamma^2 + \alpha^2 \gamma^2 + 5 \beta \gamma^2 +
4 \alpha \beta \gamma^2 + \beta^2 \gamma^2 + 5 \gamma^3 \\
\hphantom{{\cal{KN}}_{21}^{(\beta,\alpha,\gamma)} =}{}
+ 2 \alpha \gamma^3 +
 2 \beta \gamma^3 + \gamma^4,\quad (82),\\
{\cal{KN}}_{121}^{(\beta,\alpha,\gamma)} = 1 + 3 \alpha + 3 \alpha^2 + \alpha^3 + 4 \alpha \beta + 7 \alpha^2 \beta +
 3 \alpha^3 \beta + \alpha \beta^2 + 4 \alpha^2 \beta^2 + 3 \alpha^3 \beta^2 +
 \alpha^3 \beta^3\\
\hphantom{{\cal{KN}}_{121}^{(\beta,\alpha,\gamma)} =}{}
 + 6 \gamma + 15 \alpha \gamma + 12 \alpha^2 \gamma +
 3 \alpha^3 \gamma + 3 \beta \gamma + 20 \alpha \beta \gamma +
 22 \alpha^2 \beta \gamma + 6 \alpha^3 \beta \gamma + 7 \alpha \beta^2 \gamma\\
\hphantom{{\cal{KN}}_{121}^{(\beta,\alpha,\gamma)} =}{}
 +
 12 \alpha^2 \beta^2 \gamma + 3 \alpha^3 \beta^2 \gamma + 3 \alpha^2 \beta^3 \gamma +
 15 \gamma^2 + 30 \alpha \gamma^2 + 18 \alpha^2 \gamma^2 + 3 \alpha^3 \gamma^2 +
 12 \beta \gamma^2 \\
\hphantom{{\cal{KN}}_{121}^{(\beta,\alpha,\gamma)} =}{}
 + 37 \alpha \beta \gamma^2 + 24 \alpha^2 \beta \gamma^2 +
 3 \alpha^3 \beta \gamma^2 + 3 \beta^2 \gamma^2 + 15 \alpha \beta^2 \gamma^2 +
 9 \alpha^2 \beta^2 \gamma^2 + 3 \alpha \beta^3 \gamma^2\\
\hphantom{{\cal{KN}}_{121}^{(\beta,\alpha,\gamma)} =}{}
 + 20 \gamma^3 +
 30 \alpha \gamma^3 + 12 \alpha^2 \gamma^3 + \alpha^3 \gamma^3 + 18 \beta \gamma^3 +
 30 \alpha \beta \gamma^3 + 9 \alpha^2 \beta \gamma^3 + 6 \beta^2 \gamma^3 \\
\hphantom{{\cal{KN}}_{121}^{(\beta,\alpha,\gamma)} =}{}
 +
 9 \alpha \beta^2 \gamma^3 + \beta^3 \gamma^3 + 15 \gamma^4 + 15 \alpha \gamma^4 +
 3 \alpha^2 \gamma^4 + 12 \beta \gamma^4 + 9 \alpha \beta \gamma^4 +
 3 \beta^2 \gamma^4 + 6 \gamma^5\\
\hphantom{{\cal{KN}}_{121}^{(\beta,\alpha,\gamma)} =}{}
 + 3 \alpha \gamma^5 + 3 \beta \gamma^5 + \gamma^6, \quad
 (521).
\end{gather*}

$(11)$ Few more examples:
\begin{gather*}
{\cal{KN}}_{4321}^{(\beta,\alpha,\gamma=0)}(1) = 14 + 35 \alpha + 30 \alpha^2 + 10 \alpha^3 + \alpha^4 + 21 \beta + 65 \alpha \beta + 70 \alpha^2 \beta +
30 \alpha^3 \beta + 4 \alpha^4 \beta\\
\hphantom{{\cal{KN}}_{4321}^{(\beta,\alpha,\gamma=0)}(1) =}{}
 + 9 \beta^2 + 35 \alpha \beta^2 +
50 \alpha^2 \beta^2 + 30 \alpha^3 \beta^2 + 6\alpha^4 \beta^2 + \beta^3 +
 5 \alpha \beta^3 + 10 \alpha^2 \beta^3 \\
\hphantom{{\cal{KN}}_{4321}^{(\beta,\alpha,\gamma=0)}(1) =}{}
 + 10 \alpha^3 \beta^3 +
4 \alpha^4 \beta^3 + \alpha^4 \beta^4,\\
{\cal{KN}}_{4321}^{(\beta=1,\alpha=1,\gamma)}(1)
= (441,1984,3754,3882,2385,885,192,22,1)_{\gamma},\\
{\cal{KN}}_{54321}^{(\beta=1,\alpha=1,\gamma)} = (1 + \gamma)(2955, 13297, 25678, 27822, 18553, 7852, 2094, 336, 29, 1)_{\gamma}.
\end{gather*}
Note that polynomial $L_n(\gamma):= {\cal{KN}}_{n,n-1,\ldots,2,1}^{(\beta=1,\alpha=1,\gamma)}(1)$ has degree $2 n$ and $L_n(\gamma = -1) =0$.
\begin{gather*}
h^3 {\cal{KN}}_{321}^{(\beta=1,\alpha=1,\gamma,h)} = (1,3,7,9,7,1)_{h}+ \gamma (6,18,32,32,18,6)_{h} + \gamma^2 (15,42,58,42,15)_{h} \\
\hphantom{h^3 {\cal{KN}}_{321}^{(\beta=1,\alpha=1,\gamma,h)} =}{}
+\gamma^3 (20,48,48,20)_{h} + \gamma^4 (15,27,15)_{h} + \gamma^5 (6,6)_h +
\gamma^6 h^6,\\
{\cal{KN}}_{121}^{(a=1,b=1,c,r)}(1)= 31 + 112 c + 168 c^2 + 124 c^3 + 44 c^4 + 6 c^5 \\
\hphantom{{\cal{KN}}_{121}^{(a=1,b=1,c,r)}(1)=}{}
+ \big(60 + 176 c + 195 c^2 + 93 c^3 + 16 c^4\big) r + \big(38 + 85 c +
61 c^2 + 14 c^3\big) r^2\\
\hphantom{{\cal{KN}}_{121}^{(a=1,b=1,c,r)}(1)=}{}
 + \big(8 + 12 c + 4 c^2\big) r^3.
\end{gather*}

\begin{prb}\label{prob7.3} Let $n \ge k \ge 0$ be integers, consider permutation $w_{n,k}:=
[k,k-1,\ldots, 1,$ $n,n-1,n-2,\ldots,k+1] \in \mathbb{S}_n$. Give
combinatorial interpretations of polynomials $L_{n,k}(\alpha,\beta,\gamma):=
{\cal{KN}}_{w_{m,k}}^{(\alpha,\beta,\gamma)}(1)$.
\end{prb}

\begin{con}\label{conj7.4} Set $d:= \gamma-1$.
\begin{itemize}\itemsep=0pt
\item For any permutation $w \in \mathbb{S}_n$, ${\cal{KN}}_{w}^{(\beta,\alpha=1,\gamma=d-1)}(1)$ is a polynomial in $\beta$ and $d$ with non-negative
coefficients.

\item The polynomial $L_n(d)$ has non-negative
coefficients, and polynomial $L_n(d)+d^n$ is symmetric and unimodal.

\item $L_{n,1}(\alpha=1,\beta, d) \in d \N[\beta,d]$.

\item $L_{n,1}(\alpha,\beta=0,d) \in d^{n-1} (\alpha + d) \N [\alpha,d]$,
$L_{n,1}(\alpha=1,\beta=0,d=1) =2 \operatorname{Sch}_{n+1}$, $L_{n,1}(\alpha=0,$ $\beta=0,d=2) =
2^n \operatorname{Sch}_{n+1}$ $($see {\rm \cite[$A156017$]{SL}} for a combinatorial interpretation of
 these numbers$)$,
where $\operatorname{Sch}_{n}$ denotes the
$n$-th Schr\"{o}der number, see, e.g., {\rm \cite[$A001003$]{SL}}.

\item $L_{n,1}(\alpha=0,\beta=t-1,\gamma) \in \N [t,\gamma]$,
$L_{n,1}(\alpha=0,\beta= -1,\gamma =1)$ is equal to the number of Dyck
$(n+1)$-paths $(${\rm \cite[$A000108$]{SL})} in which each up step $(U)$ not at
ground level is colored red $(R)$ or blue $(B)$, {\rm \cite[$A064062$]{SL}}.
\end{itemize}
\end{con}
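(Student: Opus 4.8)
The plan is to treat the five items of Conjecture~\ref{conj7.4} as a cascade driven by the master positivity statement (item~1). Write $A=(-\beta,\alpha+\beta+\gamma,\gamma,1,(\alpha+\gamma)(\beta+\gamma))$ as in~\eqref{equation1.7}, so that $T_i=T_i^{(\beta,\alpha,\gamma)}$ is the operator $\partial_i^A$; by the footnote to Lemma~\ref{lem4.15} it factors as $T_i=\bigl(1+(\alpha+\gamma)x_i\bigr)\bigl(1+(\beta+\gamma)x_{i+1}\bigr)\partial_{i,i+1}-\beta\,\sigma_i$, where $\sigma_i$ swaps $x_i,x_{i+1}$, and it satisfies $T_i^2=(\alpha-\beta)T_i+\alpha\beta$ together with the Coxeter relations, so $T_w$ is well defined. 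After the substitution $\alpha=1$, $\gamma=d+1$ one has $\alpha+\gamma=d+2$ and $\beta+\gamma=\beta+d+1$, so every coefficient occurring in $T_i$ — and, by Lemma~\ref{lem4.15}(5), in the relation $T_ix_iT_i=\beta x_{i+1}+\bigl(1+(d+2)(x_i+x_{i+1})+(d+2)(\beta+d+1)x_ix_{i+1}\bigr)T_i$ — lies in $\N[\beta,d]$, and likewise $T_i(x_i^ax_{i+1}^b)=(d+2)x_i^ax_{i+1}^b+(\text{manifestly }\N[\beta,d]\text{-positive terms})$ whenever $a>b$.

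For item~1 I would establish the stronger polynomial assertion $\mathcal{KN}_w^{(\beta,1,d+1)}(X_n)\in\N[\beta,d][X_n]$ and, in the spirit of Theorem~\ref{theorem4.3} and Comment~\ref{com5.14}, deduce it from a positive expansion of each $\mathcal{KN}_w$ in the generalised key (Demazure) polynomials $K_\alpha^A$ defined just after Definition~\ref{def4.19}. Concretely one wants a decorated pipe-dream / RC-graph model extending the one for $\beta$-Grothendieck polynomials: keep the ordinary and the $\beta$-weighted (``$K$-theoretic'') tiles, introduce a third, $d$-weighted tile, arrange the weights so that the generating function over all such configurations for $w$ equals $\mathcal{KN}_w^{(\beta,1,d+1)}(X_n)$, and verify that the $T_i$-recursion corresponds to a weight-preserving family of ladder/chute moves. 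The subtle point — and the step I expect to be the main obstacle — is the signed term $-\beta\,\sigma_i$: positivity is \emph{not} visible monomial by monomial (indeed $T_i(x_{i+1})$ has negative coefficients, and $T_i(x_i^ax_{i+1}^a)=-\beta x_i^ax_{i+1}^a$), so the induction on $\ell(w)$ must be organised along the Bruhat order / bottom-code stratification used in Theorem~\ref{theorem5.10}, so that each $T_i$ is only ever applied to a positive combination of monomials that are dominant in positions $i,i+1$, where the computation above makes $T_i$ act $\N[\beta,d]$-positively. Producing the correct set of tiles and proving they realise the recursion is the heart of the conjecture.

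Granting item~1, item~2 (the case $w=w_0$, $\alpha=\beta=1$) splits into palindromicity and unimodality. Palindromicity of $L_n+d^n$ I would extract from the Duality statement in Proposition~\ref{prop4.8}, $\mathcal{KN}_w^{(\beta,\alpha)}(1)=(\beta\alpha)^{\ell(w)}\mathcal{KN}_{w^{-1}}^{(\alpha^{-1},\beta^{-1})}(1)$, refined so as to also track $\gamma$: since $w_0^{-1}=w_0$ this produces a functional equation for $L_n$ in $d$ whose only defect is the middle coefficient, corrected precisely by the $d^n$. Unimodality I would obtain by exhibiting $L_n+d^n$ (or a bigraded refinement of it) as the Hilbert series of a module carrying a hard-Lefschetz / $\mathfrak{sl}_2$ action — the mechanism by which the Hilbert polynomials of $\mathcal{PF}_{n,k}$ are shown symmetric and unimodal in Theorem~\ref{theorem6.8} through $\mathfrak{so}(2k+1)$-characters — or, combinatorially, via a symmetric chain decomposition of the decorated pipe-dream model. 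For the divisibility items~3 and~4 one specialises $d=0$ (i.e.\ $\gamma=1$), respectively $d=-\alpha$: at these values the operators $T_i$ degenerate (at $\alpha=1$ the Hecke relation already reads $T_i^2=(1-\beta)T_i+\beta$, and one expects further collapsing relations that kill $T_{w_{n,1}}(x^{\delta_n})$), and since item~1 forbids cancellation, vanishing of the specialisation forces divisibility by $d$, respectively by $d^{n-1}(\alpha+d)$.

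Finally, the explicit evaluations. The Schröder identities $L_{n,1}(\alpha=1,\beta=0,d=1)=2\operatorname{Sch}_{n+1}$ and $L_{n,1}(\alpha=0,\beta=0,d=2)=2^n\operatorname{Sch}_{n+1}$, and the coloured-Dyck-path count $L_{n,1}(\alpha=0,\beta=-1,\gamma=1)$, I would prove by reading a transfer-matrix (generating-function) recursion for $\mathcal{KN}_{w_{n,1}}$ off the pipe-dream model and matching it against the classical Schröder recursion $\operatorname{Sch}_{n+1}=\operatorname{Sch}_n+\sum_{k}\operatorname{Sch}_k\operatorname{Sch}_{n-k}$ — equivalently, checking that the two sides satisfy the same quadratic functional equation. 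Item~5 is the closest to settled ground: for $\alpha=\gamma=0$ the $\mathcal{KN}_w$ are the $\beta$-Grothendieck polynomials, so $\alpha=0$ with $\gamma$ free is a one-parameter deformation, and $L_{n,1}(\alpha=0,\beta=t-1,\gamma)\in\N[t,\gamma]$ should follow from the $\beta=0$ double-polynomial positivity flagged in Conjecture~\ref{conj4.9} together with the specialisation $x_i=1$; the value at $\beta=-1,\gamma=1$ then matches the red/blue-coloured Dyck-path generating function by the same comparison.
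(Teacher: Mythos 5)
This statement is Conjecture~\ref{conj7.4}: the paper offers no proof of it, only the numerical evidence tabulated in Section~\ref{section7.1}, so there is no argument of record for you to have reproduced or diverged from. Judged on its own terms, what you have written is a research programme, not a proof, and you say so yourself: the construction of a decorated pipe-dream/tile model whose weights realise the recursion $T_i=\bigl(1+(\alpha+\gamma)x_i\bigr)\bigl(1+(\beta+\gamma)x_{i+1}\bigr)\partial_{i,i+1}-\beta\sigma_i$ positively is flagged as ``the heart of the conjecture'' and is never carried out. Your local computation is correct and worth keeping --- the factorised form of $T_i$ matches the footnote to Lemma~\ref{lem4.15}, and for a monomial $x_i^ax_{i+1}^b$ with $a>b$ the $-\beta$ from $-\beta\sigma_i$ is indeed absorbed into $(\alpha+\beta+\gamma)x_i\partial_{i,i+1}$ to leave coefficient $\alpha+\gamma=d+2$ plus manifestly positive terms. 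But the whole difficulty is that $T_{i_1}\cdots T_{i_\ell}(x^{\delta_n})$ does not stay a positive combination of monomials dominant in the relevant pair of positions, and the proposed ``induction organised along the Bruhat order'' that would repair this is precisely the missing argument.

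The downstream items inherit further gaps. For item~2, the duality in Proposition~\ref{prop4.8} is stated only for $\gamma=0$; the ``refinement tracking $\gamma$'' that would give the palindromy of $L_n(d)+d^n$ is asserted, not proved, and no module with a hard-Lefschetz action (nor any symmetric chain decomposition) is exhibited for unimodality. For item~4, membership in $d^{n-1}(\alpha+d)\,\N[\alpha,d]$ requires vanishing to order $n-1$ at $d=0$; your mechanism (``positivity forbids cancellation, so vanishing of the specialisation forces divisibility'') yields only order one from a single vanishing value, and the higher-order vanishing rests on ``expected further collapsing relations'' that are not established. Item~5 is reduced to the $\beta=0$ positivity in Conjecture~\ref{conj4.9}, which is itself open in the paper, so as a proof this is circular; and the Schr\"oder and coloured-Dyck-path evaluations are matched against a transfer-matrix recursion read off a model that has not been defined. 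One small point in your favour: the paper's bullet writes $\gamma=d-1$ while defining $d:=\gamma-1$; your reading $\gamma=d+1$ is the internally consistent one, as the examples $L_{n,1}(\alpha=1,\beta,d)\in d\,\N[\beta,d]$ confirm.
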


Note that the number $2 \operatorname{Sch}_{n}$ known also as \textit{large Schr\"{o}der} number, see, e.g., \cite[$A006318$]{SL}.

For example,
\begin{gather*}
L_{3,1}(\alpha=1,\beta,d) = d \big(\beta^2 + 5 \beta d + 4 \beta^2 d + 5 d^2
+ 14 \beta d^2 + 6 \beta^2 d^2 + \beta^3 d^2 + 10 d^3 +
 12 \beta d^3 \\
 \hphantom{L_{3,1}(\alpha=1,\beta,d) =}{}
 + 3 \beta^2 d^3 + 6 d^4 + 3 \beta d^4 + d^5\big),\\
L_{7,1}(1,1,d) = d(1,27,260,1245,3375,5495,5494,3375,1245,260,27,1)_{d},\\
L_{7,1}(\alpha,\beta=0,d)= d^6 (\alpha + d) (1 + \alpha + d) \big(1 + 14 \alpha +
36 \alpha^2 + 14 \alpha^3 + \alpha^4 + 14 d + 72 \alpha d \\
 \hphantom{L_{7,1}(\alpha,\beta=0,d)=}{} +
 42 \alpha^2 d + 4 \alpha^3 d + 36 d^2+ 42 \alpha d^2 + 6 \alpha^2 d^2 +
14 d^3 + 4 \alpha d^3 + d^4\big), \\
L_{7,1}(\alpha=0,\beta=t-1,\gamma =1) = (14589,39446,39607,18068,3627,246,1)_{t}.
\end{gather*}

We expect a similar conjecture for polynomials $L_{n,k}(\alpha=1,\beta=1,\gamma)$, $k \ge 1$.

\subsection{MacMeille completion of a partially ordered set}\label{section7.2}

Let\footnote{For the reader convenience we review a def\/inition and basic
facts concerning the MacNeille completion of a~poset, see for example, notes
by E.~Turunen, available at
\url{http://math.tut.fi/~eturunen/AppliedLogics007/Mac1.pdf}.} $(\Sigma, \le)$ be a partially ordered set (poset for short) and
$X \subseteq \Sigma$. Def\/ine
\begin{itemize}\itemsep=0pt
\item The set of upper bounds for $X$, namely,
\begin{gather*}
X^{up} := \{z \in \Sigma \,\vert\, x \le z,\, \forall\, x \in X \}.
\end{gather*}
\item The set of lower bounds for $X$, namely,
\begin{gather*}
X^{\rm lo} : = \{z \in \Sigma \,\vert\, z \le x \, \forall\, x \in X \}.
\end{gather*}
\item A poset $({\cal{MN}}({\Sigma}), \le )$, namely,
\begin{gather*}
{\cal{MN}}({\Sigma}):= \{{\cal{MN}}({X}) \vert X \subseteq \Sigma \},
\end{gather*}
 where ${\cal{MN}}(X):= \big(X^{\rm up}\big)^{\rm lo}$.
Clearly, $ X \subseteq {\cal{MN}}(X)$ and ${\cal{MN}}({\cal{MN}}({X})) =
{\cal{MN}}(X)$.

\item A map $\kappa \colon \Sigma \longrightarrow {\cal{MN}}({\Sigma})$,
namely, $\kappa(X) = {\cal{MN}}(X)$, $X \subseteq \Sigma$.
\end{itemize}

{\samepage \begin{pr}\label{prop7.5} \quad
\begin{itemize}\itemsep=0pt
\item The map $\kappa$ is an embedding, that is for
$X,Y \subseteq \Sigma$,
\begin{gather*}
 X \le Y \qquad \text{if and only if} \quad \kappa(X) \subseteq \kappa(Y),
\end{gather*}
\item Poset $({\cal{MN}}({\Sigma}), \le )$ is a lattice,
called the MacNeille completion of poset $(\Sigma, \le)$.
\end{itemize}
\end{pr}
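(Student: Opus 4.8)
The plan is to prove the two bullets in order, working directly from the definitions of $X^{\rm up}$, $X^{\rm lo}$, and ${\cal{MN}}(X) = (X^{\rm up})^{\rm lo}$ given just above the statement. First I would record the elementary order-reversing/monotonicity facts that drive everything: if $X \subseteq Y$ then $Y^{\rm up} \subseteq X^{\rm up}$ and $Y^{\rm lo} \subseteq X^{\rm lo}$ (a larger set has fewer upper/lower bounds), and consequently ${\cal{MN}}$ is monotone: $X \subseteq Y \Rightarrow {\cal{MN}}(X) \subseteq {\cal{MN}}(Y)$. I would also note $X \subseteq {\cal{MN}}(X)$ (every $x \in X$ lies below every element of $X^{\rm up}$, hence $x \in (X^{\rm up})^{\rm lo}$) and the dual identity $X^{\rm up} = ({\cal{MN}}(X))^{\rm up}$, which comes from applying the two inclusions $X \subseteq {\cal{MN}}(X)$ and the general fact $X^{\rm up} \subseteq (X^{\rm up})^{\rm lo \, up}$; combining these gives idempotence ${\cal{MN}}({\cal{MN}}(X)) = {\cal{MN}}(X)$, already asserted in the excerpt. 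These are the only "calculations" needed and they are a few lines each.

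For the embedding bullet, I would interpret "$X \le Y$" in the intended way for principal-type comparison — for single elements $x,y$ one has $x \le y$ iff $\kappa(\{x\}) \subseteq \kappa(\{y\})$, where $\kappa(\{x\}) = {\cal{MN}}(\{x\}) = \{z : z \le x\}$ is the principal down-set. The forward direction is monotonicity of ${\cal{MN}}$; the reverse direction is immediate since $x \in \kappa(\{x\}) \subseteq \kappa(\{y\}) = \{z : z \le y\}$ forces $x \le y$. More generally, for arbitrary subsets the natural statement is that $\kappa$ is an embedding of the poset of ${\cal{MN}}$-closed sets ordered by inclusion, and here $X \subseteq Y \Rightarrow \kappa(X) \subseteq \kappa(Y)$ by monotonicity while $\kappa(X) \subseteq \kappa(Y)$ gives $X \subseteq {\cal{MN}}(X) = \kappa(X) \subseteq \kappa(Y) = {\cal{MN}}(Y)$; I would phrase the bullet so that the "only if" uses $X \subseteq {\cal{MN}}(Y)$.

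For the lattice bullet, the meet in $({\cal{MN}}(\Sigma), \subseteq)$ is just intersection: if $A = {\cal{MN}}(A)$ and $B = {\cal{MN}}(B)$ are closed sets, then $A \cap B$ is again closed (it is an intersection of down-sets of the form $\{z : z \le u\}$ over $u \in A^{\rm up} \cup B^{\rm up}$, hence of the form $(\,\cdot\,)^{\rm lo}$), and it is clearly the largest closed set contained in both. The join is then forced to exist in any poset where all meets exist and there is a top element $\Sigma = {\cal{MN}}(\Sigma)$, by the standard formula $A \vee B = \bigcap \{C \in {\cal{MN}}(\Sigma) : A \subseteq C,\ B \subseteq C\}$; concretely $A \vee B = {\cal{MN}}(A \cup B) = (A^{\rm up} \cap B^{\rm up})^{\rm lo}$. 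I would verify this is a least upper bound by the universal property of ${\cal{MN}}$ (monotonicity plus idempotence). The main obstacle I anticipate is purely expository rather than mathematical: pinning down precisely what "$X \le Y$" means in the first bullet (element-wise, or as sets, or as closed sets) so that the statement is literally true — the cleanest route is to state and use it for the closure-ordered lattice, which also makes the second bullet fall out with no extra work.
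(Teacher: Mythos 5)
Your argument is correct and complete: the antitone/closure facts, the identity $X^{\rm up}=({\cal{MN}}(X))^{\rm up}$ giving idempotence, meets as intersections of closed sets, and joins as ${\cal{MN}}(A\cup B)=(A^{\rm up}\cap B^{\rm up})^{\rm lo}$ constitute the standard Galois-connection proof. The paper itself offers no proof here --- Section~\ref{section7.2} is expository and cites Turunen's notes for these facts --- so there is nothing to compare against; your handling of the ambiguous ``$X\le Y$'' (reading the embedding statement on singletons via principal down-sets, and on closed sets via inclusion) is the sensible way to make the first bullet literally true.
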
}

\begin{pr}\label{prop7.6}
Let $(\Sigma, \le)$ be a poset\footnote{See \url{https://en.wikipedia.org/wiki/Graded_poset}.}. Then there is a poset
$(L, \le)$ and a map $\kappa\colon \Sigma \longrightarrow L$ such that
\begin{enumerate}\itemsep=0pt
\item[$(1)$] $\kappa$ is an embedding,

\item[$(2)$] $(L, \le)$ is a complete lattice\footnote{That is every subset of $L$ has a meet and join, see, e.g., \cite[p.~249]{ST}.},

\item[$(3)$] for each element $ a \in L$ one has
\begin{enumerate}\itemsep=0pt
\item[$(a)$] ${\cal{MN}}(\{x \in \Sigma \,\vert\, \kappa(x) \le a \}) = \{x \in \Sigma \,\vert\, \kappa(x) \le a \}$,

\item[$(b)$] $a = \bigvee\{\kappa(x) \,\vert\, x \in \Sigma, \,\kappa(x) \le a \}$.
\end{enumerate}
\end{enumerate}
Moreover, the pair $(\kappa, (L, \le))$ is defined uniquely up to an order
preserving isomorphism.
\end{pr}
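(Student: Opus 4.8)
The plan is to realize the completion concretely as $L:={\cal{MN}}(\Sigma)$, the family of all \emph{closed} subsets $a\subseteq\Sigma$ (those with $a={\cal{MN}}(a)$) ordered by inclusion, together with $\kappa(x):={\cal{MN}}(\{x\})={\downarrow}x$, where ${\downarrow}x:=\{z\in\Sigma\mid z\le x\}$ is the principal down-set of $x$. Since ${\cal{MN}}(X)=(X^{\rm up})^{\rm lo}$ is always a down-set and ${\cal{MN}}$ is monotone and idempotent (the relations $X\subseteq{\cal{MN}}(X)={\cal{MN}}({\cal{MN}}(X))$ recorded before Proposition~\ref{prop7.5}), one checks that the intersection of any family of closed sets is again closed; hence $(L,\subseteq)$ has all meets — intersections — and therefore also all joins, namely $\bigvee_i a_i={\cal{MN}}\big(\bigcup_i a_i\big)$, so $L$ is a complete lattice, which is item~$(2)$. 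For item~$(1)$ I would check that $x\le y\iff{\downarrow}x\subseteq{\downarrow}y$: the nontrivial implication is immediate from $x\in{\downarrow}x$, and antisymmetry of $\Sigma$ makes $\kappa$ injective. A small point deserving care here is the top and bottom of $L$, i.e.\ that $\Sigma={\cal{MN}}(\Sigma)$ and $\Sigma^{\rm lo}={\cal{MN}}(\varnothing)$ do lie in $L$.

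To get item~$(3)$, fix $a\in L$ and set $D(a):=\{x\in\Sigma\mid\kappa(x)\le a\}=\{x\mid{\downarrow}x\subseteq a\}$. Because $a$ is a down-set, $D(a)=a$; because $a$ is closed, ${\cal{MN}}(D(a))={\cal{MN}}(a)=a=D(a)$, which is $(3a)$. For $(3b)$, the join in $L$ of the family $\{\kappa(x)\mid x\in a\}=\{{\downarrow}x\mid x\in a\}$ is ${\cal{MN}}\big(\bigcup_{x\in a}{\downarrow}x\big)={\cal{MN}}(a)=a$, again since $a$ is a closed down-set. This completes the existence half.

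For uniqueness, let $(\kappa',L')$ be a second pair obeying $(1)$–$(3)$, and define $D\colon L'\to L$ by $D(a'):=\{x\in\Sigma\mid\kappa'(x)\le a'\}$; by $(3a)$ applied to $L'$ this really lands in $L$. The heart of the matter is the \emph{closure identity}: for every $Y\subseteq\Sigma$ and $z\in\Sigma$,
\[
\kappa'(z)\le\bigvee\nolimits_{L'}\{\kappa'(y)\mid y\in Y\}\iff z\in{\cal{MN}}(Y).
\]
The implication ``$\Leftarrow$'' holds because the left-hand join $b'$ has $D(b')\supseteq Y$ and $D(b')$ is closed by $(3a)$, so ${\cal{MN}}(Y)\subseteq D(b')$. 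The implication ``$\Rightarrow$'' is the crux: if $u$ is any upper bound of $Y$ in $\Sigma$ then $\kappa'(u)$ bounds $\{\kappa'(y)\mid y\in Y\}$ in $L'$, hence $b'\le\kappa'(u)$, so $\kappa'(z)\le\kappa'(u)$, and as $\kappa'$ is order-reflecting, $z\le u$; letting $u$ range over $Y^{\rm up}$ yields $z\in(Y^{\rm up})^{\rm lo}={\cal{MN}}(Y)$. Granting the identity, $D$ is order-preserving and injective (by $(3b)$ every $a'$ equals $\bigvee_{L'}\{\kappa'(x)\mid x\in D(a')\}$, so it is determined by $D(a')$), surjective (for closed $c$, the element $\bigvee_{L'}\{\kappa'(x)\mid x\in c\}$ has $D$-image ${\cal{MN}}(c)=c$ by the identity), and satisfies $D\circ\kappa'=\kappa$ because $D(\kappa'(x))=\{y\mid y\le x\}={\downarrow}x=\kappa(x)$. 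Hence $D^{-1}\colon L\to L'$ is an order-isomorphism intertwining the embeddings; applying this to two arbitrary pairs and composing shows they are isomorphic over $\Sigma$, i.e.\ uniqueness up to order-preserving isomorphism.

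I expect everything outside the closure identity to be routine unwinding of the definitions of $X^{\rm up}$, $X^{\rm lo}$ and ${\cal{MN}}(X)$; the one step where the hypotheses $(1)$ (that $\kappa'$ is order-reflecting) and $(3a)$ (closedness of the sets $D(a')$) are genuinely used, and which I would write out in full detail, is precisely that identity together with the surjectivity of $D$ that it powers.
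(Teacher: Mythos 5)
Your proof is correct: it realizes $L$ as the lattice of ${\cal{MN}}$-closed subsets with $\kappa(x)={\downarrow}x$, verifies $(1)$--$(3)$, and derives uniqueness from the closure identity, which is exactly the standard Dedekind--MacNeille argument that the paper invokes without proof (it presents Proposition~\ref{prop7.6} as review material and immediately identifies $(L,\le)$ with ${\cal{MN}}(\Sigma)$). The only point left implicit is that $D^{-1}$ is also order-preserving, but this follows at once from the same appeal to $(3b)$ you use for injectivity.
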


Therefore, the lattice $(L, \le)$, is an order-isomorphic to the MacNeille
completion ${\cal{MN}}(\Sigma)$ of a poset $\Sigma$.
\begin{prb}\label{prob7.7}
Let $\Sigma$ be a $($finite$)$ graded poset\footnote{See, e.g., \cite[p.~244]{ST}, or
 \url{https://en.wikipedia.org/wiki/Graded_poset}.}, denote by
\begin{gather*}
r_{\Sigma}(t): = \sum_{a \in \Sigma} t^{r(a)},
\end{gather*}
the rank generating function of a poset $\Sigma$.
Here $r(a)$ denotes the rank/degree of an element $a \in \Sigma$.
Describe polynomial $r_{{\cal{MN}}(\Sigma)}(t)$.
\end{prb}

In the present paper we are interesting in properties of the MacNeille
completion of the Bruhat poset ${\cal{B}}_n= {\cal B}({\mathbb{S}}_n)$
corresponding to the symmetric group ${\mathbb{S}}_n$. Below we brief\/ly
describe a~const\-ruc\-tion of the MacNeille completion $L_n({\mathbb{S}}_n) :=
{\cal{MN}}_n({\cal{B}}_n)$ following \cite{LS1} and \cite[p.~552,~{\bf d}]{ST+}.

Let $w=(w_1 w_2 \ldots w_n) \in \mathbb{S}_n$, associate with $w$ a
semistandard Young tableaux $T(w)$ of the staircase shape $\delta_n=(n-1,n-2,
\ldots, 2,1)$ f\/illed by integer numbers from the set $[1,n]:= \{1,2,
\ldots,n \}$ as follows:
the $i$-th row of of $T(w)$, denoted by $R_i(w)$, consists of the numbers
$w_1,\ldots, w_{n-i+1}$ in increasing order. Clearly the tableaux $T(w)=
[T_{i,j}(w)]_{1 \le i <j \le n-1}$ obtained in such a manner, satisf\/ies
the so-called {\it monotonic} and {\it flag} conditions, namely,
\begin{enumerate}\itemsep=0pt
\item[(1)] (monotonic conditions) $T_{1,i} \ge T_{2,i-1} \ge \cdots \ge T_{i,1}$, $i=1,\ldots,n-1$,

\item[(2)] (f\/lag conditions) $R_1(w) \supset R_2(w) \supset \cdots \supset
R_{n-1}(w)$.
\end{enumerate}

Denote by $L(\mathbb{S}_n)$ the subset of the set of all Young tableaux $T \in
\operatorname{STY}(\delta_n \le n)$ consisting of that~$T$ which satisf\/ies the monotonicity
conditions~(1). The set~$L(\mathbb{S}_n)$ has the natural poset
structure denoted by ``$\ge$'', and def\/ined as follows:
 if
$T^{(1)}=[t_{ij}^{(1)}]_{1 \le i < j \le n-1}$ and $T^{(2)}=
[t_{ij}^{(2)}]_{1 \le i < j \le n-1}$ belong to the set $L(\mathbb{S}_n)$,
then by def\/inition
\begin{gather*}
 T^{(1)} \ge T^{(2)} \qquad \text{if and only if} \quad t_{ij}^{(1)} \ge t_{ij}^{(2)} \quad \text{for
all} \quad 1 \le i < j \le n-1.
\end{gather*}
It is clearly seen that the set $L({\mathbb{S}}_n)$ is closed under the
following operations
\begin{itemize}\itemsep=0pt
\item $\big($meet $T^{(1)} T^{(2)}\big)$ $\bigwedge \big(T^{(1)},T^{(2)}\big):=
T^{(1)} \bigwedge T^{(2)} = \big[\min\big(t_{i,j}^{(1)},t_{i,j}^{(2)}\big)\big]$,

\item $\big($join $T^{(1)} T^{(2)}\big)$ $\bigvee\big( (T^{(1)},T^{(2)}\big):= T^{(1)} \bigvee T^{(2)} = \big[\max\big(t_{i,j}^{(1)},t_{i,j}^{(2)}\big)\big]$.
\end{itemize}

\begin{Theorem}[\cite{LS1}]\label{theorem7.8}
The poset $L({\mathbb{S}}_n)$ is a complete distributive
lattice with number of vertices equals to the number $\operatorname{ASM}(n)$ that is the
number of alternating sigh matrices of size $n \times n$. Moreover, the
lattice $L({\mathbb{S}}_n)$ is order isomorphic to the MacNeille completion of
the Bruhat poset~${\cal{B}}_n$.
\end{Theorem}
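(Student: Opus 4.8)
The plan is to verify four facts and then invoke the universal property of the MacNeille completion recalled in Proposition~\ref{prop7.6}: (a) $L({\mathbb{S}}_n)$ is a complete distributive lattice; (b) $w\mapsto T(w)$ is an order embedding of the Bruhat poset ${\cal{B}}_n$ onto a subposet of $L({\mathbb{S}}_n)$; (c) this subposet is both join-dense and meet-dense in $L({\mathbb{S}}_n)$; (d) $\#|L({\mathbb{S}}_n)|=\operatorname{ASM}(n)$. Granting (a)--(c), $\kappa\colon w\mapsto T(w)$ realizes $L({\mathbb{S}}_n)$ as a complete lattice containing a join- and meet-dense copy of ${\cal{B}}_n$, so the uniqueness clause of Proposition~\ref{prop7.6} forces $L({\mathbb{S}}_n)\cong{\cal{MN}}({\cal{B}}_n)$; fact (d) supplies the vertex count.

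First, the two formal points. For (a): by definition $L({\mathbb{S}}_n)$ consists of the arrays $T=[t_{ij}]_{1\le i<j\le n-1}$ with entries in $[1,n]$ obeying the monotonicity conditions~(1), and the coordinatewise operations $\min$ and $\max$ preserve condition~(1) (a coordinatewise minimum or maximum of weakly decreasing antidiagonals is again weakly decreasing), so $L({\mathbb{S}}_n)$ is a finite sublattice of the product of chains $[1,n]^{{n-1\choose 2}}$, with bottom $T({\rm id})$ and top $T(w_0)$; a product of chains is distributive, distributivity passes to sublattices, and finiteness gives completeness. For (b): the row $R_i(w)$ of $T(w)$ is the increasing rearrangement of $\{w_1,\dots,w_{n-i+1}\}$, so $t_{ij}(v)\le t_{ij}(w)$ for all $i<j$ is exactly the assertion that for every $k=1,\dots,n-1$ the increasing rearrangement of $\{v_1,\dots,v_k\}$ is entrywise dominated by that of $\{w_1,\dots,w_k\}$ (the case $k=n$ being vacuous), which by the tableau criterion for the Bruhat order is equivalent to $v\le w$ in ${\cal{B}}_n$; thus $w\mapsto T(w)$ identifies ${\cal{B}}_n$ with $T({\mathbb{S}}_n)\subseteq L({\mathbb{S}}_n)$ as posets.

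The crux is (c). Since $L({\mathbb{S}}_n)$ is finite, join-density (resp.\ meet-density) of $T({\mathbb{S}}_n)$ amounts to showing that every join-irreducible (resp.\ meet-irreducible) element of $L({\mathbb{S}}_n)$ has the form $T(w)$; equivalently, for each $T\in L({\mathbb{S}}_n)$ and each box $(i_0,j_0)$ one must produce permutations $v,w$ with $T(v)\le T\le T(w)$ in $L({\mathbb{S}}_n)$ and $t_{i_0j_0}(v)=t_{i_0j_0}=t_{i_0j_0}(w)$, whence $T=\bigvee\{T(v):T(v)\le T\}=\bigwedge\{T(w):T(w)\ge T\}$. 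Such $v,w$ are built by an interpolation on the monotone-triangle rows: completing the array $T$ to a full monotone triangle, one selects a permutation agreeing with $T$ along the prescribed antidiagonal and pushed as low (resp.\ as high) as the monotonicity and flag conditions allow elsewhere. I expect this explicit construction --- in effect the statement that the monotone triangles of size $n$ are precisely the joins of permutation tableaux, and dually the meets of permutation tableaux --- to be the main obstacle; it can be read off from the local-move generation of monotone triangles, or imported from \cite{LS1} and \cite[p.~552, {\bf d}]{ST+}.

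Finally, for (d): the number of staircase arrays satisfying condition~(1) equals $\operatorname{ASM}(n)$ by the classical monotone-triangle description of alternating sign matrices (cf.\ footnote~\ref{footnote9} and Corollary~\ref{cor5.7}). Assembling (a)--(d) as above yields that $L({\mathbb{S}}_n)$ is a complete distributive lattice on $\operatorname{ASM}(n)$ vertices which is order isomorphic to ${\cal{MN}}({\cal{B}}_n)$.
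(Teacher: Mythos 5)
Your proposal is correct and follows essentially the same route as the paper: both arguments get distributivity from the coordinatewise $\min$/$\max$ identities on the sublattice of monotone triangles, identify the order embedding $w\mapsto T(w)$ via the Ehresmann tableau criterion, reduce the identification with ${\cal{MN}}({\cal{B}}_n)$ to the join/meet-density of the permutation triangles among all monotone triangles (the step you rightly flag as the crux, which the paper likewise asserts with a reference to \cite{LS1} rather than proving in detail), and quote the $\operatorname{ASM}(n)$ count from Lascoux--Sch\"utzenberger.
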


Indeed it is not dif\/f\/icult to prove that the set of all monotonic triangles
obtained by applying repeatedly operation $\bigvee$ ($=$~meet) to the set
$\{T(w),\, w \in {\mathbb{S}}_n\}$ of triangles corresponding to all elements of
the symmetric group ${\mathbb{S}}_n$, coincides with the set of all monotonic triangles $L({\mathbb{S}_n})$. The natural map $\kappa\colon {\mathbb{S}}_n
\longrightarrow L({\mathbb{S}}_n)$ is obviously embedding, and all other
conditions of Proposition~\ref{prop7.6} are satisf\/ied. Therefore
$L({\mathbb{S}}_n) ={\cal{MN}}({\cal{B}}_n)$. The fact that the lattice
$L({\mathbb{S}}_n)$ is a~distributive one follows from the well-known identities
\begin{gather*}
\max(x,\min(y,z)) = \min\big(\max(x,y),\max(x,z)\big), \qquad x,y,z \in \big(\R_{\ge 0}\big)^3.
\end{gather*}
In the lattice $L({\mathbb{S}}_n$ this identity can be written in the
following forms
\begin{gather*}
 T^{(1)} \bigvee \big(T^{(2)} \bigwedge T^{(3)}\big) = \big(T^{(1)} \bigwedge T^{(2)}\big) \bigvee \big(T^{(1)} \bigwedge T^{(3)}\big), \\
 T^{(1)} \bigwedge \big(T^{(2)} \bigvee T^{(3)}\big) = \big(T^{(1)} \bigvee T^{(2)}\big) \bigwedge \big(T^{(1)} \bigvee T^{(3)}\big).
\end{gather*}
Finally the fact that the cardinality of the lattice $L({\mathbb{S}}_n)$ is
equal to the number $\operatorname{ASM}(n)$ had been proved by A.~Lascoux and M.-P.~Sch\"{u}tzenberger~\cite{LS1}.

If $T=[t_{ij}] \in L(\mathbb{S}_n)$, def\/ine {\it rank} of~$T$, denoted by
$r(T)$, as follows:
\begin{gather*}
r(T) = \sum_{1 \le i < j \le n-1} t_{ij} - {n \choose 3}.
\end{gather*}
It had been proved by C.~Ehresmann \cite{Ehresmann} that
$v \le w$ with respect to the Bruhat order in the symmetric group
$\mathbb{S}_n$ if and only if $T_{i,j}(v) \le T_{i,j}(w)$ for all $1 \le i < j \le n-1$.

It follows from an improved tableau criterion for Bruhat order on the
symmetric group \cite{BB} that\footnote{It has been proved in~\cite[Corollary~5]{BB}, that the Ehresmann
 criterion stated above is equivalent to either the criterion
$T_{i,j}^{(1)} \le T_{i,j}^{(2)}$ for all $j$ such that $w_j > w_{j+1}$
 and $1 \le i \le j$,
or that
$T_{i,j}^{(1)} \le T_{i,j}^{(2)}$ for all $j \in \{1,2,\ldots,n-1 \}
\backslash \{k \,\vert\, v_{k} > v_{k+1} \}$ and $1 \le i \le j$.}
the length $\ell(w)$ of a permutation $w \in \mathbb{S}_n$ can be
computed as follows
\begin{gather*}
\ell(w) = r(T(w)) - \sum_{(i,j) \in I(w)} (j-i - 1),
\end{gather*}
where $I(w):= \{(i,j) \,\vert\, 1 \le i < j \le n, \, w_i > w_j \}$ denotes the set of {\it inversions} of permutation~$w$; a~detailed proof can be found in~\cite{Ko}.

For example, consider permutation $w= [4,6,2,7,5,1,3]$. Then the code $c(w)$
of $w$ is equal to $c(w) = (3,4,1,3,2)$, and $w$ has the length $\ell(w) =
 13$. The corresponding Young tableau or monotonic triangle displayed below
\begin{gather*}
T(w) = \left[ \begin{matrix}
1 & 2 & 4 & 5 & 6 & 7\cr
2 & 4 & 5 & 6 & 7\cr
2 & 4 & 6 & 7 \cr
2 & 4 & 6\cr
4 & 6 \cr
4 \cr
\end{matrix} \right].
\end{gather*}
Wherefore, $r(T(w)) = |T(w)|- {7 \choose 3} =94-56 =38$. On the
other hand, the inversion set $I(w) = \{(1,3),(1,6),(1,7),(2,3),(2,5),(2,6),(2,7),(3,6),(4,5),(4,6),(4,7),(5,6),(5,7) \}$,
hence $\sum\limits_{(i,j) \in I(w)}(j-i-1) = 10+9+2+3+1 = 25$ and $\ell(w)= 38 -25 = 13$, as it should be.

 It is easily seen that the
polynomial $r_{{\cal{MN}}_n}(t)$ is symmetric and
$\deg (r_{{\cal{MN}}_n}(t)) = {n+1 \choose 3}$, For example,
\begin{gather*}
r({\cal{MN}}_3)=(1,2,1,2,1), \qquad r({\cal{MN}}_4)=(1,3,3,5,6,{\bf 6},6,5,3,3,1), \\
r({\cal{MN}}_5)=(1,4,6,10,16,20,27,34,37,40,{\bf 39},40,37,34,27,20,16,10,6,4,1),\\
r(\mathbb{S}_3 \subset {\cal{MN}}_3)=(1,2,0,2,1), \qquad r(\mathbb{S}_4
\subset {\cal{MN}}_4)=(1,3,1,4,2,{\bf 2},2,4,1,3,1)), \\
r(\mathbb{S}_5 \subset {\cal{MN}}_5)=(1,4,3,6,7,6,4,10,6,10,{\bf 6},10,6,10,
4,6,7,6,3,4,1).
\end{gather*}

\begin{con}\label{conj7.9}
The number $\operatorname{Coef\/f}_{[{n+1 \choose 3}/2]} r_{{\cal{MN}}_n}(t)$ is
a divisor of the number $\operatorname{ASM}(n)$.
\end{con}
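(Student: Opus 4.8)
The plan is to pass to the Birkhoff dual of the lattice ${\cal{MN}}_n$ and then attack the divisibility arithmetically. By Theorem~\ref{theorem7.8} the lattice $L({\mathbb{S}}_n)={\cal{MN}}({\cal{B}}_n)$ of monotonic triangles is a finite distributive lattice, hence by the fundamental theorem for finite distributive lattices it is order-isomorphic to the lattice $J(P_n)$ of order ideals (down-sets), ordered by inclusion, of the poset $P_n$ of its join-irreducible elements. The generic grading of $J(P_n)$ is $I\mapsto|I|$, and since $\deg r_{{\cal{MN}}_n}(t)={n+1\choose 3}$ we get $|P_n|={n+1\choose 3}$ and
\begin{gather*}
r_{{\cal{MN}}_n}(t)=\sum_{I\in J(P_n)}t^{|I|},\qquad r_{{\cal{MN}}_n}(1)=|J(P_n)|=\operatorname{ASM}(n).
\end{gather*}
Writing $m:=\big[{n+1\choose 3}/2\big]$, the quantity to be understood is the middle Whitney number $N_n:=\#\{I\in J(P_n)\mid |I|=m\}$. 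The first task is to identify $P_n$ explicitly: from the monotonicity and flag conditions one checks directly that $P_3$ is the $2$-by-$2$ ``crown'' (two minimal elements below two maximal ones, all four relations present), and in general $P_n$ is the graded poset of join-irreducible monotonic triangles, a ${n+1\choose 3}$-element poset that I would describe as the poset of boxes cut out of a tetrahedral region by the staircase inequalities. The palindromicity $r_i=r_{|P_n|-i}$ noted in the text is then automatic: the complementation antiautomorphism $w\mapsto w_0w$ of the Bruhat order induces an order-reversing involution $\omega$ of $P_n$, and $I\mapsto\omega(P_n\setminus I)$ is a cardinality-reversing involution of $J(P_n)$ carrying the rank-$i$ layer to the rank-$(|P_n|-i)$ layer.

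The substance of the conjecture is the divisibility $N_n\mid\operatorname{ASM}(n)$, for which I see two routes. The one I would push hardest is arithmetic. Using the Lindstr\"om--Gessel--Viennot lemma one can express $N_n$---the number of down-sets of $P_n$ of the fixed size $m$---as a single determinant of binomial coefficients, the same device that turns monotonic triangles into families of non-intersecting lattice paths in the six-vertex picture; one then compares $p$-adic valuations, prime by prime, with the product formula $\operatorname{ASM}(n)=\prod_{i=0}^{n-1}(3i+1)!/(n+i)!$ recalled in Section~\ref{section5.1}, in the spirit of Kummer's theorem. For this to succeed one wants the refined enumeration of monotonic triangles by entry-sum---equivalently by the statistic $r(T)=\sum_{1\le i<j\le n-1}t_{ij}-{n\choose 3}$---to carry enough structure to render the determinant evaluable, or at least its valuation estimable; a Schur-function or $\mathfrak{so}$-character specialization of the type exploited in Theorem~\ref{theorem6.8} would be the natural source of such structure. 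The second route is bijective: partition the set of monotonic triangles of order $n$ into blocks of common size $N_n$, for instance as fibres of a rank-collapsing map $J(P_n)\to J(P_n)_m$ onto the middle layer, or as cycles of a combinatorially natural bijection of $J(P_n)$ (rowmotion, or an element of the toggle group) once its cycle structure on $P_n$ is understood.

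The main obstacle is precisely this last point. The polynomial $r_{{\cal{MN}}_n}(t)$ does not itself factor---already for $n=3$ it equals $(1,2,1,2,1)_{t}$, which is irreducible over $\Q$---so there is no product formula for the whole generating function, and one is forced to extract arithmetic information about the single coefficient $N_n$ in isolation. Producing either a clean determinantal formula for $N_n$ with controllable $p$-adic valuation, or a genuinely equinumerous block decomposition of $\operatorname{ASM}(n)$, is where essentially all the difficulty lies; the Birkhoff reformulation and the palindromic symmetry are routine. A running sanity check is that $\operatorname{ASM}(n)/N_n=7,7,11,\dots$ for $n=3,4,5$, so the quotient is not an obvious $\operatorname{ASM}$-type number either, which indicates that the statement is really about $2$-, $11$- and $13$-adic cancellations rather than a manifest factorization.
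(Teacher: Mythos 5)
The statement you are addressing is presented in the paper as a conjecture with no proof, and your text does not close that gap either: it is a research plan whose two decisive steps are both left unexecuted. What you do carry out --- the Birkhoff reformulation of $L({\mathbb{S}}_n)$ as $J(P_n)$ with $|P_n|={n+1 \choose 3}$, the identification of the middle coefficient with the middle Whitney number $N_n$ of the ideal lattice, and the palindromicity via the order-reversing involution induced by $w\mapsto w_0w$ --- is correct but only repackages the conjecture; all of it is already implicit in the data the paper records (degree ${n+1 \choose 3}$, symmetric coefficient lists, total count $\operatorname{ASM}(n)$).

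The genuine gap lies in both of your proposed routes. The Lindstr\"om--Gessel--Viennot lemma produces determinants for weighted families of nonintersecting paths, i.e., for monotone triangles with prescribed boundary data, or for the full rank generating function as a determinant of polynomials in $t$; it does not by itself yield a closed determinant for the single number $N_n$ of order ideals of the fixed cardinality $\big[{n+1 \choose 3}/2\big]$, which is a coefficient-extraction problem for a polynomial that, as you yourself observe, does not factor. You exhibit no such determinant, no $p$-adic valuation estimate, and no actual comparison with $\prod_{i=0}^{n-1}(3i+1)!/(n+i)!$. Likewise the bijective route --- fibres of a rank-collapsing map onto the middle layer, or rowmotion orbits on $J(P_n)$ --- is only named: no map is constructed, no orbit structure is computed, and there is no reason given why such a map should have constant fibre size $N_n$. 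Your numerical checks ($N_3=1$, $N_4=6$, $N_5=39$, with quotients $7$, $7$, $11$) agree with the coefficient lists printed in Section~\ref{section7.2}, but they verify only the cases the paper itself displays. The conjecture therefore remains unproved by your proposal.
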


\subsection*{Acknowledgements}

A bit of history. Originally these notes have been designed as a continuation
of \cite{FK1}. The main purpose was to extend the methods developed in~\cite{FK2} to obtain by the use of plactic algebra, a noncommutative
generating function for the key (or~Demazure) polynomials introduced by A.~Lascoux and M.-P.~Sch\"{u}tzenberger~\cite{LS2}. The results concerning the
polynomials introduced in Section~\ref{section4}, {\it except} the Hecke--Grothendieck polynomials, see Def\/inition~\ref{def4.6}, has been presented in my lecture-courses ``Schubert Calculus'' and have been delivered at the Graduate School
 of Mathematical Sciences, the University of Tokyo, November~1995~-- April~1996, and at the Graduate School of Mathematics, Nagoya University, October~1998~-- April~1999. I~want to thank
Professor M.~Noumi and Professor T.~Nakanishi who made these courses possible.
 Some early versions of the present notes are circulated around the world and now I was asked to put it for the wide audience. I~would like to thank
Professor M.~Ishikawa (Department of Mathematics, Faculty of Education,
University of the Ryukyus, Okinawa, Japan) and Professor S.~Okada (Graduate
School of Mathematics, Nagoya University, Nagoya, Japan) for valuable comments.
My special thanks to the referees for very careful reading of a preliminary version of the present paper and many valuable remarks, comments and
suggestions.

\addcontentsline{toc}{section}{References}
\LastPageEnding

\end{document}